\documentclass[12pt, ams fonts]{amsart}

\usepackage{color}

\usepackage{stmaryrd}
\usepackage{tikz-cd}
\usepackage{mathtools}
\usepackage{amssymb,amsmath, amsthm}
\usepackage{amsfonts}
\usepackage{graphicx}
\usepackage{tikz-cd}
\usepackage{graphicx}
\usepackage{yhmath}
\usepackage{mathdots}
\usepackage{MnSymbol}

\usepackage[vcentermath,enableskew]{youngtab}

\input xy
\xyoption{all}

\font\teneufm=eufm10 \font\seveneufm=eufm7 \font\fiveeufm=eufm5 
\newfam\eufmfam
\textfont\eufmfam=\teneufm \scriptfont\eufmfam=\seveneufm
\scriptscriptfont\eufmfam=\fiveeufm

\newcommand{\C}{\mathbb{C}}

\newcommand{\Z}{\mathbb{Z}}

\DeclareMathOperator{\Supp}{Supp}
\DeclareMathOperator{\Span}{Span} 
\DeclareMathOperator{\Hom}{Hom}

 \DeclareMathOperator{\Aut}{Aut}

\DeclareMathOperator{\Tr}{Tr}

\DeclareMathOperator{\re}{Re}

\DeclareMathOperator{\diag}{diag}
\DeclareMathOperator{\Mat}{Mat}
\DeclareMathOperator{\GL}{GL}
\DeclareMathOperator{\Res}{Res}
\DeclareMathOperator{\I}{I}
\DeclareMathOperator{\J}{J}
\DeclareMathOperator{\T}{T}
\DeclareMathOperator{\D}{D}
\DeclareMathOperator{\Null}{Null}
\DeclareMathOperator{\soc}{soc}

\DeclareMathOperator{\SNF}{SNF}
\DeclareMathOperator{\rank}{rank}
\DeclareMathOperator{\Max}{Max}

\DeclareMathOperator{\End}{End}

\DeclareMathOperator{\Ext}{Ext}

\DeclareMathOperator{\pr}{pr}

\numberwithin{equation}{section}

\newtheorem{definition}{Definition}[section]

\newtheorem{example}[definition]{Example}

\theoremstyle{remark}

\newtheorem{remark}[definition]{Remark} 

\theoremstyle{plain} 

\newtheorem{theorem}[definition]{Theorem}
\newtheorem{lemma}[definition]{Lemma}
\newtheorem{corollary}[definition]{Corollary}
\newtheorem{proposition}[definition]{Proposition}

\def\Z{\mathbb Z}
\def\C{\mathbb C}

\begin{document}

\title{On $U(\mathfrak{h})$-free modules of finite rank over $\mathfrak{sl} (2)$}

\author{Dimitar Grantcharov, Khoa Nguyen, and Kaiming Zhao}

\address{Department of Mathematics, University of Texas at Arlington, Arlington, TX 76021, USA}\email{grandim@uta.edu}

	\address{Department of Mathematics and Statistics, Queen's University, Kingston, ON K7L 3N6, Canada}
	\email{k.nguyen@queensu.ca}	
            
\address{Department of Mathematics, Wilfrid Laurier University, Waterloo, ON N2L 3C5, Canada, and School of Mathematical Science, Hebei Normal University, Shijiazhuang, Hebei 050024, China.}
\email{kzhao@wlu.ca}


\maketitle
\begin{abstract}
We study $\mathfrak{sl}(2)$-modules that are free of finite rank over
$U(\mathfrak h)$, where $\mathfrak h$ is a fixed Cartan subalgebra of $\mathfrak{sl}(2)$. These modules form a natural class of non-weight
modules. The coherent families obtained from this class via the
weighting functor are identified. We also study a distinguished class of indecomposable $U(\mathfrak h)$-free modules defined in terms of Jordan blocks and give
a recursive description of their socle filtrations. Finally, we apply the
general results to exponential modules arising from the first Weyl algebra and obtain
simplicity criteria for these modules.

\medskip\noindent 2020 MSC: 17B10, 17B20 \\

\noindent Keywords and phrases: Lie algebra, simple modules, $\mathfrak{sl}(2)$-modules, non-weight modules

\end{abstract}

\section*{Introduction}

The representation theory of semisimple Lie algebras begins with
$\mathfrak{sl}(2)$, and many general constructions and methods  are first tested in this
case. The finite-dimensional modules and, more generally, the weight modules
over $\mathfrak{sl}(2)$ are classical and explicit; we refer to~\cite{Ma} for a detailed exposition. Simple $\mathfrak{sl}(2)$-modules were
classified by Block~\cite{Bl}. However, Block's classification does not provide explicit simplicity criteria and structural results 
for many concrete classes of non-weight modules.

In this paper we study one such class: $\mathfrak{sl}(2)$-modules that are
free of finite rank over $U(\mathfrak h)=\mathbb C[h]$, where $ \mathfrak h$ is a fixed Cartan subalgebra of $\mathfrak{sl}(2)$. These modules are
Cartan-torsion-free, in contrast with weight modules, which are
$U(\mathfrak h)$-torsion.  The rank-one case is completely understood~\cite{Nil1}, while in higher rank several important results and
constructions are known; see, e.g.,~\cite{BSh,FLM, GN, GN2, GNZ,  LN, MP, Me, W}.
The $U(\mathfrak h)$-free setting requires a different approach from the one used for the weight modules. 
Our approach is to fix a  $U(\mathfrak h)$-basis and then encode   
the actions of $e$ and $f$ by polynomial matrices. A central idea of the paper is that
this matrix description can be included into a useful normal form and then used to
answer concrete representation-theoretic questions.

We first parameterize every such module  of $U(\mathfrak h)$-rank \(n\), with fixed central character,  as $M(\alpha,\boldsymbol a,K(h))$,
where  $\alpha$ determines the central character, $\boldsymbol a$ records the
invariant factors in the Smith normal form of the matrix describing the action of $f$, and
$K(h)\in\GL_n(\mathbb C[h])$.  This parameterization is used not only
for classification purposes; it is the main tool applied throughout the paper to study
weighting functors, finite-dimensional quotients, socle filtrations, duality, twisting,
and exponential examples.

Our first main result concerns the weighting functor. If $M$ is free of
finite rank over $U(\mathfrak h)$, then its weighting $\mathcal W(M)$ is a
coherent family. We compute the decomposition of
$ \mathcal W\big(M(\alpha,\boldsymbol a,K(h))\big) $
into coherent families of degree $1$. In the non-resonant cases, that is, for central characters that are not regular
integral, the answer
depends only on $\alpha$ and $\boldsymbol a$.  In the resonant
cases it also depends on a finite-dimensional linear-algebra invariant of the matrix 
$K(h)$. This gives an explicit relation between the polynomial matrix
parameter of a $U(\mathfrak h)$-free module and the decomposition of its
weighting.

As an application, we determine when
$M(\alpha,\boldsymbol a,K(h))$ has a proper submodule of the same
$U(\mathfrak h)$-rank. Equivalently, we determine when such a module has a
finite-dimensional quotient.

We also begin a systematic study of the structure of the modules $M(\alpha,\boldsymbol a, K(h))$ in 
the constant-matrix case, that is, when $K(h)$ is independent of $h$. The first
natural nontrivial indecomposable case occurs when $K(h)=\J_n$ is a Jordan
block. For the modules $M(\alpha,\boldsymbol a,\J_n)$, we compute the first
socle layer and the quotient by it. This gives a recursive description of the
socle filtration. The result separates into the non-resonant and resonant cases;
in the resonant case finite-dimensional simple modules appear in the successive
quotients.

We also discuss two functors on $U(\mathfrak h)$-free modules: the contravariant
$U(\mathfrak h)$-duality functor and the twisting functor defined by the standard involution $\tau$ of
$\mathfrak{sl}(2)$, which exchanges $e$ and $f$. These functors induce explicit transformations of the
parameters $\alpha$, $\boldsymbol a$, and $K(h)$. In particular, they
provide a systematic way to compare modules obtained from one another by duality
and $\tau$-twisting.

The final part of the paper applies the general theory to exponential modules. We consider three families
$$
T_1(g,b),\qquad T_2(g,b),\qquad T_3(g), \ \  \text{ where } g \in \mathbb C[t] \text{ and } b\in\mathbb{C},
$$
coming from explicit homomorphisms from $U(\mathfrak{sl}(2))$ to the  first Weyl
algebra. If $g$ has degree $n$, then these modules are free of rank $n$
over $U(\mathfrak h)$. We realize them explicitly as modules of the form
$M(\alpha,\boldsymbol a,K(h))$ and use this realization to prove simplicity
criteria. In particular, for $\deg g\geq2$, the modules $T_1(g,b)$ and
$T_2(g,b)$ are simple precisely when
$
2b-2\notin \mathbb Z_{\geq0}.
$ For the third family, $T_3(g)$ is simple precisely when $g$ is not an even
polynomial. If $g$ is even and nonconstant, then $T_3(g)$ decomposes into
its even and odd parts, and both summands are simple. 

The paper is organized as follows. Section~1 contains the preliminaries,
including the matrix description of $U(\mathfrak h)$-free modules and the
normal form for fixed central character. In Section~2 we compute the weighting
of $M(\alpha,\boldsymbol a,K(h))$ and its decomposition into coherent
families. Section~3 gives applications to finite-dimensional quotients and
associated short exact sequences. Section~4 studies the socle filtration of the
modules $M(\alpha,\boldsymbol a,\J_n)$. Section~5 discusses
$U(\mathfrak h)$-duality and twisting by $\tau$. Finally, Section~6 applies
the preceding results to exponential modules.

\subsection*{Acknowledgements}
The first author is partially supported by Simons Collaboration Grant 855678 and by the Bulgarian Ministry of Education and Science, Scientific Programme “Enhancing the Research Capacity in Mathematical Sciences (PIKOM)”, No. DO1-241/15.08.2023. The third author is supported by  NSERC
(311907-2026).

\section{Preliminaries}
\subsection{Notation and conventions}
Throughout this paper, $\Z$, $\C$, and $\C^*$ denote the integers, complex numbers, and nonzero complex numbers, respectively. For $k\in\Z$, let
$$\Z_{\geq k}:= \{i \in \Z: i \geq k\}.$$

 Whenever we write $z\geq w$ for complex numbers $z,w$, we mean that $z-w\in \Z_{\geq 0}$.

 Unless otherwise stated, all vector spaces, algebras, homomorphisms, and tensor products are defined over $\C$. By $\mathrm{Mat}_n(A)$ we denote the associative algebra of all $n\times n$ matrices with entries in $A$, and let $\GL_n(A)$ denote the group of invertible matrices in $\mathrm{Mat}_n(A)$. For an $A$-module $M$ and $\omega \in \Aut(A)$, $M^\omega$ stands for  the $A$-module with the same underlying vector space as $M$, but with action twisted by $\omega$, i.e. through the formula $a\cdot m^{\omega} = (\omega(a)\cdot m)^{\omega}$. 
 
 For a Lie algebra $\mathfrak{g}$, we write $U(\mathfrak{g})$ for its universal enveloping algebra.
We fix the basis $\{e, f, h\}$ of the Lie algebra $\mathfrak{sl}(2)$ to be given by the following matrices:
$$
e := \begin{pmatrix} 0 & 1\\ 0 & 0 \end{pmatrix}, \qquad 
f := \begin{pmatrix} 0 & 0\\ 1 & 0 \end{pmatrix}, \qquad 
h := \begin{pmatrix} \frac{1}{2} & 0\\ 0 & -\frac{1}{2} \end{pmatrix}.
$$

In this paper, we fix $\mathfrak h = \C h$ and  $c = (2h + 1)^2 + 4fe$. It is well known that the center   $\mathcal{Z}(\mathfrak{sl}(2))$  of  $U(\mathfrak{sl}(2))$  is $\mathbb{C}[c]$. An $\mathfrak{sl}(2)$-module $M$ is said to have central character $\gamma\in\C$ if $c$ acts on $M$ by the scalar $\gamma$. We use the same symbol $h$ both for the Cartan element of $\mathfrak{sl}(2)$ and for the variable of $\C[h] = U (\mathfrak h)$. The shift automorphism $\sigma$ of $\C[h]$ plays a crucial role in this paper; it is defined by   
$$
\sigma\big(g(h)\big):=g(h-1),
\qquad\text{for all}\;\;g(h)\in\mathbb{C}[h].
$$

Let $\mathcal D(1)$ and $\mathcal D(1)_t $ denote the algebras of differential operators on $\C[t]$ and $\C[t^{\pm1}]$, respectively. In particular, $\mathcal D(1)_t = \mathcal D(1)[t^{-1}]$ is the localization of $\mathcal D(1)$ with respect to the multiplicative subset generated by $t$.
As usual, we write $\partial=\frac{d}{dt}$. For $n \geq 1$, denote
 $$
\J_n:=
\begin{pmatrix}
1 & 1 & 0 & \cdots & 0 \\
0 & 1 & 1 & \ddots & \vdots \\
\vdots & \ddots & \ddots & \ddots & 0 \\
\vdots &  & \ddots & 1 & 1 \\
0 & \cdots & \cdots & 0 & 1
\end{pmatrix}, \qquad
\T_n:=
\begin{pmatrix}
0 & \cdots & 0 & 1\\
\vdots & \iddots & 1 & 0\\
0 & \iddots & \iddots & \vdots\\
1 & 0 & \cdots & 0
\end{pmatrix}.
$$

We denote by $\mathsf e_1,\dots,\mathsf e_n$ both the standard $\C[h]$-basis of $ \C[h]^{n} = \C[h]^{\oplus n}$ and the standard basis of $\C^n$. If $M$ is a module of finite length and $L$ is a simple module, we denote by $[M:L]$ the composition multiplicity of $L$ in $M$. If $M$ is a direct sum of indecomposable coherent families and $L$ is one of them, we denote by $[M:L]$ the multiplicity of $L$ as a direct summand of $M$. 

\subsection{The category of $U(\mathfrak h)$-free $\mathfrak{sl}(2)$-modules of finite rank}

Define $\mathcal M$ to be the full subcategory of
$U(\mathfrak{sl}(2))\text{-mod}$ whose objects are $U(\mathfrak{sl}(2))$-modules $M$ such that
$$
\operatorname{Res}^{U(\mathfrak{sl}(2))}_{U(\mathfrak h)} M
\simeq U(\mathfrak h)^{\oplus n}=\C[h]^{\oplus n}
\qquad\text{for some } n \geq 1.
$$

Throughout the paper, we identify the underlying space of any module $M\in\mathcal M$ with $\C[h]^{\oplus n}$ for some $n \geq 1$, and assume that $h$ acts by multiplication. In particular, every $\boldsymbol g\in M$ has the form
$$
\boldsymbol g=
\bigl(g_1(h)\;\;\dots\;\;g_n(h)\bigr)^{\mathsf T}
\qquad (g_j(h)\in\C[h]),
$$
and
$$
h\cdot \boldsymbol g
=
\bigl(hg_1(h)\;\;\dots\;\;hg_n(h)\bigr)^{\mathsf T}.
$$

For $n \geq 1$, define the full subcategory $\mathcal M(n)$ of $\mathcal M$ by
$$
\mathcal M(n):=
\Bigl\{\,M\in\mathcal M :
\operatorname{Res}^{U(\mathfrak{sl}(2))}_{U(\mathfrak h)} M
\simeq U(\mathfrak h)^{\oplus n}\Bigr\}.
$$

Thus, on objects,
$$
\mathcal M=\bigcup_{n\geq 1}\mathcal M(n).
$$

We will also use two larger full subcategories of $U(\mathfrak{sl}(2))\text{-mod}$. Let $\mathcal M_{\mathrm{tf}}$ denote the full subcategory of modules that are torsion-free over $U(\mathfrak h)=\C[h]$, and let $\mathcal M_{\mathrm{fg}}$ denote the full subcategory of modules that are finitely generated over $U(\mathfrak h)=\C[h]$. Since $\C[h]$ is a PID, every finitely generated torsion-free $\C[h]$-module is free of finite rank. Hence $\mathcal M$ consists precisely of the nonzero modules that belong to both $\mathcal M_{\mathrm{tf}}$ and $\mathcal M_{\mathrm{fg}}$.

In general, neither $\mathcal M_{\mathrm{tf}}$ nor $\mathcal M_{\mathrm{fg}}$ contains the other: a torsion-free $\C[h]$-module need not be finitely generated, while a finitely generated $\C[h]$-module may have $h$-torsion.

The category $\mathcal M_{\mathrm{fg}}$ is abelian, since $\C[h]$ is Noetherian. By contrast, $\mathcal M_{\mathrm{tf}}$ is not abelian: cokernels of monomorphisms between torsion-free $\C[h]$-modules may have $\C[h]$-torsion. The categories $\mathcal M$ and $\mathcal M(n)$ are not abelian either. Indeed,  as defined they do not contain the zero module, and even after adjoining it, cokernels of morphisms between free $\C[h]$-modules need not be torsion-free or free of the same rank.

\begin{lemma} \label{1st_form}
For $n \geq 1$, let $M\in\mathcal M(n)$. Then there exist
$E,F\in\Mat_n(\C[h])$ such that
\begin{equation} \label{sl2-relation}
E\sigma(F)-F\sigma^{-1}(E)=2h\I_n,
\end{equation}
and the $\mathfrak{sl}(2)$-module structure on $M$ is given by
$$
e\cdot\boldsymbol g=E\,\sigma(\boldsymbol g),
\qquad
f\cdot\boldsymbol g=F\,\sigma^{-1}(\boldsymbol g),
$$
for all $\boldsymbol g\in\C[h]^{\oplus n}$. Here $\sigma$ is applied entrywise to vectors and matrices with entries in $\C[h]$.
\end{lemma}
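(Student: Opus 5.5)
We would derive everything from the commutation relations $[h,e]=e$, $[h,f]=-f$ and $[e,f]=2h$, which hold with the paper's normalization $h=\diag(\tfrac12,-\tfrac12)$. The idea is that $e$ and $f$ are $\sigma$- and $\sigma^{-1}$-semilinear over $\C[h]$. Such maps are determined by their values on the standard basis $\mathsf e_1,\dots,\mathsf e_n$, and those values form the columns of $E$ and $F$.

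First we show semilinearity. From $he=e(h+1)$ in $U(\mathfrak{sl}(2))$, induction gives $h^k e=e(h+1)^k$, and hence $g(h)e=e\,g(h+1)$ for every polynomial $g$. Applied to a vector $\boldsymbol g=\sum_j g_j(h)\mathsf e_j$, this yields
$$e\cdot\boldsymbol g=\sum_j e\cdot\big(g_j(h)\mathsf e_j\big)=\sum_j g_j(h-1)\,(e\cdot \mathsf e_j).$$
This uses that $e\,g(h)=g(h-1)\,e$, which is the same identity read with $g$ replaced by $\sigma(g)$. Let $E$ be the matrix whose $j$-th column is $e\cdot\mathsf e_j\in\C[h]^{\oplus n}$. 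Then $e\cdot\boldsymbol g=E\,\sigma(\boldsymbol g)$. In the same way, $hf=f(h-1)$ gives $f\,g(h)=g(h+1)\,f$. So if $F$ has columns $f\cdot\mathsf e_j$, we get $f\cdot\boldsymbol g=F\,\sigma^{-1}(\boldsymbol g)$.

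Next we translate $[e,f]=2h$. We compute
$$e\cdot(f\cdot\boldsymbol g)=E\,\sigma\big(F\sigma^{-1}(\boldsymbol g)\big)=E\,\sigma(F)\,\boldsymbol g,$$
because $\sigma$ is a ring automorphism acting entrywise, so $\sigma(AB)=\sigma(A)\sigma(B)$. Similarly,
$$f\cdot(e\cdot\boldsymbol g)=F\,\sigma^{-1}(E)\,\boldsymbol g.$$
Taking $\boldsymbol g=\mathsf e_j$ for each $j$, the relation $ef-fe=2h$ becomes exactly the matrix identity \eqref{sl2-relation}.

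We also note that the relations involving $h$ hold automatically once $e$ and $f$ act in this semilinear form. Indeed, $h\cdot(E\sigma(\boldsymbol g))-E\sigma(h\boldsymbol g)=hE\sigma(\boldsymbol g)-(h-1)E\sigma(\boldsymbol g)=E\sigma(\boldsymbol g)$, which is $[h,e]=e$, and the check for $f$ is symmetric. The only real care needed is keeping the direction of the shift consistent, $\sigma$ for $e$ and $\sigma^{-1}$ for $f$, with $\sigma(g)(h)=g(h-1)$. There is no substantive obstacle.
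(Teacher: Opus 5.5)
Your proposal is correct and follows essentially the same route as the paper: derive the $\sigma^{\pm1}$-semilinearity of $e$ and $f$ from the $h$-commutation relations, take the columns $e\cdot\mathsf e_j$ and $f\cdot\mathsf e_j$ as $E$ and $F$, and read off \eqref{sl2-relation} from $[e,f]=2h$. Your closing check that the $h$-relations hold automatically is not needed for the lemma itself. It does justify the converse used implicitly in the definition of $M(E,F)$.
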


\begin{proof}
From $eh=(h-1)e$ and $fh=(h+1)f$, we easily prove that
$$
e\,g(h)=\sigma(g)e,\qquad f\,g(h)=\sigma^{-1}(g)f
\qquad\text{for all } g(h)\in\C[h].
$$

Define
$$
E:=\bigl(e\cdot \mathsf e_1\;\;\dots\;\;e\cdot \mathsf e_n\bigr),
\qquad
F:=\bigl(f\cdot \mathsf e_1\;\;\dots\;\;f\cdot \mathsf e_n\bigr).
$$

Then $E,F\in\Mat_n(\C[h])$, and the semilinearity gives
$$
e\cdot\boldsymbol g=E\sigma(\boldsymbol g),
\qquad
f\cdot\boldsymbol g=F\sigma^{-1}(\boldsymbol g)
$$
for all $\boldsymbol g\in\C[h]^{\oplus n}$.

Applying the relation $[e,f]=2h$ to $\boldsymbol g$ gives
$$
E\sigma(F)\boldsymbol g-F\sigma^{-1}(E)\boldsymbol g=2h\boldsymbol g.
$$

Since this holds for all $\boldsymbol g$, we obtain
$$
E\sigma(F)-F\sigma^{-1}(E)=2h\I_n.
$$
\end{proof}

\begin{definition}
For $n \geq 1$, let $E,F\in\Mat_n(\C[h])$ satisfy \eqref{sl2-relation}. We denote by $M(E,F)$ the corresponding $\mathfrak{sl}(2)$-module defined above. In particular, every object of $\mathcal M(n)$ is of the form $M(E,F)$. Throughout the paper, when we write $M(E,F)\in\mathcal M(n)$, it is implicitly understood that $E,F\in\Mat_n(\C[h])$ satisfy \eqref{sl2-relation}.
\end{definition}

The following standard change-of-basis criterion will be used repeatedly. 
\begin{proposition} \label{firstisomthm}
For $n \geq 1$, let $M(E_1,F_1),M(E_2,F_2)\in\mathcal M(n)$. Then $M(E_1,F_1)$ and $M(E_2,F_2)$ are isomorphic if and only if there exists $P(h)\in\GL_n(\C[h])$ such that
$$
E_2=P(h)^{-1}E_1P(h-1),
\qquad
F_2=P(h)^{-1}F_1P(h+1).
$$
\end{proposition}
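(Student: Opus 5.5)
The plan is to identify module isomorphisms $M(E_1,F_1)\to M(E_2,F_2)$ with $\C[h]$-linear automorphisms of $\C[h]^{\oplus n}$ that intertwine the actions of $e$ and $f$. The key observation is that any $\mathfrak{sl}(2)$-homomorphism $\phi$ is in particular $U(\mathfrak h)=\C[h]$-linear, since $h$ acts by multiplication on both modules. Hence $\phi$ is given by left multiplication by a matrix $Q(h)\in\Mat_n(\C[h])$, namely the matrix whose columns are $\phi(\mathsf e_j)$. If $\phi$ is bijective, its inverse is also $\C[h]$-linear, so $Q(h)\in\GL_n(\C[h])$. Conversely, any $Q(h)\in\GL_n(\C[h])$ defines a bijective $\C[h]$-linear map.

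Next I impose compatibility with $e$ and $f$, using the formulas of Lemma~\ref{1st_form}. The condition $\phi(e\cdot\boldsymbol g)=e\cdot\phi(\boldsymbol g)$ reads
$$Q(h)E_1\sigma(\boldsymbol g)=E_2\,\sigma(Q(h)\boldsymbol g)=E_2\,Q(h-1)\,\sigma(\boldsymbol g),$$
because $\sigma$ is a ring automorphism applied entrywise, so $\sigma(Q\boldsymbol g)=\sigma(Q)\sigma(\boldsymbol g)$. Since $\sigma$ is bijective on $\C[h]^{\oplus n}$, the vector $\sigma(\boldsymbol g)$ ranges over all of $\C[h]^{\oplus n}$. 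Testing on the standard basis then gives the matrix identity $Q(h)E_1=E_2Q(h-1)$. The same argument with $\sigma^{-1}$ applied to $f$ gives $Q(h)F_1=E_2$'s counterpart $F_2Q(h+1)$, that is, $Q(h)F_1=F_2Q(h+1)$.

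Now set $P(h):=Q(h)^{-1}$, so that $Q(h-1)^{-1}=P(h-1)$ and $Q(h+1)^{-1}=P(h+1)$. The two identities become $E_2=P(h)^{-1}E_1P(h-1)$ and $F_2=P(h)^{-1}F_1P(h+1)$. For the converse, given such a $P$, I define $\phi(\boldsymbol g)=P(h)^{-1}\boldsymbol g$ and reverse the computation. No verification of \eqref{sl2-relation} is needed, since both modules are assumed to lie in $\mathcal M(n)$. This shows $\phi$ commutes with $h$, $e$ and $f$, and is therefore an isomorphism.

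There is no real obstacle here. The only step requiring care is correctly tracking how the shift $\sigma$ passes through matrix multiplication, which is what produces $P(h-1)$ rather than $P(h)$ or $P(h+1)$ in the $E$-relation.
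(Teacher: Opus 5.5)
Your proposal is correct and follows the same route as the paper: the isomorphism is $\C[h]$-linear, so it is given by a matrix $Q(h)=P(h)^{-1}\in\GL_n(\C[h])$, and intertwining with $e$ and $f$ yields exactly the two stated identities, with the shift $\sigma(Q)=Q(h-1)$ handled correctly. The only slip is cosmetic: the garbled phrase about ``$E_2$'s counterpart'' should simply read $Q(h)F_1=F_2Q(h+1)$.
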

\begin{proof}
If
$\Phi:M(E_1,F_1)\to M(E_2,F_2)$ is an isomorphism, then, with respect to the
chosen $U(\mathfrak h)$-bases, $\Phi$ is represented by a matrix
$P(h)^{-1}\in \GL_n(\mathbb C[h])$. The conditions that $\Phi$ intertwines the
actions of $e$ and $f$ are precisely the one in the statement.
\end{proof}
\begin{definition}\label{sigmadef}
Let $A,B\in\Mat_n(\C[h])$. For $\varepsilon\in\{1,-1\}$, we say that $A$ and $B$ are \emph{$\sigma^\varepsilon$-similar}, written $A\sim_{\sigma^\varepsilon}B$, if there exists $P(h)\in\GL_n(\C[h])$ such that
$$
B=P(h)^{-1}A\,\sigma^\varepsilon(P(h)).
$$

One easily verifies that $\sigma^\varepsilon$-similarity defines an equivalence relation ``$\sim_{\sigma^{\varepsilon}}$'' on $\Mat_n(\C[h])$.
\end{definition}

\subsection{Smith normal form}

We briefly recall the Smith normal form over $\C[h]$.

\begin{theorem}[Smith normal form]\label{Smith-form}
Let $A\in\Mat_n(\C[h])$. Then there exist $L,R\in\GL_n(\C[h])$ and
$d_1,\dots,d_n\in\C[h]$ such that
$$
LAR=\diag(d_1,\dots,d_n),
$$
where $d_i\mid d_{i+1}$ for $i=1,\dots,n-1$. The invariant factors
$d_i$ are uniquely determined up to multiplication by nonzero scalars in $\C$.
\end{theorem}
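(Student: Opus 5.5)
The approach is the standard one for a Euclidean domain. Since $\C[h]$ has a degree function, I will reduce $A$ to diagonal form by elementary row and column operations, each of which is multiplication by a matrix in $\GL_n(\C[h])$. The operations are swapping two rows or columns, adding a $\C[h]$-multiple of one row or column to another, and scaling by a nonzero constant. Uniqueness of the $d_i$ will then come from determinantal divisors.

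For existence, I argue by induction on $n$. The case $A=0$ is trivial, so assume $A\neq 0$. Among all matrices $L'AR'$ with $L',R'\in\GL_n(\C[h])$, choose one whose $(1,1)$ entry $p$ is nonzero of minimal degree; this is possible by permuting rows and columns. I then claim that $p$ divides every entry of that matrix.
\begin{itemize}
\item If $p$ fails to divide some entry $a_{1j}$ of the first row, write $a_{1j}=qp+r$ with $0\le \deg r<\deg p$. Subtracting $q$ times column $1$ from column $j$ and swapping columns puts $r$ in the $(1,1)$ position, contradicting minimality. The same argument handles the first column.
\item Once $p$ divides the first row and first column, clear them using row and column operations.
\item If $p$ then fails to divide some entry $a_{ij}$ with $i,j\ge2$, add row $i$ to row $1$ and repeat the first step, again contradicting minimality.
\end{itemize}
This yields the block form $\diag(p,A')$, in which $p$ divides every entry of $A'$. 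Applying induction to $A'$ gives $d_2\mid\cdots\mid d_n$. Every entry of any matrix $L''A'R''$ is a $\C[h]$-combination of entries of $A'$, so $p$ still divides all of them, and hence $p=d_1\mid d_2$.

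For uniqueness, let $\Delta_k(A)$ be the gcd of all $k\times k$ minors of $A$, normalized to be monic or zero. By Cauchy–Binet, every $k\times k$ minor of $LAR$ is a $\C[h]$-linear combination of $k\times k$ minors of $A$. So $\Delta_k(A)\mid \Delta_k(LAR)$, and by symmetry under inverses the two are equal up to a unit of $\C[h]$, that is, a nonzero scalar. For a diagonal matrix with $d_1\mid\cdots\mid d_n$, we have $\Delta_k=d_1\cdots d_k$ up to scalars. Hence $d_k=\Delta_k/\Delta_{k-1}$ whenever $\Delta_{k-1}\neq0$. If $\Delta_{k-1}=0$, then $d_j=0$ for all $j\ge k-1$, and those $d_j$ are determined as well.

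The main obstacle is organizing the existence step so that it terminates. Taking the minimum of $\deg$ of the $(1,1)$ entry over the whole equivalence class avoids any explicit termination argument. No part of the argument is deep, since this is the classical theorem over a PID.
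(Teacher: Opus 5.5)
Your proof is correct and complete. Existence comes from Euclidean reduction: choosing, over the whole equivalence class, a nonzero $(1,1)$ entry of minimal degree neatly avoids a separate termination argument. Uniqueness comes from the determinantal divisors $\Delta_k$, whose invariance under $A\mapsto LAR$ you correctly derive from Cauchy--Binet. Your treatment of the case $\Delta_{k-1}=0$ is also right.

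The paper does not prove this statement at all. It simply recalls the Smith normal form as the classical result over the principal ideal domain $\C[h]$, so your argument is the standard textbook proof the authors are implicitly citing. Working only with elementary row and column operations is legitimate here precisely because $\C[h]$ is Euclidean. Over a general PID, the reduction step would instead need non-elementary $2\times 2$ Bézout-type transformations in $\GL_2$ to replace a pair of entries by their gcd.
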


\begin{remark}
For $A\in\Mat_n(\C[h])$, we denote its Smith normal form by $\SNF(A)$. Throughout the paper, Smith normal forms of matrices in $\Mat_n(\C[h])$ are taken with all nonzero invariant factors to be monic. 
\end{remark}

\subsection{Modules in $\mathcal{M}(n)$ with fixed central character}
Let $M(E,F)\in \mathcal{M}(n)$ be a module with central character $\gamma\in \C$. Then, for every $\boldsymbol{g}\in \C[h]^{\oplus n}$, 
$$
\gamma\,\boldsymbol{g}
= c\cdot\boldsymbol{g}
= \bigl((2h+1)^2\I_n+4\,F\,\sigma^{-1}(E)\bigr) \boldsymbol{g}.
$$

It follows that $F\sigma^{-1}(E)=\tfrac{1}{4}\big(\gamma-(2h+1)^2\big)\I_n$. Now choose $\alpha\in\C$ such that $(2\alpha-1)^2=\gamma$, and define $\lambda_\alpha(h):=-(h-\alpha+1)(h+\alpha)$. Then 
\begin{equation}\label{EFnew}
F\sigma^{-1}(E)=\lambda_\alpha(h)\I_n.
\end{equation}

\begin{lemma} \label{1st-isom-thm}
For $n \geq 1$, let $M(E_1,F_1), M(E_2,F_2)\in \mathcal{M}(n)$ be modules with the same central character $(2\alpha-1)^2\in \C$. Then $M(E_1,F_1)\simeq M(E_2,F_2)$ if and only if $F_2 \sim_{\sigma^{-1}} F_1$.
\end{lemma}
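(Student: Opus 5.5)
Since both modules have the same central character, \eqref{EFnew} gives $F_i\sigma^{-1}(E_i)=\lambda_\alpha(h)\I_n$ for $i=1,2$. The forward direction is immediate from Proposition~\ref{firstisomthm}: an isomorphism yields $P(h)\in\GL_n(\C[h])$ with $F_2=P(h)^{-1}F_1P(h+1)=P(h)^{-1}F_1\sigma^{-1}(P(h))$, which is exactly $F_1\sim_{\sigma^{-1}}F_2$ in the sense of Definition~\ref{sigmadef}, and this relation is symmetric.

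For the converse, let $P(h)\in\GL_n(\C[h])$ satisfy $F_2=P(h)^{-1}F_1\sigma^{-1}(P(h))$. By Proposition~\ref{firstisomthm} it suffices to show that the same $P$ also gives $E_2=P(h)^{-1}E_1P(h-1)=P^{-1}E_1\sigma(P)$. The key observation is that $F$ determines $E$ once the central character is fixed. Since $\lambda_\alpha(h)$ is a nonzero polynomial, $\det F_i\cdot\det\sigma^{-1}(E_i)=\lambda_\alpha(h)^n\neq 0$. Hence $F_i$ is invertible over the field $\C(h)$, and
$$
\sigma^{-1}(E_i)=\lambda_\alpha(h)F_i^{-1}\quad\text{in }\Mat_n(\C(h)).
$$
Applying $\sigma$ gives $E_i=\lambda_\alpha(h-1)\,\sigma(F_i)^{-1}$.

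Now set $E':=P^{-1}E_1\sigma(P)$, which lies in $\Mat_n(\C[h])$. We compute
$$
F_2\sigma^{-1}(E')=P^{-1}F_1\sigma^{-1}(P)\,\sigma^{-1}(P)^{-1}\sigma^{-1}(E_1)P=P^{-1}\lambda_\alpha(h)P=\lambda_\alpha(h)\I_n .
$$
Since $F_2\sigma^{-1}(E_2)=\lambda_\alpha(h)\I_n$ as well and $F_2$ is invertible over $\C(h)$, we get $\sigma^{-1}(E')=\sigma^{-1}(E_2)$, and therefore $E'=E_2$. Proposition~\ref{firstisomthm} then yields $M(E_1,F_1)\simeq M(E_2,F_2)$.

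The only delicate point is this recovery of $E$ from $F$. It requires passing to the fraction field, and the argument works because $\lambda_\alpha\neq0$ for every $\alpha$. The rest is bookkeeping with the shift $\sigma^{\pm1}$.
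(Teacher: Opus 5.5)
Your proof is correct and follows essentially the same route as the paper. The forward direction comes from Proposition~\ref{firstisomthm}. For the converse, you show that the same $P$ also intertwines the $e$-matrices by cancelling a factor of $F$, using $F\sigma^{-1}(E)=\lambda_\alpha(h)\I_n$ with $\lambda_\alpha\neq 0$. The only cosmetic difference is which factor is cancelled and how: the paper cancels $F_1$ by injectivity of left multiplication on $\Mat_n(\C[h])$, while you cancel $F_2$ by its invertibility over $\C(h)$.
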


\begin{proof}
The only-if direction follows from Proposition~\ref{firstisomthm}. Conversely, assume
$$
F_2=P^{-1}F_1\sigma^{-1}(P)
$$
for some $P\in\GL_n(\C[h])$. Since both modules have central character $(2\alpha-1)^2$,
$$
F_i\sigma^{-1}(E_i)=\lambda_\alpha(h)\I_n,\qquad i=1,2.
$$

Hence
$$
F_1\sigma^{-1}(P)\sigma^{-1}(E_2)
=
P F_2\sigma^{-1}(E_2)
=
\lambda_\alpha(h)P
=
F_1\sigma^{-1}(E_1)P.
$$

Since $F_1\sigma^{-1}(E_1)=\lambda_\alpha(h)\I_n$ with $\lambda_\alpha(h)\neq 0$, left multiplication by $F_1$ is injective on $\Mat_n(\C[h])$. Therefore
$$
\sigma^{-1}(P)\sigma^{-1}(E_2)=\sigma^{-1}(E_1)P.
$$

Applying $\sigma$ on both sides leads to 
$$
E_2=P^{-1}E_1P(h-1).
$$

Thus Proposition~\ref{firstisomthm} implies $M(E_1,F_1)\simeq M(E_2,F_2)$.
\end{proof}

\begin{definition} \label{tripledef}
Let $\alpha \in \C$, $\boldsymbol{a} = (a_-, a_0, a_+)$ be a triple with $a_-, a_+ \in \mathbb{Z}_{\geq 0}$ and $a_0 \in \mathbb{Z}$ such that $a_- + |a_0| + a_+ = n$.  We define the diagonal matrix $ P_{(\boldsymbol{a},\alpha)}(x) \in \Mat_n(\mathbb{C}[x]) $ as follows:
\begin{itemize}
    \item[(i)] $ P_{(\boldsymbol{a},\alpha)}(x)_{ii} = 1 $ for $ i = 1, \ldots, a_- $,
    \item[(ii)] If $ a_0 \geq 0 $, then $ P_{(\boldsymbol{a},\alpha)}(x)_{ii} = x - \alpha+1 $ for $ i = a_- + 1, \ldots, a_- + a_0 $,
    \item[(iii)] If $ a_0 < 0 $, then $ P_{(\boldsymbol{a},\alpha)}(x)_{ii} = x + \alpha  $ for $ i = a_- + 1, \ldots, a_- - a_0 $,
    \item[(iv)] $ P_{(\boldsymbol{a},\alpha)}(x)_{ii} = (x - \alpha+1)(x + \alpha) $ for $ i = a_- + |a_0| + 1, \ldots, n $.
\end{itemize}
We also denote $\overline{P}_{(\boldsymbol{a},\alpha)}(x)  \in \Mat_n(\mathbb C[x]) $ so that 
$\overline{P}_{(\boldsymbol{a},\alpha)}(x)  P_{(\boldsymbol{a},\alpha)}(x)  = \lambda_\alpha(x) \I_n.$
\end{definition}

\begin{theorem} \label{thm-h-free}
For $n \geq 1$, let $M\in\mathcal M(n)$ be a module admitting the central character $(2\alpha -1)^2$. Then there exist $K(h)\in\GL_n(\C[h])$ and a triple 
$$\boldsymbol{a} = (a_-, a_0, a_+) \in \Z_{\geq 0} \times \Z \times \Z_{\geq 0}$$ 
satisfying $a_- + |a_0| + a_+ = n$ such that
$$M\simeq M\Big(\sigma\left(K^{-1}(h)\overline{P}_{(\boldsymbol{a},\alpha)}(h)\right),P_{(\boldsymbol{a},\alpha)}(h) K(h)\Big).$$
\end{theorem}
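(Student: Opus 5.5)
The plan is to normalize the matrix $F$ describing the action of $f$ via its Smith normal form, absorb the two unimodular factors into a single $\sigma^{-1}$-similarity, and then invoke Lemma~\ref{1st-isom-thm}, which says that for a fixed central character the isomorphism class is determined by $F$ up to $\sigma^{-1}$-similarity.

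\emph{Setup.} By Lemma~\ref{1st_form} write $M=M(E,F)$. Since $M$ has central character $(2\alpha-1)^2$, equation \eqref{EFnew} gives
$$
F\,G=\lambda_\alpha(h)\I_n,\qquad G:=\sigma^{-1}(E).
$$

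\emph{Step 1: the Smith normal form of $F$.} By Theorem~\ref{Smith-form} choose $L,R\in\GL_n(\C[h])$ with $LFR=D=\diag(d_1,\dots,d_n)$, the $d_i$ monic and $d_i\mid d_{i+1}$. Then
$$
D\,\bigl(R^{-1}GL^{-1}\bigr)=L\,F\,G\,L^{-1}=\lambda_\alpha(h)\I_n .
$$
Hence every $d_i$ is nonzero and divides $\lambda_\alpha(h)$ (look at the $i$-th row). The monic divisors of $\lambda_\alpha$ are $1$, $h-\alpha+1$, $h+\alpha$ and $(h-\alpha+1)(h+\alpha)$.

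\emph{Step 2: identifying $D$ with some $P_{(\boldsymbol a,\alpha)}(h)$.} I expect this to be the only step needing care.
\begin{itemize}
\item If $\alpha\neq\tfrac12$, the two linear factors are distinct and neither divides the other. So the divisibility chain contains at most one of them: a block of $1$'s, then a block of one linear factor, then a block of $\lambda_\alpha$-type entries.
\item Set $a_-$ to be the number of $1$'s and $a_+$ the number of quadratic entries. Set $|a_0|$ to be the number of linear entries, with the sign of $a_0$ recording which linear factor occurs.
\item If $\alpha=\tfrac12$ the two linear factors coincide, and we take $a_0\ge 0$.
\end{itemize}
In every case $D=P_{(\boldsymbol a,\alpha)}(h)$ and $a_-+|a_0|+a_+=n$.

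\emph{Step 3: the $\sigma^{-1}$-similarity.} Put $Q:=L^{-1}\in\GL_n(\C[h])$. Then
$$
Q^{-1}F\sigma^{-1}(Q)=LF\,\sigma^{-1}(L^{-1})=D\,R^{-1}\sigma^{-1}(L^{-1})=P_{(\boldsymbol a,\alpha)}(h)K(h),
$$
where $K:=R^{-1}\sigma^{-1}(L^{-1})\in\GL_n(\C[h])$. Define
$$
F':=P_{(\boldsymbol a,\alpha)}K,\qquad E':=\sigma\bigl(K^{-1}\overline P_{(\boldsymbol a,\alpha)}\bigr).
$$

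\emph{Step 4: $M(E',F')$ is a module with the right central character.}
\begin{itemize}
\item Since diagonal matrices commute, $F'\sigma^{-1}(E')=P_{(\boldsymbol a,\alpha)}\overline P_{(\boldsymbol a,\alpha)}=\lambda_\alpha(h)\I_n$.
\item Likewise $E'\sigma(F')=\sigma\bigl(K^{-1}\overline P_{(\boldsymbol a,\alpha)}P_{(\boldsymbol a,\alpha)}K\bigr)=\lambda_\alpha(h-1)\I_n$.
\item Expanding, $(h-\alpha+1)(h+\alpha)=h^2+h-\alpha^2+\alpha$ and $(h-\alpha)(h+\alpha-1)=h^2-h-\alpha^2+\alpha$. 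So $\lambda_\alpha(h-1)-\lambda_\alpha(h)=2h$, which is exactly \eqref{sl2-relation}. Hence $M(E',F')\in\mathcal M(n)$.
\item Its Casimir acts by $(2h+1)^2+4\lambda_\alpha(h)=(2\alpha-1)^2$, the same central character as $M$.
\end{itemize}

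\emph{Conclusion.} Since $F'\sim_{\sigma^{-1}}F$, Lemma~\ref{1st-isom-thm} gives $M\simeq M(E',F')$, which is the claimed form.
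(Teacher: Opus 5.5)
Your proof is correct and is essentially the paper's argument: you take the Smith normal form $LFR$ of $F$, use $F\sigma^{-1}(E)=\lambda_\alpha(h)\I_n$ to identify it with some $P_{(\boldsymbol a,\alpha)}(h)$, and set $K(h)=R(h)^{-1}L(h+1)^{-1}$, which is exactly the paper's $K$. The differences are minor. The paper also transforms $E$, showing $L(h)EL(h-1)^{-1}=\sigma\bigl(K^{-1}\overline P_{(\boldsymbol a,\alpha)}\bigr)$, and concludes via Proposition~\ref{firstisomthm}. You instead conclude via Lemma~\ref{1st-isom-thm}, after checking \eqref{sl2-relation} and the central character for $(E',F')$ directly. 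Your Step 2 also spells out the divisibility-chain identification that the paper leaves implicit.
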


\begin{proof}
By Lemma~\ref{1st_form}, there exist matrices $E$ and $F$ such that 
$$M = M(E,F),\qquad  F\sigma^{-1}(E) = \lambda_\alpha(h)\I_n.$$

Applying Theorem~\ref{Smith-form} to $F$, there exist $L(h), R(h)\in \GL_n(\C[h])$ such that
$$
L(h)FR(h) =\SNF(F) :=  \diag(\mu_1,\dots,\mu_n).
$$

Notice that
\begin{equation}\label{equaA}
\lambda_\alpha(h)\I_n = L(h)\bigl(F\,\sigma^{-1}(E)\bigr)L(h)^{-1}= \diag(\mu_1,\ldots,\mu_n)\;R(h)^{-1}\,\sigma^{-1}(E)\;L(h)^{-1}.
\end{equation}

It follows that $R(h)^{-1}\sigma^{-1}(E)L(h)^{-1}=\diag(\xi_1,\ldots,\xi_n),$ with $\xi_i\in\C[h]$ satisfying \\ $
\mu_i\,\xi_i=\lambda_\alpha(h)$. Hence, there exist $\boldsymbol{a} = (a_-, a_0, a_+)\in \Z_{\geq 0} \times \Z \times \Z_{\geq 0}$ such that
$$\diag(\mu_1,\dots,\mu_n) = P_{(\boldsymbol{a},\alpha)}(h), \qquad \diag(\xi_1,\ldots,\xi_n) = \overline{P}_{(\boldsymbol{a},\alpha)}(h).$$

Moreover, \eqref{equaA} can be rewritten as
$$
\lambda_\alpha(h)\I_n
=L(h)FL(h+1)^{-1}\,L(h+1)R(h)\,
\sigma^{-1}\Bigl(R(h-1)^{-1}L(h)^{-1}\,L(h)EL(h-1)^{-1}\Bigr).
$$

Let $K(h):=R(h)^{-1}L(h+1)^{-1}\in\GL_n(\C[h])$. Then
$$L(h)FL(h+1)^{-1} = P_{(\boldsymbol{a},\alpha)}(h)K(h), \qquad L(h)EL(h-1)^{-1} = \sigma\left(K^{-1}(h)\overline{P}_{(\boldsymbol{a},\alpha)}(h)\right).$$

By Proposition~\ref{firstisomthm}, we conclude that
$$M(E,F)\simeq M\Big(\sigma\left(K^{-1}(h)\overline{P}_{(\boldsymbol{a},\alpha)}(h)\right),P_{(\boldsymbol{a},\alpha)}(h) K(h)\Big),$$
and the theorem follows.
\end{proof}

\begin{definition}
We use the following shorthand notation for the above module.
$$M\big(\alpha,\boldsymbol{a},K(h)\big):= M\Big(\sigma\left(K^{-1}(h)\overline{P}_{(\boldsymbol{a},\alpha)}(h)\right),P_{(\boldsymbol{a},\alpha)}(h) K(h)\Big).$$
\end{definition}

\begin{corollary}\label{howtofind-K(h)}
For $n \geq 1$, let $M(E,F) \in \mathcal M(n)$ be a module admitting the central character $(2\alpha -1)^2$. If $L(h), R(h)\in\GL_n(\C[h])$ satisfy $L(h) F R(h)=P_{(\boldsymbol{a},\alpha)}(h)$, then 
$$M(E,F) \simeq  M\big(\alpha,\boldsymbol{a},R(h)^{-1}L(h+1)^{-1}\big).$$
\end{corollary}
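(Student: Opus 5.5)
The corollary is essentially a rereading of the proof of Theorem~\ref{thm-h-free}, with the Smith normal form replaced by any factorization of the shape $L(h)FR(h)=P_{(\boldsymbol a,\alpha)}(h)$. So I would simply rerun that argument with the given $L$ and $R$, and only check that nothing used the specific SNF construction.

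First, I would set $K(h):=R(h)^{-1}L(h+1)^{-1}\in\GL_n(\C[h])$ and $F':=L(h)F\,\sigma^{-1}(L(h))^{-1}=L(h)FL(h+1)^{-1}$. Then
$$F'=L(h)FR(h)\,R(h)^{-1}L(h+1)^{-1}=P_{(\boldsymbol a,\alpha)}(h)K(h).$$
Hence $F'=P^{-1}F\sigma^{-1}(P)$ with $P=L(h)^{-1}$, which gives $F'\sim_{\sigma^{-1}}F$.

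Next, I would check that the module $M(\alpha,\boldsymbol a,K(h))$ is well defined. This means its $E$-matrix $E''=\sigma(K^{-1}\overline P_{(\boldsymbol a,\alpha)})$ and its $F$-matrix $F''=P_{(\boldsymbol a,\alpha)}K$ must satisfy \eqref{sl2-relation} with central character $(2\alpha-1)^2$.

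\begin{itemize}
\item The central character condition is immediate:
$$F''\sigma^{-1}(E'')=P_{(\boldsymbol a,\alpha)}KK^{-1}\overline P_{(\boldsymbol a,\alpha)}=\lambda_\alpha(h)\I_n,$$
using that diagonal matrices commute.
\item For \eqref{sl2-relation} itself, I would not verify it directly. Instead I would transport the structure: apply Proposition~\ref{firstisomthm} with $P=L(h)^{-1}$. This gives $M(E,F)\simeq M(E',F')$ with $E'=L(h)EL(h-1)^{-1}$.
\item Since $M(E',F')$ is an honest module with central character $(2\alpha-1)^2$, it satisfies $F'\sigma^{-1}(E')=\lambda_\alpha\I_n$.
\end{itemize}

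Then, from $F'\sigma^{-1}(E')=\lambda_\alpha\I_n$, cancellation against the nonzero polynomial $\lambda_\alpha$ gives
$$\sigma^{-1}(E')=F'^{-1}\lambda_\alpha=K^{-1}P_{(\boldsymbol a,\alpha)}^{-1}\lambda_\alpha=K^{-1}\overline P_{(\boldsymbol a,\alpha)},$$
computed in matrices over $\C(h)$. Hence $E'=E''$ and $F'=F''$ exactly. So $M(E',F')$ literally equals $M(\alpha,\boldsymbol a,K(h))$, and the latter is a genuine object of $\mathcal M(n)$.

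Alternatively, once $M(\alpha,\boldsymbol a,K)$ is known to be a module, Lemma~\ref{1st-isom-thm} gives the isomorphism directly from $F''\sim_{\sigma^{-1}}F$.

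The only delicate point is this well-definedness: that $E''$ satisfies \eqref{sl2-relation}. The identification $E'=E''$ via invertibility of $F'$ over $\C(h)$ handles it, and no other obstacle is expected.
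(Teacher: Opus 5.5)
Your proposal is correct and follows the paper's own route. The corollary is just the last step of the proof of Theorem~\ref{thm-h-free}: one sets $K(h)=R(h)^{-1}L(h+1)^{-1}$, observes $L(h)FL(h+1)^{-1}=P_{(\boldsymbol a,\alpha)}(h)K(h)$ and $L(h)EL(h-1)^{-1}=\sigma\bigl(K(h)^{-1}\overline P_{(\boldsymbol a,\alpha)}(h)\bigr)$ (the latter by cancelling $\lambda_\alpha$, as you do), and applies Proposition~\ref{firstisomthm}. Your extra care about well-definedness is harmless but can be skipped, since for every $K$ one checks directly that $E\sigma(F)-F\sigma^{-1}(E)=\bigl(\lambda_\alpha(h-1)-\lambda_\alpha(h)\bigr)\I_n=2h\I_n$.
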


\begin{theorem}\label{main-isom-thm}
Let $\alpha,\beta\in\C$, let $\boldsymbol{a}=(a_-,a_0,a_+)$ and $\boldsymbol{b}=(b_-,b_0,b_+)$ be elements of $\Z_{\geq0}\times\Z\times\Z_{\geq0}$, and let $K_1(h),K_2(h)\in\GL_n(\C[h])$. Then
$$
M\big( \alpha, {{\boldsymbol a}}, K_1(h) \big) \simeq M\big( \beta,{\boldsymbol b}, K_2(h) \big)
$$
if and only if the following conditions hold:
\begin{itemize}
\item[(i)] either $\beta=\alpha$ and $\boldsymbol b=\boldsymbol a$, or $\beta=1-\alpha$ and $\boldsymbol b=(a_-,-a_0,a_+)$;
\item[(ii)] there exists $P(h)\in\GL_n(\C[h])$ such that
$$
P(h)\,P_{(\boldsymbol a,\alpha)}(h)\,K_2(h)\;=\;P_{(\boldsymbol a,\alpha)}(h)\,K_1(h)\,P(h+1).
$$
\end{itemize}
\end{theorem}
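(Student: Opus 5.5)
The plan is to reduce everything to Lemma~\ref{1st-isom-thm}, together with the invariance of the Smith normal form under $\sigma^{-1}$-similarity. Write $F_1=P_{(\boldsymbol a,\alpha)}(h)K_1(h)$ and $F_2=P_{(\boldsymbol b,\beta)}(h)K_2(h)$ for the matrices describing the action of $f$. By construction, $M(\alpha,\boldsymbol a,K_1(h))$ has central character $(2\alpha-1)^2$ and $M(\beta,\boldsymbol b,K_2(h))$ has central character $(2\beta-1)^2$.

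The first step is the observation that $\lambda_{1-\alpha}(h)=-(h+\alpha)(h-\alpha+1)=\lambda_\alpha(h)$. The factors $x-(1-\alpha)+1=x+\alpha$ and $x+(1-\alpha)=x-\alpha+1$ are simply interchanged when $\alpha$ is replaced by $1-\alpha$. From Definition~\ref{tripledef} this gives
$$
P_{((a_-,-a_0,a_+),1-\alpha)}(x)=P_{(\boldsymbol a,\alpha)}(x).
$$
The check is case by case on the sign of $a_0$: a block of entries $x-\alpha+1$ becomes a block $x+(1-\alpha)$, and a block $x+\alpha$ becomes $x-(1-\alpha)+1$; the blocks of $1$'s and of $\lambda$'s are unchanged.

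For sufficiency, assume (i) and (ii). By the identity above, in both cases of (i) we have $P_{(\boldsymbol b,\beta)}=P_{(\boldsymbol a,\alpha)}$ and the two central characters coincide. Condition (ii) reads $F_2=P(h)^{-1}F_1P(h+1)=P^{-1}F_1\sigma^{-1}(P)$, that is, $F_2\sim_{\sigma^{-1}}F_1$. Lemma~\ref{1st-isom-thm} then yields the isomorphism.

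For necessity, suppose the two modules are isomorphic. Then their central characters agree, so $(2\alpha-1)^2=(2\beta-1)^2$, which forces $\beta=\alpha$ or $\beta=1-\alpha$. By Proposition~\ref{firstisomthm}, there is $P\in\GL_n(\C[h])$ with $F_2=P(h)^{-1}F_1P(h+1)$. Since $P(h)^{-1}$ and $P(h+1)$ are both unimodular, the uniqueness part of Theorem~\ref{Smith-form} gives $\SNF(F_1)=\SNF(F_2)$.

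Next I claim $\SNF(P_{(\boldsymbol a,\alpha)}K_1)=P_{(\boldsymbol a,\alpha)}$. Indeed, $K_1$ is invertible, and the diagonal entries $1,\ (x-\alpha+1)\text{ or }(x+\alpha),\ \lambda$ are monic up to the scalar $-1$ on $\lambda$ and satisfy the required divisibility chain. So $P_{(\boldsymbol a,\alpha)}=P_{(\boldsymbol b,\beta)}$ as Smith forms, up to that harmless sign normalization.

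If $\beta=1-\alpha$, I rewrite $P_{(\boldsymbol b,\beta)}$ using the identity from the first step, which reduces this case to comparing $P_{(\boldsymbol a,\alpha)}$ with $P_{(\boldsymbol b',\alpha)}$, where $\boldsymbol b'=(b_-,-b_0,b_+)$. So it suffices to show that, for fixed $\alpha$, the matrix $P_{(\boldsymbol a,\alpha)}$ determines $\boldsymbol a$ up to the allowed ambiguity. Counting the entries equal to $1$ gives $a_-$, and counting the entries equal to $\lambda$ gives $a_+$. The number of middle entries gives $|a_0|$, and whether those entries equal $x-\alpha+1$ or $x+\alpha$ gives the sign of $a_0$.

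The one degenerate case is $\alpha=1/2$, where $x-\alpha+1=x+\alpha$ and the sign of $a_0$ is not recovered. Here, however, $1-\alpha=\alpha$, so the alternative $\boldsymbol b=(a_-,-a_0,a_+)$ in (i) absorbs the ambiguity. This establishes (i). Finally, since $P_{(\boldsymbol b,\beta)}=P_{(\boldsymbol a,\alpha)}$, the relation $F_2=P^{-1}F_1P(h+1)$ is exactly condition (ii).

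I expect the main obstacle to be this bookkeeping: checking that the Smith normal form of $P_{(\boldsymbol a,\alpha)}K$ is $P_{(\boldsymbol a,\alpha)}$ (including the sign normalization of $\lambda_\alpha$), and handling the coincidences between $\alpha$ and $1-\alpha$ carefully. The remaining steps are direct applications of Lemma~\ref{1st-isom-thm} and Proposition~\ref{firstisomthm}.
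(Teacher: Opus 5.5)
Your proof is correct and follows essentially the same route as the paper: sufficiency comes from Lemma~\ref{1st-isom-thm} once you note that $P_{(\boldsymbol b,\beta)}=P_{(\boldsymbol a,\alpha)}$, and necessity comes from the central character, the $\sigma^{-1}$-similarity of the $f$-matrices, and invariance of the Smith normal form. You also spell out details the paper leaves implicit, namely the identity $P_{((a_-,-a_0,a_+),1-\alpha)}=P_{(\boldsymbol a,\alpha)}$ and the degenerate case $\alpha=\tfrac12$; one small correction is that the last diagonal entries $(x-\alpha+1)(x+\alpha)=-\lambda_\alpha(x)$ are already monic, so no sign normalization is needed.
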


\begin{proof}
We first prove the ``if" direction of the statement. Condition (i) implies that $ P_{({\boldsymbol b}, \beta)}(h) = P_{({\boldsymbol a}, \alpha)}(h)$. Combining conditions (i) and (ii), it follows that there exists $P(h)\in\GL_n(\C[h])$ such that
$$
P_{({\boldsymbol b},\beta)}(h)\,K_2(h)\;=\; P(h)^{-1}\,P_{({\boldsymbol a},\alpha)}(h)\,K_1(h)\,P(h+1).
$$

Hence, Lemma \ref{1st-isom-thm} implies
$$
M\big( \alpha, {{\boldsymbol a}}, K_1(h) \big) \simeq M\big( \beta,{\boldsymbol b}, K_2(h) \big).
$$

For the ``only if" direction, assume $M\big( \alpha, {{\boldsymbol a}}, K_1(h) \big) \simeq M\big( \beta,{\boldsymbol b}, K_2(h) \big)$. Then they have the same central character; i.e., $ (2\alpha -1)^2 = (2\beta -1)^2$. Therefore, $\alpha=\beta$ or $\alpha=1-\beta$. By Lemma~\ref{1st-isom-thm} there exists $P(h)\in \GL_n(\C[h])$ such that
\begin{equation*}
P(h)^{-1}\,P_{({\boldsymbol a},\alpha)}(h)\,K_1(h)\,P(h+1)\;=\;P_{({\boldsymbol b},\beta)}(h)\,K_2(h).
\end{equation*}

Since $P(h)^{-1}, K_1(h)P(h+1)K_2(h)^{-1}\in \GL_n(\C[h])$,
$$P_{({\boldsymbol a},\alpha)}(h) = \SNF\big(P_{({\boldsymbol a},\alpha)}(h)\big) =  P_{({\boldsymbol b},\beta)}(h).$$

Hence, $\boldsymbol a=\boldsymbol b$ if $\alpha=\beta$; $\boldsymbol b=(a_-,-a_0,a_+)$ if $\alpha=1-\beta$; and
$$
P(h)\,P_{({\boldsymbol a},\alpha)}(h)\,K_2(h)
= P_{({\boldsymbol a},\alpha)}(h)\,K_1(h)\,P(h+1).
$$
\end{proof}

\subsection{Some examples of modules in $\mathcal M(n)$}
We first recall the classification of the rank-one $U(\mathfrak h)$-free modules.
\begin{theorem}[{\cite[Theorem 9]{Nil1}}] \label{classfication-rank1}
Every object in $\mathcal{M}(1)$ is isomorphic to exactly one of the following modules:
$$
M\bigl(\alpha, (1,0,0),\beta\bigr),\quad
M\bigl(\alpha, (0,1,0),\beta\bigr)\ \ \big( \alpha\in\C_{\geq \frac{1}{2}}\big),\quad
M\bigl(\alpha, (0,0,1), \beta\bigr),
$$
with $\alpha\in\C$, $\beta\in\C^*$, and 
$\C_{\geq \tfrac12}:=\big\{\,z\in\C:\re(z)\geq \tfrac{1}{2}\,\big\}$.
\end{theorem}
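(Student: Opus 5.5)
The plan is to specialize the normal form of Theorem~\ref{thm-h-free} to $n=1$ and then use Theorem~\ref{main-isom-thm} to decide exactly when two of the resulting parameter sets give isomorphic modules.

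First, existence. Let $M\in\mathcal M(1)$. Since $U(\mathfrak{sl}(2))$ acts on $\C[h]$ and $c$ commutes with $h$, the element $c$ acts as multiplication by some polynomial $\gamma(h)$. Acting with $c$ commutes with $e$, and $e\cdot g=E\,\sigma(g)$, so $\gamma(h)E=E\gamma(h-1)$. We have $E\neq 0$, since otherwise $[e,f]=2h$ would fail. Hence $\gamma$ is $1$-periodic, so it is a constant. Thus $M$ has a central character, which we write as $(2\alpha-1)^2$.

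Theorem~\ref{thm-h-free} now gives $M\simeq M(\alpha,\boldsymbol a,K)$ with $K\in\GL_1(\C[h])=\C^*$, which we write as $\beta$. The triple satisfies $a_-+|a_0|+a_+=1$, so it is one of $(1,0,0)$, $(0,1,0)$, $(0,-1,0)$ or $(0,0,1)$. By Theorem~\ref{main-isom-thm}(i), $M(\alpha,(0,-1,0),\beta)\simeq M(1-\alpha,(0,1,0),\beta)$, because condition (ii) holds with $P=1$. Since $\re(1-\alpha)=1-\re(\alpha)$, we may replace $\alpha$ by $1-\alpha$ if necessary. This puts every module of the middle type in the form $M(\alpha,(0,1,0),\beta)$ with $\re\alpha\ge\frac12$.

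Next, uniqueness. Suppose two listed modules $M(\alpha,\boldsymbol a,\beta_1)$ and $M(\alpha',\boldsymbol b,\beta_2)$ are isomorphic.

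\textbf{Triple and parameter.} Theorem~\ref{main-isom-thm}(i) gives either $\alpha'=\alpha$ and $\boldsymbol b=\boldsymbol a$, or $\alpha'=1-\alpha$ and $\boldsymbol b=(a_-,-a_0,a_+)$. In the second case both triples appear on our list only when $a_0=0$, so $\boldsymbol a=\boldsymbol b\in\{(1,0,0),(0,0,1)\}$.

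\textbf{Identifying $\alpha$ in the outer families.} For these two families, $\alpha$ and $1-\alpha$ give literally the same module. Indeed, $P_{(\boldsymbol a,\alpha)}$ is $1$ or $(h-\alpha+1)(h+\alpha)$, and both are invariant under $\alpha\mapsto1-\alpha$; likewise $\lambda_\alpha$ is invariant. So the parameter there should really be read as the central character $(2\alpha-1)^2$. I take this to be the intended meaning of ``exactly one'' for these two families.

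\textbf{Middle family.} For $(0,1,0)$ with $\re\alpha,\re\alpha'\ge\frac12$ and $\alpha'=1-\alpha$, we get $\re\alpha=\frac12$. Uniqueness would then require $\alpha=\alpha'$. This is the delicate point, and I flag it as the main obstacle. If $\alpha=\frac12+it$ with $t\neq0$, the two parameters differ but the modules are isomorphic, so either the half-plane $\C_{\ge\frac12}$ must be read with a suitable convention on the line $\re z=\frac12$, or one appeals to the cited source~\cite{Nil1}. I would check this boundary case directly against Nilsson's parametrization.

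\textbf{Equality of $\beta$.} Finally, Theorem~\ref{main-isom-thm}(ii) with $n=1$ gives $P\in\C^*$, and the condition reads $P\,P_{(\boldsymbol a,\alpha)}\,\beta_2=P_{(\boldsymbol a,\alpha)}\,\beta_1\,P$. Cancelling the nonzero polynomial $P_{(\boldsymbol a,\alpha)}$ and the scalar $P$ gives $\beta_1=\beta_2$. Thus $\beta$ is an isomorphism invariant.

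The three types are distinguished by $\boldsymbol a$, which records the Smith normal form of $F$ and is invariant by Theorem~\ref{main-isom-thm}. This completes the classification, apart from the boundary subtlety noted in the middle family.
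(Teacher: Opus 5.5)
Your route through Theorem~\ref{thm-h-free} and Theorem~\ref{main-isom-thm} is the natural one with the paper's tools; the paper itself gives no proof and only cites \cite{Nil1}. Several of your steps are correct: the argument that every $M\in\mathcal M(1)$ has a central character (needed, since Theorem~\ref{thm-h-free} assumes one), the invariance of $\beta$, and the separation of the three types by $\boldsymbol a$.

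The genuine failure is in the middle family, starting with the existence step. The identity $M(\alpha,(0,-1,0),\beta)=M(1-\alpha,(0,1,0),\beta)$ changes $\alpha\mapsto 1-\alpha$ and $a_0\mapsto -a_0$ \emph{simultaneously}. You can use it to make $a_0=1$ or to make $\re\alpha\geq\frac12$, but not both. Concretely, $M(0,(0,1,0),1)$ has $F=h+1$ and $E=-(h-1)$; it is the module $M(1,(0,-1,0),1)$ that the paper itself uses in Proposition~\ref{simplicity-rank1}. Because $\GL_1(\C[h])=\C^*$, Proposition~\ref{firstisomthm} shows that in rank one the pair $(E,F)$ is itself an isomorphism invariant. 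Here $F$ is neither constant nor quadratic, so it is not the $f$-matrix of an outer-family module. Moreover $(h-\alpha'+1)\beta'=h+1$ forces $\alpha'=0\notin\C_{\geq\frac12}$. So this object of $\mathcal M(1)$ is isomorphic to nothing on the printed list, and the statement as printed cannot be proved by any argument.

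Your ``boundary obstacle'' in the middle family is spurious. For $t\neq 0$, the modules $M(\frac12+it,(0,1,0),\beta)$ and $M(\frac12-it,(0,1,0),\beta)$ have $f$-matrices $(h+\frac12-it)\beta\neq(h+\frac12+it)\beta$, so they are not isomorphic. Your own preceding step already showed this: the alternative $\alpha'=1-\alpha$ in Theorem~\ref{main-isom-thm}(i) forces $\boldsymbol b=(0,-1,0)$, which is not the middle type $(0,1,0)$. Hence in the middle family $\alpha$ is a complete invariant over all of $\C$. The $\alpha\leftrightarrow 1-\alpha$ ambiguity, which you correctly detected, lives only in the outer families.

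With these two corrections, your argument proves the classification with the half-plane condition moved to where it belongs:
\begin{itemize}
\item[(i)] $M(\alpha,(0,1,0),\beta)$ with $\alpha\in\C$ arbitrary;
\item[(ii)] $M(\alpha,(1,0,0),\beta)$ and $M(\alpha,(0,0,1),\beta)$ with $\alpha$ taken modulo $\alpha\sim 1-\alpha$, for instance $\re\alpha\geq\frac12$ with $\frac12+it$ identified with $\frac12-it$;
\end{itemize}
and $\beta\in\C^*$ throughout.
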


\begin{proposition}[{\cite[Lemmas 11 and 12]{Nil1}}]\label{simplicity-rank1}
Let $\alpha \in \C$ and $\beta\in\C^{*}$. Then
\begin{itemize}
\item[(i)] The modules $M\big(\alpha, (1,0,0), \beta\big)$ and $M\big(\alpha, (0,0,1), \beta\big)$ are simple.
\item[(ii)] If $ \alpha\in \C_{\geq \tfrac{1}{2}} \setminus\{\tfrac{1}{2}\Z_{\geq 0}+1\}$ then the module $M\big(\alpha, (0,1,0), \beta\big)$ is simple. If $\alpha\in\frac{1}{2}\Z_{\geq 0}+1$, then the following short exact sequence is non-split
      $$
      0 \longrightarrow M\big(\alpha, (0,-1,0), \beta\big) \longrightarrow M\big(\alpha, (0,1,0), \beta\big)\longrightarrow
      L(2\alpha-2)\longrightarrow0.
      $$
\end{itemize}
\end{proposition}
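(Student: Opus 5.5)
In rank one I would work directly with the explicit matrices. For $M=M(\alpha,\boldsymbol a,\beta)$ the $1\times1$ matrices are $F=P_{(\boldsymbol a,\alpha)}(h)\beta$ and $E=\sigma\bigl(\beta^{-1}\overline P_{(\boldsymbol a,\alpha)}(h)\bigr)$. A nonzero submodule $N\subseteq M\simeq\C[h]$ is in particular a $\C[h]$-submodule, so it is an ideal $N=q(h)\C[h]$. It is stable under $e$ and $f$ exactly when $q(h)\mid F\,q(h-1)$ and $q(h)\mid E\,q(h+1)$. Since $q$ and its shifts have the same degree, I will argue with roots and their multiplicities.

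\emph{Step 1.} Let $Z$ be the multiset of roots of $q$. The condition $q\mid F\,\sigma^{-1}(q)$ says that $Z\subseteq (Z-1)\cup \mathrm{roots}(F)$, counted with multiplicity. The condition $q\mid E\,\sigma(q)$ says that $Z\subseteq(Z+1)\cup\mathrm{roots}(E)$. Finiteness of $Z$ forces every chain $z,z+1,z+2,\dots$ in $Z$ to terminate at a root of $F$, and every chain $z,z-1,\dots$ to terminate at a root of $E$. So the roots of $q$ must form strings of consecutive elements starting at a root of $\sigma(\overline P)$ and ending at a root of $P$.

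\emph{Step 2, part (i).} For $\boldsymbol a=(1,0,0)$ we have $P=1$, so $F$ has no roots and $Z=\emptyset$. For $\boldsymbol a=(0,0,1)$, $E$ is a nonzero constant, so again $Z=\emptyset$. In both cases every nonzero submodule equals $M$, which gives simplicity.

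\emph{Step 3, part (ii).} Here $F=\beta(h-\alpha+1)$ and $E=\beta^{-1}\sigma(-(h+\alpha))=-\beta^{-1}(h+\alpha-1)$. The only root of $F$ is $\alpha-1$ and the only root of $E$ is $1-\alpha$. A nonempty string therefore exists iff $(\alpha-1)-(1-\alpha)=2\alpha-2\in\Z_{\ge0}$, which under $\re\alpha\ge\frac12$ means $\alpha\in\frac12\Z_{\ge0}+1$. In that case the unique proper submodule is generated by $q=\prod_{k=0}^{2\alpha-2}(h-(1-\alpha)-k)$, and $M/N$ has dimension $2\alpha-1$. I would identify $M/N$ with $L(2\alpha-2)$ by checking the central character. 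I would identify $N$ with $M(\alpha,(0,-1,0),\beta)$ by computing the matrix of $f$ on the basis $q$: $f\cdot q=\beta(h-\alpha+1)q(h+1)$, and $q(h+1)/q(h)=(h+\alpha)/(h-\alpha+1)$ up to sign, so this equals $\beta'(h+\alpha)q$. Lemma~\ref{1st-isom-thm} then matches $N$ with the $(0,-1,0)$ normal form; any constant rescaling of $\beta$ is absorbed because $P$ may be a scalar.

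\emph{Step 4, non-splitting.} The sequence does not split because $M$ is $\C[h]$-torsion-free, so it cannot contain the finite-dimensional module $L(2\alpha-2)$.

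The main obstacle is the multiplicity and sign bookkeeping in Step 3. I need to verify that $q\mid Fq(h-1)$ holds precisely for the full string, including the endpoint conventions for $\sigma$, and to pin down the exact constant relating $N$ to $\beta$.
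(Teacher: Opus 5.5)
Your root-string argument is correct and handles the rank-one case directly. Write $m_q(z)$ for the multiplicity of $z$ as a root of $q$. The two stability conditions become $m_q(z)\le m_F(z)+m_q(z+1)$ and $m_q(z)\le m_E(z)+m_q(z-1)$. In part (ii) they force $m_q$ to vanish off the segment $1-\alpha,2-\alpha,\dots,\alpha-1$ and to be constant along it. It is also at most $1$, because $m_q(\alpha)=0$ and $\alpha-1$ is a simple root of $F$. So $q=1$ or $q=\prod_{k=0}^{2\alpha-2}(h+\alpha-1-k)$, and the multiplicity bookkeeping you worried about causes no trouble. You never use $\re\alpha\ge\frac12$; it only fixes a representative in Theorem~\ref{classfication-rank1}, so that is fine.

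Two things need fixing. First, your opening paragraph interchanges the shifts. Since $f\cdot g=F(h)g(h+1)$ and $e\cdot g=E(h)g(h-1)$, stability means $q\mid F\,q(h+1)$ and $q\mid E\,q(h-1)$. Step~1 uses the correct form, and the orientation matters: the interchanged conditions would give the wrong criterion $2-2\alpha\in\Z_{\ge0}$.

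Second, and more seriously, the claim that a constant rescaling of $\beta$ is ``absorbed because $P$ may be a scalar'' is false. In rank one $\GL_1(\C[h])=\C^*$, and $P^{-1}F\sigma^{-1}(P)=F$ for constant $P$. So by Lemma~\ref{1st-isom-thm}, $M(\alpha,(0,-1,0),\beta)\not\simeq M(\alpha,(0,-1,0),\beta')$ when $\beta\neq\beta'$; this is the ``exactly one'' in Theorem~\ref{classfication-rank1}. Had your computation produced a different constant, the displayed sequence would be wrong. What rescues the argument is that no constant appears. The polynomial $q$ is monic with roots $1-\alpha,\dots,\alpha-1$, and $q(h+1)$ is monic with roots $-\alpha,\dots,\alpha-2$. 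Hence $(h-\alpha+1)\,q(h+1)=(h+\alpha)\,q(h)$ exactly, with no sign. Therefore $f\cdot(g\,q)=\beta(h+\alpha)\,g(h+1)\,q$, and the $f$-matrix of $N$ in the basis $q$ is literally $P_{((0,-1,0),\alpha)}(h)\,\beta$. Lemma~\ref{1st-isom-thm} (same central character $(2\alpha-1)^2$) then gives $N\simeq M(\alpha,(0,-1,0),\beta)$ with the same $\beta$. Replace the absorption claim by this computation.

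With these corrections the proof is complete. The paper does not prove this statement; it cites Nilsson. Its own analogue, Proposition~\ref{SESgeneralcase}, builds the submodule as the image of the telescoping product $\prod_{i=0}^{k}P_{((0,1,0),1+\frac{k}{2}-i)}(h)\,\beta$. In rank one this equals $\beta^{k+1}q$ with $k=2\alpha-2$. The paper then uses the same dimension-plus-central-character and torsion-freeness arguments as your Steps~3 and~4. Your root-string analysis adds what that construction alone does not give: there are no other nonzero proper submodules, so $N$ is the unique one.
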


\begin{example}
In \cite{BSh}, Bahturin and Shihadeh constructed a one-parameter family of simple ${U}(\mathfrak h)$-free modules of rank $2$. For each $\omega\in\C$, the module $M^C_{\omega}$ may be realized explicitly as follows.
$$M^C_{\omega} \simeq M\left(\begin{pmatrix}
0 & \frac{1}{2}\left(-4h^{2}+4h+\omega^{2}+2\omega\right)\\
\frac{1}{2} & 0
\end{pmatrix},  \begin{pmatrix}
0 & \frac{1}{2}\left(-4h^{2}-4h+\omega^{2}+2\omega\right)\\
\frac{1}{2} & 0
\end{pmatrix}\right).$$

Since
$$\begin{pmatrix}0&1\\ -1&0\end{pmatrix}
\begin{pmatrix}
0 & \frac{1}{2}\left(-4h^{2}-4h+\omega^{2}+2\omega\right)\\
\frac{1}{2} & 0
\end{pmatrix}
\begin{pmatrix}2&0\\0&\tfrac12\end{pmatrix}
=P_{\left((1,0,1),\;\tfrac{\omega+2}{2}\right)}(h),$$
then by Corollary \ref{howtofind-K(h)},
$$
M^C_{\omega}\simeq M\left(\tfrac{\omega+2}{2}, (1,0,1), \begin{pmatrix}0&-\frac{1}{2}\\ 2&0\end{pmatrix}
\right).
$$
\end{example}

\begin{example}
In \cite{MP}, Martin and Prieto constructed a family of finite-rank $U(\mathfrak h)$-free modules parametrized by $\beta\in\C^*$, $\mu\in\C$, and a nonconstant polynomial $p(h)\in\C[h]$. For these modules, the $f$-matrix is given by
$$F(h) = (-h+\mu)(-h-\mu-1) K(h),$$
where 
$$K(h) := \begin{pmatrix}
0 & 1 & 0 & \cdots & 0 & 0 \\
0 & 0 & 1 & \ddots & \vdots & \vdots \\
\vdots & \vdots & \ddots & \ddots & 0 & 0 \\
0 & 0 & \cdots & 0 & 1 & 0 \\
-\tfrac{p(-h)}{\beta} & 0 & \cdots & 0 & 0 & 1 \\
\tfrac{1}{\beta} & 0 & \cdots & 0 & 0 & 0
\end{pmatrix}.$$

Therefore, the module constructed in \cite{MP} is isomorphic to $M\left(\mu +1, (0,0,n), K(h)
\right)$.
\end{example}

\subsection{Weighting functor and coherent families of $\mathfrak{sl}(2)$}

The weighting functor $\mathcal W$ was first introduced in \cite{Nil1}. In this subsection, we recall its definition in the case of $\mathfrak{sl}(2)$.

Let $M$ be a $U(\mathfrak{sl}(2))$-module. For $\lambda\in\C$, let
$\overline{\lambda}:U(\mathfrak h)\to\C$ be the algebra homomorphism defined by
$\overline{\lambda}(h)=\lambda$. The \emph{weighting} of $M$ is defined by
$$
\mathcal W(M)
:=
\bigoplus_{\mathfrak m\in\Max(\C[h])} M/\mathfrak m M
=
\bigoplus_{\lambda\in\C} M/\ker\overline{\lambda}\,M,
$$
where $\Max(\C[h])$ denotes the set of maximal ideals of $\C[h]$. The space
$\mathcal W(M)$ becomes a $U(\mathfrak{sl}(2))$-module under the action
\begin{align*}
h\cdot (v+\ker\overline{\lambda}\,M)
&:=
h v+\ker\overline{\lambda}\,M,\\
e\cdot (v+\ker\overline{\lambda}\,M)
&:=
e v+\ker\overline{\lambda+1}\,M,\\
f\cdot (v+\ker\overline{\lambda}\,M)
&:=
f v+\ker\overline{\lambda-1}\,M.
\end{align*}

\begin{definition} \label{cohfam-def}
An $\mathfrak{sl}(2)$-coherent family of degree $d\geq 1$ is a weight module
$$
M=\bigoplus_{\lambda\in\C} M_\lambda, \qquad M_\lambda = \{ m \in M \; | \; hm = \lambda m\},
$$
satisfying the following conditions:
\begin{itemize}
\item[(i)] $\dim M_\lambda=d$ for every $\lambda\in\C$;
\item[(ii)] the function $\lambda\mapsto \Tr\bigl(c|_{M_\lambda}\bigr)$ is polynomial in $\lambda$.
\end{itemize}
\end{definition}

\begin{remark}
For $n\geq 1$ and $M\in\mathcal M(n)$, it is well known from \cite{Nil1} that $\mathcal W(M)$ is a coherent family of degree $n$.
\end{remark}

\section{Coherent families and $U(\mathfrak h)$-free modules}\label{coherentfam-section}
In this section, we study the weighting of $M\big(\alpha,\boldsymbol a,K(h)\big)$, which is a coherent family of degree $n=a_-+|a_0|+a_+$. In particular, we provide an alternative realization of $\mathcal W\big(M(\alpha,\boldsymbol a,K(h))\big)$ in terms of differential operators and describe its decomposition as a direct sum of coherent families of degree $1$.
\subsection{Modules over ${\mathcal D}(1)$ and  $\mathcal D(1)_t $} 
We introduce the following two important modules of $\mathcal D(1)_t $.
\begin{definition}
For $\lambda \in \C$, let $\mathcal F[\lambda] := t^{\lambda} \C [t^{\pm 1}]$ regarded as a $\mathcal D(1)_t $-module with the natural action of $t$, $t^{-1}$, and $\partial$. For $\overline{\lambda} = \lambda + \Z \in \C/\Z$, we write $\mathcal F\left[\overline{\lambda}\right]  = \mathcal F[\lambda] $, and set
$$
\mathcal F := \bigoplus_{\overline{\lambda} \in \C/\Z} \mathcal F\left[\overline{\lambda}\right]. 
$$

Denote by $\varphi : \mathcal D(1)_t  \to \End(\mathcal F)$ the corresponding representation on $\mathcal F$.
\end{definition}

\begin{definition}
For $\eta \in \C^*$, let $G_\eta$ be the $\mathcal D(1)_t$-module with underlying space $\C[h]$, whose action is given by  $t \mapsto \eta\sigma$, $\partial \mapsto \frac{1}{\eta} (h+1)\sigma^{-1}$. Denote by $\beta : \mathcal D(1)_t  \to \End(\mathbb C [h])$
the homomorphism corresponding to the $\mathcal D(1)_t $-module  $G_1$.
\end{definition}

\begin{remark}
 For $\eta\in\mathbb C^*$, set
$$
\overline G_\eta := \Res^{\mathcal D(1)_t}_{\mathcal D(1)} G_\eta .
$$

One can easily verify that
$$\overline G_\eta \simeq E(\eta t)^\theta,$$
where $E(g(t))$ denotes the natural ${\mathcal D}(1)$-module on $e^{g(t)}\C[t]$ and $\theta$ is the Fourier-transform automorphism of $\mathcal D(1)$, given by $t\mapsto \partial,\;  \partial\mapsto -t$. Moreover, every $\mathcal D(1)_t $-module that is free of rank $1$ over $\mathbb C[t\partial]$ is isomorphic to $G_\eta$ for some $\eta \in \C^*$. Every weight $\mathcal D(1)_t $-module  $M$ of degree $1$ is a direct sum of modules of the form $\mathcal F\left[\overline{\lambda}\right]$, where $\overline{\lambda}\in \C/\mathbb Z$. In particular, if the support of $M$ is $\C$, then $M$ is isomorphic to $\mathcal F$. Finally, every ${\mathcal D}(1)$-module that is free of rank $1$ over $\mathbb C[t\partial]$ is isomorphic to
either $\overline G_\eta$ or $\overline G_\eta^\theta$, for some $\eta \in \C^*$.
\end{remark}

\begin{remark}
Rank-one ${U}(\mathfrak h)$-free modules over generalized Weyl algebras, in particular over $\mathcal D(1)_t$ and ${\mathcal D}(1)$, were studied in detail in \cite{LN}.
\end{remark}

\begin{proposition}
Let $\alpha \in\C$, $\boldsymbol a=(a_-,a_0,a_+)\in \Z_{\geq 0}^3$ so that $a_- + a_0 + a_+ =n \geq 1$, and let $K(x)\in\GL_n(\C[x])$. Then the correspondence
\begin{eqnarray*}
f &\mapsto &P_{(\boldsymbol a,\alpha)}(t \partial) K(t\partial) t^{-1} \I_n, \\
e  &\mapsto &  tK^{-1}(t\partial)\overline{P}_{(\boldsymbol a,\alpha)}(t\partial),\\
h &\mapsto& t\partial \I_n,
\end{eqnarray*}
extends to a homomorphism of associative algebras $\omega_{\alpha,\boldsymbol a, K} : U({\mathfrak{sl} (2)}) \to  \Mat_n(\mathcal D(1)_t )$.
\end{proposition}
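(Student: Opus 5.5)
By the PBW theorem, $U(\mathfrak{sl}(2))$ is the associative algebra generated by $e,f,h$ subject only to
$$[h,e]=e,\qquad [h,f]=-f,\qquad [e,f]=2h.$$
These are the normalizations forced by $eh=(h-1)e$ and $fh=(h+1)f$, as in the proof of Lemma~\ref{1st_form}. So the plan is to check that the three proposed matrices in $\Mat_n(\mathcal D(1)_t)$ satisfy these three relations; the universal property then gives the homomorphism $\omega_{\alpha,\boldsymbol a,K}$.

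The basic tool is the commutation rule in $\mathcal D(1)_t$. From $(t\partial)t=t(t\partial+1)$ we get
$$t\,g(t\partial)=g(t\partial-1)\,t,\qquad t^{-1}g(t\partial)=g(t\partial+1)\,t^{-1}$$
for every $g(x)\in\C[x]$. These extend entrywise to matrices with entries in $\C[t\partial]$, since $t\I_n$ and $t^{-1}\I_n$ are scalar matrices. Write
$$A(x):=K^{-1}(x)\overline P_{(\boldsymbol a,\alpha)}(x),\qquad B(x):=P_{(\boldsymbol a,\alpha)}(x)K(x),$$
so that $e\mapsto tA(t\partial)$ and $f\mapsto B(t\partial)t^{-1}$.

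First I verify the two $h$-relations. The image of $h$ is $t\partial\,\I_n$, and it commutes with every matrix whose entries lie in $\C[t\partial]$. Hence $[t\partial\,\I_n,\,tA]=[t\partial,t]A=tA$, and $[t\partial\,\I_n,\,Bt^{-1}]=B[t\partial,t^{-1}]=-Bt^{-1}$.

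Next I treat $[e,f]$. Using the commutation rule,
$$(tA)(Bt^{-1})=t\,(AB)(t\partial)\,t^{-1}=(AB)(t\partial-1),\qquad (Bt^{-1})(tA)=(BA)(t\partial).$$
Since $P_{(\boldsymbol a,\alpha)}$ and $\overline P_{(\boldsymbol a,\alpha)}$ are diagonal with product $\lambda_\alpha(x)\I_n$, we get $AB=K^{-1}\overline P P K=\lambda_\alpha\I_n$ and $BA=P\overline P=\lambda_\alpha\I_n$. This is the one place where the invertibility of $K$ and the definition of $\overline P_{(\boldsymbol a,\alpha)}$ in Definition~\ref{tripledef} are used. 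It remains to check the scalar identity
$$\lambda_\alpha(x-1)-\lambda_\alpha(x)=2x.$$
Expanding, $\lambda_\alpha(x)=-(x^2+x-\alpha^2+\alpha)$ and $\lambda_\alpha(x-1)=-(x^2-x-\alpha^2+\alpha)$, so the difference is indeed $2x$. Therefore $[e,f]\mapsto 2t\partial\,\I_n$, as required.

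There is no real obstacle in this argument. The only point requiring care is the ordering and sign conventions: $t$ shifts $t\partial$ down by one when moved to the right, and the relation is $[h,e]=e$ rather than $2e$ because $h=\diag(\tfrac12,-\tfrac12)$. The computation is the differential-operator analogue of relation~\eqref{sl2-relation}, with $\sigma$ realized as conjugation by $t$.
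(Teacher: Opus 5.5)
Your verification is correct and is exactly the ``direct verification'' the paper leaves to the reader. The $h$-relations follow from $[t\partial,t^{\pm1}]=\pm t^{\pm1}$, and $[e,f]\mapsto \lambda_\alpha(t\partial-1)-\lambda_\alpha(t\partial)=2t\partial$ once you use $\overline P_{(\boldsymbol a,\alpha)}P_{(\boldsymbol a,\alpha)}=\lambda_\alpha\I_n$. One small correction: the presentation of $U(\mathfrak{sl}(2))$ by $e,f,h$ and the three bracket relations comes from the definition (universal property) of the enveloping algebra, not from the PBW theorem, which is not needed here.
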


\begin{proof}
Direct verification.
\end{proof}

\begin{corollary}
The module $M\big(\alpha,\boldsymbol a,K(h)\big)$ is isomorphic to the module with underlying space $\C[h]^{\oplus n}$ and corresponding homomorphism $\Mat_n(\beta) \omega_{\alpha,\boldsymbol a, K}$, see the diagram below.

\begin{center}
\begin{tikzcd}
U(\mathfrak{sl}(2)) \arrow[r, "\omega_{\alpha,\boldsymbol a, K}"] \arrow[d] 
& \Mat_n(\mathcal D(1)_t ) \arrow[d, "\Mat_n(\beta)"] \\
\End(\mathbb C[h]^{\oplus n}) \arrow[r, "\simeq"]
&  \Mat_n(\End(\mathbb C[h]))
\end{tikzcd}
\end{center}

\end{corollary}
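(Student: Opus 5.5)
The plan is to compute the composite $\Mat_n(\beta)\circ\omega_{\alpha,\boldsymbol a,K}$ on the generators $e,f,h$ and compare it with the defining formulas of $M(\alpha,\boldsymbol a,K(h))$. Since $\omega_{\alpha,\boldsymbol a,K}$ is an algebra homomorphism by the preceding proposition, and $\Mat_n(\beta)$ is the homomorphism $\Mat_n(\mathcal D(1)_t)\to\Mat_n(\End(\C[h]))\simeq\End(\C[h]^{\oplus n})$ induced entrywise by $\beta$, the composite is a representation of $U(\mathfrak{sl}(2))$ on $\C[h]^{\oplus n}$. Two representations on the same space agreeing on generators coincide, so it suffices to check that $h$, $f$ and $e$ act exactly as in $M\big(\sigma(K^{-1}\overline P_{(\boldsymbol a,\alpha)}),P_{(\boldsymbol a,\alpha)}K\big)$. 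In fact I expect literal equality, with the isomorphism being the identity.

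First I record how $\beta$ acts on the basic elements. We have $\beta(t)=\sigma$ and $\beta(\partial)=(h+1)\sigma^{-1}$, hence $\beta(t\partial)=\sigma(h+1)\sigma^{-1}$, which is multiplication by $\sigma(h+1)=h$. Consequently, for any polynomial $q$, $\beta(q(t\partial))$ is multiplication by $q(h)$. Also $\beta(t^{-1})=\sigma^{-1}$, since $\beta$ is a homomorphism from $\mathcal D(1)_t$ and $\sigma$ is invertible. In particular $h\mapsto t\partial\I_n\mapsto h\I_n$, which is the action of $h$ by multiplication.

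Next I treat $f$. Applying $\beta$ entrywise to $P_{(\boldsymbol a,\alpha)}(t\partial)K(t\partial)t^{-1}$ gives the operator $\boldsymbol g\mapsto P_{(\boldsymbol a,\alpha)}(h)K(h)\sigma^{-1}(\boldsymbol g)$, which is $F\sigma^{-1}(\boldsymbol g)$ with $F=P_{(\boldsymbol a,\alpha)}K$. For $e$, the image of $tK^{-1}(t\partial)\overline P_{(\boldsymbol a,\alpha)}(t\partial)$ is $\sigma\circ$ (multiplication by $K^{-1}(h)\overline P(h)$). Using the twisting rule $\sigma\, q(h)=q(h-1)\,\sigma$ (entrywise), this equals multiplication by $\sigma\big(K^{-1}\overline P\big)$ followed by $\sigma$. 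That is exactly $E\sigma(\boldsymbol g)$ with $E=\sigma(K^{-1}\overline P_{(\boldsymbol a,\alpha)})$, matching the definition of $M(\alpha,\boldsymbol a,K(h))$ via Lemma~\ref{1st_form}.

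The only delicate point is bookkeeping of operator order when applying $\beta$ to a product in $\Mat_n(\mathcal D(1)_t)$. Matrix multiplication and composition must be compatible, which holds because $\Mat_n(\beta)$ is a ring homomorphism. One must also pass $\sigma$ past matrix-valued multiplication operators using the entrywise rule above. Once these are handled, the two module structures coincide on generators and hence everywhere.
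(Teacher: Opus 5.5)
Your proposal is correct and is essentially the paper's argument: the paper states the corollary without a written proof, as a direct check on generators after the proposition on $\omega_{\alpha,\boldsymbol a,K}$. Your computation shows that $\Mat_n(\beta)\circ\omega_{\alpha,\boldsymbol a,K}$ sends $h,f,e$ literally to $h\I_n$, $\boldsymbol g\mapsto P_{(\boldsymbol a,\alpha)}(h)K(h)\sigma^{-1}(\boldsymbol g)$ and $\boldsymbol g\mapsto \sigma\bigl(K^{-1}\overline P_{(\boldsymbol a,\alpha)}\bigr)\sigma(\boldsymbol g)$, so the identity map is the isomorphism. One phrase is worth tidying: ``multiplication by $\sigma(K^{-1}\overline P_{(\boldsymbol a,\alpha)})$ followed by $\sigma$'' should say that $\sigma$ is applied first and the multiplication second, which is what your formula $E\sigma(\boldsymbol g)$ already records correctly.
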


\begin{definition}
Define an $\mathfrak{sl}(2)$-module structure on $\mathcal F^{\oplus n}$ via the composition
$$
\begin{tikzcd}[column sep=large]
U(\mathfrak{sl}(2)) \arrow[r, "\omega_{\alpha,\boldsymbol a,K}"] & \Mat_n(\mathcal D(1)_t ) \arrow[r, "\Mat_n(\varphi)"] & \Mat_n(\End(\mathcal F)) \simeq \End(\mathcal F^{\oplus n}),
\end{tikzcd}
$$
and denote the resulting module by $\mathcal F(\alpha, \boldsymbol a, K(h))$.
\end{definition}

\begin{proposition}
Let $\alpha \in\C$, $\boldsymbol a=(a_-,a_0,a_+)\in \Z_{\geq 0}^3$ so that $a_- + a_0 + a_+ =n \geq 1$, and let $K(h)\in\GL_n(\C[h])$. Then
$$\mathcal{W}\left(M\big(\alpha,{\boldsymbol a},K(h)\big)\right) \simeq \mathcal F(\alpha, \boldsymbol a, K(h)).$$
\end{proposition}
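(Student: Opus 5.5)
We will write down an explicit linear isomorphism $\Psi:\mathcal W(M)\to\mathcal F^{\oplus n}$, where $M=M(\alpha,\boldsymbol a,K(h))$ is realized on $\C[h]^{\oplus n}$ through $\Mat_n(\beta)\,\omega_{\alpha,\boldsymbol a,K}$ as in the preceding corollary, and then check that $\Psi$ intertwines the actions of $e$, $f$, $h$.

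\emph{Construction of $\Psi$.} Since $M\simeq\C[h]^{\oplus n}$ as a $\C[h]$-module, for each $\lambda\in\C$ we have $M/\ker\overline{\lambda}\,M\simeq\C^n$ via evaluation $\boldsymbol g\mapsto\boldsymbol g(\lambda)$. Hence $\mathcal W(M)=\bigoplus_{\lambda\in\C}\C^n$, with the summand indexed by $\lambda$ being the $\lambda$-weight space. On the other side, $\mathcal F^{\oplus n}=\bigoplus_{\lambda\in\C}\C^n t^{\lambda}$: as $\overline\lambda$ runs over $\C/\Z$ and $k$ over $\Z$, the numbers $\lambda+k$ cover $\C$ exactly once. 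Define
$$
\Psi\big(\boldsymbol g+\ker\overline{\lambda}\,M\big):=\boldsymbol g(\lambda)\,t^{\lambda}\in\C^n t^\lambda .
$$
This is a bijection. It intertwines $h$, since $t\partial$ acts on $\C^n t^{\lambda}$ by $\lambda$, and for any $A(x)\in\Mat_n(\C[x])$ we get $A(t\partial)(v\,t^{\lambda})=A(\lambda)v\,t^{\lambda}$.

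\emph{Checking $e$ and $f$.} In $M$, $f$ acts by $\boldsymbol g\mapsto P_{(\boldsymbol a,\alpha)}(h)K(h)\,\boldsymbol g(h+1)$, so on $\mathcal W(M)$ it sends the class of $\boldsymbol g$ in weight $\lambda$ to the class in weight $\lambda-1$ of that vector. Evaluating at $\lambda-1$ gives $P_{(\boldsymbol a,\alpha)}(\lambda-1)K(\lambda-1)\boldsymbol g(\lambda)$. On $\mathcal F^{\oplus n}$,
$$
P(t\partial)K(t\partial)t^{-1}\big(\boldsymbol g(\lambda)t^\lambda\big)=P(\lambda-1)K(\lambda-1)\boldsymbol g(\lambda)\,t^{\lambda-1},
$$
so the two agree. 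For $e$: $\boldsymbol g\mapsto K^{-1}(h-1)\overline P(h-1)\boldsymbol g(h-1)$ moves weight $\lambda$ to weight $\lambda+1$, and evaluating at $\lambda+1$ gives $K^{-1}(\lambda)\overline P(\lambda)\boldsymbol g(\lambda)$. This matches $tK^{-1}(t\partial)\overline P(t\partial)(\boldsymbol g(\lambda)t^\lambda)=K^{-1}(\lambda)\overline P(\lambda)\boldsymbol g(\lambda)t^{\lambda+1}$.

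The only delicate point is bookkeeping. We must check that $\Psi$ is well defined on each quotient, which holds because evaluation at $\lambda$ kills $\ker\overline\lambda\,M$. We must also keep the conventions for $\sigma^{\pm1}$ and the shifts $\lambda\pm1$ consistent with the definition of the $\mathcal W$-action. Beyond that, the proof is the direct verification above.
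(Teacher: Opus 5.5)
Your proposal is correct and is essentially the paper's proof. Your map $\boldsymbol g+\ker\overline{\lambda}\,M\mapsto \boldsymbol g(\lambda)\,t^{\lambda}$ is exactly the paper's isomorphism $\Upsilon$, which sends $\mathsf e_i+\ker\overline{\lambda}\,M$ to $t^{\lambda}\mathsf e_i$. Your weight-space computations reproduce the paper's formulas $f\cdot\mathbf v_\lambda=\mathbf v_{\lambda-1}P_{(\boldsymbol a,\alpha)}(\lambda-1)K(\lambda-1)$ and $e\cdot\mathbf v_\lambda=\mathbf v_{\lambda+1}K^{-1}(\lambda)\overline P_{(\boldsymbol a,\alpha)}(\lambda)$.
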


\begin{proof}
For $i \in \{1,\dots, n\}$ and $\lambda \in \C$, denote 
$$v_{i,\lambda}:= e_i + \ker\overline{\lambda}\; M\big(\alpha,{\boldsymbol a},K(h)\big).$$

Then, 
$$\mathcal{W}\left(M\big(\alpha,{\boldsymbol a},K(h)\big)\right) = \Span\{v_{i,\lambda}:1 \leq i \leq n, \lambda \in \C  \}.$$

The $\mathfrak{sl}(2)$-action on the module $\mathcal{W}\left(M\big(\alpha,{\boldsymbol a},K(h)\big)\right)$ can be given in matrix form as follows. For each $\lambda\in\mathbb C$, set $\mathbf v_\lambda:=\bigl(v_{1,\lambda}\;\;\dots\;\;v_{n,\lambda}\bigr)
\in
\Mat_{1\times n}\!\left(\mathcal W\left(M(\alpha,\boldsymbol a,K(h))\right)\right).$
Then
$$h\cdot \mathbf v_\lambda=\lambda \mathbf v_\lambda, \quad f\cdot \mathbf v_\lambda 
=
\mathbf v_{\lambda-1}
P_{(\boldsymbol a,\alpha)}(\lambda-1)K(\lambda-1), \quad
e\cdot \mathbf v_\lambda
=
\mathbf v_{\lambda+1}
K^{-1}(\lambda)\overline P_{(\boldsymbol a,\alpha)}(\lambda).
$$

The isomorphism is given by
$$ \Upsilon : \mathcal{W}\left(M\big(\alpha,{\boldsymbol a},K(h)\big)\right) \to \mathcal F(\alpha, \boldsymbol a, K(h)),\qquad \text{where}\qquad v_{i, \lambda} \mapsto t^\lambda e_i,$$
for all $i \in \{1,\dots,n\}$ and $\lambda \in \C$.
\end{proof}


\subsection{Four coherent families}

 We define four coherent families of degree $1$ that will play crucial role in this section.
\begin{definition} For $\alpha \in \C$, let $\mathcal F_-(\alpha)$, $\mathcal F_0^-(\alpha)$, $\mathcal F_0^+(\alpha)$, and $\mathcal F_+(\alpha)$ denote the $\mathfrak{sl}(2)$-modules with underlying space $\Span \left\{ v_{\lambda} : \lambda \in \mathbb C\right\}$ as follows.
\begin{itemize}
\item[(i)] The family $\mathcal F_-(\alpha)$:
$$e\cdot v_\lambda =-(\lambda-\alpha+1)(\lambda+\alpha)v_{\lambda+1}, \qquad f\cdot v_\lambda = v_{\lambda-1}, \qquad h\cdot v_{\lambda}= \lambda v_\lambda. $$
\item[(ii)] The family $\mathcal F_0^-(\alpha)$:
$$e\cdot v_{\lambda}= -(\lambda -\alpha +1) v_{\lambda +1}, \qquad f\cdot v_{\lambda} = (\lambda +\alpha-1) v_{\lambda-1}, \qquad h \cdot v_\lambda = \lambda v_\lambda. $$
\item[(iii)] The family $\mathcal F_0^+(\alpha)$:
$$e\cdot v_{\lambda}= -(\lambda +\alpha) v_{\lambda +1}, \qquad f\cdot v_{\lambda} = (\lambda -\alpha) v_{\lambda-1}, \qquad h \cdot v_\lambda = \lambda v_\lambda. $$
\item[(iv)] The family $\mathcal F_+(\alpha)$:
$$e\cdot v_{\lambda}= -v_{\lambda +1},\qquad f\cdot v_{\lambda}=(\lambda-\alpha)(\lambda+\alpha-1)v_{\lambda-1}, \qquad h\cdot v_\lambda = \lambda v_\lambda.$$
\end{itemize}

Note that $\mathcal F_-(\alpha)^{\vee} \simeq \mathcal F_+(\alpha)$ and $\mathcal F_0^-(\alpha)^{\vee} \simeq \mathcal F_0^+(\alpha)$, where $(-)^{\vee}$ denotes the restricted dual. Also, $\mathcal F_0^-\left(\frac{1}{2}\right) = \mathcal F_0^+ \left(\frac{1}{2}\right)$ in the singular central character case $\alpha=\frac{1}{2}$. 
\end{definition}

One can show that every coherent family of degree $1$ with the property that $\Null e + \Null f = 2$ is isomorphic to one and exactly one of the four families above. Also, the only coherent families that have a finite-dimensional quotient are 
$$ \mathcal F_0^-(\alpha)\;\;\; \text{for}\;\;\; \alpha \in \tfrac{1}{2}\Z_{\leq 0},\quad \mathcal F_0^+(\alpha)\;\;\; \text{for}\;\;\;  \alpha \in 1+ \tfrac{1}{2}\Z_{\geq 0}.$$

\subsection{Decomposition of $\mathcal{W}\left(M\big(\alpha,{\boldsymbol a},K(h)\big)\right)$} 
\begin{lemma} \label{famofmatrices}
Let $M$ and $N$ be $\mathfrak{sl}(2)$-coherent families of degree $d \geq 1$. Suppose that the actions of $e$ and $f$ on $M$ and $N$ are given by
$$E_M(\lambda) : M_\lambda \to M_{\lambda +1}, \quad F_M(\lambda): M_\lambda \to M_{\lambda-1};\quad E_N(\lambda) : N_\lambda \to N_{\lambda +1}, \quad F_N(\lambda): N_\lambda \to N_{\lambda-1}.$$

Then $M\simeq N$ if and only if there exists a family $\big\{A(\lambda) \in \GL_d(\C): \lambda \in \C \big\}$ such that
$$A(\lambda +1)E_M(\lambda) = E_N(\lambda)A(\lambda), \quad  A(\lambda -1)F_M(\lambda) = F_N(\lambda)A(\lambda),\quad \text{for all}\;\;\; \lambda \in \C.$$
\end{lemma}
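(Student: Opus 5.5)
The plan is to identify a module isomorphism $\Phi:M\to N$ with a family of weight-space maps. The key point is that a homomorphism of weight modules preserves weight spaces. Throughout, fix bases of each weight space $M_\lambda$ and $N_\lambda$, so that $E_M(\lambda)$, $F_M(\lambda)$, $E_N(\lambda)$ and $F_N(\lambda)$ become $d\times d$ complex matrices.

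For the ``only if'' direction, assume $\Phi:M\to N$ is an $\mathfrak{sl}(2)$-isomorphism. Since $\Phi$ commutes with $h$, it maps $M_\lambda$ into $N_\lambda$ for each $\lambda$. The inverse $\Phi^{-1}$ is also a homomorphism, so it maps $N_\lambda$ into $M_\lambda$. Hence the restriction $\Phi|_{M_\lambda}:M_\lambda\to N_\lambda$ is a linear bijection between $d$-dimensional spaces. We then let $A(\lambda)\in\GL_d(\C)$ be its matrix in the chosen bases. Evaluating $\Phi(e\cdot v)=e\cdot\Phi(v)$ on $v\in M_\lambda$ gives
$$A(\lambda+1)E_M(\lambda)=E_N(\lambda)A(\lambda).$$
The identity $\Phi(f\cdot v)=f\cdot\Phi(v)$ gives the analogous relation for $F$.

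For the ``if'' direction, define $\Phi:M\to N$ on each summand $M_\lambda$ by the matrix $A(\lambda)$. Extend it linearly, using $M=\bigoplus_\lambda M_\lambda$. This map is bijective because each $A(\lambda)$ is invertible and $N=\bigoplus_\lambda N_\lambda$. It commutes with $h$ by construction, since it preserves weight spaces. It commutes with $e$ and $f$ exactly because of the two intertwining identities in the statement. Since $e$, $f$ and $h$ generate $U(\mathfrak{sl}(2))$, $\Phi$ is a module isomorphism.

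This is routine, and no real obstacle is expected. The only point needing care is that invertibility of each $A(\lambda)$ follows from $\Phi$ being an isomorphism compatible with the weight decompositions. This uses that both modules are weight modules with all weight spaces of dimension $d$ (Definition~\ref{cohfam-def}(i)).
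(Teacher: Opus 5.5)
Your proposal is correct and follows the paper's own argument essentially verbatim. In one direction, an isomorphism preserves weight spaces and its restrictions $A(\lambda)$ satisfy the two intertwining identities. In the other, a family $A(\lambda)$ satisfying those identities assembles into a weight-preserving linear bijection commuting with $e$, $f$ and $h$.
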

\begin{proof}
An isomorphism $M\to N$ preserves weight spaces, and its restriction to
$M_\lambda$ is represented by some $A(\lambda)\in\GL_d(\mathbb C)$.
Commuting with $e$ and $f$ gives exactly the two displayed identities.
Conversely, a family $A(\lambda)$ satisfying these identities defines a
weight-preserving linear isomorphism $M\to N$ which commutes with $e$, $f$,
and hence with the $\mathfrak{sl}(2)$-action.
\end{proof}

Let $\boldsymbol a=(a_-,a_0,a_+) \in \Z_{\geq 0}^3$ satisfy $a_-+a_0+a_+=n \geq 1$ and $K(h) \in \GL_n(\C[h])$. For  $\alpha \in \tfrac{1}{2}\Z \setminus \{\tfrac{1}{2}\}$, define
$$\mathcal K_{\alpha, \boldsymbol a, K}:=\begin{cases} K(-\alpha) \prod_{j=1}^{2\alpha -2}\left(P_{({\boldsymbol{a}},\alpha)}(-\alpha+j) K(-\alpha+j)\right), &\alpha \geq \tfrac{3}{2},\\ K(-\alpha), & \alpha=1,\\K(\alpha-1), & \alpha=0, \\ K(\alpha-1) \prod_{j=1}^{-2\alpha}\left(P_{({\boldsymbol{a}},\alpha)}(\alpha-1+j) K(\alpha-1+j)\right), &\alpha \leq -\tfrac{1}{2}.  \end{cases}$$

For $T \in \Mat_n(\C)$, define $\boldsymbol M_{\boldsymbol a}(T) \in \Mat_{n \times (a_- +a_+)}(\C)$ to be the following matrix
$$\boldsymbol M_{\boldsymbol a}(T):= \Big( T \mathsf e_1\;\;\dots\;\; T \mathsf e_{a_-}\;\; \mathsf e_{a_-+a_0+1}\;\;\dots\;\;\mathsf e_n\Big).$$

For $0\leq d \leq \min\{a_-, a_+ \}$, denote
$$\mathcal S^- := \Span\{\mathsf e_1,\dots,\mathsf e_{a_-}\}, \qquad \mathcal S^+ := \Span\{\mathsf e_{a_-+a_0+1},\dots, \mathsf e_n\},  $$
$$\mathcal S^-_{a_- -d} := \Span\{\mathsf e_1,\dots,\mathsf e_{a_- -d}\}, \qquad \mathcal S^+_d := \Span\{\mathsf e_{a_-+a_0+1},\dots, \mathsf e_{a_-+a_0+d}\}. $$

\begin{theorem} \label{cohfamilydecomp}
The decomposition of $\mathcal{W}\left(M\big(\alpha, \boldsymbol a, K(h)\big)\right)$ into a direct sum of coherent families of degree $1$ depends on $\alpha$, $\boldsymbol a$, and $K(h)$ as described below.
\begin{itemize}
\item[(i)] Assume that  either $(a_-,a_+) \in \{(0,0), (m,0), (0,m): m \geq 1 \}$; \\ or $(a_-,a_+) \in \Z_{\geq 1} \times \Z_{\geq 1}$ with 
$\alpha \in \left(\C \setminus \tfrac{1}{2}\Z\right) \cup \{\tfrac{1}{2}\}.$ Then for every $K(h) \in \GL_n(\C[h])$,
$$\mathcal{W}\left(M\big(\alpha, \boldsymbol a, K(h)\big)\right) \simeq\mathcal F_-(\alpha)^{\oplus  a_-} \oplus \mathcal F_0^+(\alpha)^{\oplus a_0} \oplus \mathcal F_+(\alpha)^{\oplus  a_+}.$$ 
\item[(ii)] If $(a_-,a_+) \in \Z_{\geq 1} \times \Z_{\geq 1}$ and $\alpha \in \tfrac{1}{2}\Z \setminus \{\tfrac{1}{2}\}$, then  
$$\mathcal{W}\left(M\big(\alpha, \boldsymbol a, K(h)\big)\right)\simeq\mathcal F_-(\alpha)^{\oplus  (a_- - d)} \oplus \mathcal F_0^+(\alpha)^{\oplus (a_0+d)} \oplus \mathcal F_0^-(\alpha)^{\oplus d} \oplus \mathcal F_+(\alpha)^{\oplus  (a_+ -d)},$$
where $d:= a_-+ a_+ -\rank\left(\boldsymbol M_{\boldsymbol a}\big(\mathcal K_{\alpha, \boldsymbol a, K} \big)\right)$. 
\end{itemize}

\end{theorem}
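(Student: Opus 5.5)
We work in the realization $\mathcal F(\alpha,\boldsymbol a,K(h))$ and decompose $\mathcal F^{\oplus n}=\bigoplus_{\overline\mu\in\C/\Z}\mathcal F[\overline\mu]^{\oplus n}$, which is a decomposition of $\mathfrak{sl}(2)$-modules since $e$ and $f$ shift weights by integers. For each coset $\overline\mu$ we then have a ``$\Z$-string'' of vector spaces $\C^n$ with $f$ acting by $F(\lambda)=P_{(\boldsymbol a,\alpha)}(\lambda-1)K(\lambda-1)$ and $e$ by $E(\lambda)=K^{-1}(\lambda)\overline P_{(\boldsymbol a,\alpha)}(\lambda)$. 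Using Lemma~\ref{famofmatrices}, the task on each string is to find gauge matrices $A(\lambda)\in\GL_n(\C)$ that diagonalize the string into $n$ one-dimensional strings. These strings are then identified with restrictions of $\mathcal F_-(\alpha)$, $\mathcal F_0^\pm(\alpha)$, $\mathcal F_+(\alpha)$ by inspecting where $e$ or $f$ vanishes. The key observation is that $F(\lambda)$ and $E(\lambda+1)$ are invertible except at the roots $\lambda\in\{\alpha,\,1-\alpha\}$ of $\lambda_\alpha$ (shifted appropriately). So on a coset containing no root, the string is ``generic'': we set $A(\lambda_0)=\I_n$ at one point and propagate by $A(\lambda-1)=D(\lambda)A(\lambda)F(\lambda)^{-1}$ for suitable diagonal $D$. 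This identifies the string with any degree-$1$ families having no zeros there, and all four families agree up to isomorphism on such cosets.

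For the cosets containing roots, we distinguish cases. If $\alpha\notin\tfrac12\Z$, the two roots $\alpha$ and $1-\alpha$ (more precisely $\alpha-1,-\alpha$ after the shift in $P$) lie in different cosets, or in the same coset but separated. In each coset only one factor of $\lambda_\alpha$ vanishes at a single point. At that point the kernel of $F$ is $\Span$ of the basis vectors whose diagonal entry of $P$ vanishes; this is exactly $\mathcal S^+$ plus the $a_0$-part, or $\mathcal S^+$ plus nothing, depending on the factor. We choose $A$ on the left of the singular point and on the right independently. Since only one side of the string is ``broken'' at a time, we can gauge so that the kernel vectors of $f$ (and the image complement of $e$) are coordinate vectors. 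This yields the counts $a_-,a_0,a_+$ in (i). The cases $(a_-,a_+)$ with one of them zero and $\alpha=\tfrac12$ (a double root) are handled the same way: no mixing between the $\mathcal S^-$ and $\mathcal S^+$ blocks can occur because one of them is empty, or because there is only one singular weight.

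For (ii), $\alpha\in\tfrac12\Z\setminus\{\tfrac12\}$, both singular weights lie in one coset at distance $|2\alpha-1|$. Transporting across the intermediate region by the invertible product of $f$-matrices is exactly $\mathcal K_{\alpha,\boldsymbol a,K}$ as defined, with the various sign cases for $\alpha$. At the upper singular point, $\ker f$ is $\mathcal S^+$ together with the $a_0$-part. At the lower one, after transport by $\mathcal K$, the relevant subspace is $\mathcal K\mathcal S^-$ versus $\mathcal S^+$. A vector lying in both of these special subspaces generates a one-dimensional string on which $f$ vanishes at one point and $e$ at the other, i.e.\ an $\mathcal F_0^-(\alpha)$ summand. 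Their number is $d=\dim(\mathcal K\mathcal S^-\cap \mathcal S^+)$, which equals $a_-+a_+-\rank\boldsymbol M_{\boldsymbol a}(\mathcal K)$. The remaining vectors produce $\mathcal F_-$, $\mathcal F_+$ and extra $\mathcal F_0^+$ summands in the stated numbers. We then choose a basis adapted simultaneously to $\mathcal K\mathcal S^-\cap\mathcal S^+$, complements in $\mathcal K\mathcal S^-$ and in $\mathcal S^+$, and the $a_0$-block. Using this basis we build $A(\lambda)$ and verify the intertwining relations.

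The main obstacle is this resonant coset. The delicate point is checking that a compatible adapted basis exists at both singular points simultaneously, and that the images and kernels of $e$ match the $P/\overline P$ structure so that the resulting one-dimensional strings are genuinely the stated families rather than non-split extensions. First I would verify this carefully in the case $n=2$, $\boldsymbol a=(1,0,1)$, and then generalize by block linear algebra.
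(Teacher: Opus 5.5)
Your strategy is the paper's: use Lemma~\ref{famofmatrices} to look for gauge matrices $A(\lambda)\in\GL_n(\C)$ identifying $\mathcal W(M(\alpha,\boldsymbol a,K(h)))$ with the diagonal model, propagate them through weights where $P_{(\boldsymbol a,\alpha)}$ and $\overline P_{(\boldsymbol a,\alpha)}$ are invertible, and treat the roots separately.

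\textbf{Part (i).} This is fine in substance. At an isolated root $\lambda_s$ the paper takes $A(\lambda_s)=\I_n$ and $A(\lambda_s+1)=K(\lambda_s)$, which satisfies both singular equations. In the one-sided resonant case, one of the two equations is vacuous at the other root.

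\textbf{Part (ii): the missing argument.} This is the only nontrivial case, and here you defer exactly the step that needs an argument. Take $\alpha\ge1$, $N=2\alpha-1$, $\lambda_j=-\alpha+j$ and $\mathcal K=\mathcal K_{\alpha,\boldsymbol a,K}$. The key observation is that everything is decided on the single weight space at the upper root $\lambda_N=\alpha-1$.
\begin{itemize}
\item There $\ker e=\mathcal S^-$ and $\ker f^N=\mathcal K^{-1}\mathcal S^+$, since $f^N$ acts by $P_{(\boldsymbol a,\alpha)}(\lambda_0)\mathcal K$.
\item In the model, these kernels are $\mathcal S^-_{a_--d}\oplus\mathcal S^+_d$ and $\mathcal S^+$.
\item Both pairs have intersection of dimension $d$, so a single $A(\lambda_N)$ carries the first pair onto the second.
\item Let $\overline D_N$ be the model's $e$-matrix. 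The inclusion $A(\lambda_N)\mathcal S^-\subseteq\ker\overline D_N$ is exactly the solvability condition for the two singular equations at $\lambda_N$. Solving them gives $A(\lambda_{N+1})$.
\item Transporting down to $\lambda_1$ multiplies on the model side only by diagonal matrices, which preserve $\mathcal S^+$. Hence $A(\lambda_1)K(\lambda_0)^{-1}\mathcal S^+=\mathcal S^+$, which is precisely the solvability condition at $\lambda_0$.
\end{itemize}
So there is no ``simultaneous'' compatibility problem. There is also no non-split-extension issue, because the isomorphism with the model is built directly. Note too that what matters at $\lambda_N$ is $\ker e=\mathcal S^-$, not the kernel of $f$ landing in $\lambda_N$ (which involves the $a_0$-block).

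\textbf{The sign cases for $\alpha$.} Your claim that the same transport works ``with the various sign cases for $\alpha$'' fails for $\alpha\in\tfrac12\Z_{\le0}$ with $a_0\ge1$. Put $\mathcal S^0:=\Span\{\mathsf e_{a_-+1},\dots,\mathsf e_{a_-+a_0}\}$. The upper root is now $-\alpha$, and there $\ker e=\mathcal S^-\oplus\mathcal S^0$ while $\ker f^{1-2\alpha}=\mathcal K^{-1}(\mathcal S^0\oplus\mathcal S^+)$. So the argument counts $\mathcal F_0^-(\alpha)$ summands by $a_-$ minus the rank of the submatrix of $\mathcal K$ on its first $a_-$ rows and first $a_-+a_0$ columns. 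By contrast, $d$ uses the first $a_-+a_0$ rows and the first $a_-$ columns.

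\textbf{Counterexample.} Take $\alpha=0$, $\boldsymbol a=(1,1,1)$, and $K$ the constant permutation matrix with $K\mathsf e_1=\mathsf e_3$, $K\mathsf e_2=\mathsf e_1$, $K\mathsf e_3=\mathsf e_2$.
\begin{itemize}
\item Here $\mathcal K=K$ and $d=1$, so the statement predicts $\mathcal F_0^+(0)^{\oplus2}\oplus\mathcal F_0^-(0)$.
\item On the weight-$0$ space, $e$ acts by $K^{-1}\diag(0,0,-1)$ and $f$ by $\diag(1,0,0)K$. Their common kernel is only $\C\mathsf e_1$.
\item Each copy of $\mathcal F_0^+(0)$ contributes a trivial submodule at weight $0$, so $\mathcal F_0^+(0)^{\oplus2}$ cannot be a summand.
\item Likewise, at weight $0$ one has $eM_{-1}+fM_1=\C^3$, so $M$ has no trivial quotient. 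This contradicts Proposition~\ref{generalcaseprop}.
\end{itemize}
The paper proves only $\alpha\ge1$ and calls $\alpha\le0$ analogous, so this defect sits in the statement itself. Your argument can be completed only for $\alpha\ge1$, or for $\alpha\le0$ with $a_0=0$.
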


\begin{remark} \label{d-intersectiondim}
Note that $d = \dim\big(\mathcal K_{\alpha, \boldsymbol a, K} \mathcal S^- \cap \mathcal S^+ \big)$. Therefore, $0\leq d \leq \min\{a_-, a_+ \}$.
\end{remark}

\begin{proof}
By Lemma \ref{famofmatrices}, to prove part (i), it suffices to show that there exists a family 
$$\big\{A(\lambda) \in \GL_n(\C): \lambda \in \C \big\}$$ 
such that
\begin{itemize}
\item[(1)] $A(\lambda)P_{({\boldsymbol{a}},\alpha)}(\lambda) K(\lambda) =P_{({\boldsymbol{a}},\alpha)}(\lambda) A(\lambda +1);$
\item[(2)]$   A(\lambda +1) K^{-1}(\lambda) \overline{P}_{({\boldsymbol{a}},\alpha)}(\lambda)  = \overline{P}_{({\boldsymbol{a}},\alpha)}(\lambda)   A(\lambda).$
\end{itemize}

Let $\Theta$ be a set of representatives for the cosets of $\C/\Z$, chosen so that if $\alpha -1$ and $-\alpha$ lie in distinct cosets, then $\{\alpha-1, -\alpha\} \subset \Theta$, whereas if they lie in the same coset, then $-\alpha \in \Theta$. Let $p(\lambda) = (\lambda-\alpha+1)(\lambda+\alpha)$.

If the two roots $\alpha-1$ and $-\alpha$ lie in distinct
$\mathbb Z$-cosets, or if they coincide, i.e. $\alpha=\frac12$, we define
$$
A(\alpha -1) = A(-\alpha) = \I_n, \qquad
A(\alpha)=K(\alpha-1),\qquad A(-\alpha+1)=K(-\alpha).
$$

In these cases the assignments are compatible, and the two equations above
hold at $\lambda=\alpha-1$ and $\lambda=-\alpha$.  For all other
$\lambda\in(\alpha-1)+\mathbb Z$ or $\lambda\in-\alpha+\mathbb Z$, we
define $A(\lambda)$ recursively by
$$
A(\lambda+1)
=
P_{(\boldsymbol a,\alpha)}(\lambda)^{-1}
A(\lambda)P_{(\boldsymbol a,\alpha)}(\lambda)K(\lambda),
\qquad
\lambda\notin\{\alpha-1,-\alpha\}.
$$

It remains to consider the case where $\alpha-1$ and $-\alpha$ are
distinct but lie in the same $\mathbb Z$-coset.  In part (i), this can occur
only in the one-sided cases.  Let  $a_+=0$ (the case $a_-=0$ is handled in a similar manner). Set
$$
A(\alpha-1)=\I_n,\qquad A(\alpha)=K(\alpha-1),
$$
and use equation $(1)$ to define $A(\lambda)$ recursively for all other
$\lambda$ in this coset.  Since $P_{(\boldsymbol a,\alpha)}(\lambda)$ is
invertible except at $\lambda=\alpha-1$, and
$\overline P_{(\boldsymbol a,\alpha)}(-\alpha)=0$ when $a_+=0$, equation
$(2)$ also holds.

 For each coset $\nu+\Z$ with $\nu\in \Theta\setminus\{\alpha-1, -\alpha\}$, we first choose an arbitrary matrix
$$A(\nu) \in \GL_n(\C),$$
for instance, $A(\nu)=\I_n$. The remaining matrices $A(\lambda)$, for $\lambda\in \nu+\Z$, are then determined inductively using equation (1), as above. This proves part (i).

\medskip\noindent We prove part (ii) for the case $\alpha \in 1+\tfrac{1}{2}\Z_{\geq 0}$; the case $\alpha \in \tfrac{1}{2}\Z_{\leq 0}$ follows by an analogous argument. Assume that $2\alpha -1=N \geq 1$, then the two roots of $p(\lambda)$ lie in the coset $\alpha+ \Z$ ($-\alpha+N = \alpha-1$). For $j \in \Z$, let $\lambda_j:= -\alpha+j$. Then $\lambda_0 = -\alpha$ and $\lambda_N=\alpha-1$ are two roots of $p(\lambda)$. For brevity, let
$$D_j :=  \diag \big(\underbrace{1,\dots,1}_{(a_- -d)\ \text{times}},\underbrace{\lambda_j-\alpha+1,\dots,\lambda_j-\alpha+1}_{(a_0+d)\ \text{times}},\underbrace{\lambda_j+\alpha,\dots,\lambda_j+\alpha}_{d\ \text{times}}, \underbrace{p(\lambda_j),\dots,p(\lambda_j)}_{(a_+-d)\ \text{times}}\big),$$
$$ \overline D_j :=  -\diag \big(\underbrace{p(\lambda_j),\dots,p(\lambda_j)}_{(a_- -d)\ \text{times}},\underbrace{\lambda_j+\alpha,\dots,\lambda_j+\alpha}_{(a_0+d)\ \text{times}},\underbrace{\lambda_j-\alpha+1,\dots,\lambda_j-\alpha+1}_{d\ \text{times}}, \underbrace{1,\dots,1}_{(a_+-d)\ \text{times}}\big).$$

To prove
$$\mathcal{W}\left(M\big(\alpha, \boldsymbol a, K(h)\big)\right)\simeq\mathcal F_-(\alpha)^{\oplus  (a_- - d)} \oplus \mathcal F_0^+(\alpha)^{\oplus (a_0+d)} \oplus \mathcal F_0^-(\alpha)^{\oplus d} \oplus \mathcal F_+(\alpha)^{\oplus  (a_+ -d)},$$
it suffices to show that there exists a family 
$$\big\{A(\lambda_j) \in \GL_n(\C): j \in \Z \big\}$$
such that for every $j \in \Z$,
$$A(\lambda_j) P_{({\boldsymbol{a}},\alpha)}(\lambda_j) K(\lambda_j) =D_j A(\lambda_{j+1}),$$
$$ A(\lambda_{j+1}) K^{-1}(\lambda_j) \overline{P}_{({\boldsymbol{a}},\alpha)}(\lambda_j)  
= \overline D_j  A(\lambda_j).$$

Since
$$\overline{P}_{({\boldsymbol{a}},\alpha)}(\lambda_j) P_{({\boldsymbol{a}},\alpha)}(\lambda_j) =  \overline D_j D_j = -p(\lambda_j) \I_n,\quad \text{for all} \;\;\; j  \in \Z \setminus \{0,N\}, $$
 the above two equations are equivalent for all $j  \in \Z \setminus \{0,N\}$. From Remark \ref{d-intersectiondim}, $d = \dim\big(\mathcal K_{\alpha, \boldsymbol a, K} \mathcal S^- \cap \mathcal S^+ \big)$. Hence 
 $$d = \dim\big( \mathcal S^- \cap \mathcal K_{\alpha, \boldsymbol a, K}^{-1}\mathcal S^+ \big).$$
 
 Fix a basis $\{w_1,\dots,w_d\}$ of $\mathcal S^- \cap \mathcal K_{\alpha, \boldsymbol a, K}^{-1}\mathcal S^+$ and extend it to a basis of $\mathcal S^-$ and $\mathcal K_{\alpha, \boldsymbol a, K}^{-1}\mathcal S^+$ respectively as follows
 $$u_1, \dots, u_{a_--d}, w_1,\dots,w_d;\qquad w_1,\dots,w_d, v_1,\dots, v_{a_+ -d}.$$
 
 Then $\{u_1, \dots, u_{a_--d}, w_1,\dots,w_d, v_1,\dots, v_{a_+ -d} \}$ is a linearly independent set. Extend this to $\C^n$ by adding $z_1, \dots, z_{a_0 +d}$. We now define $A(\lambda_N)$ as follows
$$ A(\lambda_N) u_i = \mathsf e_i\;\; (1\leq i \leq a_--d), \quad A(\lambda_N) z_i = \mathsf e_{a_--d +i}\;\; (1\leq i \leq a_0+d),$$
$$ A(\lambda_N) w_i = \mathsf e_{a_- +a_0+i}\;\; (1\leq i \leq d), \quad A(\lambda_N) v_i = \mathsf e_{a_- +a_0+d +i}\;\; (1\leq i \leq a_+-d).$$
  
  Then $A(\lambda_N) \in \GL_n(\C)$ and
  $$A(\lambda_N) \mathcal S^- = \mathcal S^-_{a_- -d} \oplus \mathcal S^+_d, \qquad  A(\lambda_N) \mathcal K_{\alpha, \boldsymbol a, K}^{-1}\mathcal S^+ =  \mathcal S^+. $$ 

Since 
$$
A(\lambda_N)\mathcal S^-=\mathcal S^-_{a_--d}\oplus \mathcal S^+_d,
$$
standard linear algebra reasoning gives $A(\lambda_{N+1})\in \GL_n(\C)$ such that
$$
A(\lambda_N)P_{(\boldsymbol a,\alpha)}(\lambda_N)K(\lambda_N)
=
D_NA(\lambda_{N+1}),
$$
$$
A(\lambda_{N+1})K^{-1}(\lambda_N)\overline P_{(\boldsymbol a,\alpha)}(\lambda_N)
=
\overline D_NA(\lambda_N).
$$

Indeed, the displayed condition on $A(\lambda_N)\mathcal S^-$ is precisely the compatibility condition for solving these two equations at the singular value $\lambda_N$, and the resulting partially defined map extends to an element of $\GL_n(\C)$.
 
Note that
 $$A(\lambda_1) =\left(\prod_{j=1}^{N-1}D_j\right)\, A(\lambda_N)\, \left(\prod_{j=1}^{N-1}\left(P_{({\boldsymbol{a}},\alpha)}(\lambda_j) K(\lambda_j)\right)\right)^{-1},$$
where the empty product (i.e. $N =1$) is $\I_n$. Since $ \prod_{j=1}^{N-1}D_j$ preserves $ \mathcal S^+$, it follows that
  $$A(\lambda_1)K(\lambda_0)^{-1} \mathcal S^+ =  \mathcal S^+.$$

Since
$$
A(\lambda_1)K(\lambda_0)^{-1}\mathcal S^+=\mathcal S^+,
$$
standard linear algebra reasoning gives $A(\lambda_0)\in\GL_n(\C)$ such that
$$
A(\lambda_0)P_{(\boldsymbol a,\alpha)}(\lambda_0)K(\lambda_0)
=
D_0A(\lambda_1),
$$
$$
A(\lambda_1)K^{-1}(\lambda_0)\overline P_{(\boldsymbol a,\alpha)}(\lambda_0)
=
\overline D_0A(\lambda_0).
$$

Indeed, the displayed condition is exactly the compatibility condition for solving the two singular equations at $\lambda_0$, and the remaining values of $A(\lambda_0)$ may be chosen by extending to a basis. 

For $j \leq -1$ or $j \geq N+2$, the matrices $A(\lambda_j)$ can be determined from $A(\lambda_0)$ and $A(\lambda_{N+1})$. This completes the proof. 
\end{proof}

\begin{example}
As an illustration of the last theorem, we consider the module
$$
\mathcal M:=\mathcal{W}\left(M\big(1,(1,0,1),\T_2\big)\right)
= \Span\big\{v_{1,\lambda}, v_{2,\lambda}: \lambda\in \C \big\},
$$
and let $\mathcal M[\overline{\mu}]$ denote the $\mathfrak{sl}(2)$-submodule of
$\mathcal M$ with support $\overline{\mu}=\mu+\mathbb Z$.

For each coset $\overline{\mu}\in\mathbb C/\mathbb Z$, fix a representative
$\mu\in\mathbb C$, and define
$$\mathcal M^+[\overline{\mu}]
:=
\bigoplus_{k \in \Z}
\left(\C v_{1,\mu +2k+1} \oplus \C v_{2,\mu +2k}\right), \quad
\mathcal M^-[\overline{\mu}]
:=
\bigoplus_{k \in \Z}
\left(\C v_{1,\mu +2k} \oplus \C v_{2,\mu +2k+1}\right).
$$

Then both $\mathcal M^+[\overline{\mu}]$ and $\mathcal M^-[\overline{\mu}] $ are $\mathfrak{sl}(2)$-submodules of $\mathcal M [\overline{\mu}]$, and 
$$\mathcal M[\overline{\mu}]  = \mathcal M^+[\overline{\mu}] \oplus \mathcal M^-[\overline{\mu}].$$

Therefore, 
$$\mathcal M = \mathcal M^+ \oplus \mathcal M^-, \quad \text{where}\quad \mathcal M^{\pm}:= \bigoplus_{\overline{\mu} \in \C/\Z}  \mathcal M^{\pm}[\overline{\mu}].$$

A direct check shows that $\mathcal M^+ \simeq \mathcal F_0^+(1)$ and $\mathcal M^- \simeq \mathcal F_0^-(1)$, leading to 
$$\mathcal M \simeq \mathcal F_0^-(1) \oplus \mathcal F_0^+(1).$$

\end{example}

\section{Finite-dimensional submodules and quotients}
 In this section, we study modules in $\mathcal M (n)$ and their weightings, focusing on the existence of finite-dimensional submodules and quotients.
 \subsection{Finite-dimensional quotients of objects in $\mathcal M(n)$}
 \begin{proposition}\label{quotient-weighting}
Let $M\in\mathcal M$, and let $Q$ be a finite-dimensional $\mathfrak{sl}(2)$-module. Then 
\begin{itemize}
\item[(i)]$\Hom_{\mathfrak{sl}(2)}(\mathcal W(M),Q) \simeq\Hom_{\mathfrak{sl}(2)}(M,Q)$;
\item[(ii)] $Q$ is a quotient of $M$ if and only if $Q$ is a quotient of $\mathcal W(M)$.
\end{itemize}
\end{proposition}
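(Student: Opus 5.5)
The plan is to build the isomorphism in (i) explicitly through the natural surjections $\pi_\lambda: M \to M/\ker\overline{\lambda}\,M$, using that $Q$ is a weight module whose $h$ acts semisimply with finitely many weights. For $\phi\in\Hom_{\mathfrak{sl}(2)}(M,Q)$, write $\phi=\sum_\lambda \phi_\lambda$ with $\phi_\lambda := \mathrm{pr}_\lambda\circ\phi$, where $\mathrm{pr}_\lambda:Q\to Q_\lambda$ is the weight projection. Each $\phi_\lambda$ is $\C[h]$-linear from $M$ to the $\lambda$-eigenspace $Q_\lambda$, so it kills $(h-\lambda)M=\ker\overline{\lambda}\,M$. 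It therefore factors through $M/\ker\overline\lambda\,M$, giving $\overline\phi_\lambda$. I would define $\Psi(\phi):=\bigoplus_\lambda\overline\phi_\lambda:\mathcal W(M)\to Q$; only finitely many $\lambda$ contribute.

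The first check is that $\Psi(\phi)$ is an $\mathfrak{sl}(2)$-map. Since $\phi(ev)=e\phi(v)$ and $e Q_\lambda\subset Q_{\lambda+1}$, we get $\mathrm{pr}_{\lambda+1}\phi(ev)=e\,\mathrm{pr}_\lambda\phi(v)$. This is exactly the statement that $\Psi(\phi)$ intertwines $e\cdot(v+\ker\overline\lambda M)=ev+\ker\overline{\lambda+1}M$. The same argument works for $f$, and $h$ is clear.

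Conversely, given $\psi:\mathcal W(M)\to Q$, I would set $\Phi(\psi)(v):=\sum_{\lambda}\psi(v+\ker\overline\lambda M)$. Since $\psi$ is weight preserving, only the finitely many weights of $Q$ contribute, so the sum is finite. The same computation read backwards shows $\Phi(\psi)$ is an $\mathfrak{sl}(2)$-map: $h$-linearity holds because $h$ acts on the $\lambda$-summand as $\lambda$, and the $e$ and $f$ compatibility follows from the definition of the action on $\mathcal W(M)$. Then $\Phi$ and $\Psi$ are mutually inverse; the key point is that $\psi$ restricted to $M/\ker\overline\lambda M$ lands in $Q_\lambda$. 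This proves (i). Note the argument does not use freeness of $M$, only that $Q$ is a weight module with finite support, which holds for finite-dimensional $Q$ because $h$ acts semisimply.

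For (ii), it remains to show that surjectivity is preserved by the correspondence. Take $\phi$ and $\psi=\Psi(\phi)$. The image of $\psi$ is $\sum_\lambda \mathrm{pr}_\lambda\phi(M)$. Because $\phi(M)$ is an $h$-stable subspace of the finite-dimensional semisimple $h$-module $Q$, it is the sum of its weight components: $\mathrm{pr}_\lambda\phi(M)=\phi(M)\cap Q_\lambda$. Hence $\operatorname{im}\psi=\operatorname{im}\phi$, so $\phi$ is surjective if and only if $\Psi(\phi)$ is. This is the only mildly subtle step, and it rests on applying polynomial interpolation (Lagrange projectors in $h$) to show weight projections preserve $h$-submodules.
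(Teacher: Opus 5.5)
Your proposal is correct and follows essentially the same route as the paper: you use the same mutually inverse maps, namely the weight projections of $\phi$ factored through $M/(h-\lambda)M$, and $\psi\mapsto\sum_{\lambda}\psi(v+(h-\lambda)M)$. The only difference is cosmetic and occurs in (ii). You show $\operatorname{im}\Psi(\phi)=\operatorname{im}\phi$ because $h$-stable subspaces of $Q$ split into weight components, while the paper constructs a preimage directly via the Lagrange polynomial $\prod_{j\neq i}\frac{h-\lambda_j}{\lambda_i-\lambda_j}\,m$; both rest on the same interpolation idea.
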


\begin{proof}
Since $Q$ is finite dimensional, it is a weight module. Write
$$
Q=\bigoplus_{\lambda\in\Supp(Q)} Q_\lambda,
$$
where  $\lambda\in\Supp(Q)$ if $Q_{\lambda} \neq 0$. Let $\pr_\lambda:Q\to Q_\lambda$ denote the corresponding projection.

For $\psi\in\Hom_{\mathfrak{sl}(2)}(M,Q)$, define
$$
\widetilde{\psi}\bigl(m+(h-\lambda)M\bigr)
:=
\pr_\lambda(\psi(m)).
$$

This is well defined, and a direct check using
$eQ_\lambda\subseteq Q_{\lambda+1}$ and $fQ_\lambda\subseteq Q_{\lambda-1}$ shows that
$\widetilde{\psi}\in\Hom_{\mathfrak{sl}(2)}(\mathcal W(M),Q)$. Conversely, for $\varphi\in\Hom_{\mathfrak{sl}(2)}(\mathcal W(M),Q)$, define
$$
\widehat{\varphi}(m)
:=
\sum_{\lambda\in\Supp(Q)}
\varphi\bigl(m+(h-\lambda)M\bigr).
$$

The sum is finite, and the same weight-shift identities show that
$\widehat{\varphi}\in\Hom_{\mathfrak{sl}(2)}(M,Q)$. The two constructions are inverse to each other, and part (i) follows.

It remains only to show that this Hom-space isomorphism preserves surjectivity. More precisely, if
$\psi:M\twoheadrightarrow Q$, then $\widetilde{\psi}:\mathcal W(M)\twoheadrightarrow Q$; and if
$\varphi:\mathcal W(M)\twoheadrightarrow Q$, then $\widehat{\varphi}:M\twoheadrightarrow Q$.
This is immediate in the first  direction. In the other direction, if $\varphi:\mathcal W(M)\twoheadrightarrow Q$ and $v\in Q_{\lambda_i}$, choose
$m\in M$ such that
$$
\varphi\bigl(m+(h-\lambda_i)M\bigr)=v.
$$

If $\Supp(Q)=\{\lambda_1,\dots,\lambda_r\}$, then
$$
\widehat{\varphi} \left( \prod_{j\neq i}\frac{h-\lambda_j}{\lambda_i-\lambda_j}\,m\right) = v.
$$

Hence $\widehat{\varphi}$ is surjective, which completes the proof. \end{proof}
 \begin{proposition} \label{generalcaseprop}
Let $\alpha \in\C$, $\boldsymbol a=(a_-,a_0,a_+)\in \Z_{\geq 0}^3$ so that $a_- + a_0 + a_+ =n \geq 1$, and let $K(h)\in\GL_n(\C[h])$. Then  $M\big( \alpha, {{\boldsymbol a}}, K(h)\big)$ contains a proper submodule $N$ of rank $n$ if and only if  either 
$$a_0+d>0\quad \text{and} \quad \alpha \in 1+ \tfrac{1}{2}\Z_{\geq 0},$$ 
or 
$$d>0\quad \text{and} \quad \alpha \in \tfrac{1}{2}\Z_{\leq 0},$$ 
where $d:= a_-+ a_+ -\rank\left(\boldsymbol M_{\boldsymbol a}\big(\mathcal K_{\alpha, \boldsymbol a, K} \big)\right)$.
\end{proposition}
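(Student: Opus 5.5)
The proof will go through finite-dimensional quotients, using Proposition~\ref{quotient-weighting} to move to the weighting, where Theorem~\ref{cohfamilydecomp} describes the answer completely.

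\textbf{Step 1: reduce to finite-dimensional quotients.} If $N\subsetneq M:=M(\alpha,\boldsymbol a,K(h))$ has $U(\mathfrak h)$-rank $n$, then $M/N$ is a finitely generated torsion $\C[h]$-module, hence finite dimensional, and it is nonzero. Conversely, if $M\twoheadrightarrow Q\neq 0$ with $\dim Q<\infty$, let $N$ be the kernel. Then $N$ is a $\C[h]$-submodule of $\C[h]^{\oplus n}$, so it is free. Its rank is $n$, because $M/N$ is torsion. So the statement is equivalent to: $M$ has a nonzero finite-dimensional quotient. It suffices to test simple quotients, and these are finite-dimensional simple modules $L(k)$.

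\textbf{Step 2: pass to the weighting.} By Proposition~\ref{quotient-weighting}(ii), $M$ has a nonzero finite-dimensional quotient if and only if $\mathcal W(M)$ does. By Theorem~\ref{cohfamilydecomp}, $\mathcal W(M)$ is a direct sum of degree-one families $\mathcal F_-(\alpha)$, $\mathcal F_0^{\pm}(\alpha)$, $\mathcal F_+(\alpha)$. A direct sum has a nonzero finite-dimensional quotient iff some summand does, by composing with the projection onto that summand. So we may use the stated fact that the only degree-one families with a finite-dimensional quotient are $\mathcal F_0^-(\alpha)$ for $\alpha\in\tfrac12\Z_{\leq 0}$ and $\mathcal F_0^+(\alpha)$ for $\alpha\in 1+\tfrac12\Z_{\geq 0}$. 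I would justify this briefly. A quotient $L(k)$ needs the central character $(2\alpha-1)^2=(k+1)^2$. Inspecting the formulas, $\mathcal F_\pm(\alpha)$ have $e$ or $f$ acting injectively on every weight space, so no finite-dimensional quotient exists. For $\mathcal F_0^{\pm}$, one locates the highest and lowest weights where $e$ or $f$ vanishes. This is the step I expect to be the main obstacle if it is not accepted as given.

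\textbf{Step 3: case check.} Since $\alpha\in\tfrac12\Z_{\le0}\cup(1+\tfrac12\Z_{\ge0})$ is necessary, we only need to read off the multiplicities. In case (i) of Theorem~\ref{cohfamilydecomp}, only $\mathcal F_0^+(\alpha)^{\oplus a_0}$ occurs among the $\mathcal F_0$ families. In the one-sided cases, and whenever $\alpha\notin\tfrac12\Z\setminus\{\tfrac12\}$, the formula for $d$ (via Remark~\ref{d-intersectiondim}) gives $d=0$. Hence there is a quotient iff $a_0>0=a_0+d$ and $\alpha\in1+\tfrac12\Z_{\ge0}$. The condition on $\alpha\in\tfrac12\Z_{\le 0}$ with $d>0$ fails, consistent with the claim.

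The exceptional value $\alpha=\tfrac12$ needs separate attention. Here $\mathcal F_0^+=\mathcal F_0^-$, $d=0$, and $\tfrac12$ lies in neither range, so no quotient exists. This matches the claim.

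In case (ii), the multiplicities of $\mathcal F_0^+(\alpha)$ and $\mathcal F_0^-(\alpha)$ are $a_0+d$ and $d$. This gives exactly the two stated conditions, with $d$ defined as in the proposition.
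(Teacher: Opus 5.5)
Your proposal is correct and follows the paper's proof. You reduce to nonzero finite-dimensional quotients, transfer to $\mathcal W(M)$ via Proposition~\ref{quotient-weighting}, and read off the multiplicities $a_0+d$ and $d$ of $\mathcal F_0^{+}(\alpha)$ and $\mathcal F_0^{-}(\alpha)$ from Theorem~\ref{cohfamilydecomp}, using the paper's stated list of degree-one families with finite-dimensional quotients. There are only minor slips, none affecting the argument:
\begin{itemize}
\item ``$a_0>0=a_0+d$'' should read $a_0+d=a_0>0$.
\item For $\alpha\notin\tfrac12\Z\setminus\{\tfrac12\}$ the quantity $d$ is undefined rather than $0$. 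This is harmless, since both $\alpha$-conditions fail there.
\item For $\mathcal F_\mp(\alpha)$ the operative property is that $f$ (respectively $e$) maps weight spaces onto each other, so every vector lies in the image of arbitrarily high powers, which vanish on a finite-dimensional quotient. Injectivity is not what matters.
\end{itemize}
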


\begin{proof} 
We note that $M\big( \alpha, {{\boldsymbol a}}, K\big)$ has a proper submodule $N$ of rank $n$ precisely when the quotient $M\big( \alpha, {{\boldsymbol a}}, K\big) / N $ is a finite-dimensional $\mathfrak{sl}(2)$-module. By Theorem \ref{cohfamilydecomp}, the coherent family $\mathcal{W}\left(M\big(\alpha, \boldsymbol a, K(h)\big)\right)$ has a finite-dimensional quotient if and only if either 
$$ \left[\mathcal{W}\left(M\big(\alpha, \boldsymbol a, K(h)\big)\right): \mathcal F_0^-(\alpha)\right] \neq 0\quad \text{and}\quad \alpha \in \tfrac{1}{2}\Z_{\leq 0},$$
or
 $$\left[\mathcal{W}\left(M\big(\alpha, \boldsymbol a, K(h)\big)\right): \mathcal F_0^+(\alpha)\right] \neq 0\quad \text{and}\quad \alpha \in 1+ \tfrac{1}{2}\Z_{\geq 0}.$$

Since 
$$\left[\mathcal{W}\left(M\big(\alpha, \boldsymbol a, K(h)\big)\right): \mathcal F_0^-(\alpha)\right] = d, \quad \left[\mathcal{W}\left(M\big(\alpha, \boldsymbol a, K(h)\big)\right): \mathcal F_0^+(\alpha)\right] = a_0+d,$$
then Proposition \ref{quotient-weighting} implies the statement.
\end{proof}

\begin{proposition} \label{SESgeneralcase}
	Let $k \geq 0$, $K(h) \in \GL_n(\C[h])$, and suppose that $a_0 \geq 1$. Then the following non-split short exact sequences hold.
	$$0 \longrightarrow  M\big( 1+\tfrac{k}{2}, (a_-,-a_0,0), K(h+k+1) \big)  \longrightarrow  M\big( 1+\tfrac{k}{2}, (a_-,a_0,0) , K(h) \big)  \longrightarrow  L(k)^{\oplus a_0} \longrightarrow 0,$$
	$$0 \longrightarrow  M\big( 1+\tfrac{k}{2}, (0,-a_0,a_+), K(h-k-1) \big)  \longrightarrow  M\big( 1+\tfrac{k}{2}, (0,a_0,a_+), K(h) \big)  \longrightarrow  L(k)^{\oplus a_0} \longrightarrow 0,$$
	where $  L(k)$ is the simple $\mathfrak{sl}(2)$-module of dimension $k+1$.
\end{proposition}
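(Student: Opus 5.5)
The plan is to realize the submodule explicitly as the image of a power of $f$, or of a closely related polynomial matrix, compute its $f$-matrix, and then read off its parameters with Corollary~\ref{howtofind-K(h)}. I treat the first sequence; the second follows the same way with $e$ in place of $f$, or by the $\tau$-twisting symmetry discussed later.

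Put $\alpha=1+\tfrac k2$, so the roots of $\lambda_\alpha$ are $\alpha-1=\tfrac k2$ and $-\alpha=-1-\tfrac k2$. The weights of $L(k)$ are exactly $\tfrac k2,\tfrac k2-1,\dots,-\tfrac k2$.

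First I will pin down the quotient abstractly. By Theorem~\ref{cohfamilydecomp}(i), which applies because $a_+=0$, we have
$$\mathcal W(M)\simeq \mathcal F_-(\alpha)^{\oplus a_-}\oplus\mathcal F_0^+(\alpha)^{\oplus a_0}.$$
Here $\mathcal F_-(\alpha)$ has no finite-dimensional quotient, and each $\mathcal F_0^+(\alpha)$ has $L(k)$ as its unique simple finite-dimensional quotient, with multiplicity one. By Proposition~\ref{quotient-weighting}, $M=M(\alpha,(a_-,a_0,0),K)$ therefore surjects onto $L(k)^{\oplus a_0}$, and this is its largest finite-dimensional quotient. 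Let $N$ be the kernel. Since $M/N$ is finite dimensional, $N$ is $\C[h]$-free of rank $n$, so $N\in\mathcal M(n)$ with the same central character.

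Next I will identify $N$ concretely. In $\mathcal F_0^+(\alpha)$, the vectors $f^{k+1}v_\lambda$ span exactly the kernel of the map to $L(k)$. This suggests that on the $a_0$-block, $N$ is the image of $f^{k+1}$, whose matrix is
$$B(h)=F\,\sigma^{-1}(F)\cdots\sigma^{-k}(F)=\prod_{j=0}^{k}P_{(\boldsymbol a,\alpha)}(h+j)K(h+j).$$
When $a_-=0$, I take $N=B(h)\C[h]^{\oplus n}$. Its $f$-matrix in this basis is
$$B(h)^{-1}F\,B(h+1)=P_{(\boldsymbol a,\alpha)}(h+k+1)K(h+k+1),$$
and the diagonal entries of $P_{(\boldsymbol a,\alpha)}(h+k+1)$ are $h+k+1-\tfrac k2=h+\alpha$. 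That is exactly $P_{((a_-,-a_0,0),\alpha)}(h)$, so Corollary~\ref{howtofind-K(h)} gives $N\simeq M(\alpha,(0,-a_0,0),K(h+k+1))$. I will also check that $\det B$ has degree $a_0(k+1)=\dim L(k)^{\oplus a_0}$. This confirms $\operatorname{coker}B$ has the right size, and hence that this $N$ coincides with the kernel from the first step.

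For $a_->0$ the $a_-$ block must not be cut down, since it carries unit invariant factors and contributes nothing to the quotient. I would replace $B(h)$ by a matrix built from the same product but with the $a_-$ block restored. Concretely, I would take the $\C[h]$-span of $B(h)\C[h]^{\oplus n}$ together with preimages of the $\mathcal S^-$ directions. I would then use Smith normal form to choose $L,R$ with $L\,F_N\,R=P_{((a_-,-a_0,0),\alpha)}(h)$ and verify that $R^{-1}L(h+1)^{-1}$ equals $K(h+k+1)$, or is equivalent to it under the relation of Theorem~\ref{main-isom-thm}(ii). I expect this step to be the main obstacle: finding a basis of $N$ for which the $K$-parameter comes out exactly as $K(h+k+1)$. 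My first attempt would be to conjugate by the shift $h\mapsto h+k+1$ and absorb the unimodular corrections on the $a_-$ block into the $P(h)$ allowed by Theorem~\ref{main-isom-thm}.

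Finally, non-splitting is immediate. If the sequence split, $L(k)$ would be a submodule of $M$. But $M$ is $\C[h]$-torsion free, and a nonzero finite-dimensional submodule is $\C[h]$-torsion, a contradiction.
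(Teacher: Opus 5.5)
Your $B(h)$ is exactly the matrix $A(h)$ of the paper. Since $P_{((a_-,a_0,0),1+\frac k2-i)}(h)=P_{((a_-,a_0,0),\alpha)}(h+i)$, the paper's product $\prod_{i=0}^{k}P_{((a_-,a_0,0),1+\frac k2-i)}(h)K(h+i)$ equals $F\sigma^{-1}(F)\cdots\sigma^{-k}(F)$, and for $a_-=0$ your argument is correct. Your identification of the quotient goes through the weighting functor, which is heavier than the paper's route: the paper just observes that $M/B(h)\C[h]^{\oplus n}$ has central character $(k+1)^2$ and dimension $\deg\det B=a_0(k+1)$. In exchange, you get for free that the kernel is a submodule. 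You should still state why $B(h)\C[h]^{\oplus n}=f^{k+1}M$ lies in that kernel: every homomorphism $M\to L(k)$ kills $f^{k+1}M$, because $f^{k+1}$ acts by zero on $L(k)$. The genuine gap is the case $a_->0$. You leave it open, and the repair you sketch points the wrong way.

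Nothing needs to be restored on the $a_-$ block. The identity $F\,B(h+1)=B(h)\,P_{(\boldsymbol a,\alpha)}(h+k+1)K(h+k+1)$ holds for every $a_-$. The diagonal of $P_{(\boldsymbol a,\alpha)}(h+k+1)$ consists of $a_-$ entries $1$ followed by $a_0$ entries $h+\alpha$, which is precisely $P_{((a_-,-a_0,0),\alpha)}(h)$. Moreover, $\det B(h)$ is a nonzero scalar times $\prod_{j=0}^{k}(h+j-\alpha+1)^{a_0}$, of degree $a_0(k+1)$ independently of $a_-$, because the $a_-$ block contributes only units. Hence $B(h)\C[h]^{\oplus n}$ is already the kernel. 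Corollary~\ref{howtofind-K(h)} with $L=\I_n$, $R=K(h+k+1)^{-1}$ then gives the parameter $K(h+k+1)$ on the nose, with no appeal to Theorem~\ref{main-isom-thm}. This is also what your own weighting picture predicts: $f$ acts bijectively on $\mathcal F_-(\alpha)$, so $f^{k+1}$ removes nothing there. Conversely, adjoining ``preimages of the $\mathcal S^-$ directions'' either changes nothing or overshoots the kernel, so it cannot produce the submodule in the sequence.

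The same misconception would block your treatment of the second sequence when $a_+>0$. Under $\tau$, or when you use $e^{k+1}$ (whose matrix carries the units $-1$ of $\overline P_{(\boldsymbol a,\alpha)}$ on the $a_+$ block), that block plays the role of the $a_-$ block, and the resolution is identical. If you go through $\tau$, note that Proposition~\ref{tau-normal-form} is stated only for a nonnegative middle entry, whereas the submodule has middle entry $-a_0$. The same computation, with the same $\D_{\boldsymbol a}$, covers that case, and $L(k)^\tau\simeq L(k)$.
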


\begin{proof} 
For the first short exact sequence, define 
$$ A(h):= \prod_{i=0}^k \left(P_{\big((a_-,a_0,0), 1+ \tfrac{k}{2} -i \big)}(h) K(h+i)\right) .$$

Since 
$$ P_{\big((a_-,a_0,0), 1+ \tfrac{k}{2} -i \big)}(h+1) = P_{\big((a_-,a_0,0), 1+ \tfrac{k}{2} -(i+1) \big)}(h),$$
and
$$ P_{\big((a_-,a_0,0), 1+ \tfrac{k}{2} -(k+1) \big)}(h) = P_{\big((a_-,-a_0,0), 1+ \tfrac{k}{2}\big)}(h),$$
it follows that
$$ P_{\big((a_-,a_0,0), 1+ \tfrac{k}{2}\big)}(h)  K(h)  A(h+1) = A(h) P_{\big((a_-,-a_0,0), 1+ \tfrac{k}{2}\big)}(h) K(h+k+1).$$

Then  $N_1:=A(h)\C[h]^{\oplus n}$ is a proper submodule of $M\big( 1+\tfrac{k}{2}, (a_-,a_0,0), K(h) \big)$, and 
$$ N_1 \simeq M\big( 1+\tfrac{k}{2}, (a_-,-a_0,0), K(h+k+1) \big).$$ 

We next consider the quotient $Q:= M\big( 1+\tfrac{k}{2}, (a_-,a_0,0), K(h)\big) / N_1$. Since $Q$ has central character $(k+1)^2$ and
$$ \dim(Q) = \dim \big(\C[h]^{\oplus n}/ A(h)\C[h]^{\oplus n}\big)= \deg(\det(A(h)) = a_0(k+1).$$

Therefore, $Q \simeq L(k)^{\oplus a_0}$ and the first exact sequence follows. Similarly, define
$$ B(h):= \sigma\left(\prod_{i=1}^{k+1}\Big(-K(h-i)^{-1} \overline P_{\bigl((0,a_0,a_+),\,1+ \tfrac{k}{2} -i \bigr)}(h)\Big)\right).$$

Then  $N_2:=B(h)\C[h]^{\oplus n}$ is a proper submodule of $M\big( 1+\tfrac{k}{2}, (0,a_0,a_+), K(h) \big)$, and 
$$ N_2 \simeq M\big( 1+\tfrac{k}{2}, (0,-a_0,a_+), K(h-k-1) \big).$$

Moreover, 
 $$ M\big( 1+\tfrac{k}{2}, (0,a_0,a_+), K(h) \big) / N_2 \simeq L(k)^{\oplus a_0}.$$

 The sequences are non-split because the middle terms of both sequences are free over
$\C[h]$, hence contain no nonzero finite-dimensional $\mathfrak{sl}(2)$-submodules. This completes the proof of the proposition.
\end{proof}

\begin{remark}
The one-sided conditions $a_-=0$ or $a_+=0$ cannot be removed in the following sense. When both $a_-$ and $a_+$ are nonzero, rank $n$ submodules may
still exist, but the corresponding $K$-matrix need not be obtained by a
simple shift of $K(h)$ as the following exact sequence for $\alpha =1$:
$$
0 \longrightarrow  M\left( 1, (1,-1,1), \begin{pmatrix}
1 & h+1 & 1 \\
0 & 1 & -1 \\
0 & 0& 1 
\end{pmatrix} \right)  \longrightarrow  M\big( 1,(1,1,1)  , \J_3 \big)  \longrightarrow  L(0) \longrightarrow 0,
$$
where the submodule  of $M\big( 1,(1,1,1)  , \J_3 \big)$  is $A(h)\C[h]^{\oplus 3}$ with $
A(h)=
\begin{pmatrix}
1&0&0\\
0&h&1\\
0&0&-1
\end{pmatrix}$.
\end{remark}

 \subsection{Finite-dimensional submodules of weightings of objects in $\mathcal M(n)$}
\begin{proposition} \label{fin-sub-coherentfam}
Let $n \geq 1$ and $M \in \mathcal M(n)$. Then $\mathcal W(M)$ has a finite-dimensional submodule $S$ if and only if there exists a short exact sequence
$$
0\longrightarrow M\longrightarrow X\longrightarrow S\longrightarrow 0,
$$
where $X \in \mathcal M(n)$.
\end{proposition}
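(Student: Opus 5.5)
Two directions; the idea is to use the duality between finite-dimensional submodules and quotients, together with Proposition~\ref{quotient-weighting}, applied to a suitable dual.

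\emph{($\Leftarrow$).} Suppose $0\to M\to X\to S\to 0$ is exact with $X\in\mathcal M(n)$. Since $\C[h]$ is a PID and $X,M$ are free of the same rank $n$, $S$ is finite dimensional; it is a weight module with finite support $\Supp(S)$. Apply the weighting functor. For each $\lambda$, tensoring with $\C[h]/(h-\lambda)$ gives an exact sequence
$$\operatorname{Tor}_1^{\C[h]}(S,\C[h]/(h-\lambda))\to M/(h-\lambda)M\to X/(h-\lambda)X\to S/(h-\lambda)S\to 0,$$
and the Tor term is isomorphic to $S_\lambda$ (the kernel of $h-\lambda$ on $S$). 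Both fibers $M/(h-\lambda)M$ and $X/(h-\lambda)X$ have dimension $n$, so the connecting map $S_\lambda\to M/(h-\lambda)M$ is injective and its image has dimension $\dim S_\lambda$. The connecting maps are natural, hence $\mathfrak{sl}(2)$-equivariant once the weight shifts are tracked, as in the definition of $\mathcal W$. Summing over $\lambda$ gives an embedding $S\hookrightarrow\mathcal W(M)$. The one point to check here is equivariance of the connecting homomorphism. Concretely, for $s\in S_\lambda$ lift it to $x\in X$; then $(h-\lambda)x\in M$, and we send $s\mapsto (h-\lambda)x+(h-\lambda)M$. Using $e(h-\lambda)=(h-\lambda-1)e$ this visibly commutes with $e$ and $f$.

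\emph{($\Rightarrow$).} Let $S\subseteq\mathcal W(M)$ be finite dimensional. I dualize to reduce to Proposition~\ref{quotient-weighting}.
\begin{enumerate}
\item Let $M^\ast=\Hom_{\C[h]}(M,\C[h])$, with $\mathfrak{sl}(2)$ acting through the antiautomorphism fixing $h$ and swapping $e,f$ (transpose). Then $M^\ast\in\mathcal M(n)$, with matrices the transposes $F^{\mathsf T},E^{\mathsf T}$ suitably shifted.
\item Fiberwise, $M^\ast/(h-\lambda)M^\ast\simeq (M/(h-\lambda)M)^\ast$. Hence $\mathcal W(M^\ast)\simeq\mathcal W(M)^\vee$, the restricted dual twisted by the same antiautomorphism.
\item Therefore $S\subseteq\mathcal W(M)$ gives a surjection $\mathcal W(M^\ast)\twoheadrightarrow S'$, where $S'$ is the corresponding twisted dual of $S$, again finite dimensional.
\item By Proposition~\ref{quotient-weighting}(ii), $M^\ast\twoheadrightarrow S'$. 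The kernel $Y$ is torsion-free and finitely generated, so $Y\in\mathcal M(n)$ (it has full rank since $S'$ is torsion).
\item Dualize $0\to Y\to M^\ast\to S'\to0$. Applying $\Hom_{\C[h]}(-,\C[h])$ gives
$$0\to M^{\ast\ast}=M\to Y^\ast\to\Ext^1_{\C[h]}(S',\C[h])\to0,$$
since $\Hom(S',\C[h])=0$ and $\Ext^1(M^\ast,\C[h])=0$.
\item $\Ext^1_{\C[h]}(S',\C[h])$ is finite dimensional, and as a $\C[h]$-module it is $\C[h]$-isomorphic to the $\C$-dual of $S'$. So set $X:=Y^\ast\in\mathcal M(n)$.
\end{enumerate}

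The main obstacle is step 6: identifying $\Ext^1(S',\C[h])$ as an $\mathfrak{sl}(2)$-module with $S$ itself, not merely with something of the same dimension. I would handle this by avoiding Ext altogether. Realize $X$ as the lattice
$$X=\{\,x\in M\otimes\C(h): (h-\lambda)x\in M \text{ lifting elements of } S\,\},$$
concretely $X=\{x\in M\otimes_{\C[h]}\C(h): p(h)x\in M,\ \text{class of }p(h)x\text{ under the natural map lies in }S\}$ with $p=\prod_{\lambda\in\Supp S}(h-\lambda)^{N}$. This is an $\mathfrak{sl}(2)$-stable $\C[h]$-lattice, so its quotient $X/M$ is finite dimensional. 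I would then compare it with the ($\Leftarrow$) connecting map: that map gives an embedding $X/M\hookrightarrow\mathcal W(M)$ with image $S$. Taking $N=1$ should suffice once one checks that $\mathfrak{sl}(2)$-stability forces it, and the dual argument above guarantees the lattice has full rank.
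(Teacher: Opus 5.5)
Your $(\Leftarrow)$ half is the paper's argument: $s\mapsto (h-\lambda)x+(h-\lambda)M$ is exactly the paper's $\delta_\lambda$, and injectivity and equivariance are checked the same way. Your dimension count works, though injectivity also follows directly from $\operatorname{Tor}_1^{\C[h]}(X,-)=0$.

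The $(\Rightarrow)$ half, as written, has a gap. Steps 1--5 of the duality detour are fine, but it stalls at step 6, as you say. Here it also cannot be rescued by the ``equal dimension and equal central character'' shortcut the paper uses later, because $M\in\mathcal M(n)$ has no prescribed central character. Your fallback lattice is the right object only for $N=1$, and the claim that it is $\mathfrak{sl}(2)$-stable is false for $N\geq 2$. Take $M=h\C[h]\subset M(1,(0,1,0),\beta)$, where $e\cdot g=-\beta^{-1}h\,g(h-1)$ and $f\cdot g=\beta h\,g(h+1)$. The fibre $S=M/hM$ is a copy of $L(0)$ inside $\mathcal W(M)$. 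With $N=2$ your set is $h^{-2}M=h^{-1}\C[h]$, yet $e\cdot h^{-1}=-\beta^{-1}h(h-1)^{-1}\notin h^{-1}\C[h]$. For $N=1$ the stability, which is the whole content of this direction, is asserted rather than proved; ``stability forces $N=1$'' is not an argument.

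The missing verification is short, and it is the paper's proof. Put $p=\prod_{\lambda\in\Supp S}(h-\lambda)$ and $\widetilde S_\lambda:=\pi_\lambda^{-1}(S_\lambda)$. Partial fractions, $x=\sum_\lambda c_\lambda(h-\lambda)^{-1}px$ with $c_\lambda=\prod_{\mu\neq\lambda}(\lambda-\mu)^{-1}$, show that your $N=1$ set equals the paper's $X=\sum_{\lambda\in\Supp S}(h-\lambda)^{-1}\widetilde S_\lambda$.

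\emph{Stability.} For $m\in\widetilde S_\lambda$ one has $e\cdot(h-\lambda)^{-1}m=(h-\lambda-1)^{-1}em$ and $\pi_{\lambda+1}(em)=e\,\pi_\lambda(m)\in S_{\lambda+1}$. If $\lambda+1\notin\Supp S$, this forces $em\in(h-\lambda-1)M$, so the element lies in $M$; otherwise it lies in $(h-\lambda-1)^{-1}\widetilde S_{\lambda+1}\subseteq X$. The same argument works for $f$ with $\lambda-1$.

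\emph{Rank and the quotient.} The inclusions $M\subseteq X\subseteq p^{-1}M$ give finite generation and rank $n$ directly, so no duality is needed for ``full rank''. Since $pX\subseteq M$ with $p$ squarefree, $X/M$ is a weight module supported on $\Supp S$. Your comparison with $\delta$ then does work. If $(h-\lambda)x\in M$, then $\pi_\lambda(px)$ is the nonzero multiple $\prod_{\mu\neq\lambda}(\lambda-\mu)$ of $\pi_\lambda((h-\lambda)x)$, so $\delta$ maps $(X/M)_\lambda$ into $S_\lambda$. It maps onto $S_\lambda$ via the elements $(h-\lambda)^{-1}m$ with $m\in\widetilde S_\lambda$. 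Hence $X/M\simeq S$. Using $\delta$ here is a tidy substitute for the paper's directly defined $\Psi$, whose well-definedness the paper leaves implicit.
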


\begin{proof}
We first relate finite-dimensional submodules of $\mathcal W(M)$ to finite-dimensional quotients of same-rank extensions of $M$. Suppose that
$$
0\longrightarrow M\longrightarrow X\longrightarrow S\longrightarrow 0
$$
is exact, where $X\in\mathcal M(n)$ and $S\neq0$ is finite dimensional. As usual, for $\lambda \in \C$, let $S_\lambda$ be the corresponding weight space. If $v\in S_\lambda$, choose a lift $x\in X$. Since $(h-\lambda)v=0$, we have $(h-\lambda)x\in M$. Define
$$\delta_\lambda:  S_\lambda \to M/(h-\lambda)M, \quad \text{where} \quad v\mapsto (h-\lambda)x+(h-\lambda)M.$$

Note that $\delta_\lambda$ is independent of the choice of $x$. Furthermore, $\delta_\lambda$ is injective. Indeed, suppose that $(h-\lambda)x\in (h-\lambda)M$. Then there exists $m\in M$ such that $(h-\lambda)x=(h-\lambda)m$. Since $X$ is $\C[h]$-free, we have  $x=m\in M$, and thus $v=\pi (x) = 0$. Define
$$
\delta:S\to \mathcal W(M),
\quad \text{where}\quad
\delta|_{S_\lambda}:= \delta_\lambda,
$$
for all $\lambda \in \Supp(S)$. We claim that $\delta$ is a homomorphism of  $\mathfrak{sl}(2)$-modules. Let $v\in S_\lambda$, and choose a lift $x\in X$. Then
$$
\delta(ev)=(h-\lambda-1)ex+(h-\lambda-1)M .
$$

On the other hand, in $\mathcal W(M)$, we have $(h-\lambda-1)e=e(h-\lambda)$, which gives
$$
e\delta(v)=e\big((h-\lambda)x+(h-\lambda)M\big)=(h-\lambda-1)ex+(h-\lambda-1)M.
$$

Thus $\delta(ev)=e\delta(v)$. Similarly, $\delta(fv)=f\delta(v)$. Therefore $\delta$ is an $\mathfrak{sl}(2)$-homomorphism, and its image is a finite-dimensional submodule of $\mathcal W(M)$.

\noindent\smallskip
Conversely, let $S\subset\mathcal W(M)$ be a nonzero finite-dimensional submodule. Write
$$
S=\bigoplus_{\lambda\in  \Supp(S)} S_\lambda,\qquad S_\lambda\subset M/(h-\lambda)M,
$$
where $|\Supp(S)| < \infty$. Let $\pi_\lambda:M\to M/(h-\lambda)M$ be the natural projection, and 
$\widetilde S_\lambda:=\pi_\lambda^{-1}(S_\lambda)$. Inside $\mathbb C(h)\otimes_{\C[h]} M$, define
$$
X:=\sum_{\lambda \in \Supp(S)} (h-\lambda)^{-1}\widetilde S_\lambda,
$$
where $(h-\lambda)^{-1}\widetilde S_\lambda:=\left\{(h-\lambda)^{-1}m\mid m\in\widetilde S_\lambda\right\}$. Then $X$ is a finitely generated torsion-free $\C[h]$-module of rank $n$, and
hence is $\C[h]$-free of rank $n$. Moreover, $X$ is stable under the actions of $e$ and $f$. Indeed, if $m\in\widetilde S_\lambda$, then
$$
e\left((h-\lambda)^{-1}m\right)=(h-\lambda-1)^{-1}em\in X , 
$$
and similarly for $f$. Therefore $X$ is an $\mathfrak{sl}(2)$-submodule of $\C(h)\otimes_{\C[h]} M$.
Next, define
$$
\Psi:X\longrightarrow S
$$
as follows. For $m\in M$, set $\Psi(m)=0$, and for $m\in\widetilde S_\lambda$, set
$$
\Psi\left((h-\lambda)^{-1}m\right)
=
m+(h-\lambda)M\in S_\lambda .
$$

This is a well defined map whose kernel is exactly $M$. Hence $X/M\simeq S$. Thus $X\in\mathcal M(n)$, and
$X/M\simeq S$ is finite dimensional. 
\end{proof}


\section{Socle filtration of  $M\big(\alpha, {\boldsymbol a}, \J_n\big)$}
Recall that   $\C[h]^m=\C[h]^{\oplus m}$. For $P_{(\boldsymbol a,\alpha)}(h)=\diag(p_1(h),\dots,p_n(h))$, where each $p_i(h)$ is a monic divisor of $\lambda_{\alpha} (h)$, we often use the notation
$$
M\big((p_1,\dots,p_n),\alpha,K(h)\big) 
$$
for the module $M(\alpha,\boldsymbol a,K(h))$. For $k \in \Z$, define $k^{+}: = \max \{0,k\}$. 

\subsection{The case of an upper-triangular matrix} \label{generalupper}
In this subsection, for $n \geq 1$, let
$$
A(h):=\begin{pmatrix}
a_{11} & u_{12}(h) & \cdots & u_{1n}(h) \\
0      & a_{22}    & \ddots & \vdots \\
\vdots & \ddots    & \ddots & u_{n-1,n}(h) \\
0      & \cdots    & 0      & a_{nn}
\end{pmatrix},
$$
where $a_{ii} \in \C^*$ for $i \in \{1,\dots, n\}$, and  $u_{jk}(h) \in \C[h]$ for all $1\leq j <k \leq n$. We recall Lemma 3.1 from \cite{GNZ}.

\begin{lemma}\label{surj-lem-ab}
Let $a,b\in\C^*$. Define
$$
T_{a,b}:\C[h]\longrightarrow\C[h],\qquad
T_{a,b}\bigl(g(h)\bigr)=a\,g(h+1)-b\,g(h).
$$
Then $T_{a,b}$ is a surjective linear map.
\end{lemma}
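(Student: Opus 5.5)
The map $T_{a,b}$ is linear, so I will prove surjectivity by a degree argument on $\C[h]$. Filter $\C[h]$ by the finite-dimensional subspaces $V_m:=\{g\in\C[h]:\deg g\le m\}$ (with $V_{-1}=0$). Each $V_m$ is mapped into itself by $T_{a,b}$: shifting by $h\mapsto h+1$ preserves degree, so $T_{a,b}(V_m)\subseteq V_m$. It therefore suffices to show that $T_{a,b}|_{V_m}:V_m\to V_m$ is surjective for every $m\ge 0$. Since $V_m$ is finite dimensional, this is equivalent to injectivity, or to the matrix of $T_{a,b}|_{V_m}$ in the monomial basis being invertible.

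I would split into two cases according to whether $a=b$. If $a\neq b$, then for $g$ monic of degree $m$ we have $T_{a,b}(h^m)=(a-b)h^m+(\text{lower order terms})$. So in the basis $1,h,\dots,h^m$ the matrix of $T_{a,b}|_{V_m}$ is upper triangular with diagonal entries $a-b\neq 0$, hence invertible, and we are done.

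If $a=b$, then $T_{a,a}(g)=a\,(g(h+1)-g(h))$ lowers degree by exactly one on nonconstant polynomials: $T_{a,a}(h^{m})=a\,m\,h^{m-1}+(\text{lower})$, and $T_{a,a}$ kills constants. Hence $T_{a,a}$ maps $V_{m+1}$ onto $V_m$. The matrix from the basis $h,\dots,h^{m+1}$ of the quotient $V_{m+1}/V_0$ to $1,\dots,h^m$ is triangular with diagonal entries $a(k+1)\neq 0$. Given any target polynomial of degree at most $m$, it therefore has a preimage of degree at most $m+1$.

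Both cases are elementary, so I expect no real obstacle. The only point needing care is the case split: the naive triangularity argument fails when $a=b$, and there one must use the degree-lowering property instead (the difference operator is onto $\C[h]$ with one-dimensional kernel). In either case surjectivity onto every $V_m$ yields surjectivity of $T_{a,b}$ on $\C[h]=\bigcup_m V_m$.
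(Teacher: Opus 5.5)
The paper gives no proof of this lemma to compare against: it only recalls the statement as Lemma~3.1 of \cite{GNZ}. Judged on its own, your proof is correct and complete, and it is the standard degree argument.

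\begin{itemize}
\item For $a\neq b$, $T_{a,b}$ preserves degree with leading coefficient $a-b$. It is therefore bijective on each $V_m$.
\item For $a=b$, $T_{a,b}$ is $a$ times the forward difference operator. Its triangular leading terms $a(k+1)h^k$ show that it maps $V_{m+1}$ onto $V_m$.
\end{itemize}

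One expository point: your opening reduction (surjectivity of $T_{a,b}|_{V_m}$ onto $V_m$) is valid only when $a\neq b$. When $a=b$, the constants lie in the kernel and $T_{a,b}(V_m)=V_{m-1}$. You do handle this correctly in the second case, but it would be cleaner to state that reduction only for the case where it applies. An equivalent way to finish the case $a=b$ is a dimension count: $g(h+1)=g(h)$ forces $g$ to be constant, so $T_{a,a}|_{V_{m+1}}$ has rank $m+1=\dim V_m$.

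Your argument actually only uses $(a,b)\neq(0,0)$, not $a,b\in\C^*$. Your case split also matters for how the paper uses the lemma. When it is applied to upper-triangular matrices with equal diagonal entries, such as $\J_n$, only your case $a=b$ occurs. Indeed, solving $p(h+1)-p(h)=-1$ gives $p=-h$, the superdiagonal entry of $P_n(h)$ in Lemma~\ref{lem:diag-Jn-explicit}.
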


\begin{lemma}
The matrix $A(h)$ is $\sigma^{-1}$-similar to $\diag(a_{11},a_{22},\dots,a_{nn})$.
\end{lemma}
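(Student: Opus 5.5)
We need to find $P(h)\in\GL_n(\C[h])$ with $P(h)^{-1}A(h)P(h+1)=D:=\diag(a_{11},\dots,a_{nn})$, or equivalently
$$A(h)P(h+1)=P(h)D.$$
I will look for $P$ upper unitriangular, with $P_{jj}=1$ and $P_{jk}=p_{jk}(h)$ for $j<k$; such a $P$ is automatically in $\GL_n(\C[h])$.

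Compare the $(j,k)$ entries with $j<k$. The equation reads
$$a_{jj}\,p_{jk}(h+1)+\sum_{j<l<k}u_{jl}(h)\,p_{lk}(h+1)+u_{jk}(h)=p_{jk}(h)\,a_{kk}.$$
Equivalently,
$$T_{a_{jj},a_{kk}}\bigl(p_{jk}\bigr)=-u_{jk}(h)-\sum_{j<l<k}u_{jl}(h)\,p_{lk}(h+1).$$
The diagonal entries agree trivially, and the entries below the diagonal vanish on both sides. The right-hand side involves only the unknowns $p_{lk}$ with $l>j$ in the same column $k$.

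So I will solve column by column, and within column $k$ by descending induction on $j$, from $j=k-1$ down to $j=1$. At each step the right-hand side is a known polynomial. Lemma~\ref{surj-lem-ab} says $T_{a_{jj},a_{kk}}$ is surjective because $a_{jj},a_{kk}\in\C^*$, so a polynomial solution $p_{jk}$ exists.

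After all entries are chosen, $P$ is unitriangular, hence invertible, and $A(h)\sigma^{-1}(P)=PD$. That gives $P^{-1}A\,\sigma^{-1}(P)=D$, which is exactly $\sigma^{-1}$-similarity in the sense of Definition~\ref{sigmadef}.

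The only real input is the surjectivity in Lemma~\ref{surj-lem-ab}, which we may assume. The main point of care is the ordering of the recursion, so that each equation involves only previously determined unknowns; the descending order on $j$ within each column achieves this.
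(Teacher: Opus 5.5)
Your proof is correct and follows essentially the paper's route: a unitriangular $\sigma^{-1}$-change of basis whose entries come from the surjectivity of $T_{a,b}$ (Lemma~\ref{surj-lem-ab}). The paper builds the transition matrix as a product $\prod_{k=1}^{n-1}U^{(k)}(h)$, clearing one superdiagonal at a time. You instead solve for all entries of $P$ at once by a triangular recursion within each column, which makes the ordering explicit and avoids having to check that later conjugations leave earlier superdiagonals untouched.
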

 
 \begin{proof}
For $1\leq k \leq n-1$, let 
$$U^{(k)}(h):= \I_n + \sum_{i=1}^{n-k}p_{i,i+k}(h) E_{i,i+k}, \quad \text{where} \quad p_{i,i+k}(h) \in \C[h].$$

A direct computation shows that, for each $1\leq i \leq n-1$, the $(i,i+1)$-entry of the matrix $\big(U^{(1)}(h)\big)^{-1} A(h)U^{(1)}(h+1)$ is given by 
$$u_{i,i+1}(h) + a_{ii}\ p_{i,i+1}(h+1) - a_{i+1,i+1}\ p_{i,i+1}(h).$$ 

By Lemma \ref{surj-lem-ab}, there exists $p_{i,i+1}(h)$, for each $1\leq i \leq n-1$ such that 
$$\left(\big(U^{(1)}(h)\big)^{-1} A(h)U^{(1)}(h+1)\right)_{i,i+1} = 0. $$

Applying the same argument successively to the transformed matrices, and observing that $\sigma^{-1}$-conjugation by $U^{(k)}(h)$ leaves the first $k-1$ superdiagonals unchanged, we obtain $U^{(2)}(h),\dots,U^{(n-1)}(h)$ such that
$$\Big(\prod_{i=1}^{n-1}U^{(i)}(h) \Big)^{-1} A(h) \prod_{i=1}^{n-1}U^{(i)}(h+1) = \diag(a_{11},a_{22},\dots,a_{nn}).$$ 
 
 This proves the lemma.
 \end{proof}
 
 \begin{corollary} \label{cor-scalar}
 Let $\alpha \in \C$ and $p \in \big\{1, h-\alpha+1, -\lambda_\alpha(h)\big\}$. Then
 $$M\big((p,\dots,p),\alpha,A(h)\big) \simeq \bigoplus_{i=1}^{n} M(p,\alpha, a_{ii}).$$ 
 \end{corollary}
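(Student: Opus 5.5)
The plan is to combine the preceding lemma with the isomorphism criterion of Theorem~\ref{main-isom-thm}. Here $M\big((p,\dots,p),\alpha,A(h)\big)$ is the module $M(\alpha,\boldsymbol a,A(h))$ with $P_{(\boldsymbol a,\alpha)}(h)=p(h)\I_n$. The case $p=1$ corresponds to $\boldsymbol a=(n,0,0)$, the case $p=h-\alpha+1$ to $\boldsymbol a=(0,n,0)$, and the case $p=-\lambda_\alpha(h)$ to $\boldsymbol a=(0,0,n)$. Since $P_{(\boldsymbol a,\alpha)}(h)$ is a scalar polynomial times the identity, it commutes with every matrix in $\Mat_n(\C[h])$. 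This commutation is the key point.

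First, by the preceding lemma there is $P(h)\in\GL_n(\C[h])$ with $P(h)^{-1}A(h)P(h+1)=D:=\diag(a_{11},\dots,a_{nn})$, i.e. $A(h)P(h+1)=P(h)D$. Multiplying on the left by $p(h)\I_n$ and using that it is central, we get
$$P(h)\,P_{(\boldsymbol a,\alpha)}(h)\,D=P_{(\boldsymbol a,\alpha)}(h)\,A(h)\,P(h+1).$$
This is exactly condition (ii) of Theorem~\ref{main-isom-thm} with $K_1=A(h)$ and $K_2=D$. Condition (i) holds trivially with $\beta=\alpha$ and $\boldsymbol b=\boldsymbol a$. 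Hence
$$M\big((p,\dots,p),\alpha,A(h)\big)\simeq M\big((p,\dots,p),\alpha,D\big).$$

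Second, we decompose $M\big((p,\dots,p),\alpha,D\big)$. Its $f$-matrix is $p(h)D$ and its $e$-matrix is $\sigma\big(D^{-1}\overline{P}_{(\boldsymbol a,\alpha)}(h)\big)$, where $\overline{P}_{(\boldsymbol a,\alpha)}(h)=\overline p(h)\I_n$ with $p\,\overline p=\lambda_\alpha$. Both matrices are diagonal. Therefore each coordinate line $\C[h]\mathsf e_i$ is stable under $e$, $f$ and $h$, and the module splits as the direct sum of these lines. On $\C[h]\mathsf e_i$ the action of $f$ is $p(h)a_{ii}\sigma^{-1}$ and the action of $e$ is $\sigma(a_{ii}^{-1}\overline p(h))\sigma$. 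These are precisely the structure matrices of the rank-one module $M(p,\alpha,a_{ii})$, so
$$M\big((p,\dots,p),\alpha,D\big)\simeq\bigoplus_{i=1}^{n}M(p,\alpha,a_{ii}),$$
which is the claimed decomposition.

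There is no real obstacle. The only point to watch is the centrality of $P_{(\boldsymbol a,\alpha)}(h)$, which is where the hypothesis that all diagonal entries equal the same $p$ is used. Everything else is a bookkeeping check that the $e$-matrix transforms consistently. That check is already covered by Lemma~\ref{1st-isom-thm}, since the central character is fixed.
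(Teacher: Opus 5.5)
Your proof is correct and is essentially the argument the paper intends. The paper states this corollary without proof, as an immediate consequence of the preceding lemma that $A(h)$ is $\sigma^{-1}$-similar to $\diag(a_{11},\dots,a_{nn})$. The logic is yours: since $P_{(\boldsymbol a,\alpha)}(h)=p(h)\I_n$ is central, the similarity transfers to the $f$-matrices and gives an isomorphism via Lemma~\ref{1st-isom-thm} (equivalently Theorem~\ref{main-isom-thm}). The resulting diagonal module then splits into rank-one summands, mirroring the paper's proof of Proposition~\ref{prop-ppp-directsum} for $\J_n$.
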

 
 For the matrix $A(h):= \big(A(h)_{ij}\big)_{1\leq i,j \leq n}$  and for each $1\leq k \leq n$, we define the upper-left and lower-right $k \times k$ submatrix of $A(h)$, respectively, by
 $$A(h)^{[k]}:= \big(A(h)_{ij}\big)_{1\leq i,j \leq k}, \qquad A(h)_{[k]}:= \big(A(h)_{ij}\big)_{n-k+1\leq i,j \leq n}.$$
 
 \begin{lemma}\label{lem-cut-extensionnew}
Let $n \geq 1$, $1\leq m \leq n-1$, and suppose that 
$$P_{(\boldsymbol a,\alpha)}(h)=\diag(p_1(h),\dots,p_n(h)).$$ Then there exists a short exact sequence of $\mathfrak{sl}(2)$-modules:
\begin{multline*}
0\longrightarrow M\big((p_1,\dots,p_m), \alpha, A(h)^{[m]}\big)\longrightarrow
M\big((p_1,\dots,p_n), \alpha, A(h)\big)\\
\longrightarrow M\big((p_{m+1},\dots,p_n), \alpha, A(h)_{[n-m]}\big)\longrightarrow 0.
\end{multline*}
\end{lemma}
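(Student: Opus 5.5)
The plan is to work directly with the matrix description. Write $P:=P_{(\boldsymbol a,\alpha)}(h)=\diag(p_1,\dots,p_n)$. The module $M\big((p_1,\dots,p_n),\alpha,A(h)\big)$ has $F=PA(h)$ and $E=\sigma\big(A(h)^{-1}\overline P\big)$, acting on $\C[h]^{\oplus n}$ by $f\cdot\boldsymbol g=F\sigma^{-1}(\boldsymbol g)$ and $e\cdot\boldsymbol g=E\sigma(\boldsymbol g)$. We take the $\C[h]$-submodule $N:=\Span_{\C[h]}\{\mathsf e_1,\dots,\mathsf e_m\}$ and show three things in turn: $N$ is $\mathfrak{sl}(2)$-stable, $N$ is isomorphic to the first term, and the quotient is isomorphic to the third term.

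\emph{Stability of $N$.} Since $A(h)$ is upper triangular with constant nonzero diagonal, $A(h)^{-1}\in\GL_n(\C[h])$ is again upper triangular. Its diagonal entries are $a_{ii}^{-1}$, its upper-left $m\times m$ block is $(A(h)^{[m]})^{-1}$, and its lower-right block is $(A(h)_{[n-m]})^{-1}$; this is the standard block-triangular inverse formula. Since $P$ and $\overline P$ are diagonal, both $F$ and $E$ are block upper triangular with respect to the decomposition $n=m+(n-m)$. Block upper triangular matrices preserve the span of the first $m$ coordinates, and $\sigma^{\pm1}$ also preserves $N$. Hence $e\cdot N\subseteq N$ and $f\cdot N\subseteq N$, so $N$ is a submodule.

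\emph{Identifying $N$.} The upper-left block of $F$ is $\diag(p_1,\dots,p_m)A(h)^{[m]}$. The upper-left block of $E$ is $\sigma\big((A(h)^{[m]})^{-1}\diag(\overline p_1,\dots,\overline p_m)\big)$, where $\overline p_i=\lambda_\alpha/p_i$. These are exactly the matrices defining $M\big((p_1,\dots,p_m),\alpha,A(h)^{[m]}\big)$.

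\emph{Identifying the quotient.} The quotient $M/N$ is $\C[h]$-free with basis the images of $\mathsf e_{m+1},\dots,\mathsf e_n$. The action of $e$ and $f$ on it is given by the lower-right blocks of $E$ and $F$. These are $\diag(p_{m+1},\dots,p_n)A(h)_{[n-m]}$ and $\sigma\big((A(h)_{[n-m]})^{-1}\diag(\overline p_{m+1},\dots,\overline p_n)\big)$, which define $M\big((p_{m+1},\dots,p_n),\alpha,A(h)_{[n-m]}\big)$.

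All three modules are written in the paper's notation $M\big((p_1,\dots),\alpha,K\big)$ even though the tuple of $p_i$ need not be in the ordered Smith form. The relation $F\sigma^{-1}(E)=\lambda_\alpha\I$ holds blockwise automatically, since each block satisfies it by construction. So each is a genuine module of the form $M(E,F)$ with central character $(2\alpha-1)^2$. The only point requiring care, and hence the main obstacle, is this bookkeeping. Everything else reduces to the observation that the inverse of a block upper triangular matrix is block upper triangular with the inverted diagonal blocks.
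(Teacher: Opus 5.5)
Your proposal is correct and is essentially the argument the paper has in mind. The paper states this lemma without proof, but uses exactly this reasoning in the proof of Lemma~\ref{lem-cut-extension}: the $\C[h]$-span of the first $m$ standard basis vectors is stable because the $e$- and $f$-matrices are block upper triangular, and the sub- and quotient modules are read off from the diagonal blocks. (Your caveat about Smith-form ordering is unnecessary, since any initial or terminal segment of the diagonal of $P_{(\boldsymbol a,\alpha)}(h)$ is again of the form $P_{(\boldsymbol a',\alpha)}(h)$.)
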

 
\subsection{The case of a Jordan cell}
Throughout this subsection, we assume that $A(h)=\J_n$. For $k\leq n$, we view $M\big((p_1,\dots,p_k),\alpha,\J_k\big)$ as the $\C[h]$-submodule of $M\big((p_1,\dots,p_n),\alpha,\J_n\big)$  spanned by the first $k$ standard basis vectors (equivalently, $\C[h]^k\oplus \{0\}\subset \C[h]^k\oplus\C[h]^{n-k}$). One checks easily that it is also an $\mathfrak{sl}(2)$-submodule.

 \begin{lemma}\label{lem:diag-Jn-explicit}
Define $P_n(h)=\big(P_{i,j}(h)\big)_{1 \leq i,j \leq n} \in\Mat_n(\C[h])$ by
$$
P_{i,j}(h)=
\begin{cases}
(-1)^{\,j-i}\displaystyle\binom{h+j-i-1}{j-i},& i\le j,\\
0,& i>j.
\end{cases}
$$
Then $P_n(h)\in \GL_n(\C[h])$ and $P_n(h)^{-1}\J_nP_n(h+1)=\I_n$.
\end{lemma}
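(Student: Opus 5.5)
The plan is to verify the claimed identity in the equivalent form
$$
\J_n P_n(h+1) = P_n(h),
$$
which is enough because $P_n(h)$ will be shown to be invertible first. Both parts reduce to the Pascal recurrence for binomial coefficients viewed as polynomials in $h$. Throughout, $\binom{h+k-1}{k}$ denotes the polynomial $\frac{1}{k!}\prod_{s=0}^{k-1}(h+k-1-s)\in\C[h]$ of degree $k$, with $\binom{h+m}{0}=1$ and $\binom{h+m}{-1}=0$.

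Invertibility comes first. By definition $P_n(h)$ is upper triangular, and its diagonal entries are $P_{i,i}(h)=(-1)^0\binom{h-1}{0}=1$. So $\det P_n(h)=1$, and hence $P_n(h)\in\GL_n(\C[h])$ (its inverse is again a polynomial matrix, by the adjugate formula).

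Next I would compare entries. Since $\J_n=\I_n+\sum_{i=1}^{n-1}E_{i,i+1}$, the $(i,j)$-entry of $\J_nP_n(h+1)$ is
$$
P_{i,j}(h+1)+P_{i+1,j}(h+1),
$$
where the second term is absent when $i=n$. For $i>j$ both terms vanish, as does $P_{i,j}(h)$. For $i=j$ the second term is $0$, since $i+1>j$, and the first is $1=P_{i,i}(h)$. For $i<j$, put $k=j-i\geq1$. The entry becomes
$$
(-1)^k\binom{h+k}{k}+(-1)^{k-1}\binom{h+k-1}{k-1}
=(-1)^k\left[\binom{h+k}{k}-\binom{h+k-1}{k-1}\right].
$$
By the Pascal identity $\binom{x+1}{k}=\binom{x}{k}+\binom{x}{k-1}$ with $x=h+k-1$, this equals $(-1)^k\binom{h+k-1}{k}=P_{i,j}(h)$. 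The Pascal identity is a polynomial identity in $x$, because it holds for all nonnegative integers $x$.

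Multiplying $\J_nP_n(h+1)=P_n(h)$ on the left by $P_n(h)^{-1}$ gives $P_n(h)^{-1}\J_nP_n(h+1)=\I_n$. There is no real obstacle in this argument. The only care needed is the index bookkeeping at the boundary: the case $i=n$, where $\J_n$ has no superdiagonal entry in that row, and the case $j=i$, where the second term is zero because $P_n$ is upper triangular.
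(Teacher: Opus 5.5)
Your proof is correct and follows the paper's argument: the paper also verifies $\J_nP_n(h+1)=P_n(h)$ entrywise using the recurrence $p_r(h+1)+p_{r-1}(h+1)=p_r(h)$ for $p_r(h)=(-1)^r\binom{h+r-1}{r}$, and this is exactly your Pascal-identity computation for $k=j-i\geq 1$. Your observation that $P_n(h)$ is unitriangular, so $\det P_n(h)=1$, supplies the invertibility claim that the paper leaves implicit.
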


\begin{proof} The identity follows by a direct computation using that for $r\geq 1$ we have 
$$
p_r(h+1)+p_{r-1}(h+1)=p_r(h),
$$
where $p_r(h)=(-1)^r\binom{h+r-1}{r}$.
\end{proof}
\begin{remark} We notice that $P_n(h)^{-1} = P_n(-h)$. In particular, 
$$
P_n(h)^{-1}:=
\begin{pmatrix}
1 & \binom{h}{1} & \binom{h}{2} & \cdots & \binom{h}{n-1} \\
0 & 1 & \binom{h}{1} & \cdots & \binom{h}{n-2} \\
0 & 0 & 1 & \ddots & \vdots \\
\vdots & \vdots & \ddots & \ddots & \binom{h}{1} \\
0 & 0 & \cdots & 0 & 1
\end{pmatrix}.
$$
\end{remark} 
\begin{proposition}\label{prop-ppp-directsum}
Let $\alpha \in \C$ and $p \in \big\{1, h-\alpha+1, -\lambda_\alpha(h)\big\}$. For $1\leq j\leq n$, define
$$
u_j:=P_n(h)\mathsf e_j
=\sum_{r=0}^{j-1}(-1)^r\binom{h+r-1}{r}\,\mathsf e_{j-r}.
$$
Then
$$
M\big((p,\ldots,p), \alpha,\J_n\big)=\C[h]u_1\oplus\C[h]u_2\oplus\cdots\oplus\C[h]u_n
$$
and each $\C[h]u_j$ is an $\mathfrak{sl}(2)$-submodule isomorphic to the rank-one module $M(p,\alpha,1)$.
\end{proposition}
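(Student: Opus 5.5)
The plan is to combine Lemma~\ref{lem:diag-Jn-explicit} with the fact that, when all invariant factors equal the same $p$, the matrix $P_{(\boldsymbol a,\alpha)}(h)=p(h)\I_n$ is scalar and so commutes with every matrix. In $M\big((p,\dots,p),\alpha,\J_n\big)$ the operators are
\[
f\cdot\boldsymbol g = p(h)\J_n\,\sigma^{-1}(\boldsymbol g),\qquad e\cdot\boldsymbol g=\sigma\big(\J_n^{-1}\overline p(h)\big)\sigma(\boldsymbol g),
\]
where $\overline p(h)=\lambda_\alpha(h)/p(h)$. Since $P_n(h)\in\GL_n(\C[h])$ by Lemma~\ref{lem:diag-Jn-explicit}, the vectors $u_j=P_n(h)\mathsf e_j$ are exactly the columns of an invertible polynomial matrix. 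Hence they form a $\C[h]$-basis, and the decomposition $\C[h]^{\oplus n}=\bigoplus_j\C[h]u_j$ holds as $\C[h]$-modules. The explicit formula for $u_j$ is just the $j$-th column of $P_n(h)$ from the definition, reindexed with $r=j-i$.

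Next I would show that each $\C[h]u_j$ is stable under $f$ and $e$. For $f$, $\sigma^{-1}$ applied to $g(h)u_j$ gives $g(h+1)P_n(h+1)\mathsf e_j$. Using $\J_nP_n(h+1)=P_n(h)$ from Lemma~\ref{lem:diag-Jn-explicit}, we get
\[
f\cdot(g u_j)=p(h)g(h+1)\J_nP_n(h+1)\mathsf e_j=p(h)g(h+1)u_j.
\]
For $e$, the same identity gives $\J_n^{-1}P_n(h)=P_n(h+1)$, so $\J_n^{-1}P_n(h-1)=P_n(h)$. Then
\[
e\cdot(gu_j)=\overline p(h-1)\,\J_n^{-1}\,g(h-1)P_n(h-1)\mathsf e_j=\overline p(h-1)g(h-1)u_j.
\]
Thus in the basis $u_j$ the module has $E=\sigma(\overline p)$ and $F=p$ on each summand. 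By the definition of the shorthand $M(\alpha,\boldsymbol a,K)$ with $K=1$ in rank one, each $\C[h]u_j$ is an $\mathfrak{sl}(2)$-submodule isomorphic to $M(p,\alpha,1)$.

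Alternatively, the whole statement follows at once from Proposition~\ref{firstisomthm} applied with $P=P_n(h)$. This gives $F_2=P_n(h)^{-1}p\J_nP_n(h+1)=p\I_n$ and, by Lemma~\ref{1st-isom-thm}, the corresponding $E_2$. The only step needing care is the $e$-action, where we must check that the shift direction matches, namely $\J_n^{-1}P_n(h-1)=P_n(h)$. This is just $\sigma$ applied to the identity in Lemma~\ref{lem:diag-Jn-explicit}, so I expect no real obstacle.
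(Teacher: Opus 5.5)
Your proof is correct and follows essentially the paper's route: you change basis by $P_n(h)$ and use $\J_nP_n(h+1)=P_n(h)$ from Lemma~\ref{lem:diag-Jn-explicit} to turn the $f$-matrix into $p\I_n$. The only minor difference is in the $e$-action: you compute it directly from the shifted identity $\J_n^{-1}P_n(h-1)=P_n(h)$, whereas the paper reads off $E'=\sigma(\lambda_\alpha/p)\I_n$ from the central-character relation $F'\sigma^{-1}(E')=\lambda_\alpha\I_n$.
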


\begin{proof}
By Lemma~\ref{lem:diag-Jn-explicit}, $P_n(h)^{-1}\J_nP_n(h+1)=\I_n$, hence
$$
F':=P_n(h)^{-1}FP_n(h+1)=P_n(h)^{-1}\,p(h)\J_n\,P_n(h+1)=p(h)\I_n.
$$

Let $E':=P_n(h)^{-1}EP_n(h-1)$. Then, by  Proposition~1.3  we have an
$\mathfrak{sl}(2)$--module isomorphism
$$
M(E,F)\xrightarrow{\simeq} M(E',F'),\qquad v\mapsto P_n(h)^{-1}v.
$$

In $M(E',F')$ we have $F'\sigma^{-1}(E')=\lambda_\alpha(h)\I_n$, which implies $E'=\sigma\Big(\frac{\lambda_\alpha(h)}{p(h)}\Big)\,\I_n$.
Therefore,  both $e$ and $f$ act {diagonally} on $\C[h]\mathsf e_j$, and more precisely, 
$$
f\cdot (g \mathsf e_j)=p(h)\,\sigma^{-1}(g)\,\mathsf e_j,\qquad
e\cdot (g \mathsf e_j)=\sigma\Big(\frac{\lambda_\alpha(h)}{p(h)}\Big)\sigma(g)\,\mathsf e_j.
$$

Therefore $M(E',F')=\bigoplus_{j=1}^n \C[h]\mathsf e_j$ as $\mathfrak{sl}(2)$--modules, and each $\C[h]\mathsf e_j\simeq M(p, \alpha, 1)$, which completes the proof.
\end{proof}

\begin{lemma}\label{lem-cut-extension} 
Let $1\leq m\leq n-1$ and let $p_1,\dots,p_n\in\C[h]$ satisfy $\deg p_m <\deg p_{m+1}$.
Then the following short exact sequence is non-split
$$
0\rightarrow M\big((p_1,\dots,p_m), \alpha,\J_{m}\big)\rightarrow
M\big((p_1,\dots,p_n), \alpha,\J_n\big)\rightarrow
M\big((p_{m+1},\dots,p_n), \alpha,\J_{n-m}\big)\rightarrow 0.
$$
\end{lemma}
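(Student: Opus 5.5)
The plan has two parts: first obtain the sequence from Lemma~\ref{lem-cut-extensionnew}, then prove non-splitness by a degree count on one entry of the $f$-matrix.

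\textbf{Exactness.} Apply Lemma~\ref{lem-cut-extensionnew} with $A(h)=\J_n$, which is upper triangular with $a_{ii}=1\in\C^*$. Since $\J_n^{[m]}=\J_m$ and $(\J_n)_{[n-m]}=\J_{n-m}$, this gives the displayed short exact sequence. The submodule is the span of $\mathsf e_1,\dots,\mathsf e_m$, and the quotient map is projection onto the last $n-m$ coordinates.

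\textbf{Setting up a splitting.} Suppose the sequence splits, and let $s$ be an $\mathfrak{sl}(2)$-section of the projection. Since $s$ commutes with $h$, it is $\C[h]$-linear. In the standard bases it is therefore given by a block matrix
$$
S(h)=\begin{pmatrix} X(h)\\ \I_{n-m}\end{pmatrix},\qquad X(h)\in\Mat_{m\times(n-m)}(\C[h]).
$$
The $f$-matrices are $F=\diag(p_1,\dots,p_n)\J_n$ on the middle term and $F''=\diag(p_{m+1},\dots,p_n)\J_{n-m}$ on the quotient. Compatibility of $s$ with $f$ reads $F\,S(h+1)=S(h)\,F''$.

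\textbf{The key entry.} Compare the $(m,1)$ entries of both sides. Row $m$ of $F$ equals $p_m(\mathsf e_m^{\mathsf T}+\mathsf e_{m+1}^{\mathsf T})$. Write $x(h):=X(h)_{m1}$; the $(m+1)$ entry of the first column of $S(h+1)$ is $1$. The first column of $F''$ is $p_{m+1}\mathsf e_1$, so the right side contributes $x(h)p_{m+1}(h)$. This gives
$$
p_m(h)\bigl(x(h+1)+1\bigr)=x(h)\,p_{m+1}(h).
$$

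\textbf{Divisibility and the contradiction.} The diagonal entries of $P_{(\boldsymbol a,\alpha)}$ form a divisibility chain. In order they are copies of $1$, then copies of $h-\alpha+1$ or of $h+\alpha$, then copies of $-\lambda_\alpha$. Hence $p_m\mid p_{m+1}$, and the hypothesis $\deg p_m<\deg p_{m+1}$ lets us write $p_{m+1}=p_m q$ with $\deg q\ge 1$. Cancelling $p_m\neq0$ gives $x(h+1)+1=x(h)q(h)$. If $x=0$, this reads $1=0$. If $x\neq0$, the left side has degree at most $\deg x$, while the right side has degree $\deg x+\deg q>\deg x$. Either way we have a contradiction, so no section exists and the sequence is non-split.

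Only compatibility with $f$ was needed. The step requiring the most care is the index bookkeeping that isolates the $(m,1)$ entry; once that entry is identified, the degree argument is immediate.
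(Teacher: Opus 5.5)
Your proof is correct and follows the paper's argument. A splitting is encoded by a polynomial matrix $X(h)$, the paper's $L(h)$ describing the complement as a graph. The $(m,1)$-entry of the $f$-compatibility equation then gives $p_m(h)\bigl(x(h+1)+1\bigr)=p_{m+1}(h)\,x(h)$, which is ruled out by comparing degrees.

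The one difference is cosmetic. You cancel $p_m$ using $p_m\mid p_{m+1}$, which is valid here though not needed, since a direct degree comparison already gives the contradiction. Cancelling does neatly absorb the case $x=-1$, which the paper treats separately.
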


\begin{proof} Let $k=n-m$. The $f$--matrix on $M\big((p_1,\dots,p_n), \alpha,\J_n\big)$ is 
$$F=\begin{pmatrix}\diag(p_1,\dots,p_m) \J_m&p_m E_{m,1}\\0&\diag(p_{m+1},\dots,p_n)\J_k\end{pmatrix}.$$

  Write $\C[h]^n=\C[h]^m\oplus\C[h]^k$ using the standard basis and set
$U:=\C[h]^m\oplus\{0\}$. Then, $U$ is an $\mathfrak{sl}(2)$-submodule of $M\big((p_1,\dots,p_n), \alpha,\J_n\big)$, and
$$U\simeq M\big((p_1,\dots,p_m),\alpha,\J_m\big),\quad M\big((p_1,\dots,p_n), \alpha,\J_n\big)/U\simeq M\big((p_{m+1},\dots,p_n), \alpha,\J_k\big).$$

Suppose, for contradiction, that the exact sequence splits. Then there exists an $\mathfrak{sl}(2)$-submodule $N$ for which $M\big((p_1,\dots,p_n), \alpha,\J_n\big)=U\oplus N$. Hence there exists $L(h)\in\Mat_{m\times k}(\C[h])$ so that
$$N=\big\{(L(h)u\;\;\;u)^{\mathsf T}: u\in\C[h]^k\big\}\subset \C[h]^m\oplus\C[h]^k.$$

The condition $f\cdot N\subset N$ is equivalent to the identity
\begin{equation} \label{eq-p-m-l}
\diag(p_1,\dots,p_m) \J_m L(h+1)+p_m E_{m,1}\;=\;L(h)\diag(p_{m+1},\dots,p_n)\J_k.
\end{equation}

Taking the $(m,1)$-entry of \eqref{eq-p-m-l} and setting $\ell(h):=L_{m,1}(h)$, we obtain
$$
p_m(h)\ell(h+1)+p_m(h)\;=\;p_{m+1}(h)\ell(h).
$$

If $\ell=0$ then $p_m=0$, which is impossible. The case $\ell=-1$ is easy. If $\ell\neq0$ and $\ell\neq -1$, then taking the degrees of both sides of the polynomial equation, we have a contradiction with $\deg p_m<\deg p_{m+1}$. Hence, no such $L(h)$ exists, so the exact sequence is non-split.
\end{proof}

\begin{lemma}\label{lem-diffeq-2}
Let $p_1(h),p_2(h)\in \C[h]$ such that $\deg p_i=i$, and $p_1\mid p_2$. Given $m,k,l\in \Z_{\geq 0}$ with $n=m+k+l\geq 1$, define
$$
F(h)=\diag(\underbrace{1,\ldots,1}_{m},\underbrace{p_1,\ldots,p_1}_{k},\underbrace{p_2,\ldots,p_2}_{l})\J_n.
$$
\begin{itemize}
\item[(i)] The equation
$$
F(h)\boldsymbol{q}(h+1)=p_1(h)\boldsymbol{q}(h)
$$
has a nonzero solution in $\C[h]^n$ if and only if $k\geq m+1$, and the solution space has dimension $k-m$.

\item[(ii)] The equation
$$
F(h)\boldsymbol{q}(h+1)=p_2(h)\boldsymbol{q}(h)
$$
has a nonzero solution in $\C[h]^n$ if and only if $l\geq 2m+k+1$, and the solution space has dimension $l-2m-k$.
\end{itemize}
\end{lemma}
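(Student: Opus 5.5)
The plan is to write the equation componentwise and solve it by back-substitution from the last index upward, reducing everything to a degree count for injective, degree-raising difference operators.

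\textbf{Setup.} Let $F(h)=D(h)\J_n$ with $D=\diag(d_1,\dots,d_n)$ and let $\pi\in\{p_1,p_2\}$ be the right-hand side. Then $F(h)\boldsymbol q(h+1)=\pi(h)\boldsymbol q(h)$ is equivalent to
$$
d_i(h)\bigl(q_i(h+1)+q_{i+1}(h+1)\bigr)=\pi(h)q_i(h),\qquad 1\le i\le n,\quad q_{n+1}:=0 .
$$
The indices split into three blocks, where $d_i$ equals $1$, $p_1$, or $p_2$.

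\textbf{Case (i), $\pi=p_1$.}
\begin{itemize}
\item \emph{Last block} ($d_i=p_2$). Write $p_2=p_1r$ with $\deg r=1$ and cancel $p_1$ to get $r(h)\bigl(q_i(h+1)+q_{i+1}(h+1)\bigr)=q_i(h)$. For $i=n$ this reads $r(h)q_n(h+1)=q_n(h)$, and comparing degrees forces $q_n=0$. Descending induction then gives $q_i=0$ on the whole block.
\item \emph{Middle block} ($d_i=p_1$). The relation becomes $q_{i+1}(h+1)=q_i(h)-q_i(h+1)$, with $q_{m+k+1}=0$. Hence $q_{m+1}$ determines $q_{m+2},\dots,q_{m+k}$ uniquely, since each is obtained from the previous one by a shifted difference operator. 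The single constraint is $\Delta^k q_{m+1}=0$, i.e.\ $\deg q_{m+1}\le k-1$. So the middle-block solutions are parametrized by the $k$-dimensional space $\C[h]_{\le k-1}$.
\item \emph{Top block} ($d_i=1$). The relation is $q_{i+1}(h+1)=T(q_i)$, where $T(g)(h):=p_1(h)g(h)-g(h+1)$. Since $\deg p_1\ge 1$, the leading term of $p_1g$ cannot cancel, so $T$ raises degree by exactly $\deg p_1=1$; in particular $T$ is injective.
\end{itemize}
It follows that $q_{m+1}$ is the image of $q_1$ under a composite $\Phi$ of $m$ maps, each an injective map raising degree by exactly one (copies of $T$ interleaved with the shift $\sigma$). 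Every top-block entry is then recovered uniquely from $q_1$ by injectivity.

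\textbf{Counting in case (i).} The solution space is isomorphic to
$$\{q_1:\ \deg\Phi(q_1)\le k-1\}=\C[h]_{\le k-m-1},$$
which has dimension $(k-m)^+$. This gives both the criterion $k\ge m+1$ and the dimension $k-m$.

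\textbf{Case (ii), $\pi=p_2$.} The same scheme applies, with the blocks playing shifted roles.
\begin{itemize}
\item \emph{Last block.} Now $d_i=\pi$, so it behaves like the middle block of (i): the block is determined by $q_{m+k+1}$ subject only to $\deg q_{m+k+1}\le l-1$.
\item \emph{Middle block.} Cancelling $p_1$ gives $q_{i+1}(h+1)=r(h)q_i(h)-q_i(h+1)$, an injective operator raising degree by exactly $1$.
\item \emph{Top block.} Here $q_{i+1}(h+1)=p_2(h)q_i(h)-q_i(h+1)$, an injective operator raising degree by exactly $2$.
\end{itemize}
Thus $q_{m+k+1}=\Psi(q_1)$, where $\Psi$ is injective and raises degree by exactly $2m+k$. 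The solution space is isomorphic to $\C[h]_{\le l-1-2m-k}$, of dimension $(l-2m-k)^+$, which gives (ii).

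\textbf{Main point to verify.} The only delicate step is that each operator $g\mapsto c(h)g(h)-g(h+1)$ with $\deg c\ge1$ raises degree by exactly $\deg c$. This holds because the leading coefficient of $c\,g$ cannot cancel against that of $\sigma^{-1}g$. This exactness is what makes the image of a degree-filtered space precisely a shifted degree-filtered space, so that the dimension count is exact. The first step of the last block in case (i) ($\deg r=1$ forcing $q_n=0$) rests on the same kind of degree comparison.
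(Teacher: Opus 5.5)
Your proposal is correct and is essentially the paper's proof. The paper also proceeds blockwise: it kills the $p_2$-block in (i) by a degree comparison, parametrizes the Jordan-type block by its first coordinate in $V_\ell$, and pushes $q_1$ through the exactly degree-raising operators $T_a(g)(h)=a(h-1)g(h-1)-g(h)$ (your $\sigma\circ T$) to obtain $(k-m)^+$ and $(l-2m-k)^+$.
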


\begin{proof}
For $r\in\Z$, let $V_r:=\{g\in\C[h] : \deg g<r\}$, with the convention $V_r=0$ if $r\le 0$. Then $\dim V_r=r^{+}$. For a nonzero polynomial $a(h)\in\C[h]$  define the linear operator
$$
T_a:\C[h]\to\C[h],\qquad T_a\big(g(h)\big):=a(h-1)g(h-1)-g(h).
$$

If  $\deg a>0$  and $g\neq 0$, then $\deg\big(T_a(g)\big)=\deg a+\deg g$. In particular, if $\deg a>0$ and  $r\geq 0$,
$$
T_a:V_r\hookrightarrow V_{r+\deg a}
$$
is injective. We also use the following claim, whose verification is straightforward.

\smallskip\noindent
{\bf Claim.}
Let $\ell \in \Z_{\geq 1}$ and $\boldsymbol x=(x_1 \;\; \dots \;\; x_\ell)^{\mathsf T}\in\C[h]^\ell$. Then $\J_\ell\boldsymbol x(h+1)=\boldsymbol x(h)$
if and only if
$$
x_{i+1} = T_1^{i}x_1 \;\;(1\leq i \leq \ell-1),\qquad \text{and} \qquad x_1 \in V_\ell.
$$

In particular, the space of solutions of $\J_\ell\boldsymbol x(h+1)=\boldsymbol x(h)$ has dimension $\ell$.

\smallskip\medskip

\noindent
We first prove part (i). Let $\boldsymbol q=(\boldsymbol u\;\;\boldsymbol v\;\;\boldsymbol w)^{\mathsf T}$, where $\boldsymbol u\in\C[h]^m$, $\boldsymbol v\in\C[h]^k$, and $\boldsymbol w\in\C[h]^l$. The equation $F(h)\boldsymbol q(h+1)=p_1(h)\boldsymbol q(h)$ becomes
\[
\begin{cases}
\J_m\boldsymbol u(h+1)+E_{m,1}\boldsymbol v(h+1)=p_1(h)\boldsymbol u(h),\\
\J_k\boldsymbol v(h+1)+E_{k,1}\boldsymbol w(h+1)=\boldsymbol v(h),\\
p_2(h)\J_l\boldsymbol w(h+1)=p_1(h)\boldsymbol w(h).
\end{cases}
\tag{1}
\]

The last equation of $p_2(h)\J_l\boldsymbol w(h+1)=p_1(h)\boldsymbol w(h)$ is
$$
p_2(h)w_l(h+1)=p_1(h)w_l(h),
$$
which implies $w_l=0$. Then  from the remaining equations of $p_2(h)\J_l\boldsymbol w(h+1)=p_1(h)\boldsymbol w(h)$, we obtain
$$
w_{l-1}=0,\quad w_{l-2}=0,\;\dots,\quad w_1=0,
$$
so $\boldsymbol w=0$. Then, the second equation in $(1)$ becomes $\J_k \boldsymbol v(h+1)=\boldsymbol v(h).$
By the above claim, its solution space is parametrized by $V_k$ via the first coordinate
$v_1$. Writing the first equation in $(1)$ coordinatewise and replacing $h$ by $h-1$, we obtain
$$
u_{i+1}=T_{p_1}(u_i)\qquad (1\le i\le m-1),
\qquad
v_1=T_{p_1}(u_m).
$$

Hence, $v_1=T_{p_1}^m(u_1)$. Therefore the polynomial solutions of $F(h)\boldsymbol q(h+1)=p_1(h)\boldsymbol q(h)$ are in bijection with the set of
$u_1\in\C[h]$ such that $T_{p_1}^m(u_1)\in V_k$. Since $\deg p_1=1$, the map 
$$
T_{p_1}^m:V_{k-m}\hookrightarrow V_k
$$
is injective, and $T_{p_1}^m(u_1)\in V_k$ holds exactly when $u_1\in V_{k-m}$. Thus the solution
space is in bijection with $V_{k-m}$, and hence has dimension $\dim V_{k-m}=(k-m)^{+}$.

\medskip\noindent
For part (ii), consider the equation $F(h)\boldsymbol q(h+1)=p_2(h)\boldsymbol q(h)$, which is equivalent to the system
\[
\begin{cases}
\J_m\boldsymbol u(h+1)+E_{m,1}\boldsymbol v(h+1)=p_2(h)\boldsymbol u(h),\\
p_1(h)\bigl(\J_k\boldsymbol v(h+1)+E_{k,1}\boldsymbol w(h+1)\bigr)=p_2(h)\boldsymbol v(h),\\
p_2(h)\J_l\boldsymbol w(h+1)=p_2(h)\boldsymbol w(h).
\end{cases}
\tag{2}
\]

Set $r(h):=\frac{p_2(h)}{p_1(h)}$. Then $r(h) \in \C[h]$ and $\deg r=1$. The last equation in $(2)$ simplifies to $\J_l\boldsymbol w(h+1)=\boldsymbol w(h)$, and hence, by the claim, its solution space is parametrized by $V_l$ via the first coordinate $w_1$. Dividing the middle equation in $(2)$ by $p_1(h)$, we obtain
$$
\J_k \boldsymbol v(h+1)+E_{k,1}\boldsymbol w(h+1)=r(h)\boldsymbol v(h).
$$

Writing this equation coordinatewise and replacing $h$ by $h-1$, leads to 
$$
v_{i+1}=T_r(v_i)\qquad (1\le i\le k-1),
\qquad
w_1=T_r(v_k).
$$

Hence $w_1=T_r^k(v_1)$. Since $\deg r=1$, the map 
$$
T_r^k:V_{l-k}\hookrightarrow V_l
$$
is injective, and the solutions $(\boldsymbol v\;\; \boldsymbol w)^{\mathsf T}$ of the lower subsystem are in bijection with $V_{l-k}$. Therefore, the solution space of the lower subsystem has dimension $\dim V_{l-k}=(l-k)^{+}.$

Finally, consider the first equation in $(2)$. Replacing $h$ by $h-1$, we obtain
$$
u_{i+1}=T_{p_2}(u_i)\qquad (1\leq i\leq m-1),
\qquad
v_1=T_{p_2}(u_m).
$$

Hence $v_1=T_{p_2}^m(u_1)$. Since $\deg p_2=2$, the map 
$$
T_{p_2}^m:V_{l-k-2m}\hookrightarrow V_{l-k}
$$
is injective. Combining this with the fact that $V_{l-k}$ parameterizes the solution spaces of the lower subsystem, we see that the solutions of $F(h)\boldsymbol q(h+1)=p_2(h)\boldsymbol q(h)$ are in bijection with $V_{l-k-2m}$. Hence, the solution space has dimension $\dim V_{l-k-2m}=(l-2m-k)^{+}$. This proves the lemma.
\end{proof}

\begin{lemma}\label{lem-splitting-C}
Let $\boldsymbol{a} = (a_-, a_0, a_+) \in \Z_{\geq 0}^3$ with $a_+ >2a_-+a_0$, and set $n=a_- + a_0 +a_+$.   Then
$$
M(\alpha, \boldsymbol{a},\J_n) \simeq M\big(\alpha,(a_-, a_0, a_+-1),\J_{n-1}\big)\oplus M(\alpha,(0,0,1),1).
$$
\end{lemma}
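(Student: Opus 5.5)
The plan is to split off the last basis direction. Write
$$F=P_{(\boldsymbol a,\alpha)}(h)\J_n=\diag(\underbrace{1,\dots,1}_{a_-},\underbrace{p_1,\dots,p_1}_{a_0},\underbrace{p_2,\dots,p_2}_{a_+})\J_n,$$
where $p_1=h-\alpha+1$ and $p_2=-\lambda_\alpha(h)=(h-\alpha+1)(h+\alpha)$, so that $p_1\mid p_2$ and $\deg p_i=i$.

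Let $U$ be the $\C[h]$-span of $\mathsf e_1,\dots,\mathsf e_{n-1}$. As noted at the start of the Jordan-cell subsection, $U$ is an $\mathfrak{sl}(2)$-submodule isomorphic to $M\big(\alpha,(a_-,a_0,a_+-1),\J_{n-1}\big)$. It therefore suffices to find a vector $\boldsymbol q\in\C[h]^n$ whose last coordinate is a nonzero constant and such that $\C[h]\boldsymbol q$ is an $\mathfrak{sl}(2)$-submodule isomorphic to $M(\alpha,(0,0,1),1)$. Since $q_n\in\C^*$, we automatically get $\C[h]^n=U\oplus\C[h]\boldsymbol q$ as $\C[h]$-modules, hence as $\mathfrak{sl}(2)$-modules.

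\textbf{Step 1: the $f$-condition.} For $f\cdot(g\boldsymbol q)=F\,\sigma^{-1}(g)\,\boldsymbol q(h+1)$ to equal $p_2(h)\,g(h+1)\,\boldsymbol q(h)$ (the $f$-action of $M(\alpha,(0,0,1),1)$), the required condition is exactly
$$F(h)\boldsymbol q(h+1)=p_2(h)\boldsymbol q(h).$$
Apply Lemma~\ref{lem-diffeq-2}(ii) with $m=a_-$, $k=a_0$, $l=a_+$. The hypothesis $a_+>2a_-+a_0$ makes the solution space nonzero, of dimension $a_+-2a_--a_0$.

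\textbf{Step 2: making the last coordinate a nonzero constant.} This is the main point, so I will track degrees through the proof of that lemma. Solutions are parametrized by $u_1\in V_{l-k-2m}$ via $w_1=T_r^{k}T_{p_2}^{m}(u_1)$, where each $T_a$ with $\deg a>0$ raises degree by exactly $\deg a$. By the Claim, $w_l=T_1^{\,l-1}w_1$, and $T_1(g)=g(h-1)-g(h)$ lowers degree by exactly one on nonconstant $g$. So choose $u_1$ of degree exactly $l-k-2m-1\ge 0$, which is possible precisely because $a_+>2a_-+a_0$. Then
$$\deg w_1=(l-k-2m-1)+k+2m=l-1,$$
so $w_l=q_n$ is a nonzero constant, and we rescale so that $q_n=1$. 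The degree claims for $T_1$, $T_r$ and $T_{p_2}$ are the only delicate point; they follow from leading-coefficient computations, since $a(h-1)g(h-1)-g(h)$ has the leading term of $ag$ when $\deg a\ge1$.

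\textbf{Step 3: the $e$-action is automatic.} Because $F\sigma^{-1}(E)=\lambda_\alpha\I_n$ with $\lambda_\alpha\ne0$, we also have $\sigma^{-1}(E)F=\lambda_\alpha\I_n$, i.e. $E(h)F(h-1)=\lambda_\alpha(h-1)\I_n$. Shifting the relation from Step~1 by $h\mapsto h-1$ gives $F(h-1)\boldsymbol q(h)=p_2(h-1)\boldsymbol q(h-1)$. Hence
$$e\cdot\boldsymbol q=E\,\boldsymbol q(h-1)=\frac{\lambda_\alpha(h-1)}{p_2(h-1)}\,\boldsymbol q(h)=-\boldsymbol q(h),$$
which is exactly the $e$-action $\sigma\big(\overline P_{((0,0,1),\alpha)}\big)=-1$ in $M(\alpha,(0,0,1),1)$. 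Thus $\C[h]\boldsymbol q$ is an $\mathfrak{sl}(2)$-submodule isomorphic to $M(\alpha,(0,0,1),1)$, and together with the first paragraph this proves the lemma.
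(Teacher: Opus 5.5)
Your proof is correct, but it reaches the splitting by a different route than the paper. Write $M:=M(\alpha,\boldsymbol a,\J_n)$ and $C:=M(\alpha,(0,0,1),1)$.

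The paper argues by a dimension count. It applies Lemma~\ref{lem-diffeq-2} twice, obtaining $\dim\Hom_{\mathfrak{sl}(2)}(C,M)=a_+-2a_--a_0$ and $\dim\Hom_{\mathfrak{sl}(2)}(C,U)=a_+-2a_--a_0-1$. Hence some homomorphism $\varphi\colon C\to M$ has image $X\not\subset U$. Simplicity of $C$ (Proposition~\ref{simplicity-rank1}) then forces the composite $X\to M/U\simeq C$ to be an isomorphism, and so $M=U\oplus X$.

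You instead open up the proof of Lemma~\ref{lem-diffeq-2}. You use its explicit parametrization $w_1=T_r^{k}T_{p_2}^{m}(u_1)$ and $w_l=T_1^{\,l-1}w_1$, together with the exact degree behaviour of the operators $T_a$, to produce a solution whose last coordinate is a nonzero constant. This gives a $\C[h]$-complement to $U$ directly. What your route buys:
\begin{itemize}
\item[(a)] You need neither the simplicity of $C$ nor the second application of the lemma to $U$.
\item[(b)] You obtain an explicit complement.
\item[(c)] You verify the $e$-compatibility (your Step~3), which the paper leaves implicit when it identifies $\Hom_{\mathfrak{sl}(2)}(C,-)$ with the solution space of the $f$-equation.
\end{itemize}
The price is that you rely on the internals of that proof rather than only on its statement. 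As there, when $a_-=0$ the free parameter is $v_1$ or $w_1$ rather than $u_1$, but this does not affect your degree count.

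The two arguments are closer than they look. The last equation forces $q_n(h+1)=q_n(h)$, so any solution not lying in $U$ automatically has $q_n\in\C^*$. The paper's dimension count therefore produces exactly the kind of vector you construct.
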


\begin{proof}
 Let $U$ be the submodule of $M(\alpha, \boldsymbol{a},\J_n)$ corresponding to the first $n-1$ coordinates. Then, 
 $$U \simeq M\big(\alpha,(a_-, a_0, a_+-1),\J_{n-1}\big).$$ 
 
 It remains to show that the short exact sequence
$$
0\longrightarrow U\longrightarrow M(\alpha, \boldsymbol{a},\J_n)\longrightarrow M\big(\alpha,(0,0,1),1\big)\longrightarrow 0.
$$
splits. Applying Lemma~\ref{lem-diffeq-2} to the $f$-matrix of $M(\alpha, \boldsymbol{a},\J_n)$, we obtain
$$
\dim \Hom_{\mathfrak{sl}(2)}\left(M\big(\alpha,(0,0,1),1\big),M(\alpha, \boldsymbol{a},\J_n)\right)  = a_+-2a_--a_0
$$
because the equation
$$
F(h) \boldsymbol q(h+1)=p_2(h)\boldsymbol q(h)
$$
has the solution space of dimension $a_+-2a_--a_0$. On the other hand, Lemma~\ref{lem-diffeq-2}, applied to the submodule $U\simeq M\big(\alpha,(a_-, a_0, a_+-1),\J_{n-1}\big)$, yields
$$
\dim \Hom_{\mathfrak{sl}(2)}\left(M\big(\alpha,(0,0,1),1\big),M\big(\alpha,(a_-, a_0, a_+-1),\J_{n-1}\big)\right)=a_+-2a_--a_0-1.
$$

Therefore, there exists a nonzero homomorphism $\varphi:M(\alpha,(0,0,1),1)\to M(\alpha, \boldsymbol{a},\J_n)$, such that $X:=\varphi(M(\alpha,(0,0,1),1)) \not\subset U$. Since $M(\alpha,(0,0,1),1)$ is simple and $\varphi\neq 0$, we have $X\simeq M(\alpha,(0,0,1),1)$. Because $X\not\subset U$, the composite map
$$
X\hookrightarrow M\twoheadrightarrow M(\alpha, \boldsymbol{a},\J_n)/U\simeq M(\alpha,(0,0,1),1)
$$
is nonzero. Since both $X$ and $M(\alpha,(0,0,1),1)$ are simple, this map is an isomorphism. In particular,
$$
X\cap U=0
\qquad\text{and}\qquad
M(\alpha, \boldsymbol{a},\J_n)=U+X,
$$
which proves the statement. 
\end{proof}

\begin{definition}
Let $M$ be an $\mathfrak{sl}(2)$--module. The \emph{socle} of $M$, denoted $\soc(M)$, is the sum of all simple submodules of $M$ (equivalently, the largest semisimple submodule of $M$).

Inductively define the \emph{socle filtration} $\{\soc_i(M)\}_{i\ge 0}$ by
$$
\soc_0(M):=0,\qquad 
\soc_{i+1}(M):=\pi_i^{-1}\big(\soc(M/\soc_i(M))\big)\qquad (i\ge 0),
$$
where $\pi_i:M\to M/\soc_i(M)$ is the natural projection.
The successive quotients
$$
\soc_i(M)/\soc_{i-1}(M)\qquad (i\ge 1)
$$
are called the \emph{socle layers} of $M$.

If $\soc_\ell(M)=M$ for some $\ell$, then the least such $\ell$ is called the \emph{socle length} of $M$.
\end{definition}

For the remainder of this section, we introduce the following notation. Given $a_0\in \Z_{\geq 1}$ and $a_+\in \Z_{\geq 0}$, let $N_{a_0,a_+}$ be the $\mathfrak{sl}(2)$-module isomorphic to $M\big(\alpha,(0,a_0,a_+),\J_{a_0+a_+}\big)$, whose $f$-matrix is
$$
F_{a_0,a_+}(h)=\begin{pmatrix} p(h)\I_{a_0} & p(h)c_{a_0}(h)\mathsf e_1^{\mathsf T}\\ 0 & -\lambda_\alpha(h)\J_{a_+}
\end{pmatrix},\qquad
c_{a_0}(h):=P_{a_0}(h)^{-1}\mathsf e_{a_0},\quad \mathsf e_1\in \C^{a_+},
$$
where $p(h):=h-\alpha+1$. Indeed, this follows from the identity  
\begin{equation} \label{changingbasis}
F_{a_0,a_+}(h)=\begin{pmatrix}P_{a_0}(h)&0\\0&\I_{a_+}\end{pmatrix}^{-1} P_{((0,a_0,a_+), \alpha)}(h) \J_{a_0+a_+}\,
\begin{pmatrix}P_{a_0}(h+1)&0\\0&\I_{a_+}\end{pmatrix}.
\end{equation}

By convention, $N_{0,a_+}\simeq M(\alpha,(0,0,a_+),\J_{a_+})$.

\begin{lemma}\label{lem-resonant-step}
Assume that $2\alpha-2\in\Z_{\geq 0}$. Define
$$
g_\alpha(h):=\prod_{j=0}^{2\alpha-2}(h+\alpha-1-j),
\qquad
D:=\C[h]\mathsf e_1\subset N_{a_0,a_+},
\qquad
S:=\C[h]g_\alpha(h)\mathsf e_1\subset D.
$$

Then 
$$D\simeq M(\alpha,(0,1,0),1),\quad S=\soc(D)\simeq M(\alpha,(0,-1,0),1),\quad N_{a_0,a_+}/S \simeq L(2\alpha-2)\oplus N_{a_0-1,a_+}. $$
\end{lemma}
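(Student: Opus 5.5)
The plan is to treat the three claims in order, working throughout with the explicit $f$-matrix $F_{a_0,a_+}(h)$ from \eqref{changingbasis}. Here $f\cdot\boldsymbol g=F\,\sigma^{-1}(\boldsymbol g)$ and $\sigma^{-1}(E)=\lambda_\alpha(h)F^{-1}$.

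\emph{Step 1: $D$ and its socle.} The first column of $F_{a_0,a_+}$ is $p(h)\mathsf e_1$. Hence the first column of $\sigma^{-1}(E)=\lambda_\alpha F^{-1}$ is $(\lambda_\alpha/p)\,\mathsf e_1$. So $D=\C[h]\mathsf e_1$ is stable under $e$ and $f$, and $D\simeq M(p,\alpha,1)=M(\alpha,(0,1,0),1)$.

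For $S$ I will compute directly. The roots of $g_\alpha$ are $-\alpha+1,\dots,\alpha-1$, so $g_\alpha(h+1)p(h)=(h+\alpha)g_\alpha(h)$. This gives $f\cdot(g\,g_\alpha\mathsf e_1)=(h+\alpha)\,g(h+1)\,g_\alpha\mathsf e_1$, and similarly for $e$. Thus $S\simeq M(\alpha,(0,-1,0),1)$, which is exactly the kernel in the non-split sequence of Proposition~\ref{simplicity-rank1}(ii) (note $2\alpha-2\in\Z_{\geq0}$), with $D/S\simeq L(2\alpha-2)$.

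To get $S=\soc(D)$ I argue as follows. $S$ is simple, since it is the kernel of that non-split sequence, i.e. the rank-one simple module $M(1-\alpha,(0,1,0),1)$ after Theorem~\ref{main-isom-thm}. Any nonzero submodule of $D$ has rank one, so it has finite-dimensional cokernel and therefore meets $S$ nontrivially. Hence $S$ is the unique simple submodule of $D$.

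\emph{Step 2: $N_{a_0,a_+}/D\simeq N_{a_0-1,a_+}$.} Since $D$ is spanned by the first basis vector, the quotient has $f$-matrix obtained by deleting the first row and column of $F_{a_0,a_+}$. The only thing to check is that rows $2,\dots,a_0$ of $c_{a_0}=P_{a_0}(h)^{-1}\mathsf e_{a_0}$ equal $c_{a_0-1}$. This holds because $P_{a_0}(h)^{-1}$ is upper-triangular Toeplitz with entries $\binom{h}{j-i}$, by the Remark after Lemma~\ref{lem:diag-Jn-explicit}. By Lemma~\ref{1st-isom-thm} the quotient is then $N_{a_0-1,a_+}$.

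\emph{Step 3: splitting $0\to D/S\to N_{a_0,a_+}/S\to N_{a_0-1,a_+}\to0$.} I expect this to be the main obstacle, since the central characters agree and no Ext vanishing is available for free. Rather than building complements by hand, I will produce a homomorphism $\psi:N_{a_0,a_+}\to L(2\alpha-2)$ that is nonzero on $D$. Any such $\psi$ kills $S$, because $S$ is simple and infinite-dimensional. Therefore $\psi$ induces a map $N_{a_0,a_+}/S\to L(2\alpha-2)$ whose restriction to $D/S\simeq L(2\alpha-2)$ is an isomorphism, and $D/S$ is a direct summand.

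To find $\psi$, I will count dimensions. By Proposition~\ref{quotient-weighting}(i), $\dim\Hom(M,L(2\alpha-2))=\dim\Hom(\mathcal W(M),L(2\alpha-2))$. Since $a_-=0$, Theorem~\ref{cohfamilydecomp}(i) gives $\mathcal W(N_{a_0,a_+})\simeq\mathcal F_0^+(\alpha)^{\oplus a_0}\oplus\mathcal F_+(\alpha)^{\oplus a_+}$. Only $\mathcal F_0^+(\alpha)$ maps nontrivially to $L(2\alpha-2)$, and I expect it to do so with a one-dimensional Hom (this is to be checked from the explicit formulas). Hence $\dim\Hom(N_{a_0,a_+},L)=a_0$ while $\dim\Hom(N_{a_0-1,a_+},L)=a_0-1$.

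Left exactness of $\Hom(-,L)$ applied to $D\to N_{a_0,a_+}\to N_{a_0-1,a_+}\to0$ then forces some $\psi$ that does not vanish on $D$. Consequently $N_{a_0,a_+}/S\simeq L(2\alpha-2)\oplus N_{a_0-1,a_+}$.
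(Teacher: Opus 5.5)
Your proof is correct. Steps~1--2 are essentially the paper's argument: $D$ comes from the first column of $F_{a_0,a_+}$, $S$ and $D/S\simeq L(2\alpha-2)$ come from the rank-one theory, and $N_{a_0,a_+}/D\simeq N_{a_0-1,a_+}$ comes from deleting the first row and column.

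The genuine difference is the splitting. The paper writes down a candidate complement $W=\{(\eta(h)u,\,u)^{\mathsf T}\}$ with $\eta$ supported on the last $a_+$ coordinates. It turns $f\cdot W\subset W$ into congruences $y(h+1)-(h+\alpha)y(h)\equiv r(h)\pmod{q_\alpha(h)}$ and solves them by interpolation at the roots of $q_\alpha$. You instead count $\dim\Hom(-,L(2\alpha-2))$ through Proposition~\ref{quotient-weighting}(i) and Theorem~\ref{cohfamilydecomp}(i), which applies here because $a_-=0$. That gives $a_0$ versus $a_0-1$, and you take $\ker\bar\psi$ as the complement.

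The paper's route is elementary and produces $W$ concretely. Yours is shorter, reuses the weighting machinery, and gives a complement that is automatically stable under $e$ as well as $f$. That matters, because in the torsion quotient $N_{a_0,a_+}/S$ one cannot cancel $\lambda_\alpha(h)$, so $f$-stability of a $\C[h]$-complement does not force $e$-stability. For example, take $\alpha=1$ and $a_0=a_+=1$, where $E=\begin{pmatrix}-h&1\\0&-1\end{pmatrix}$. Every $\eta$ gives an $f$-stable $W$, which matches the paper's ``nothing to solve''. However, $e\cdot(0,1)^{\mathsf T}=(1,-1)^{\mathsf T}$ shows that only $\eta\equiv-1\pmod h$ gives a submodule. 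The paper's explicit construction therefore needs an extra $e$-check that your argument bypasses.

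Two small repairs on your side:
\begin{enumerate}
\item Being the kernel of a non-split sequence does not by itself make $S$ simple. Also, Proposition~\ref{simplicity-rank1} as stated only covers $\re\ge\tfrac12$, so it does not apply to $M(1-\alpha,(0,1,0),1)$. Check simplicity directly instead. If $q$ generates a submodule of $M(\alpha,(0,-1,0),1)$, then $f$-stability forces each root of $q$ to climb to $-\alpha$, and $e$-stability forces it to descend to $\alpha$. Both together are impossible for $\alpha\ge1$.
\item The Hom count you left to be checked does hold. A map $\mathcal F_0^+(\alpha)\to L(2\alpha-2)$ is determined by the image of $v_{\alpha-1}$, because $f v_\lambda=(\lambda-\alpha)v_{\lambda-1}$ with $\lambda-\alpha\neq0$ for $\lambda\le\alpha-1$. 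A nonzero such map exists, namely the quotient by $\Span\{v_\lambda:\lambda\ge\alpha\text{ or }\lambda\le-\alpha\}$ together with the other cosets. Finally, $\Hom(\mathcal F_+(\alpha),L(2\alpha-2))=0$ because $e$ is injective on $\mathcal F_+(\alpha)$.
\end{enumerate}
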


\begin{proof}
By Lemma \ref{lem:diag-Jn-explicit}, 
$$
c_{a_0}(h)=P_{a_0}(h)^{-1}\mathsf e_{a_0}=
\left(\binom{h}{{a_0}-1},\;\; \binom{h}{{a_0}-2},\;\;\dots\;\;\binom{h}{1},\;\;1\right)^{\mathsf T}.
$$

The upper left $a_0\times a_0$ block of $F_{a_0,a_+}(h)$ is $p(h)\I_{a_0}$, so by
Proposition \ref{prop-ppp-directsum}, $D=\C[h]\mathsf e_1$
is an $\mathfrak{sl}(2)$--submodule isomorphic to $M(\alpha,(0,1,0),1)$.
Hence
$$
S=\soc(D)=\C[h]g_\alpha(h)\mathsf e_1\simeq M(\alpha,(0,-1,0),1),
\qquad
D/S\simeq L(2\alpha-2).
$$

Write
$$
c_{a_0}(h)=\binom{b_{a_0}(h)}{c_{{a_0}-1}(h)},
\qquad
b_{a_0}(h):=\binom{h}{{a_0}-1}.
$$

Then projection onto the last $({a_0}+{a_+}-1)$ coordinates yields the short exact sequence
$$
0\longrightarrow D/S\longrightarrow N_{a_0,a_+}/S\longrightarrow N_{a_0-1,a_+}\longrightarrow 0.
$$

Hence, it only remains to prove that this sequence splits. We look for an $\mathfrak{sl}(2)$-module complement $W$ to $D/S$ in $N_{a_0,a_+}/S$,
$$
W=\big\{(\eta(h)u\;\;\;u)^{\mathsf T}: u\in \C[h]^{{a_0}+{a_+}-1}\big\}, \qquad
\eta(h)\in \big(\C[h]/\C[h]g_\alpha(h)\big)^{1\times({a_0}+{a_+}-1)}.
$$

It is enough to take
$$
\eta(h)=\big(0\;\;\dots\;\;0\;\;y_1(h)\;\;\dots\;\;y_{a_+}(h)\big),
$$
supported only on the last $a_+$ coordinates. Then the condition
$f\cdot W\subset W$ is equivalent to

$$
p(h)y_1(h+1)+\lambda_\alpha(h)y_1(h)\equiv -p(h)b_{a_0}(h)\pmod{g_\alpha(h)},
$$
and
$$
p(h)y_j(h+1)+\lambda_\alpha(h)\bigl(y_{j-1}(h)+y_j(h)\bigr)\equiv 0
\pmod{g_\alpha(h)}
\qquad (2\leq j\leq {a_+}).
$$

After dividing by $p(h)$, these congruences become
$$
y_1(h+1)-(h+\alpha)y_1(h)\equiv -b_{a_0}(h)
\pmod{\prod_{j=0}^{2\alpha-3}(h+\alpha-1-j)},
$$
and
$$
y_j(h+1)-(h+\alpha)y_j(h)\equiv (h+\alpha)y_{j-1}(h)
\pmod{\prod_{j=0}^{2\alpha-3}(h+\alpha-1-j)}
\qquad (2\leq j\leq {a_+}).
$$

We now solve this system of congruences. If $\alpha=1$, then there is nothing to solve. Assume that $\alpha>1$, and set
$$
q_\alpha(h):=\prod_{j=0}^{2\alpha-3}(h+\alpha-1-j),\qquad d:=\deg q_\alpha=2\alpha-2.
$$

We claim that for every $r(h)\in\C[h]$ there exists $y(h)\in\C[h]$ such that
$$
y(h+1)-(h+\alpha)y(h)\equiv r(h)\pmod{q_\alpha(h)}.
$$

Indeed, for $0\leq i\leq d$, let $t_i:=-\alpha+1+i$. Then $t_0,\dots,t_{d-1}$ are precisely the roots of $q_\alpha(h)$.
Choose numbers $\beta_0,\dots,\beta_d$ recursively by
$$
\beta_0:=0,\qquad \beta_{i+1}:=(i+1)\beta_i+r(t_i)\qquad (0\leq i\leq d-1).
$$

By interpolation, there exists a polynomial $y(h)$ such that $y(t_i)=\beta_i$ for $0\leq i\leq d$. Then
$$
y(t_i+1)-(t_i+\alpha)y(t_i)=\beta_{i+1}-(i+1)\beta_i=r(t_i),\quad \text{for}\;\; 0\leq i\leq d-1.
$$

Hence
$$
y(h+1)-(h+\alpha)y(h)\equiv r(h)\pmod{q_\alpha(h)},
$$
proving the claim. 

Applying the claim first with $r(h)=-b_{a_0}(h)$, and then recursively with
$$r(h)=(h+\alpha)y_{j-1}(h),\qquad  (2\leq j\leq a_+),$$ 
we obtain the required polynomials $y_1,\dots,y_{a_+}$. Thus such submodule $W$ exists, and  
$$
N_{a_0,a_+}/S=(D/S)\oplus W \simeq L(2\alpha-2)\oplus N_{a_0-1,a_+}.
$$
\end{proof}

\begin{proposition}\label{prop-soc1-and-quotient}
Let $\boldsymbol{a} = (a_-, a_0, a_+) \in \Z_{\geq 0}^3$ and $n=a_- + a_0 +a_+$. Define
$$
A:=M(\alpha,(1,0,0),1),\qquad
B_{\pm}:=M(\alpha,(0,\pm1,0),1), ,\qquad
C:=M(\alpha,(0,0,1),1),
$$
$$
K:=\min\{a_-,a_0\},\qquad R:=\min\{a_+,2a_-+a_0\}.
$$
\begin{enumerate}
\item[(i)] If $2\alpha-2\notin\Z_{\ge 0}$, then
$$
\soc_1\big(M(\alpha, \boldsymbol{a},\J_n)\big)\simeq A^{\oplus {a_-}}\oplus
B_+^{\oplus{(a_0-a_-)}^{+}} \oplus
C^{\oplus{(a_+-2a_--a_0)}^{+}},
$$
and
$$
M(\alpha, \boldsymbol{a},\J_n)/\soc_1\big(M(\alpha, \boldsymbol{a},\J_n)\big) \simeq M(\alpha,(0,K,R),\J_{K+R}).
$$

\item[(ii)] If $2\alpha-2\in\Z_{\ge 0}$, then
$$
\soc_1\big(M(\alpha, \boldsymbol{a},\J_n)\big)\simeq A^{\oplus {a_-}}\oplus
B_-^{\oplus{(a_0-a_-)}^{+}} \oplus
C^{\oplus{(a_+-2a_--a_0)}^{+}},
$$
and
$$
M(\alpha, \boldsymbol{a},\J_n)/\soc_1\big(M(\alpha, \boldsymbol{a},\J_n)\big)\simeq
L^{\oplus{(a_0-a_-)}^{+}}\oplus M(\alpha,(0,K,R),\J_{K+R}),
$$
where $ L:=L(2\alpha-2)$.
\end{enumerate}
\end{proposition}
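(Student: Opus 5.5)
The plan is to combine three things: the triangular structure of $\J_n$, which gives a filtration with rank-one subquotients; the Hom-dimension count of Lemma~\ref{lem-diffeq-2}; and the splitting results Lemma~\ref{lem-splitting-C} and Lemma~\ref{lem-resonant-step}. Write $M:=M(\alpha,\boldsymbol a,\J_n)=M\big((1^{a_-},p^{a_0},p_2^{a_+}),\alpha,\J_n\big)$ with $p=h-\alpha+1$ and $p_2=-\lambda_\alpha$. Let $U_k$ be the span of the first $k$ coordinates.

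\emph{Step 1: every simple submodule is one of $A,B_\pm,C$.} By Lemma~\ref{lem-cut-extensionnew} with $A(h)=\J_n$, the chain $U_1\subset\cdots\subset U_n=M$ is a filtration whose subquotients are rank-one modules $M(p_i,\alpha,1)$. Let $S\subseteq M$ be simple and let $k$ be minimal with $S\subseteq U_k$. Then $S\cap U_{k-1}=0$, so $S$ embeds into $U_k/U_{k-1}\simeq M(p_k,\alpha,1)$. Hence $S$ is a rank-one submodule of $A$, $B_+$ or $C$. By Proposition~\ref{simplicity-rank1}, $S$ is therefore isomorphic to $A$, $B_+$ or $C$ in case (i), and to $A$, $B_-$ or $C$ in case (ii), the last because $\soc(B_+)=B_-$ when $2\alpha-2\in\Z_{\ge0}$.

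\emph{Step 2: reduce to $a_0\le a_-$ and $a_+\le 2a_-+a_0$.} A homomorphism $M(q,\alpha,1)\to M$ with $q\in\{1,p,p_2\}$ is a vector $\boldsymbol q$ solving $F(h)\boldsymbol q(h+1)=q(h)\boldsymbol q(h)$. Lemma~\ref{lem-diffeq-2} then gives
\[
\dim\Hom(B_+,M)=(a_0-a_-)^+,\qquad \dim\Hom(C,M)=(a_+-2a_--a_0)^+,
\]
and the analogous (easier) count gives $\dim\Hom(A,M)=a_-$, realized by $U_{a_-}\simeq A^{\oplus a_-}$ (Proposition~\ref{prop-ppp-directsum}). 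Lemma~\ref{lem-splitting-C}, applied repeatedly, splits off $C^{\oplus(a_+-2a_--a_0)^+}$ as a direct summand and leaves $M(\alpha,(a_-,a_0,R),\J)$. By the same Hom-count argument (compare $\dim\Hom(B_+,-)$ on $M$ and on the submodule spanned by the first $n-1$ coordinates, then use that the extra copy of $B_+$ maps isomorphically onto the top quotient), I will prove the analogous splitting for $B_+$. For this I first move the last $p$-coordinate to the end using Lemma~\ref{lem-cut-extensionnew}, or apply the argument to the quotient $N_{a_0,a_+}$-type module. This splits off $B_+^{\oplus(a_0-a_-)^+}$ and leaves $M':=M(\alpha,(a_-,K,R),\J_{a_-+K+R})$.

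\emph{Step 3: the reduced module $M'$.} Lemma~\ref{lem-diffeq-2} shows $\Hom(B_+,M')=\Hom(C,M')=0$. In case (ii) a $B_-$ inside $M'$ would sit inside its $\C(h)$-saturation, which is $\mathfrak{sl}(2)$-stable and of type $B_+$ (multiply by $g_\alpha^{-1}$). That would give a nonzero map $B_+\to M'$, which is impossible. By Step 1, $\soc(M')=U_{a_-}\simeq A^{\oplus a_-}$, and $M'/U_{a_-}\simeq M(\alpha,(0,K,R),\J_{K+R})$ by Lemma~\ref{lem-cut-extensionnew}. This proves (i): the socle of $M$ is $A^{a_-}\oplus B_+^{(a_0-a_-)^+}\oplus C^{(\cdots)^+}$, the split summands $B_+$ and $C$ being simple, and the quotient is as stated. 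In case (ii) each split summand $B_+$ contributes its socle $B_-$ to $\soc_1$ and contributes $B_+/B_-\simeq L(2\alpha-2)$ to the quotient. Lemma~\ref{lem-resonant-step} gives the same conclusion inside the $N_{a_0,a_+}$ picture and can replace the direct splitting if that is cleaner.

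The main obstacle is the $B_+$-splitting in Step 2. Unlike the $C$ case, the relevant coordinates are not the last ones, so the submodule/quotient bookkeeping requires a preliminary change of basis, as in \eqref{changingbasis}. I would try first to reduce to the $N_{a_0,a_+}$-form after quotienting by $U_{a_-}$ and to argue with Hom counts there. A secondary point is making rigorous, via saturation, that no $B_-$ sits inside $M'$ in the resonant case.
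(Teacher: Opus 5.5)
\textbf{Gap: the splitting of $B_+$ in Step 2 is false.} Your Step 1 and the Hom counts are fine and agree with the paper. So does splitting off the $C$'s via Lemma~\ref{lem-splitting-C}. The problem is the claim that $B_+^{\oplus(a_0-a_-)^+}$ is a direct summand with complement $M(\alpha,(a_-,K,R),\J_{a_-+K+R})$.

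Take $\boldsymbol a=(0,1,1)$. Here Step 2 asserts $M(\alpha,(0,1,1),\J_2)\simeq B_+\oplus C$ for every $\alpha$. But the $f$-matrix is $\begin{pmatrix} p & p\\ 0 & -\lambda_\alpha\end{pmatrix}$ with $p=h-\alpha+1$. A complement $\C[h](y,1)^{\mathsf T}$ to $\C[h]\mathsf e_1$ would require $y(h+1)-(h+\alpha)y(h)=-1$, which has no polynomial solution. This is Lemma~\ref{lem-cut-extension}; also $\dim\Hom(C,M)=0$ by Lemma~\ref{lem-diffeq-2}(ii).

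More generally, take $a_-=0<a_0$ and $1\le a_+\le a_0$. A summand $B_+^{\oplus a_0}$ would force $M\simeq B_+^{\oplus a_0}\oplus C^{\oplus a_+}$, contradicting $\dim\Hom(C,M)=(a_+-a_0)^+=0$.

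Why the $C$ argument does not transfer: the Hom-count trick of Lemma~\ref{lem-splitting-C} works only because the $C$-coordinates come last. An extra copy of $C$ therefore maps isomorphically onto the top rank-one quotient $M/U_{n-1}\simeq C$. The last $p$-coordinate is tied to the first $C$-coordinate by the superdiagonal entry of $\J_n$. Permuting it to the end destroys the triangular shape, so Lemma~\ref{lem-cut-extensionnew} no longer applies. In general a copy of $B_+$ is a submodule, not a summand.

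Step 3 then fails as well. $\Hom(C,M')\neq0$ can occur: in the example $M'=C$. Your decomposition would also put $C$ into $\soc(M(\alpha,(0,1,1),\J_2))$, contradicting the statement you are proving. The non-split gluing between the $p$-block and the $C$-block is exactly what the quotient $M(\alpha,(0,K,R),\J_{K+R})$ records, so it cannot be split away.

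\textbf{What the paper does, and a caution.} The paper never splits off $B_+$. After removing the $C$'s it conjugates by $\diag(P_{a_-}(h),P_{a_0}(h),\I_{a_+})$ and quotients by $\mathbf U\simeq A^{\oplus a_-}$ to obtain $N_{a_0,a_+}$. It identifies the socle with the same Hom counts you use, then computes the quotient inside $N_{a_0,a_+}$. In the resonant case Lemma~\ref{lem-resonant-step} splits off the copies of $L(2\alpha-2)$.

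If you rebuild your argument this way, the real difficulty is deciding which rank-$(a_0-a_-)$ part of $N_{a_0,a_+}$ the socle maps onto, because the isomorphism class of the quotient depends on it. The paper takes this part to be $D_1\oplus\cdots\oplus D_{a_0-a_-}$ on the basis of multiplicities alone, and that is not automatic.

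Concretely, let $\alpha=0$ and $\boldsymbol a=(1,2,1)$. Then $\soc(M)=\C[h]\mathsf e_1\oplus\C[h](1,h-1,-1,0)^{\mathsf T}$. In the basis $\overline{\mathsf e}_2,\overline{\mathsf e}_4$ the quotient has $f$-matrix $\begin{pmatrix} h+1 & (h+1)(h-1)\\ 0 & h(h+1)\end{pmatrix}$, and $\C[h](\overline{\mathsf e}_2+\overline{\mathsf e}_4)\simeq C$. Hence $M/\soc(M)\simeq B_+\oplus C$, not the non-split $M(0,(0,1,1),\J_2)$. So on either route the quotient formula needs a genuine extra argument, and in this instance a correction.
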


\begin{proof}
By repeated application of Lemma \ref{lem-splitting-C},
$$
M(\alpha, \boldsymbol{a},\J_n)\simeq M(\alpha,(a_-,a_0,R),\J_{a_-+a_0+R})\oplus C^{\oplus(a_+-R)}.
$$

Since $C$ is simple, it follows immediately that
$$C^{\oplus(a_+-R)}=C^{\oplus{(a_+-2a_--a_0)}^{+}} \subset \soc_1(M),$$ 
and the quotient by the socle is the same as for the first summand. Hence, it suffices to prove the proposition under the additional assumption $a_+\leq 2a_-+a_0$. (i.e. $a_+=R$).

From now on, we will work under this assumption. Define
$$
U(h):=\diag\big(P_{a_-}(h),P_{a_0}(h),\I_{a_+}\big) \in \GL_n (\C[h]),
$$
 and
$$ M(E,F) := M\left(U(h)^{-1} \sigma\left(\J_{n}^{-1}\overline{P}_{({\boldsymbol{a}},\alpha)}(h)\right) U(h-1),\; U^{-1}(h) P_{({\boldsymbol a}, \alpha)}(h) \J_{n} U(h+1)\right).$$

Hence, $M(E,F) \simeq M(\alpha, \boldsymbol{a},\J_n)$. By Lemma \ref{lem:diag-Jn-explicit}, 
$$
F(h)= U(h)^{-1} P_{({\boldsymbol a}, \alpha)}(h) \J_{n} U(h+1)=  
\begin{pmatrix}
\I_{a_-} & X(h) & 0\\
0 & p(h)\I_{a_0} & p(h)c_{a_0}(h)\mathsf e_1^{\mathsf T}\\
0 & 0 & -\lambda_\alpha(h)\J_{a_+}
\end{pmatrix},
$$
for some matrix $X(h) \in \Mat_{a_- \times a_0}(\C[h])$, $p(h):=h-\alpha+1$, and
$$
c_{a_0}(h):=P_{a_0}(h)^{-1}\mathsf e_{a_0}
=
\bigg(\binom{h}{{a_0}-1},\;\;\; \binom{h}{{a_0}-2},\;\;\;\dots\;\;\;\binom{h}{1},\;\;\;1
\bigg)^{\mathsf T}.
$$

In particular, if ${a_-}<{a_0}$, then the last ${a_-}$ entries of $c_{a_0}(h)$ are exactly $c_{a_-}(h)$.

Let
$$
\mathbf U:=\C[h]^{a_-}\oplus \{0\}\oplus \{0\}\subset M(E,F) = \C[h]^{a_-} \oplus \C[h]^{a_0} \oplus \C[h]^{a_+}. 
$$

Then $\mathbf U$ is a submodule, and it follows from Corollary \ref{cor-scalar} that $\mathbf U\simeq A^{\oplus a_-}$. Let $N:=M(E,F)/\mathbf U$, then $N \simeq N_{a_0,a_+}$. In particular, the induced $f$--matrix on $N$ is
$$
\overline F(h)=
\begin{pmatrix}
p(h)\I_{a_0} & p(h)c_{a_0}(h)\mathsf e_1^{\mathsf T}\\
0 & -\lambda_\alpha(h)\J_{a_+}
\end{pmatrix}.
$$

\smallskip\noindent For part (i), assume that $2\alpha-2\notin\Z_{\geq 0}$. By Corollary \ref{cor-scalar}, the multiplicity of $A$ in $\soc_1\big(M(E,F)\big)$, and hence in $\soc_1\big(M(\alpha,\boldsymbol a,\J_n)\big)$, is precisely $a_-$. Applying Lemma \ref{lem-diffeq-2} to the matrix  $P_{({\boldsymbol a}, \alpha)}(h) \J_{n}$, and using the assumption $a_+ \leq 2a_-+a_0$, we deduce that
$$
\dim\Hom_{\mathfrak{sl}(2)}\big(B_+,M(\alpha,\boldsymbol a,\J_n)\big)=(a_0-a_-)^{+},\qquad \dim\Hom_{\mathfrak{sl}(2)}\big(C,M(\alpha,\boldsymbol a,\J_n)\big)=0.
$$

Hence, $\soc_1\big(M(\alpha,\boldsymbol a,\J_n)\big)\simeq A^{\oplus a_-}\oplus B_+^{\oplus(a_0-a_-)^{+}}$. If $a_-\geq a_0$, then $(a_0-a_-)^{+}=0$, so $\soc_1\big(M(\alpha,\boldsymbol a,\J_n)\big)\simeq \mathbf U$ and $M(\alpha,\boldsymbol a,\J_n)/\soc_1\big(M(\alpha,\boldsymbol a,\J_n)\big)\simeq N$.
Since
$$
\overline F(h)=
\begin{pmatrix}
p(h)\I_{a_0} & p(h)c_{a_0}(h)\mathsf e_1^{\mathsf T}\\
0 & -\lambda_\alpha(h)\J_{a_+}
\end{pmatrix}.
$$

From \eqref{changingbasis}, it follows that $N \simeq M\big(\alpha,(0,a_0, a_+),\J_{a_0+a_+}\big)$. Moreover, since $K= \min\{a_-,a_0\} = a_0$, we deduce 
$$
M(\alpha,\boldsymbol a,\J_n)/\soc_1\big(M(\alpha,\boldsymbol a,\J_n)\big)\simeq M(\alpha,(0,K,a_+),\J_{K+a_+}).
$$

Next, assume that $a_-<a_0$. For $1\leq j\leq a_0-a_-$, let $D_j:=\C[h]\overline{\mathsf e}_j\subset N$,
where $\overline{\mathsf e}_j$ is the $j$th coordinate vector in the quotient space. It is clear that each $D_j$ is a submodule isomorphic to $B_+$. Set
$$
\mathbf T:=D_1\oplus\cdots\oplus D_{a_0-a_-}\subset N.
$$

Since the multiplicities of $A$ and $B_+$ in $\soc_1\big(M(\alpha,\boldsymbol a,\J_n)\big)$ have already been determined, we obtain
$$
M(\alpha,\boldsymbol a,\J_n)/\soc_1\big(M(\alpha,\boldsymbol a,\J_n)\big) \simeq N/\mathbf T.
$$

The induced $f$--matrix of $N/\mathbf T$ is
$$
\begin{pmatrix}
p(h)\I_{a_-} & p(h)c_{a_-}(h)\mathsf e_1^{\mathsf T}\\
0 & -\lambda_\alpha(h)\J_{a_+}
\end{pmatrix}.
$$

Once again, \eqref{changingbasis} yields $N/\mathbf T \simeq M\big(\alpha,(0,a_-,a_+),\J_{a_-+a_+}\big)$. Since $K= \min\{a_-,a_0\} = a_-$, we conclude that
$$
M(\alpha,\boldsymbol a,\J_n)/\soc_1\big(M(\alpha,\boldsymbol a,\J_n)\big) \simeq M\big(\alpha,(0,K,a_+),\J_{K+a_+}\big).
$$

Therefore, part (i) is proven.

\smallskip\noindent For part (ii), assume that $2\alpha-2\in\Z_{\geq 0}$. As above, the multiplicity of $A$ in $\soc_1\big(M(\alpha,\boldsymbol a,\J_n)\big)$ is $a_-$. By the same argument as in part (i),
together with the assumption $a_+ \leq 2a_-+a_0$, it follows that
$$
\dim\Hom_{\mathfrak{sl}(2)}\big(B_+,M(\alpha,\boldsymbol a,\J_n)\big)=(a_0-a_-)^{+},\qquad \dim\Hom_{\mathfrak{sl}(2)}\big(C,M(\alpha,\boldsymbol a,\J_n)\big)=0.
$$

Since $\alpha \in 1 + \tfrac{1}{2}\Z_{\geq 0}$, $B_+$ has a proper simple submodule, and $\soc(B_+) \simeq B_-$. Therefore,
$$
\soc_1\big(M(\alpha,\boldsymbol a,\J_n)\big) \simeq A^{\oplus a_-}\oplus B_-^{\oplus{(a_0-a_-)}^{+}} . 
$$

If $a_-\geq a_0$, then, as in part (i), we have $K=a_0$, and hence
$$
M(\alpha,\boldsymbol a,\J_n)/\soc_1\big(M(\alpha,\boldsymbol a,\J_n)\big)\simeq M(\alpha,(0,K,a_+),\J_{K+a_+}).
$$

Assume now that $a_-<a_0$. Then, for each $1\leq j\leq a_0-a_-$, $D_j$ is a submodule of $N$. Let
$$
S_j:=\soc(D_j)\simeq B_-,
\qquad
\mathbf S:=S_1\oplus\cdots\oplus S_{a_0-a_-}.
$$

Since the multiplicity of $B_+$ in $M(\alpha,\boldsymbol a,\J_n)$  is $(a_0-a_-)$, it follows that
$$
\soc_1\big(M(\alpha,\boldsymbol a,\J_n)\big)\simeq \mathbf U\oplus \mathbf S.
$$

Hence $M(\alpha,\boldsymbol a,\J_n)/\soc_1\big(M(\alpha,\boldsymbol a,\J_n)\big)\simeq N/ \mathbf S$. Under the identifications obtained after each quotient, the image of $S_j$ is the submodule $S=\soc(D)$ in the corresponding module $N_{a_0-j+1,a_+}$. Applying Lemma \ref{lem-resonant-step} repeatedly to $N \simeq N_{a_0,a_+}$, we obtain
$$
N/\mathbf S \simeq L(2\alpha-2)^{\oplus(a_0-a_-)}\oplus N_{a_-,a_+}.
$$

Finally, \eqref{changingbasis} implies $N_{a_-,a_+} \simeq M(\alpha,(0,a_-,a_+),\J_{a_-+a_+})$. Since $K=a_-$ in this case, we conclude that
$$
M(\alpha,\boldsymbol a,\J_n)/\soc_1\big(M(\alpha,\boldsymbol a,\J_n)\big)\simeq L(2\alpha-2)^{\oplus(a_0-a_-)}\oplus M(\alpha,(0,K,a_+),\J_{K+a_+}),
$$
which proves (ii).
\end{proof}

\begin{example}
We apply Proposition \ref{prop-soc1-and-quotient} to determine the socle layers of $M(\alpha,(1,2,6),\J_9)$ as follows
\begin{enumerate}
\item[(i)] If $2\alpha-2\notin\Z_{\geq 0}$, then the nonzero socle layers of $M(\alpha,(1,2,6),\J_9)$ are
\begin{eqnarray*}
\soc_3\big(M(\alpha,(1,2,6),\J_9)\big)/\soc_2\big(M(\alpha,(1,2,6),\J_9)\big) &\simeq& C,\\
\soc_2\big(M(\alpha,(1,2,6),\J_9)\big)/\soc_1\big(M(\alpha,(1,2,6),\J_9)\big) &\simeq& B_+\oplus C^{\oplus 3},\\
\soc_1\big(M(\alpha,(1,2,6),\J_9)\big) &\simeq& A\oplus B_+\oplus C^{\oplus 2}.\\
\end{eqnarray*}

\item[(ii)] If $2\alpha-2\in\Z_{\ge 0}$, then the nonzero socle layers of $M(\alpha,(1,2,6),\J_9)$ are
\begin{eqnarray*}
\soc_3\big(M(\alpha,(1,2,6),\J_9)\big)/\soc_2\big(M(\alpha,(1,2,6),\J_9)\big) &\simeq& L \oplus C,\\
\soc_2\big(M(\alpha,(1,2,6),\J_9)\big)/\soc_1\big(M(\alpha,(1,2,6),\J_9)\big) &\simeq& L \oplus B_-\oplus C^{\oplus 3},\\
\soc_1\big(M(\alpha,(1,2,6),\J_9)\big) &\simeq& A\oplus B_-\oplus C^{\oplus 2}.
\end{eqnarray*}
\end{enumerate}
\end{example}

\section{$U(\mathfrak h)$-Duality and Twisting by the Automorphism $\tau$} \label{sec-dual}
We recall two important functors: the $\mathcal U(\mathfrak h)$-dual, introduced in \cite{MP}, and the twisting functor by the automorphism $\tau$.
\subsection{$U(\mathfrak h)$-Duality }
We first recall the notion of $U(\mathfrak h)$-dual  in
the category $\mathcal M$.

\begin{definition}\label{duality}
For $M\in\mathcal M$, define the $U(\mathfrak h)$-dual of $M$ by
$$
M^{\vee_{\mathfrak h}}:=\Hom_{\C[h]}(M,\C[h]).
$$

It is equipped with the following $\mathfrak{sl}(2)$-action:
\begin{equation*}
(h\cdot g)(m):=h\,g(m)=g(h\cdot m), \qquad (e\cdot g)(m):=\sigma\bigl(g(f\cdot m)\bigr),
\qquad
(f\cdot g)(m):=\sigma^{-1}\bigl(g(e\cdot m)\bigr),
\end{equation*}
where $m\in M$ and $g\in M^{\vee_{\mathfrak h}}$.
\end{definition}

\begin{remark}\label{h-duality-functoriality}
The functor  $$
(\_)^{\vee_{\mathfrak h}}:\mathcal M^{\mathrm{op}}\to\mathcal M
$$
is a contravariant functor and not exact in general. 
\end{remark}

\begin{proposition}\label{dualstructure}
For $n \geq 1$, let $M(E,F)\in\mathcal M(n)$. Then
$$
M(E,F)^{\vee_{\mathfrak h}}
\simeq
M\bigl(\sigma(F^{\mathsf T}),\sigma^{-1}(E^{\mathsf T})\bigr).
$$
\end{proposition}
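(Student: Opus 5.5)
I will work directly with the dual $U(\mathfrak h)$-basis. Since $M=M(E,F)$ has underlying space $\C[h]^{\oplus n}$ with standard basis $\mathsf e_1,\dots,\mathsf e_n$, the module $M^{\vee_{\mathfrak h}}=\Hom_{\C[h]}(M,\C[h])$ is free of rank $n$. Its basis is the dual family $\mathsf e_1^*,\dots,\mathsf e_n^*$, with $\mathsf e_i^*(\mathsf e_j)=\delta_{ij}$. I identify $M^{\vee_{\mathfrak h}}$ with $\C[h]^{\oplus n}$ by sending $g$ to the column vector $\bigl(g(\mathsf e_1)\;\dots\;g(\mathsf e_n)\bigr)^{\mathsf T}$. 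Then $h$ acts by multiplication, as required. By Lemma~\ref{1st_form} it suffices to compute the matrices of $e$ and $f$ in this basis, i.e. the columns $e\cdot\mathsf e_i^*$ and $f\cdot\mathsf e_i^*$.

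For $e$, the definition gives $(e\cdot \mathsf e_i^*)(\mathsf e_j)=\sigma\bigl(\mathsf e_i^*(f\cdot\mathsf e_j)\bigr)$. Since $f\cdot\mathsf e_j=F\sigma^{-1}(\mathsf e_j)=F\mathsf e_j$ is the $j$th column of $F$, we get $(e\cdot\mathsf e_i^*)(\mathsf e_j)=\sigma(F_{ij})$. Hence the $i$th column of the new $e$-matrix is the $i$th row of $\sigma(F)$, so the $e$-matrix is $\sigma(F^{\mathsf T})$. In the same way, $(f\cdot\mathsf e_i^*)(\mathsf e_j)=\sigma^{-1}(E_{ij})$, so the $f$-matrix is $\sigma^{-1}(E^{\mathsf T})$.

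It remains to check that, for a general $g=\sum_i g_i\mathsf e_i^*$, the action is given by $\sigma$-semilinear extension, i.e. $e\cdot g=\sigma(F^{\mathsf T})\sigma(\boldsymbol g)$. This is the only step that needs care, and I expect it to be the main obstacle: the issue is whether the paper's action definition is compatible with the $\C[h]$-module structure on the dual. I would verify it directly. We have $(e\cdot(g_i\mathsf e_i^*))(m)=\sigma\bigl(g_i\,\mathsf e_i^*(f m)\bigr)=\sigma(g_i)\,\sigma\bigl(\mathsf e_i^*(fm)\bigr)$, so $e\cdot(g_i\mathsf e_i^*)=\sigma(g_i)\,(e\cdot\mathsf e_i^*)$. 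This matches the relation $e\,g(h)=\sigma(g)e$ used in Lemma~\ref{1st_form}; the case of $f$ is analogous. I would also check once that $e\cdot g$ is $\C[h]$-linear in $m$, using $f h=(h+1)f$: $\sigma\bigl(g(f h m)\bigr)=\sigma\bigl((h+1)g(fm)\bigr)=h\,\sigma\bigl(g(fm)\bigr)$. So the action is well defined.

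Finally, relation \eqref{sl2-relation} for the pair $\bigl(\sigma(F^{\mathsf T}),\sigma^{-1}(E^{\mathsf T})\bigr)$ holds automatically, since $M^{\vee_{\mathfrak h}}$ is an $\mathfrak{sl}(2)$-module. It can also be checked directly by transposing \eqref{sl2-relation} and applying $\sigma$. Together with the identification above, this gives the isomorphism $M(E,F)^{\vee_{\mathfrak h}}\simeq M\bigl(\sigma(F^{\mathsf T}),\sigma^{-1}(E^{\mathsf T})\bigr)$.
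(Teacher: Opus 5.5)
Your proof is correct and follows the paper's argument: both read off the $e$- and $f$-matrices in the dual basis directly from the definition of the dual action. The paper does this for a general $\boldsymbol g$ via the pairing $\boldsymbol g^{\mathsf T}\boldsymbol v$, while you work on the $\mathsf e_i^*$ and extend semilinearly, also checking well-definedness. As a minor aside, the direct check of \eqref{sl2-relation} for the new pair needs only transposition of \eqref{sl2-relation}, not an application of $\sigma$.
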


\begin{proof}
Recall that $\mathsf e_1,\dots, \mathsf e_n$ are the standard $\C[h]$-basis of $M(E,F) = \C[h]^n$, and let
$\varepsilon_1,\dots,\varepsilon_n$ be the dual basis of
$M(E,F)^{\vee_{\mathfrak h}}$. We identify
$\sum_i g_i\varepsilon_i$ with
$\boldsymbol g=(g_1,\dots,g_n)^{\mathsf T}$ via $$
\left( \sum_i g_i\varepsilon_i \right) (\boldsymbol v)=\boldsymbol g^{\mathsf T}\boldsymbol v
\qquad
(\boldsymbol v\in\C[h]^{\oplus n}).
$$

Using Definition~\ref{duality}, we obtain
\begin{align*}
(e\cdot \boldsymbol g)(\boldsymbol v)
&=
\sigma\bigl(\boldsymbol g^{\mathsf T}F\sigma^{-1}(\boldsymbol v)\bigr)
=
\bigl(\sigma(F^{\mathsf T})\sigma(\boldsymbol g)\bigr)^{\mathsf T}
\boldsymbol v,\\
(f\cdot \boldsymbol g)(\boldsymbol v)
&=
\sigma^{-1}\bigl(\boldsymbol g^{\mathsf T}E\sigma(\boldsymbol v)\bigr)
=
\bigl(\sigma^{-1}(E^{\mathsf T})\sigma^{-1}(\boldsymbol g)\bigr)^{\mathsf T}
\boldsymbol v.
\end{align*}

Therefore
$$
e\cdot\boldsymbol g=\sigma(F^{\mathsf T})\sigma(\boldsymbol g),
\qquad
f\cdot\boldsymbol g=\sigma^{-1}(E^{\mathsf T})\sigma^{-1}(\boldsymbol g),
$$
which proves the claim.
\end{proof}

\begin{corollary}\label{h-dual-normal-form}
For $n \geq 1$, let $\alpha\in\C$, $K(h)\in\GL_n(\C[h])$, and let
$\boldsymbol a=(a_-,a_0,a_+)\in\Z_{\geq 0}^3$ satisfy
$a_-+a_0+a_+=n$. Then
$$
M\bigl(\alpha,\boldsymbol a,K(h)\bigr)^{\vee_{\mathfrak h}}
\simeq
M\Bigl(\alpha,(a_+,-a_0,a_-),
-\T_n\bigl(K(h)^{-1}\bigr)^{\mathsf T}\T_n\Bigr).
$$
\end{corollary}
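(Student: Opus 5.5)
The plan is to apply Proposition~\ref{dualstructure} to $M(\alpha,\boldsymbol a,K)=M(E,F)$ with $F=P(h)K(h)$ and $E=\sigma\bigl(K^{-1}(h)\overline P(h)\bigr)$, where $P:=P_{(\boldsymbol a,\alpha)}$ and $\overline P:=\overline P_{(\boldsymbol a,\alpha)}$. This gives
$$
M^{\vee_{\mathfrak h}}\simeq M(E',F'),\qquad F'=\sigma^{-1}(E^{\mathsf T})=\overline P(h)\bigl(K(h)^{-1}\bigr)^{\mathsf T},\qquad E'=\sigma(F^{\mathsf T}).
$$
The dual has the same central character $(2\alpha-1)^2$. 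One can see this because $F'\sigma^{-1}(E')=\sigma^{-1}\bigl((F\sigma^{-1}(E))^{\mathsf T}\bigr)$, up to the transposition order. It is also simply because the Casimir acts on the dual by the same scalar. By Lemma~\ref{1st-isom-thm} it therefore suffices to identify the $\sigma^{-1}$-similarity class of $F'$ in the normal form of Theorem~\ref{thm-h-free}. The cleanest route is Corollary~\ref{howtofind-K(h)}: find $L,R\in\GL_n(\C[h])$ with $LF'R=P_{(\boldsymbol b,\alpha)}$ for $\boldsymbol b=(a_+,-a_0,a_-)$, and then read off $K'=R^{-1}L(h+1)^{-1}$.

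To do this, compare the diagonal entries of $\overline P$ with those of $P_{(\boldsymbol b,\alpha)}$. For $i\le a_-$ the entry of $\overline P$ is $\lambda_\alpha=-(h-\alpha+1)(h+\alpha)$. For the middle $a_0$ entries it is $-(h+\alpha)$. For the last $a_+$ entries it is $-1$. Reversing the order with $\T_n$, we get $\T_n\overline P\,\T_n=-P_{(\boldsymbol b,\alpha)}$. Indeed, $a_+$ ones come first, then $a_0$ entries $h+\alpha$ (this is the case $b_0=-a_0<0$ of Definition~\ref{tripledef}), then $a_-$ entries $(h-\alpha+1)(h+\alpha)$. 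Hence $\T_n F'\,\bigl(K^{-1}\bigr)^{-\mathsf T}\T_n\cdot(-1)=P_{(\boldsymbol b,\alpha)}$. That is, we take $L=-\T_n$ and $R=K(h)^{\mathsf T}\T_n$, which is invertible and independent of the shift issue. Corollary~\ref{howtofind-K(h)} then yields
$$
K'=R(h)^{-1}L(h+1)^{-1}=\T_n\bigl(K(h)^{\mathsf T}\bigr)^{-1}\cdot(-\T_n)=-\T_n\bigl(K(h)^{-1}\bigr)^{\mathsf T}\T_n,
$$
using $\T_n^{-1}=\T_n$, and this is the claimed formula.

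The only delicate point is bookkeeping. We must check that Corollary~\ref{howtofind-K(h)} applies to the pair $(E',F')$. This needs the correct sign convention $\overline P P=\lambda_\alpha\I_n$, so that the sign $-1$ is absorbed into $L$ rather than lost. We must also confirm that the triple $(a_+,-a_0,a_-)$ is legitimate even though $-a_0\le 0$; it is, since Definition~\ref{tripledef} allows $a_0\in\Z$. When $a_0=0$ both cases of the definition agree, so no ambiguity arises.
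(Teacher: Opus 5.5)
Your argument is correct and is essentially the paper's proof. Both compute the dual $f$-matrix $\overline P_{(\boldsymbol a,\alpha)}(h)\bigl(K(h)^{-1}\bigr)^{\mathsf T}$ from Proposition~\ref{dualstructure}, use $\T_n\bigl(-\overline P_{(\boldsymbol a,\alpha)}(h)\bigr)\T_n=P_{((a_+,-a_0,a_-),\alpha)}(h)$, and finish with a constant change of basis: you go through Corollary~\ref{howtofind-K(h)} with $L=-\T_n$ and $R=K(h)^{\mathsf T}\T_n$, while the paper uses $\sigma^{-1}$-similarity by $\T_n$ and Lemma~\ref{1st-isom-thm}.

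One small slip: the correct identity is $F'\sigma^{-1}(E')=\bigl(F\sigma^{-1}(E)\bigr)^{\mathsf T}=\lambda_\alpha(h)\I_n$, with no extra $\sigma^{-1}$; that shift would give $\lambda_\alpha(h+1)$ instead. Your separate Casimir argument already establishes the central character, so nothing breaks.
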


\begin{proof}
Recall that
$$
M\bigl(\alpha,\boldsymbol a,K(h)\bigr)
=
M\Bigl(
\sigma\bigl(K(h)^{-1}\overline P_{(\boldsymbol a,\alpha)}(h)\bigr),
P_{(\boldsymbol a,\alpha)}(h)K(h)
\Bigr).
$$

By Proposition~\ref{dualstructure}, the $f$-matrix of
$M\bigl(\alpha,\boldsymbol a,K(h)\bigr)^{\vee_{\mathfrak h}}$ is
$$
\sigma^{-1}
\left(
\sigma\bigl(K(h)^{-1}\overline P_{(\boldsymbol a,\alpha)}(h)\bigr)^{\mathsf T}
\right)
=
\overline P_{(\boldsymbol a,\alpha)}(h)
\bigl(K(h)^{-1}\bigr)^{\mathsf T} = -\overline P_{(\boldsymbol a,\alpha)}(h)
\bigl(-K(h)^{-1}\bigr)^{\mathsf T}.
$$

Since
$$
\T_n\bigl(-\overline P_{(\boldsymbol a,\alpha)}(h)\bigr)\T_n
=
P_{((a_+,-a_0,a_-),\alpha)}(h),
$$
the above $f$-matrix is $\sigma^{-1}$-similar to
$$
P_{((a_+,-a_0,a_-),\alpha)}(h)
\left(-\T_n\bigl(K(h)^{-1}\bigr)^{\mathsf T}\T_n\right).
$$

The result follows from Lemma~\ref{1st-isom-thm}.
\end{proof}

\begin{definition}
For $n \geq 1$, let $\alpha\in\C$, $K(h)\in\GL_n(\C[h])$, and let $\boldsymbol a=(a_-,a_0,a_+)\in\Z_{\geq 0}^3$ satisfy $a_-+a_0+a_+=n$. By $\mathfrak f\left(M\bigl(\alpha,\boldsymbol a,K(h)\bigr)\right)$ we denote the total multiplicity of all simple finite-dimensional 
submodules in the coherent family $\mathcal W\left(M\bigl(\alpha,\boldsymbol a,K(h)\bigr)\right)$. By 
Theorem~\ref{cohfamilydecomp},
$$
\mathfrak f\left(M\bigl(\alpha,\boldsymbol a,K(h)\bigr)\right)=
\begin{cases}
a_0+d, & \text{if } \alpha\in \frac{1}{2}\Z_{\leq0},\\
d, & \text{if } \alpha\in 1+\frac{1}{2}\Z_{\geq0},\\
0, & \text{otherwise},
\end{cases}
$$
where $d:= a_-+ a_+ -\rank\left(\boldsymbol M_{\boldsymbol a}\big(\mathcal K_{\alpha, \boldsymbol a, K} \big)\right)$.
\end{definition}

The following proposition gives a necessary and sufficient condition for the $\mathcal U(h)$-dual of a module in $\mathcal M(n)$ to be simple.
\begin{proposition}\label{prop-hdual-simple-defect}
For $n \geq 1$, let $\alpha\in\C$, $K(h)\in\GL_n(\C[h])$, and let $\boldsymbol a=(a_-,a_0,a_+)\in\Z_{\geq 0}^3$ satisfy $a_-+a_0+a_+=n$. Let $M: = M\bigl(\alpha,\boldsymbol a,K(h)\bigr)$ be simple. Then
$M^{\vee_{\mathfrak h}}$ is simple if and only if $ \mathfrak f\left(M\right)=0.$
\end{proposition}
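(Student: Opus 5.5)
The plan is to reduce the statement to Proposition~\ref{fin-sub-coherentfam} by dualizing same-rank inclusions. Since $M$ and $M^{\vee_{\mathfrak h}}$ are free of rank $n$ over $\C[h]$, the canonical map $M\to (M^{\vee_{\mathfrak h}})^{\vee_{\mathfrak h}}$ is a $\C[h]$-isomorphism. A direct check with Definition~\ref{duality} shows it is $\mathfrak{sl}(2)$-linear, since the two $\sigma$-twists cancel; alternatively one can apply Proposition~\ref{dualstructure} twice. By definition, $\mathfrak f(M)$ counts simple finite-dimensional submodules of $\mathcal W(M)$. Every nonzero finite-dimensional submodule contains a simple one, so $\mathfrak f(M)=0$ if and only if $\mathcal W(M)$ has no nonzero finite-dimensional submodule.

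The basic tool is the following. Let $A\subset B$ be modules in $\mathcal M(n)$ with $B/A$ finite dimensional and nonzero. Applying $\Hom_{\C[h]}(-,\C[h])$ gives an injection $B^{\vee_{\mathfrak h}}\hookrightarrow A^{\vee_{\mathfrak h}}$ of $\mathfrak{sl}(2)$-modules. Its cokernel is $\Ext^1_{\C[h]}(B/A,\C[h])$, which is finite dimensional and nonzero. Concretely, if $A=T(h)B$ in bases, then the cokernel has dimension $\deg\det T(h)=\dim B/A$. Hence $B^{\vee_{\mathfrak h}}$ is a proper submodule of $A^{\vee_{\mathfrak h}}$ with a nonzero finite-dimensional quotient.

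\emph{($\Rightarrow$).} Suppose $\mathfrak f(M)\neq 0$. Then $\mathcal W(M)$ has a nonzero finite-dimensional submodule $S$. Proposition~\ref{fin-sub-coherentfam} gives $0\to M\to X\to S\to 0$ with $X\in\mathcal M(n)$. Dualizing, $X^{\vee_{\mathfrak h}}$ is a proper nonzero submodule of $M^{\vee_{\mathfrak h}}$, so $M^{\vee_{\mathfrak h}}$ is not simple.

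\emph{($\Leftarrow$).} Suppose $M^{\vee_{\mathfrak h}}$ has a proper nonzero submodule $N$. Let $N'\supseteq N$ be its saturation, namely the preimage of the $\C[h]$-torsion of $M^{\vee_{\mathfrak h}}/N$. This torsion is an $\mathfrak{sl}(2)$-submodule, because $e$ and $f$ act $\sigma^{\pm1}$-semilinearly and therefore preserve $\C[h]$-torsion. Two cases arise.
\begin{itemize}
\item[(a)] If $N'\neq M^{\vee_{\mathfrak h}}$, then $Q:=M^{\vee_{\mathfrak h}}/N'$ is a free $\C[h]$-module of positive rank. Dualizing the surjection onto $Q$ gives an injection $Q^{\vee_{\mathfrak h}}\hookrightarrow (M^{\vee_{\mathfrak h}})^{\vee_{\mathfrak h}}\simeq M$. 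Its image has rank $<n$, because $N'\neq 0$ has positive rank. It is nonzero, so it is a proper nonzero submodule of $M$, contradicting the simplicity of $M$.
\item[(b)] Hence $N'=M^{\vee_{\mathfrak h}}$. Then $N$ has rank $n$ and $M^{\vee_{\mathfrak h}}/N$ is a finitely generated torsion $\C[h]$-module, so it is finite dimensional and nonzero. By the tool above, $M\simeq (M^{\vee_{\mathfrak h}})^{\vee_{\mathfrak h}}\subset N^{\vee_{\mathfrak h}}$ with a nonzero finite-dimensional quotient, and $N^{\vee_{\mathfrak h}}\in\mathcal M(n)$. Proposition~\ref{fin-sub-coherentfam} then gives a nonzero finite-dimensional submodule of $\mathcal W(M)$, so $\mathfrak f(M)\neq0$.
\end{itemize}

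The main obstacle is the bookkeeping in the dualization steps. One must check that the maps induced by $\Hom_{\C[h]}(-,\C[h])$ are $\mathfrak{sl}(2)$-equivariant for the twisted actions of Definition~\ref{duality}; this is routine from the formulas. One must also verify that the cokernel of $B^{\vee_{\mathfrak h}}\hookrightarrow A^{\vee_{\mathfrak h}}$ is nonzero. I would do this with Smith normal form (Theorem~\ref{Smith-form}) applied to the inclusion matrix $T(h)$, which gives $\dim\operatorname{coker}=\deg\det T(h)>0$.
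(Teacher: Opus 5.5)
Your proposal is correct and follows the paper's proof essentially step for step: both directions go through Proposition~\ref{fin-sub-coherentfam} and dualizing same-rank inclusions, whose cokernel $\Ext^1_{\C[h]}(\,\cdot\,,\C[h])$ has the same dimension as the finite-dimensional quotient. Your case (a), dualizing $M^{\vee_{\mathfrak h}}/N'$, is the paper's annihilator argument in different form, since under $M\simeq (M^{\vee_{\mathfrak h}})^{\vee_{\mathfrak h}}$ the image of $(M^{\vee_{\mathfrak h}}/N')^{\vee_{\mathfrak h}}$ is exactly $N^\perp=\{m\in M:\varphi(m)=0 \text{ for all } \varphi\in N\}$.
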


\begin{proof}
By Proposition \refeq{fin-sub-coherentfam}, $ \mathfrak f\left(M\right)>0$ if and
only if there exists an exact sequence
$$
0\longrightarrow M\longrightarrow X\longrightarrow S\longrightarrow 0
$$
with $X\in\mathcal M(n)$ and $S$ is a nonzero finite-dimensional module. Observe that $M$, $X$ and $S$ have the same central character $(2\alpha-1)^2$. We now compare this condition with the simplicity of $M^{\vee_{\mathfrak h}}$. Assume first that such an exact sequence exists. Applying the functor  
$(\_)^{\vee_{\mathfrak h}}$ and this gives rise to its right derived functor in the category of modules that are finitely generated over $U(\mathfrak h) \simeq \C[h]$. In particular, we obtain
$$
0\longrightarrow X^{\vee_{\mathfrak h}}
\longrightarrow M^{\vee_{\mathfrak h}}
\longrightarrow \Ext_{\mathcal M_{\mathrm{fg}} }^1(S,\C[h])\longrightarrow 0.
$$

Since $\dim \Ext_{\mathcal M_{\mathrm{fg}} }^1(S,\C[h]) = \dim S$ 
and $\Ext_{\mathcal M_{\mathrm{fg}}}^1(S,\C[h])$ has the same central character as $S$, it follows that $\Ext_{\mathcal M_{\mathrm{fg}} }^1(S,\C[h]) \simeq S$ and hence $M^{\vee_{\mathfrak h}}$ is not simple.

\smallskip\noindent
Conversely, suppose that $M^{\vee_{\mathfrak h}}$ is not simple, and choose a nonzero proper submodule $N\subset M^{\vee_{\mathfrak h}}$. Since $M$ is
simple, $N$ must have full $\C[h]$-rank $n$; otherwise the annihilator
$$
N^\perp=\{m\in M: \varphi(m)=0\text{ for all }\varphi\in N\}
$$
would be a nonzero proper submodule of $M$. Hence $M^{\vee_{\mathfrak h}}/N$ is nonzero finite dimensional. Applying $(\_)^{\vee_{\mathfrak h}}$ and its right derived functor as above to
$$
0\longrightarrow N\longrightarrow M^{\vee_{\mathfrak h}}
\longrightarrow M^{\vee_{\mathfrak h}}/N\longrightarrow 0
$$
gives an exact sequence
$$
0\longrightarrow M\longrightarrow N^{\vee_{\mathfrak h}}
\longrightarrow
\Ext_{\mathcal M_{\mathrm{fg}} }^1(M^{\vee_{\mathfrak h}}/N,\C[h])\longrightarrow 0.
$$

Note that since $M\in\mathcal M(n)$, the natural evaluation map
$$
M\longrightarrow M^{\vee_{\mathfrak h}\vee_{\mathfrak h}}
$$
is an isomorphism of $\mathfrak{sl}(2)$-modules. The module
$N^{\vee_{\mathfrak h}}$ belongs to $\mathcal M(n)$, and the last term is
nonzero finite dimensional. Hence $M$ has a same-rank extension with nonzero
finite-dimensional quotient. Thus $M^{\vee_{\mathfrak h}}$ is not simple if and only if
$\mathfrak f(M)>0$. Equivalently,
$M^{\vee_{\mathfrak h}}$ is simple if and only if $\mathfrak f(M)=0$.
\end{proof}

\begin{example}
We now give an explicit example illustrating the above proposition. Let $\alpha\in 1+\tfrac12\Z_{\geq 0}$ and $\beta\in\C^*$. Then $M(\alpha,(0,-1,0),\beta)$ is simple, while
$$
M(\alpha,(0,-1,0),\beta)^{\vee_{\mathfrak h}}
\simeq
M\left(\alpha,(0,1,0),-\tfrac1\beta\right)
$$
 is not simple.
\end{example}

\subsection{Twisting by the automorphism $\tau$}
Let $\tau\in\Aut(\mathfrak{sl}(2))$ be defined by
$$
\tau(e)=-f,\qquad \tau(f)=-e,\qquad \tau(h)=-h.
$$

It is well-known that $(\_)^\tau$ is a covariant exact autoequivalence. This contrasts with
$U(\mathfrak h)$-duality, which is contravariant and not exact.

\begin{lemma}\label{tau-twist}
For $n \geq 1$, let $M(E,F)\in\mathcal M(n)$. Then
$$
M(E,F)^\tau\simeq M(-F(-h),-E(-h)).
$$
\end{lemma}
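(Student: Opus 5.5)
The plan is to compute the twisted action directly in the standard $\C[h]$-basis and then change basis by the automorphism $h\mapsto -h$ of the underlying space. By definition, $M(E,F)^\tau$ has the same underlying space $\C[h]^{\oplus n}$, with
$$
h\cdot \boldsymbol g^\tau=(-h\,\boldsymbol g)^\tau,\qquad e\cdot\boldsymbol g^\tau=(-F\sigma^{-1}(\boldsymbol g))^\tau,\qquad f\cdot\boldsymbol g^\tau=(-E\sigma(\boldsymbol g))^\tau .
$$
So $h$ no longer acts by multiplication by $h$. To return to the normal form of Lemma~\ref{1st_form}, I would introduce the $\C$-linear bijection
$$
\Phi:\C[h]^{\oplus n}\to\C[h]^{\oplus n},\qquad \Phi(\boldsymbol g)(h):=\boldsymbol g(-h).
$$
The first check is that $\Phi$ turns the twisted $h$-action into multiplication by $h$: we have $\Phi(-h\boldsymbol g)=h\,\Phi(\boldsymbol g)$, since evaluating $-h\boldsymbol g(h)$ at $-h$ gives $h\boldsymbol g(-h)$.

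Next I transport $e$. Let $\boldsymbol k=\Phi(\boldsymbol g)$, so that $\boldsymbol g(h)=\boldsymbol k(-h)$. Then
$$
\Phi\bigl(-F(h)\boldsymbol g(h+1)\bigr)=-F(-h)\,\boldsymbol g(-h+1)=-F(-h)\,\boldsymbol k(h-1)=-F(-h)\,\sigma(\boldsymbol k).
$$
Hence the transported $e$-action is $\boldsymbol k\mapsto E'\sigma(\boldsymbol k)$ with $E'=-F(-h)$. In the same way, $\Phi\bigl(-E(h)\boldsymbol g(h-1)\bigr)=-E(-h)\boldsymbol k(h+1)$, so the transported $f$-action is $F'\sigma^{-1}(\boldsymbol k)$ with $F'=-E(-h)$.

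It remains to confirm that $(E',F')$ satisfies \eqref{sl2-relation}. This is automatic, because it is the matrix description of a genuine $\mathfrak{sl}(2)$-module on which $h$ acts by multiplication, so Lemma~\ref{1st_form} applies. It can also be verified directly by substituting $h\mapsto -h$ into \eqref{sl2-relation} and multiplying by $-1$, using $\sigma(F')=-F(-h+1)$.

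Thus $\Phi$ is an $\mathfrak{sl}(2)$-isomorphism $M(E,F)^\tau\simeq M(-F(-h),-E(-h))$. There is no real obstacle here. The only point needing care is the sign and shift bookkeeping: $\sigma$ and $\sigma^{-1}$ swap roles under $h\mapsto -h$, which is exactly why $F$ ends up paired with $\sigma$.
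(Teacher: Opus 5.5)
Your proof is correct and uses the same change of variables $h\mapsto -h$ as the paper: the paper sets $\underline h:=-h$ and re-reads the twisted action, which is your map $\Phi$ left implicit. There is one minor slip in your optional direct check of \eqref{sl2-relation}: $\sigma(F')=-E(-h+1)$, not $-F(-h+1)$ (the latter is $\sigma(E')$), but this does not affect the argument.
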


\begin{proof}
Write $\boldsymbol g=(g_1,\dots,g_n)^{\mathsf T}$ with $g_i\in\C[h]$. In the
twisted module $M(E,F)^\tau$, one has
$$
h\cdot\boldsymbol g=-h\boldsymbol g,\qquad
e\cdot\boldsymbol g=-F\sigma^{-1}(\boldsymbol g),\qquad
f\cdot\boldsymbol g=-E\sigma(\boldsymbol g).
$$

For convenience, set $\underline h:=-h$. Then $h$ acts by multiplication by $\underline h$, and
$$
\sigma(\boldsymbol g(\underline h))=\boldsymbol g(\underline h+1),
\qquad
\sigma^{-1}(\boldsymbol g(\underline h))=\boldsymbol g(\underline h-1).
$$

Therefore
$$
e\cdot\boldsymbol g(\underline h)
=
-F(-\underline h)\boldsymbol g(\underline h-1),
\qquad
f\cdot\boldsymbol g(\underline h)
=
-E(-\underline h)\boldsymbol g(\underline h+1),
$$
which proves the statement.
\end{proof}

\begin{proposition}\label{tau-normal-form}
For $n \geq 1$, let $\alpha\in\C$, $K(h)\in\GL_n(\C[h])$, and let
$\boldsymbol a=(a_-,a_0,a_+)\in\Z_{\geq 0}^3$ satisfy
$a_-+a_0+a_+=n$. Then
$$
M\bigl(\alpha,\boldsymbol a,K(h)\bigr)^\tau
\simeq
M\Bigl(\alpha,(a_+,a_0,a_-),
\T_nK(-h-2)^{-1}\T_n \D_{\boldsymbol a}\Bigr),
$$
where
$$
\D_{\boldsymbol a}:=
\diag\Bigl(
\underbrace{1,\ldots,1}_{a_+},
\underbrace{-1,\ldots,-1}_{a_0},
\underbrace{1,\ldots,1}_{a_-}
\Bigr).
$$
\end{proposition}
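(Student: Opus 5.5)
The plan is to compute the $e$- and $f$-matrices of the twisted module with Lemma~\ref{tau-twist}, and then bring the new $f$-matrix into normal form with an explicit $\sigma^{-1}$-similarity. Lemma~\ref{1st-isom-thm} then finishes the argument, since $M^\tau$ has the same central character $(2\alpha-1)^2$: the Casimir $c$ is $\tau$-invariant.

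First, write $M(\alpha,\boldsymbol a,K(h))=M(E,F)$ with $F=P_{(\boldsymbol a,\alpha)}(h)K(h)$ and $E=K(h-1)^{-1}\overline P_{(\boldsymbol a,\alpha)}(h-1)$. By Lemma~\ref{tau-twist}, the $f$-matrix of $M^\tau$ is
$$F^\tau:=-E(-h)=-K(-h-1)^{-1}\,\overline P_{(\boldsymbol a,\alpha)}(-h-1).$$

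Second, compute $\overline P_{(\boldsymbol a,\alpha)}(-h-1)$ entrywise. We have $\lambda_\alpha(-h-1)=\lambda_\alpha(h)$, and the substitution $x\mapsto -h-1$ sends $x-\alpha+1\mapsto-(h+\alpha)$ and $x+\alpha\mapsto -(h-\alpha+1)$. Since $\overline P_{(\boldsymbol a,\alpha)}=\diag(\lambda_\alpha/p_i)$, this gives
$$-\overline P_{(\boldsymbol a,\alpha)}(-h-1)=\diag\bigl(\underbrace{(h-\alpha+1)(h+\alpha)}_{a_-},\underbrace{-(h-\alpha+1)}_{a_0},\underbrace{1}_{a_+}\bigr).$$
Conjugating by $\T_n$ reverses the order of the blocks. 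Hence
$$-\overline P_{(\boldsymbol a,\alpha)}(-h-1)=\T_n\,P_{((a_+,a_0,a_-),\alpha)}(h)\,\D_{\boldsymbol a}\,\T_n,$$
and so $F^\tau=K(-h-1)^{-1}\T_n P'\D_{\boldsymbol a}\T_n$, where $P':=P_{((a_+,a_0,a_-),\alpha)}(h)$.

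Third, choose $Q(h):=K(-h-1)^{-1}\T_n\in\GL_n(\C[h])$. Then $Q(h)^{-1}=\T_nK(-h-1)$ and $Q(h+1)=K(-h-2)^{-1}\T_n$. Therefore
$$Q(h)^{-1}F^\tau Q(h+1)=P'\,\D_{\boldsymbol a}\,\T_nK(-h-2)^{-1}\T_n.$$
Next, conjugate once more by the constant matrix $\D_{\boldsymbol a}$; note $\D_{\boldsymbol a}^{-1}=\D_{\boldsymbol a}$, and a constant matrix is unchanged by $\sigma$. Since $\D_{\boldsymbol a}$ and $P'$ are both diagonal, they commute, which gives
$$\D_{\boldsymbol a}^{-1}\bigl(P'\D_{\boldsymbol a}\T_nK(-h-2)^{-1}\T_n\bigr)\D_{\boldsymbol a}=P'\,\T_nK(-h-2)^{-1}\T_n\D_{\boldsymbol a}.$$
This is exactly the $f$-matrix of the claimed module. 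Thus $F^\tau\sim_{\sigma^{-1}}$ that $f$-matrix, and Lemma~\ref{1st-isom-thm} yields the isomorphism.

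The only real obstacle is the sign and shift bookkeeping in the second step. One must check that each block of $-\overline P(-h-1)$ matches the corresponding block of $P_{((a_+,a_0,a_-),\alpha)}$ up to the sign recorded in $\D_{\boldsymbol a}$; in particular the middle block carries $h-\alpha+1$, so the middle entry $a_0$ keeps its sign and is not replaced by $-a_0$. One must also confirm that the shift $-h-2$ arises from $Q(h+1)$.
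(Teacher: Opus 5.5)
Your proof is correct and matches the paper's argument: the paper also takes the $f$-matrix $-E(-h)=K(-h-1)^{-1}\bigl(-\overline P_{(\boldsymbol a,\alpha)}(-h-1)\bigr)$ from Lemma~\ref{tau-twist}, then applies Corollary~\ref{howtofind-K(h)} with $L=\D_{\boldsymbol a}\T_nK(-h-1)$ and $R=\T_n$, and the $\sigma^{-1}$-similarity behind that step is exactly your composite $K(-h-1)^{-1}\T_n\D_{\boldsymbol a}$. The only differences are that you invoke Lemma~\ref{1st-isom-thm} directly instead of the corollary, and you state explicitly the $\tau$-invariance of $c$, which the paper uses without comment.
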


\begin{proof}
By Lemma~\ref{tau-twist}, the $f$-matrix of
$M\bigl(\alpha,\boldsymbol a,K(h)\bigr)^\tau$ is
$$
-E(-h)
=
K(-h-1)^{-1}
\bigl(-\overline P_{(\boldsymbol a,\alpha)}(-h-1)\bigr).
$$

Let
$$
P(h):=\D_{\boldsymbol a}\T_nK(-h-1),
\qquad
Q(h):=\T_n.
$$

A direct computation gives
$$
P(h)K(-h-1)^{-1}
\bigl(-\overline P_{(\boldsymbol a,\alpha)}(-h-1)\bigr)Q(h)
=
P_{((a_+,a_0,a_-),\alpha)}(h).
$$

Then by Corollary~\ref{howtofind-K(h)} we have
\begin{align*}
M\bigl(\alpha,\boldsymbol a,K(h)\bigr)^\tau
&\simeq
M\Bigl(\alpha,(a_+,a_0,a_-),Q(h)^{-1}P(h+1)^{-1}\Bigr)\\
&=
M\Bigl(\alpha,(a_+,a_0,a_-),
\T_nK(-h-2)^{-1}\T_n\D_{\boldsymbol a}\Bigr).
\end{align*}
\end{proof}

\section{Exponential modules and $U (\mathfrak h)$-free modules}
For $g(t) \in \C[t]$, write $g(t)= g(t)_{\mathrm{ev}} + g(t)_{\mathrm{odd}}$, where $g(t)_{\mathrm{ev}}$ (resp. $g(t)_{\mathrm{odd}}$) is the sum of the even-degree (resp. odd-degree) terms of $g(t)$. We call $g(t)$ \emph{even} (respectively, \emph{odd}) if $g(t)_{\mathrm{odd}}=0$ (respectively, if $g(t)_{\mathrm{ev}}=0$). We also denote
$$\C[t]_{\mathrm{ev}} = \Span\{t^{2k} :  k\geq 0\} \qquad \C[t]_{\mathrm{odd}}:=\Span\{t^{2k+1}:  k\geq 0\}$$
  
  The following lemma introduces several associative algebra homomorphisms that will be used throughout.
\begin{lemma}
Let $b\in\C$. Define
$$
\begin{aligned}
\Phi_1 :\qquad & e\mapsto t^2\partial + 2bt,\qquad f\mapsto -\partial,\qquad h\mapsto t\partial +b;\\
\Phi_2 :\qquad & e\mapsto -t\partial^2 + (2b-2)\partial,\qquad f\mapsto t,\qquad h\mapsto  -t\partial +(b-1);\\
\Phi_3 :\qquad & e\mapsto \tfrac{1}{2}t^2,\qquad f\mapsto -\tfrac{1}{2}\partial^2,\qquad h\mapsto \tfrac{1}{2}t\partial +\tfrac{1}{4}.\\
\end{aligned}
$$

Then, for each $i\in \{1,2,3\}$, the above assignment extends to an associative algebra homomorphism $\Phi_i: U(\mathfrak{sl}(2))\to\mathcal D(1)$.
\end{lemma}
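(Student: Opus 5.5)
Since $\mathcal D(1)$ is the quotient of the free algebra $\C\langle t,\partial\rangle$ by $[\partial,t]=1$, and $U(\mathfrak{sl}(2))$ is presented by generators $e,f,h$ subject to $[h,e]=e$, $[h,f]=-f$, $[e,f]=2h$ (with the normalization $h=\tfrac12\diag(1,-1)$ fixed in the paper), the plan is to use the universal property of $U(\mathfrak{sl}(2))$. An assignment on $e,f,h$ extends to an associative algebra homomorphism precisely when the images satisfy these three commutator relations in $\mathcal D(1)$. So for each $i\in\{1,2,3\}$ I will verify the three relations directly, using only $[\partial,t]=1$ and its consequences $[\partial,t^k]=kt^{k-1}$, $[\partial^2,t]=2\partial$, $[t\partial,t^k]=kt^k$ and $[t\partial,\partial^k]=-k\partial^k$.

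\textbf{Case $\Phi_1$.} Set $H=t\partial+b$, $E=t^2\partial+2bt$, $F=-\partial$. The element $t\partial$ is the Euler operator, so $[H,t^k\partial^m]=(k-m)t^k\partial^m$. This gives $[H,F]=\partial=-F$, and since $E$ has $t$-degree minus $\partial$-degree equal to $1$, also $[H,E]=E$. For the last relation, $[E,F]=-[t^2\partial+2bt,\partial]=[\partial,t^2]\partial+2b[\partial,t]=2t\partial+2b=2H$.

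\textbf{Case $\Phi_2$.} Set $H=-t\partial+(b-1)$, $E=-t\partial^2+(2b-2)\partial$, $F=t$. Here $[-t\partial,t^k\partial^m]=(m-k)t^k\partial^m$. So $[H,F]=-t=-F$, and both monomials of $E$ have $m-k=1$, giving $[H,E]=E$. For the last relation, $[E,F]=-t[\partial^2,t]+(2b-2)[\partial,t]=-2t\partial+2b-2=2H$.

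\textbf{Case $\Phi_3$.} Set $H=\tfrac12t\partial+\tfrac14$, $E=\tfrac12t^2$, $F=-\tfrac12\partial^2$. Then $[H,E]=\tfrac12\cdot\tfrac12\cdot2t^2=E$ and $[H,F]=-\tfrac14(-2\partial^2)=-F$. For the last relation, $[E,F]=-\tfrac14[t^2,\partial^2]$. Using $[\partial^2,t^2]=4t\partial+2$, this becomes $[E,F]=\tfrac14(4t\partial+2)=t\partial+\tfrac12=2H$.

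There is no real obstacle in any of the three cases. The only step needing care is the identity $[\partial^2,t^2]=4t\partial+2$, which gives the constant $\tfrac14$ in $\Phi_3$. It follows by expanding $\partial^2t^2=t^2\partial^2+4t\partial+2$.
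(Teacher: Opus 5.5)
Your proposal is correct and follows the paper's approach: the paper's proof is just ``direct verification,'' and you carry out that verification of $[h,e]=e$, $[h,f]=-f$, $[e,f]=2h$ for each $\Phi_i$ via the universal property. All your computations check out, including the identity $[\partial^2,t^2]=4t\partial+2$ needed for $\Phi_3$.
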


\begin{proof}
Direct verification.
\end{proof}

\begin{definition}
For $b \in \C$, $i \in \{1,2\}$, and $g \in \C[t]$, let
$$
T_i(g,b) := \C[t]e^{g}, \qquad T_3(g) := \C[t]e^{g}.
$$

Then $T_1(g,b)$, $T_2(g,b)$, and $T_3(g)$ are $\mathfrak{sl}(2)$-modules via the homomorphisms $\Phi_1$, $\Phi_2$, and $\Phi_3$, respectively. Unless otherwise stated we will assume that $g = \sum_{i=1}^n a_it^i$, where $a_i \in \C$.
\end{definition}

\begin{remark}
Observe that
$$
\theta\circ \Phi_3=\Phi_3\circ \tau
\qquad\text{and}\qquad
\Phi_2=\theta\circ \Phi_1,
$$
where $\theta$ is the Fourier-transform automorphism of $\mathcal D(1)$, defined in Section \ref{coherentfam-section}, and $\tau$ is the automorphism of $\mathfrak{sl}(2)$, defined in Section~\ref{sec-dual}. With the $\tau$-twist and the Fourier-transform automorphism, we obtain six natural families of $\mathfrak{sl}(2)$-modules. It suffices to determine the simplicity criteria for the three families $T_1(g,b)$, $T_2(g,b)$, and $T_3(g)$.
\end{remark}

\begin{lemma}
For $i \in \{1,2,3\}$, the images of the Casimir element $c$ under $\Phi_i$ are
$$\Phi_1(c) =\Phi_2(c)=(2b-1)^2 ,  \quad\text{and}\quad \Phi_3(c) = \tfrac{1}{4} .$$
\end{lemma}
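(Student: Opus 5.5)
The plan is to compute $\Phi_i(c)$ directly in $\mathcal D(1)$, using $c=(2h+1)^2+4fe$ and the commutation rule $\partial t=t\partial+1$. Each image is a polynomial in $t$ and $\partial$. We bring it to normal-ordered form (all powers of $t$ to the left of all powers of $\partial$) and show that everything except a constant cancels.

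\textbf{Case $\Phi_1$.} First, $2h+1\mapsto 2t\partial+2b+1$. Squaring with $(t\partial)^2=t^2\partial^2+t\partial$ gives
$$4t^2\partial^2+4t\partial+4(2b+1)t\partial+(2b+1)^2.$$
Next,
$$4fe\mapsto -4\partial(t^2\partial+2bt)=-4\bigl(t^2\partial^2+2t\partial+2bt\partial+2b\bigr).$$
Adding these, the $t^2\partial^2$ terms cancel and the $t\partial$ coefficient is $4+8b+4-8-8b=0$. The constant is $(2b+1)^2-8b=(2b-1)^2$, as required.

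\textbf{Case $\Phi_2$.} The identity $\Phi_2=\theta\circ\Phi_1$ was recorded in the preceding remark and is checked generator by generator. Since $\theta$ is an algebra automorphism fixing scalars, $\Phi_2(c)=\theta\bigl((2b-1)^2\bigr)=(2b-1)^2$. As a cross-check one can expand directly. Here $2h+1\mapsto -2t\partial+2b-1$, and
$$4fe\mapsto 4t(-t\partial^2+(2b-2)\partial),$$
so the same cancellation of the $t^2\partial^2$ and $t\partial$ terms occurs.

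\textbf{Case $\Phi_3$.} Here $2h+1\mapsto t\partial+\tfrac32$, so its square is
$$t^2\partial^2+t\partial+3t\partial+\tfrac94=t^2\partial^2+4t\partial+\tfrac94.$$
Also $4fe\mapsto -\tfrac12\partial^2t^2$. Normal-ordering gives $\partial^2t^2=t^2\partial^2+4t\partial+2$, so
$$4fe\mapsto -t^2\partial^2-2t\partial-1.$$
The sum is $2t\partial+\tfrac54$, which is not constant. This is the one place a genuine check is needed, so the step to examine carefully is the normalization in $\Phi_3$.

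With $e\mapsto \tfrac12 t^2$ and $f\mapsto-\tfrac12\partial^2$ we get $[e,f]\mapsto -\tfrac14[t^2,\partial^2]=t\partial+\tfrac12$, which already equals $2h=t\partial+\tfrac12$. So $\Phi_3$ is a homomorphism for the relation $[e,f]=2h$. However, $4fe$ is not $c-(2h+1)^2$ unless the ordering convention of $c$ is adjusted. Recomputing with $c=(2h-1)^2+4ef$ instead gives $(t\partial-\tfrac12)^2-t^2\partial^2=-\tfrac14$... no; since $c$ is central, I will redo the computation using the symmetric form $c=4h^2+2(ef+fe)+1$, which is equal to $(2h+1)^2+4fe$ exactly when $[e,f]=2h$.

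The conclusion is that the method is straightforward normal ordering, and the main obstacle is bookkeeping in $\Phi_3$. There I will recheck the conventions, in particular that $[h,e]=e$ holds: $[\tfrac12 t\partial,\tfrac12 t^2]=\tfrac12 t^2$. This holds, so each relation individually is fine. I will then recompute $fe$ carefully, expecting the $t\partial$ terms to cancel and the constant to come out as $\tfrac14$.
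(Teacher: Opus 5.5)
Your treatment of $\Phi_1$ and $\Phi_2$ is correct. The paper gives no written proof here; it is a direct verification, and your normal-ordering computation for $\Phi_1$, together with $\Phi_2=\theta\circ\Phi_1$, is exactly that.

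The gap is the $\Phi_3$ case, which you leave unresolved. Your proposal ends with the non-constant value $2t\partial+\tfrac54$ and a promise to recompute, so one third of the statement is not proved. The cause is an arithmetic slip, not a convention issue. One has $4fe\mapsto 4\cdot(-\tfrac12\partial^2)(\tfrac12 t^2)=-\partial^2t^2$, not $-\tfrac12\partial^2t^2$. Your normal-ordered expression $-t^2\partial^2-2t\partial-1$ is consistent with neither coefficient. Using $\partial^2t^2=t^2\partial^2+4t\partial+2$, one gets
\[
\Phi_3(c)=\bigl(t^2\partial^2+4t\partial+\tfrac94\bigr)-\bigl(t^2\partial^2+4t\partial+2\bigr)=\tfrac14 .
\]

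Your doubt about the definition of $c$ is unfounded. The element $c=(2h+1)^2+4fe$ is the right central element for these conventions, as your $\Phi_1$ computation already confirms. Also, $(2h-1)^2+4ef$ is the same element, since $4ef=4fe+8h$.

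The alternative you abandoned is in fact the quickest route. We have $(t\partial-\tfrac12)^2=t^2\partial^2+t\partial-t\partial+\tfrac14=t^2\partial^2+\tfrac14$ and $4ef\mapsto -t^2\partial^2$, so it gives $\tfrac14$, not $-\tfrac14$. With either correction your argument is complete.
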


\begin{remark}
The exponential modules $E_{\pm}(g)$ for $\mathfrak{osp}(1|2)$ were studied in detail in~\cite{GN2}. We note that  $\mathfrak{osp}(1|2)_{\bar{0}} \simeq \mathfrak{sl}(2)$ and
$$T_3(g) \,\simeq\, \Res^{U(\mathfrak{osp}(1|2))}_{ U(\mathfrak{osp}(1|2)_{\bar{0}})}E_{+}(g), \qquad  T_3(g)^\tau \, \simeq\, \Res^{U(\mathfrak{osp}(1|2))}_{U(\mathfrak{osp}(1|2)_{\bar{0}})}E_{-}(g).$$
\end{remark}

The next proposition shows that, if $\deg g=n\geq 1$, then $T_1(g,b), T_2(g,b), T_3(g)$
are $U(\mathfrak h)$-free of rank $n$.

\begin{proposition} \label{U(h)-freeproof}
Let $g(t)$ be of degree $n\geq 1$ and $T\in \{ T_1(g,b), T_2(g,b), T_3(g)\}$.
Then
$$
T=\bigoplus_{p=0}^{n-1} \;U(\mathfrak h)\cdot(t^{p}e^{g}).
$$
as $ U(\mathfrak h)$-modules. In particular, $T\;\in \; \mathcal{M}(n)$.
\end{proposition}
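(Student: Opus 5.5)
The plan is to compute how $h$ acts on elements $q(t)e^{g}$ and show that $h$ behaves like a "degree-raising by $n$" operator, up to lower-order terms. That makes $t^0e^g,\dots,t^{n-1}e^g$ a $\C[h]$-basis by a leading-term induction.

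\emph{Case $T_1(g,b)$.} Here $h=t\partial+b$. Writing $g=\sum_{i=1}^n a_it^i$ with $a_n\neq0$, we get
$$
h\cdot(q\,e^{g})=\bigl(tq'+bq+tg'q\bigr)e^{g}.
$$
Since $tg'$ has degree $n$ with leading coefficient $na_n\neq 0$, for $q$ of degree $k$ the polynomial $tq'+bq+tg'q$ has degree exactly $k+n$, with leading coefficient $na_n\cdot\mathrm{lc}(q)$. Identify $T$ with $\C[t]$ via $q e^g\mapsto q$, and let $H$ denote the induced operator. Then $H^j(t^p)$ has degree $p+jn$ and nonzero leading coefficient.

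The index map $(j,p)\mapsto p+jn$, for $j\ge0$ and $0\le p\le n-1$, is a bijection onto $\Z_{\ge0}$. Hence the family $\{H^j(t^p)\}$ has exactly one element of each degree, each with nonzero leading coefficient. Such a family is a $\C$-basis of $\C[t]$ by the usual triangularity argument. This is precisely the statement that $T=\bigoplus_p \C[h]\,t^pe^g$ with each summand free: both the spanning and the independence over $\C[h]$ come from the same triangularity. It follows that $T\in\mathcal M(n)$, since $T$ is an $\mathfrak{sl}(2)$-module by the preceding lemma.

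\emph{Case $T_2(g,b)$.} Here $h=-t\partial+(b-1)$, so the operator is $-(tq'+tg'q)+(b-1)q$. Its leading term is $-na_n\,t^{k+n}\,\mathrm{lc}(q)$, and the same argument applies.

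\emph{Case $T_3(g)$.} Here $h=\tfrac12 t\partial+\tfrac14$, giving
$$
H(q)=\tfrac12\bigl(tq'+tg'q\bigr)+\tfrac14 q,
$$
which again raises degree by exactly $n$ with leading coefficient $\tfrac n2 a_n\neq0$.

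There is no real obstacle. The only point needing care is that the degree increases by exactly $n$ for every nonzero $q$, including constants. This holds because the $tg'q$ term strictly dominates $tq'$ and the scalar term in degree whenever $n\ge1$. With that in place, the standard triangularity argument finishes the proof.
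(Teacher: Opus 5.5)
Your proof is correct and follows essentially the same route as the paper. Both arguments rest on the fact that $h$ raises $t$-degree by exactly $n$ with nonzero leading coefficient, so $h^j\cdot t^pe^g$ has degree $p+jn$. You get spanning and $\C[h]$-independence in one step from the one-element-per-degree triangularity. The paper instead proves spanning by solving recursively for $t^\ell e^g$, and proves independence by a maximal-leading-degree argument using that the numbers $p+n\deg q_p$ have distinct residues modulo $n$. You also verify $T_2$ and $T_3$ explicitly, which the paper leaves as analogous.
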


\begin{proof}
We prove the statement for $T_1(g,b)$; the proofs for $T_2(g,b)$ and $T_3(g)$ are analogous.  Write $g(t)=\sum_{j=1}^n a_jt^j$ with $a_n\neq 0$.
Using $\Phi_1(h)=t\partial+b$, we have, for every $k\geq 0$,
$$
h\cdot(t^k e^g)
=
(k+b)t^k e^g+\sum_{j=1}^n j a_j t^{k+j}e^g .
$$

Taking $k=\ell-n$, with $\ell\geq n$, and solving for $t^\ell e^g$, we obtain
$$
t^\ell e^g
=
\frac{h-\ell+n-b}{na_n}\,t^{\ell-n}e^g
-
\sum_{j=1}^{n-1}\frac{j a_j}{na_n}\,t^{\ell-n+j}e^g .
$$

It follows by induction on $\ell$ that every $t^\ell e^g$ lies in the $U(\mathfrak h)$-span of
$e^g,\; te^g,\;\ldots,\; t^{n-1}e^g$.
Thus
$$
T_1(g,b)=\sum_{p=0}^{n-1}U(\mathfrak h)\cdot(t^p e^g).
$$

It remains to show that this sum is direct. From the formula for the action of $h$, we have
$$
h^d\cdot t^p e^g=(na_n)^d t^{p+nd}e^g+\text{terms of smaller $t$-degree}.
$$

Suppose that
$$
\sum_{p=0}^{n-1} q_p(h)t^p e^g=0,
\qquad q_p(h)\in \mathbb C[h].
$$

If some $q_p$ is nonzero, choose $r$ such that $r+n\deg q_r$ is maximal among all $p+n\deg q_p$ with $q_p\neq 0$. Such $r$ is unique, since the integers $p+n\deg q_p$, for $0\leq p\leq n-1$, have distinct residues modulo $n$. The summand $q_r(h)t^r e^g$ then contributes a nonzero multiple of $t^{r+n\deg q_r}e^g$,
while all other summands have strictly smaller $t$-degree. This is impossible. Hence all $q_p=0$, and the sum is direct.
\end{proof}

\subsection{Realizing exponential modules as $M\big(\alpha, {\boldsymbol{a}}, K(h)\big)$} The following result provides an explicit realization of $T_i(g,b)$, for $i\in\{1,2\}$, as a module of the form $M(\alpha,\boldsymbol a,K(h))$.
\begin{proposition} \label{realizationT-1}
Let $g(t)=\sum_{i=1}^n a_i t^i$, where $n\geq 2$ and $a_n\neq 0$. Then
$$
T_1(g,b)\simeq M\bigl(b,(1,n-1,0),K_1\bigr),
\qquad
T_2(g,b)\simeq M\bigl(b,(n-1,1,0),K_2\bigr),
$$
where $K_1$ and $K_2$ are the following matrices:
$$
K_1:=
\begin{pmatrix}
 -a_1 & -1 & 0 & \cdots & 0 \\
 -2a_2 & 0 & -1 & \cdots & 0 \\
 \vdots & \vdots & \ddots & \ddots & \vdots \\
 -(n-1)a_{n-1} & 0 & \cdots & 0 & -1 \\
 -na_n & 0 & \cdots & 0 & 0
\end{pmatrix},
\qquad
K_2:=
\begin{pmatrix}
0      & 0      & \cdots & -\tfrac{a_1}{na_n}      & -\tfrac{1}{na_n} \\
1      & 0      & \cdots & -\tfrac{2a_2}{na_n}      & 0              \\
0      & 1      & \ddots & \vdots & \vdots         \\
\vdots & \vdots & \ddots & -\tfrac{(n-1)a_{n-1}}{na_n}       & 0            \\
0      & 0      & \cdots & 1      & 0
\end{pmatrix}.
$$
\end{proposition}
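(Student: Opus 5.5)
The plan is to compute, for each of $T_1(g,b)$ and $T_2(g,b)$, the matrices $E,F$ of Lemma~\ref{1st_form} with respect to the $U(\mathfrak h)$-basis $e^g,te^g,\dots,t^{n-1}e^g$ given by Proposition~\ref{U(h)-freeproof}. Next I will exhibit unimodular $L(h),R(h)$ with $L(h)FR(h)=P_{(\boldsymbol a,\alpha)}(h)$ for the claimed $\boldsymbol a$ and $\alpha=b$. Finally I will read off $K=R(h)^{-1}L(h+1)^{-1}$ via Corollary~\ref{howtofind-K(h)}. Since $\Phi_1(c)=\Phi_2(c)=(2b-1)^2$, we may take $\alpha=b$ in both cases. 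Only $F$ is needed, by Lemma~\ref{1st-isom-thm}. Throughout I use $f\cdot(q(h)v)=q(h+1)\,f\cdot v$, so each column of $F$ is simply $f\cdot(t^pe^g)$ expanded in the basis.

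For $T_1(g,b)$, the key identity is $h\cdot(t^ke^g)=(k+b)t^ke^g+t^{k+1}g'e^g$. Equivalently, $t^{k+1}g'e^g=(h-k-b)\,t^ke^g$, with $h$ acting on the left. Since $f=-\partial$, for $p\ge1$ this gives
$$f\cdot(t^pe^g)=-\bigl(pt^{p-1}+t^pg'\bigr)e^g=-(h-b+1)\,t^{p-1}e^g,$$
and $f\cdot e^g=-\sum_j ja_jt^{j-1}e^g$. Thus $F$ has first column $(-ja_j)_{j=1}^n$ and superdiagonal entries $-(h-b+1)$, with all other entries zero. The entry $-na_n$ in position $(n,1)$ is a unit, and the superdiagonal has $n-1$ copies of $h-b+1$. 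Consequently $\SNF(F)=\diag(1,p,\dots,p)$ with $p=h-b+1$, which is $P_{((1,n-1,0),b)}$. To get explicit $L,R$, I will move the row with the unit entry to the top. The remaining $(n-1)\times(n-1)$ block is then $-(h-b+1)$ times an identity-shaped block, and the other first-column entries are cleared by constant row operations. The result is $F=D\,K_1'$ for a constant-plus-permutation-type matrix. I then compute $R^{-1}L(h+1)^{-1}$ and compare it with $K_1$. If the two do not agree literally, I will use Theorem~\ref{main-isom-thm}(ii) with a constant $P(h)$ commuting with $P_{((1,n-1,0),b)}$ to align them.

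For $T_2(g,b)$ there are two possible routes. One is to use $\Phi_2=\theta\circ\Phi_1$, but the direct computation seems cleaner, so I will do that. Here $f=t$, so $f\cdot(t^pe^g)=t^{p+1}e^g$ for $p\le n-2$, which produces $1$'s on the subdiagonal. For $p=n-1$, I must reduce $t^ne^g$. From $h\cdot(t^{k}e^g)=(b-1-k)t^ke^g-t^{k}\,tg'e^g$ with $k=0$, I can solve for $na_nt^ne^g$ as $-(h-b+1)e^g$ minus $\sum_{j<n}ja_jt^je^g$. Hence the last column of $F$ is $-\frac{1}{na_n}\bigl((h-b+1),\,a_1,\,2a_2,\dots,(n-1)a_{n-1}\bigr)^{\mathsf T}$, up to the shift conventions. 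This is a companion-type matrix whose determinant has degree one, so $\SNF(F)=\diag(1,\dots,1,h-b+1)$, which is $P_{((n-1,1,0),b)}$. A cyclic permutation of columns followed by constant column operations will produce $L,R$. Corollary~\ref{howtofind-K(h)} should then give $K_2$, which has exactly this companion shape with the constant entries $-ja_j/(na_n)$.

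I expect the main obstacle to be bookkeeping rather than any conceptual difficulty. The $\sigma$-shifts in $f\cdot(q(h)v)$, together with the $h+1$ shift in $R^{-1}L(h+1)^{-1}$, must be tracked so that the resulting $K$ is constant and matches $K_1$ and $K_2$ exactly, and the signs coming from $\lambda_\alpha$ and the normalization $-na_n$ must come out right. My first move will be to choose $L$ to be constant, since all the needed row operations are constant. Then $L(h+1)=L$, and $K=R^{-1}L^{-1}=(LR)^{-1}$ is easy to evaluate. As a sanity check I will verify the case $n=2$ by hand.
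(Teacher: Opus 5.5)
Your plan is correct and is essentially the paper's proof. You compute the $f$-matrix in the basis $t^pe^g$, find unimodular $L,R$ with $LFR=P_{(\boldsymbol a,b)}(h)$, and read off $K=R^{-1}L(h+1)^{-1}$ from Corollary~\ref{howtofind-K(h)}. In fact, for $T_1$ one has literally $F=K_1P_{((1,n-1,0),b)}(h)$, so $L=K_1^{-1}$ and $R=\I_n$, and no alignment step is needed. For $T_2$ the paper takes $L_2$ to be the cyclic \emph{row} permutation with scaling $-na_n$, which puts $h-b+1$ in the $(n,n)$ slot; $R_2$ is the constant column operation clearing the last column.
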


\begin{proof}
Set
$$
v_i:=t^i e^g,\qquad 0\leq i\leq n-1.
$$

By Proposition~\ref{U(h)-freeproof}, the elements $v_0,\ldots,v_{n-1}$ form a $U(\mathfrak h)$-basis of both $T_1(g,b)$ and $T_2(g,b)$. Thus it remains to compute the matrices $F_1, F_2$ of the $f$-action on $T_1(g,b)$, $T_2(g,b)$,  with respect to this basis and compare it to the one from from Corollary~\ref{howtofind-K(h)}. 

We first consider $T_1(g,b)$. Since $\Phi_1(f) =-\partial$, we have
$$
f v_0=-g'(t)e^g
=-\sum_{j=1}^n j a_j v_{j-1}, \qquad f v_i
=-i v_{i-1}-g'(t)t^i e^g
=-(h-b+1)v_{i-1} \;\; (i\geq 1).
$$

Hence the matrix $F_1$ of $f$ with the convention $f\boldsymbol v=F_1\sigma^{-1}(\boldsymbol v)$ is
$$
F_1=
\begin{pmatrix}
-a_1      & -(h-b+1)     & 0      & \cdots & 0     \\
-2a_2      & 0      & -(h-b+1)     & \cdots & 0     \\
\vdots & \vdots & \ddots & \ddots & \vdots \\
-(n-1)a_{n-1}      & 0      & \cdots & 0      & -(h-b+1)    \\
-na_n     & 0      & \cdots & 0      & 0
\end{pmatrix}.
$$

Let 
$$
L_1=
\begin{pmatrix}
0      & 0      & \cdots & 0      & -\tfrac{1}{na_n} \\
-1      & 0      & \cdots & 0      & \tfrac{a_1}{na_n}              \\
0      & -1      & \ddots & \vdots & \vdots         \\
\vdots & \vdots & \ddots & 0      & \tfrac{(n-2)a_{n-2}}{na_n}             \\
0      & 0      & \cdots & -1      & \tfrac{(n-1)a_{n-1}}{na_n}
\end{pmatrix}.
$$

A direct calculation gives
$$
L_1F_1=P_{((1,n-1,0),b)}(h).
$$

Therefore, by Corollary~\ref{howtofind-K(h)}, the corresponding $K$-matrix is $L_1^{-1}$, which is the displayed matrix $K_1$ in the statement.  This proves the part for $T_1(g,b)$.

Now consider $T_2(g,b)$. Here $\Phi_2(f)=t$. From the action of $h$ on $e^g$, we have
$$
(h-b+1)e^g=-\sum_{j=1}^n j a_j t^j e^g,
$$
and therefore
$$
f v_i=v_{i+1}\qquad (0\leq i\leq n-2), \qquad f v_{n-1}
=
-\frac{1}{na_n}
\left((h-b+1)v_0+\sum_{j=1}^{n-1}j a_j v_j\right).
$$

Thus the matrix $F_2$ of $f$ is
$$
F_2=
\begin{pmatrix}
0      & 0      & \cdots & 0      & -\tfrac{1}{na_n}(h-b+1) \\
1      & 0      & \cdots & 0      & -\tfrac{a_1}{na_n}              \\
0      & 1      & \ddots & \vdots & \vdots         \\
\vdots & \vdots & \ddots & 0      & -\tfrac{(n-2)a_{n-2}}{na_n}             \\
0      & 0      & \cdots & 1      & -\tfrac{(n-1)a_{n-1}}{na_n}
\end{pmatrix}.
$$

Let
$$
L_2=
\begin{pmatrix}
0 & 1 & 0 & \cdots & 0 \\
0 & 0 & 1 & \cdots & 0 \\
\vdots & \vdots & \ddots & \ddots & \vdots \\
0 & 0 & \cdots & 0 & 1 \\
-na_n & 0 & \cdots & 0 & 0
\end{pmatrix},
\qquad
R_2=
\begin{pmatrix}
1 & 0 & 0 & \cdots & \tfrac{a_1}{na_n} \\
0 & 1 & 0 & \cdots & \tfrac{2a_2}{na_n} \\
\vdots & \vdots & \ddots & \ddots & \vdots \\
0 & 0 & \cdots & 1 & \tfrac{(n-1)a_{n-1}}{na_n} \\
0 & 0 & \cdots & 0 & 1
\end{pmatrix}.
$$

Again, a direct calculation gives
$$
L_2F_2R_2=P_{((n-1,1,0),b)}(h).
$$

Thus $\SNF \left(F_{2}\right) =P_{((n-1,1,0) ,b)}(h)$ and by Corollary~\ref{howtofind-K(h)} we see that  the corresponding $K$-matrix is $ R_2^{-1}L_2^{-1}$, which is the displayed matrix $K_2$. This proves the statement for $T_2(g,b)$.
\end{proof}

\begin{remark} \label{rank1T1}
In the case where $g=a_1t$ with $a_1 \in \C^{*}$, we have
$$T_1(a_1t,b) \simeq M\big(b, (1,0,0), -a_1 \big),\qquad T_2(a_1t,b) \simeq M\big(b, (0,1,0), -\tfrac{1}{a_1} \big).$$

Using the same argument, one also obtains $T_3(a_1t) \simeq M\big(\tfrac{1}{4}, (1,0,0), -\tfrac{1}{2}a_1^2\big)$.
\end{remark}

\begin{proposition} \label{matrixformT3}
Let $g(t)=\sum_{i=1}^n a_i t^i,\; n\geq 2,\; a_n\neq 0$,
and set $u(h):=2h+\tfrac12$. Then
$$
T_3(g)\simeq
M\left(\tfrac14,(1,0,n-1),R(h)^{-1}L(h+1)^{-1}\right),
$$
where $L(h),R(h)\in \GL_n(\mathbb C[h])$ are defined as follows.

If $a_1\neq 0$, then
$$
R(h)=
\begin{pmatrix}
1 & -\tfrac{1}{a_1}u(h+\tfrac12) & 0 & \cdots & 0\\
0 & 1 & 0 & \cdots & 0\\
0 & 0 & -1 & \ddots & \vdots\\
\vdots & \vdots & \ddots & \ddots & 0\\
0 & 0 & \cdots & 0 & -1
\end{pmatrix},
$$
and
$$
L(h)=
\begin{pmatrix}
0 & 0 & 0 & \cdots & 0 & 0 & -\tfrac{2}{na_1a_n} \\
0 & 0 & 0 & \cdots & 0 & \tfrac{a_1}{2na_n} &
-\tfrac{(n-1)a_1a_{n-1}+na_nu(h)}{2n^2a_n^2} \\
\tfrac{1}{2} & 0 & 0 & \cdots & 0 & -\tfrac{2a_2}{2na_n} &
\tfrac{-na_1a_n+2(n-1)a_2a_{n-1}}{2n^2a_n^2} \\
0 & \tfrac{1}{2} & 0 & \cdots & 0 & -\tfrac{3a_3}{2na_n} &
\tfrac{-2na_2a_n+3(n-1)a_3a_{n-1}}{2n^2a_n^2} \\
0 & 0 & \tfrac{1}{2} & \ddots & \vdots & -\tfrac{4a_4}{2na_n} &
\tfrac{-3na_3a_n+4(n-1)a_4a_{n-1}}{2n^2a_n^2} \\
\vdots & \vdots & \ddots & \ddots & 0 & \vdots & \vdots \\
0 & 0 & \cdots & \tfrac{1}{2} & 0 & -\tfrac{(n-2)a_{n-2}}{2na_n} &
\tfrac{-(n-3)na_{n-3}a_n+(n-2)(n-1)a_{n-2}a_{n-1}}{2n^2a_n^2} \\
0 & 0 & \cdots & 0 & \tfrac{1}{2} & -\tfrac{(n-1)a_{n-1}}{2na_n} &
\tfrac{-(n-2)na_{n-2}a_n+(n-1)^2a_{n-1}^2}{2n^2a_n^2}
\end{pmatrix}.
$$

If $a_1=0$, then
$$
R(h)=
\begin{pmatrix}
1 & \tfrac{1}{2}u(h+\tfrac12) & 0 & \cdots & 0\\
1 & \tfrac{1}{2}u(h) & 0 & \cdots & 0\\
0 & 0 & 1 & \ddots & \vdots\\
\vdots & \vdots & \ddots & \ddots & 0\\
0 & 0 & \cdots & 0 & 1
\end{pmatrix},
$$
and
$$
L(h)=
\begin{pmatrix}
0 & 0 & 0 & \cdots & 0 & \tfrac{2}{na_n} &
-\tfrac{2((n-1)a_{n-1}+na_n)}{n^2a_n^2} \\
0 & 0 & 0 & \cdots & 0 & -\tfrac{1}{na_n}u(h+\tfrac12) &
\tfrac{(n-1)a_{n-1}u(h+\tfrac12)+na_nu(h)}{n^2a_n^2} \\
-\tfrac{1}{2} & 0 & 0 & \cdots & 0 & \tfrac{2a_2}{2na_n} &
-\tfrac{(n-1)a_2a_{n-1}}{n^2a_n^2} \\
0 & -\tfrac{1}{2} & 0 & \cdots & 0 & \tfrac{3a_3}{2na_n} &
\tfrac{2na_2a_n-3(n-1)a_3a_{n-1}}{2n^2a_n^2} \\
0 & 0 & -\tfrac{1}{2} & \ddots & \vdots & \tfrac{4a_4}{2na_n} &
\tfrac{3na_3a_n-4(n-1)a_4a_{n-1}}{2n^2a_n^2} \\
\vdots & \vdots & \ddots & \ddots & 0 & \vdots & \vdots \\
0 & 0 & \cdots & -\tfrac{1}{2} & 0 & \tfrac{(n-2)a_{n-2}}{2na_n} &
\tfrac{(n-3)na_{n-3}a_n-(n-2)(n-1)a_{n-2}a_{n-1}}{2n^2a_n^2} \\
0 & 0 & \cdots & 0 & -\tfrac{1}{2} & \tfrac{(n-1)a_{n-1}}{2na_n} &
\tfrac{(n-2)na_{n-2}a_n-(n-1)^2a_{n-1}^2}{2n^2a_n^2}
\end{pmatrix}.
$$
\end{proposition}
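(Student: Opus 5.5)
The plan mirrors the proof of Proposition~\ref{realizationT-1}. First, by Proposition~\ref{U(h)-freeproof}, the vectors $v_i:=t^ie^g$ for $0\leq i\leq n-1$ form a $U(\mathfrak h)$-basis of $T_3(g)$. The central character is $\Phi_3(c)=\tfrac14=(2\alpha-1)^2$, and we take $\alpha=\tfrac14$. Then
$$\lambda_{1/4}(h)=-(h+\tfrac34)(h+\tfrac14)=-\tfrac14 u(h)u(h+\tfrac12),$$
since $u(h)=2h+\tfrac12$. So the two roots $\alpha-1=-\tfrac34$ and $-\alpha=-\tfrac14$ correspond to the factors $u(h+\tfrac12)$ and $u(h)$. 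The target normal form $P_{((1,0,n-1),1/4)}(h)=\diag(1,-\lambda,\dots,-\lambda)$ has determinant of degree $2(n-1)$, consistent with the triple $(1,0,n-1)$.

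The core computation is the matrix $F$ of $f$ in the basis $v_0,\dots,v_{n-1}$, with the convention $f\boldsymbol v=F\sigma^{-1}(\boldsymbol v)$. Since $\Phi_3(f)=-\tfrac12\partial^2$, we get
$$f\cdot t^ie^g=-\tfrac12\bigl(i(i-1)t^{i-2}+2it^{i-1}g'+t^i(g''+g'^2)\bigr)e^g.$$
This has $t$-degree up to $i+2n-2$, so it must be reduced back into the $U(\mathfrak h)$-span of $v_0,\dots,v_{n-1}$. For the reduction we use
$$h\cdot t^ke^g=\bigl((\tfrac k2+\tfrac14)t^k+\tfrac12 t^kg't\bigr)e^g,$$
that is, $t^{k+n}e^g$ expressed through $h$ and lower powers, as in the proof of Proposition~\ref{U(h)-freeproof}. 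It is cleaner to first rewrite $g'^2t^ie^g$ using $(2h-\tfrac12-i)\,t^ie^g=t^{i+1}g'e^g$ in a suitable form, so that the entries of $F$ come out quadratic in $h$. This is where the factors $u(h)$ and $u(h+\tfrac12)$ appear: the $\sigma^{-1}$-shift twice turns $2h$ into expressions like $u(h+\tfrac12)$.

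With $F$ in hand, the remaining step is to verify $L(h)\,F\,R(h)=P_{((1,0,n-1),1/4)}(h)$ for the displayed $L,R$, in the two cases $a_1\neq0$ and $a_1=0$. One also checks that $\det L$ and $\det R$ are nonzero constants, so $L,R\in\GL_n(\C[h])$. For example, when $a_1=0$ we have $\det R=\tfrac12(u(h)-u(h+\tfrac12))=-\tfrac12$. Corollary~\ref{howtofind-K(h)} then yields $T_3(g)\simeq M(\tfrac14,(1,0,n-1),R(h)^{-1}L(h+1)^{-1})$.

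The case split is natural. When $a_1\neq0$, $f\cdot v_0$ has a nonzero constant component $-\tfrac12 a_1^2$ (from $g'^2$) plus $-a_2$ (from $g''$). This makes the unit invariant factor visible directly, and column operations with $R$ clear the rest of the first row. When $a_1=0$, the unit invariant factor only emerges after combining the first two rows, which is why $R$ mixes $u(h)$ and $u(h+\tfrac12)$. The main obstacle will be obtaining $F$ in a clean closed form, especially the last column, which comes from the reduction of $t^{2n-2+i}e^g$ and produces the intricate last columns of $L$. I would first do the cases $n=2,3$ by hand to fix the pattern, then verify the general identity $LFR=P$ row by row. The middle rows are essentially shift rows (entries $\pm\tfrac12$) and are routine; the first two rows carry all of the $a_{n-1},a_n$ and $u(h)$ dependence.
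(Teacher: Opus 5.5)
Your skeleton matches the paper's: compute the $f$-matrix $F$ in the basis $v_i=t^ie^g$, verify $L(h)F R(h)=P_{((1,0,n-1),1/4)}(h)$ with $L,R$ unimodular, and apply Corollary~\ref{howtofind-K(h)}. The gap is that the one nontrivial input, a closed form of $F$ valid for every $n$, is never obtained. You list it as the main obstacle and propose to extract a pattern from $n=2,3$, but that cannot establish $LFR=P$ for general $n$. The paper supplies this input by quoting Theorem~6.3 of \cite{GN2}, namely $F=-X_-(h)X_-(h+\tfrac12)$.

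The missing idea is to factor $f=-\tfrac12\partial^2$ through $\partial$, instead of reducing $t^{i+2n-2}e^g$.

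\textbf{The factorization.} From $\Phi_3(h)=\tfrac12t\partial+\tfrac14$ one gets $\partial\,q(h)=q(h+\tfrac12)\,\partial$. Your own identity $(2h-\tfrac12-i)t^ie^g=t^{i+1}g'e^g$ gives $\partial(t^ie^g)=u(h)\,t^{i-1}e^g$ for $i\ge1$, while $\partial e^g=\sum_j ja_jt^{j-1}e^g$. Hence $\partial\boldsymbol q=X(h)\boldsymbol q(h+\tfrac12)$, where $X(h)$ has first column $(a_1,2a_2,\dots,na_n)^{\mathsf T}$ and $u(h)$ on the superdiagonal. Therefore $F=-\tfrac12X(h)X(h+\tfrac12)$, which is the paper's formula with $X=\sqrt2\,X_-$.

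\textbf{Consequences.} In particular $f\cdot v_i=2\lambda_{1/4}(h)v_{i-2}$ for $i\ge2$. So, contrary to your expectation, columns $3,\dots,n$ of $F$ are trivial, and all dependence on $g$ sits in the first two columns. Once the $\pm\tfrac12$ entries of $L$ are fixed, checking $LFR=P$ reduces to two linear conditions per row of $L$, on its entries in columns $n-1$ and $n$.

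\textbf{Your case-split heuristic is inaccurate.} For $a_1\neq0$, the entry $F_{11}=-\tfrac12(a_1^2+2a_2u(h))$ is not constant. The unit is $F_{n1}=-\tfrac12na_1a_n$, which is why the first row of $L$ is $-\tfrac{2}{na_1a_n}\mathsf e_n^{\mathsf T}$. Also, $R$ is a column operation killing $F_{n2}=-\tfrac12na_nu(h+\tfrac12)$, not something that clears the first row. For $a_1=0$, the unit comes from combining columns (not rows) 1 and 2, using $u(h+\tfrac12)-u(h)=1$.

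Your check that $\det L$ and $\det R$ are nonzero constants is the right one. It also makes the paper's separate determinant argument for $\SNF(F)$ redundant.
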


\begin{proof} Using the idea from the last proof, let $F_3 $ be the matrix of the $f$-action on $T_3(g)$ with respect to the $U(\mathfrak h)$-basis $\{e^g,\ te^g,\ \ldots,\ t^{n-1}e^g\}$. By Theorem~6.3 in~\cite{GN2},
$$
F=-X_-(h)X_-\left(h+\tfrac12\right),
$$
where
$$
X_{-}(h)=\tfrac{1}{\sqrt{2}}
\begin{pmatrix}
 a_1 &  u(h)  & 0 & \cdots & 0 \\
 2a_2 & 0 &  u(h)  & \cdots & 0 \\
 \vdots & \vdots & \ddots & \ddots & \vdots \\
 (n-1)a_{n-1} & 0 & \cdots & 0 &  u(h)  \\
 na_n & 0 & \cdots & 0 & 0
\end{pmatrix}.
$$

Since
$$
\det(F)=(-1)^n2^{n-2}n^2a_n^2
\left(h+\tfrac14\right)^{n-1}
\left(h+\tfrac34\right)^{n-1},
$$
with standard divisibility reasoning, it follows that $\SNF(F)=P_{((1,0,n-1),1/4)}(h)$.

For the matrices $L(h)$ and $R(h)$ defined above, one checks directly, in both cases $a_1\neq0$ and $a_1=0$, that
$$
L(h)FR(h)=P_{((1,0,n-1),1/4)}(h).
$$
Therefore, by Corollary~\ref{howtofind-K(h)}, we obtain
$$
T_3(g)\simeq
M\left(\tfrac14,(1,0,n-1),R(h)^{-1}L(h+1)^{-1}\right).
$$
\end{proof}

\begin{remark}
For reference, when $n=2$, the matrices in Proposition~\ref{matrixformT3}
specialize as follows. If $a_1\neq 0$, then
$$
F_3=-\tfrac12
\begin{pmatrix}
a_1^2+2a_2u(h) & a_1u(h+\tfrac12)\\
2a_1a_2 & 2a_2u(h+\tfrac12)
\end{pmatrix},
\;
L(h)=
\begin{pmatrix}
0 & -\tfrac{1}{a_1a_2}\\
\tfrac{a_1}{4a_2} & -\tfrac{a_1^2+2a_2u(h)}{8a_2^2}
\end{pmatrix},
\;
R(h)=
\begin{pmatrix}
1 & -\tfrac{u(h+\tfrac12)}{a_1}\\
0 & 1
\end{pmatrix}.
$$

If $a_1=0$, then
$$
F_3\hskip-4pt =\hskip-4pt -
\begin{pmatrix}
a_2u(h) & 0\\
0 & a_2u(h+\tfrac12)
\end{pmatrix},
\; 
L(h)\hskip -3pt =  \hskip-4pt \begin{pmatrix} \frac{1}{a_2} & -\tfrac{a_1 +2a_2}{2a_2^2}\\ -\tfrac{1}{2a_2}u(h+\frac{1}{2}) & \tfrac{a_1u(h+\frac{1}{2}) +2a_2u(h)}{4a_2^2} \end{pmatrix},
\; 
R(h)\hskip-4pt =\hskip-4pt 
\begin{pmatrix}
1 & \tfrac12u(h+\tfrac12)\\
1 & \tfrac12u(h)
\end{pmatrix}.
$$
\end{remark}

\begin{proposition} \label{decompT-3}
Let $m\geq 1$ and  $g(t)=\sum_{j=1}^m a_{2j}t^{2j}$ with $a_{2m}\neq 0$.
Then
$$
T_3(g)=T_3(g)^{\mathrm{ev}}\oplus T_3(g)^{\mathrm{odd}},
$$
where $T_3(g)^{\mathrm{ev}}:=\mathbb C[t]_{\mathrm{ev}}e^g$ and $T_3(g)^{\mathrm{odd}}:=\mathbb C[t]_{\mathrm{odd}}e^g$.
Moreover,
$$
T_3(g)^{\mathrm{ev}}
\simeq
M\left(\tfrac14,(0,-1,m-1),K_3\right),
\qquad
T_3(g)^{\mathrm{odd}}
\simeq
M\left(\tfrac14,(0,1,m-1),K_3\right),
$$
where
$$
K_3=
\begin{pmatrix}
-2 a_2 & -2 & 0 & \cdots & 0\\
-4 a_4 & 0 & -2 & \cdots & 0\\
\vdots & \vdots & \ddots & \ddots & \vdots\\
-2(m-1)a_{2(m-1)} & 0 & \cdots & 0 & -2\\
-2m a_{2m} & 0 & \cdots & 0 & 0
\end{pmatrix}.
$$

For $m=1$, $K_3=(-2a_2)$.
\end{proposition}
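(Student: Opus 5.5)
The first step is the splitting. Since $g$ is even, $g'$ is odd. Hence $\partial$ maps $\C[t]_{\mathrm{ev}}e^g$ into $\C[t]_{\mathrm{odd}}e^g$ and vice versa. So each of $\Phi_3(e)=\tfrac12t^2$, $\Phi_3(f)=-\tfrac12\partial^2$ and $\Phi_3(h)=\tfrac12t\partial+\tfrac14$ preserves both $\C[t]_{\mathrm{ev}}e^g$ and $\C[t]_{\mathrm{odd}}e^g$. This gives the $\mathfrak{sl}(2)$-decomposition $T_3(g)=T_3(g)^{\mathrm{ev}}\oplus T_3(g)^{\mathrm{odd}}$.

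To identify the summands, I will change variables. Set $s=t^2$ and $G(s):=\sum_{j=1}^m a_{2j}s^j$, so that $e^g=e^{G(s)}$.

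\emph{Even part.} On $\C[s]e^{G}$ we have $t\partial=2s\partial_s$ and $\partial^2=2\partial_s+4s\partial_s^2$. Therefore
$$h=s\partial_s+\tfrac14,\qquad f=-(2s\partial_s+1)\partial_s=-\bigl(2h+\tfrac12\bigr)\partial_s,$$
with $h$ written on the left. Up to the outer factor and a shift of $b$, this is the $T_1$ situation of Proposition~\ref{realizationT-1} with $b=\tfrac14$. The argument of Proposition~\ref{U(h)-freeproof}, applied in the variable $s$, shows that $v_k:=s^ke^G$ for $0\le k\le m-1$ form a $U(\mathfrak h)$-basis.

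Next I compute $\partial_s v_k=kv_{k-1}+G'(s)s^ke^G$. Using $(h-\tfrac14)v_{k-1}=(k-1)v_{k-1}+sG'(s)s^{k-1}e^G$, the $\partial_s$-part of $f$ becomes exactly the $T_1$-matrix: the entries $-ja_{2j}$ appear in the first column and $-(h-\tfrac14+1)=-(h+\tfrac34)$ appears on the superdiagonal. Left multiplication by $2h+\tfrac12=2(h+\tfrac14)$ then multiplies every entry by $2(h+\tfrac14)$.

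I then reuse the matrix $L_1$ from Proposition~\ref{realizationT-1}, with $a_j$ replaced by $a_{2j}$ and $n$ replaced by $m$. This transforms $F$ into
$$2\bigl(h+\tfrac14\bigr)\diag\bigl(1,h+\tfrac34,\dots,h+\tfrac34\bigr),$$
possibly up to sign. The scalar $2$ and the signs are absorbed into a constant diagonal right factor $R$. The result is $P_{((0,-1,m-1),1/4)}(h)$, because $x+\alpha=h+\tfrac14$ and $-\lambda_{1/4}=(h+\tfrac34)(h+\tfrac14)$. Corollary~\ref{howtofind-K(h)} then gives $K=R^{-1}L(h+1)^{-1}$, and I expect this to equal $L_1^{-1}$ rescaled by $2$, which is $K_3$.

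\emph{Odd part.} Here I use the basis $ts^ke^G$. On $t\varphi(s)e^G$ one gets $t\partial(t\varphi)=t(2s\partial_s+1)\varphi$, so $h$ corresponds to $s\partial_s+\tfrac34$. Also $\partial^2(t\varphi)=t(4s\partial_s^2+6\partial_s)\varphi$, hence
$$f=-(2s\partial_s+3)\partial_s=-\bigl(2h+\tfrac32\bigr)\partial_s=-2\bigl(h+\tfrac34\bigr)\partial_s .$$
The same reduction now gives the diagonal
$$2\bigl(h+\tfrac34\bigr)\diag\bigl(1,h+\tfrac54-1,\dots\bigr).$$
The superdiagonal factor is now $h-\tfrac34+1=h+\tfrac14$, so the normal form is $P_{((0,1,m-1),1/4)}(h)$, since $x-\alpha+1=h+\tfrac34$. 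This yields $a_0=+1$ with the same $K_3$.

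The main obstacle is the bookkeeping. I must check that the ordering of $h$ relative to $\partial_s$ produces the shifts $\sigma^{\pm1}$ consistently with the convention $f\boldsymbol g=F\sigma^{-1}(\boldsymbol g)$. I also need to confirm that the normalising constants and signs produce exactly the displayed $K_3$ and not a conjugate of it; if necessary I will adjust the constant diagonal $R$ to achieve this. For $m=1$ I will verify the rank-one formula $K_3=(-2a_2)$ directly.
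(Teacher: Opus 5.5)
Your proposal is correct and follows essentially the same route as the paper: split by parity, write the $f$-matrix in the bases $t^{2k}e^g$ and $t^{2k+1}e^g$, reduce it to $P_{((0,\mp1,m-1),1/4)}(h)$ by a constant left factor, and apply Corollary~\ref{howtofind-K(h)}; your substitution $s=t^2$ simply explains why $F_{\mathrm{ev}}$ and $F_{\mathrm{odd}}$ are $u(h)$ and $u(h+\tfrac12)$ times the $T_1$-matrices for $G(s)=\sum_j a_{2j}s^j$ with $b=\tfrac14$ and $b=\tfrac34$ respectively. The bookkeeping you were worried about works out with no sign adjustments: $L_1F=2P_{((0,\mp1,m-1),1/4)}(h)$, so $R=\tfrac12\I_m$, the paper's $L_3$ is exactly $\tfrac12L_1$, and $K=2L_1^{-1}=2K_1=K_3$ (with $a_j$ replaced by $a_{2j}$ and $n$ by $m$).
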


\begin{proof}
Since $g$ is even, the operators $\Phi_3(e)=\frac12t^2$, $\Phi_3(f)=-\frac12\partial^2$, and $\Phi_3(h)=\frac12t\partial+\frac14$ preserve $\mathbb C[t]_{\mathrm{ev}}e^g$ and $\mathbb C[t]_{\mathrm{odd}}e^g$.
Hence $T_3(g)^{\mathrm{ev}}$ and $T_3(g)^{\mathrm{odd}}$
are $\mathfrak{sl}(2)$-submodules, and
$$
T_3(g)=T_3(g)^{\mathrm{ev}}\oplus T_3(g)^{\mathrm{odd}}.
$$

Consider the ordered $U(\mathfrak h)$-bases
$\{e^g,t^2e^g,\ldots,t^{2m-2}e^g\}$ of  $T_3(g)^{\mathrm{ev}}$, and \\
$\{ te^g,t^3e^g,\ldots,t^{2m-1}e^g \}$
of $T_3(g)^{\mathrm{odd}}$. With respect to these bases, the corresponding matrices of the $f$-action are
$$
F_{\mathrm{ev}}=
\begin{pmatrix}
-a_2u(h) & 2\lambda_{1/4}  & 0 & \cdots & 0 \\
-2 a_4u(h) & 0 & 2\lambda_{1/4}  & \cdots & 0 \\
\vdots & \vdots & \ddots & \ddots & \vdots \\
-(m-1)a_{2(m-1)}u(h) & 0 & \cdots & 0 & 2\lambda_{1/4}  \\
-m a_{2m}u(h) & 0 & \cdots & 0 & 0
\end{pmatrix},
$$
and the matrix $F_{\mathrm{odd}}$ is obtained from $F_{\mathrm{ev}}$ by replacing $u(h)$ by $u(h+\tfrac12)$.

Let
$$
L_3=
\begin{pmatrix}
0 & 0 & \cdots & 0 & -\tfrac{1}{2ma_{2m}} \\
-\frac12 & 0 & \cdots & 0 & \tfrac{a_2}{2ma_{2m}} \\
0 & -\frac12 & \ddots & \vdots & \vdots \\
\vdots & \vdots & \ddots & 0 & \tfrac{(m-2)a_{2(m-2)}}{2ma_{2m}} \\
0 & 0 & \cdots & -\frac12 & \tfrac{(m-1)a_{2(m-1)}}{2ma_{2m}}
\end{pmatrix}.
$$

A direct calculation gives
$$
L_3F_{\mathrm{ev}}=P_{((0,-1,m-1),1/4)}(h),
\qquad
L_3F_{\mathrm{odd}}=P_{((0,1,m-1),1/4)}(h).
$$

Therefore, by Corollary~\ref{howtofind-K(h)}, the two claimed realizations with $K_3=L_3^{-1}$ follow.
\end{proof}

\subsection{Simplicity criteria}
In this subsection we prove necessary and sufficient conditions for the three families of exponential modules to be simple. It is worth noting that the proofs use a standard linear algebra observation, Lemma~\ref{lemforT3}, which makes the arguments shorter and may be useful in other related cases.

\begin{theorem} \label{simplicityT-2}
The module $T_2(g,b)$ is simple if and only if $b\notin 1+\tfrac12\mathbb Z_{\geq 0}$.
\end{theorem}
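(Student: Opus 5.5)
The plan is to reduce simplicity of $T_2(g,b)$ to the absence of finite-dimensional quotients, and then read that off from Proposition~\ref{generalcaseprop}. Let $n=\deg g$.

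\textbf{Step 1: every nonzero submodule has finite codimension.} Let $N\subset T_2(g,b)=\C[t]e^{g}$ be a nonzero $\mathfrak{sl}(2)$-submodule, and pick a nonzero element $q(t)e^{g}\in N$. Since $\Phi_2(f)=t$, the submodule $N$ is stable under multiplication by $t$. Hence
$$
\C[t]\,q(t)e^{g}\subseteq N .
$$
The space $\C[t]q(t)e^{g}$ has codimension $\deg q$ in $\C[t]e^{g}$. So $T_2(g,b)/N$ is finite dimensional.

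It follows that $T_2(g,b)$ is simple if and only if it has no nonzero finite-dimensional quotient. By Proposition~\ref{U(h)-freeproof}, $T_2(g,b)\in\mathcal M(n)$. So this is equivalent to $T_2(g,b)$ having no proper submodule of rank $n$.

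\textbf{Step 2: the case $n\geq 2$.} By Proposition~\ref{realizationT-1}, $T_2(g,b)\simeq M\bigl(b,(n-1,1,0),K_2\bigr)$, so $a_-=n-1$, $a_0=1$, $a_+=0$. We apply Proposition~\ref{generalcaseprop} with $\alpha=b$.

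Since $a_+=0$, the matrix $\boldsymbol M_{\boldsymbol a}(T)$ consists of $a_-$ columns $T\mathsf e_1,\dots,T\mathsf e_{a_-}$. These columns are linearly independent for invertible $T$. One needs $\mathcal K_{\alpha,\boldsymbol a,K}$ to be invertible; alternatively, Remark~\ref{d-intersectiondim} gives $0\le d\le\min\{a_-,a_+\}=0$ directly. In either case $d=0$.

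Proposition~\ref{generalcaseprop} therefore says a proper rank-$n$ submodule exists if and only if $a_0+d=1>0$ and $b\in 1+\tfrac12\Z_{\geq 0}$; the second alternative there requires $d>0$ and so never occurs. Combined with Step 1, $T_2(g,b)$ is simple exactly when $b\notin 1+\tfrac12\Z_{\ge0}$.

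In the non-simple case, Proposition~\ref{SESgeneralcase} (first sequence, with $k=2b-2$) gives the proper submodule explicitly. The quotient is $L(2b-2)$.

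\textbf{Step 3: the case $n=1$.} By Remark~\ref{rank1T1}, $T_2(a_1t,b)\simeq M\bigl(b,(0,1,0),-\tfrac1{a_1}\bigr)$. Proposition~\ref{simplicity-rank1}(ii) is stated only for $\re(b)\geq\tfrac12$, so we split into two cases.

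\begin{itemize}
\item[$\cdot$] If $\re b\ge \tfrac12$, Proposition~\ref{simplicity-rank1}(ii) gives simplicity exactly when $b\notin 1+\tfrac12\Z_{\geq 0}$.
\item[$\cdot$] If $\re b<\tfrac12$, Theorem~\ref{main-isom-thm} identifies the module with $M\bigl(1-b,(0,-1,0),\cdot\bigr)$. This module is simple: its weighting involves only $\mathcal F_0^-(1-b)$, which has no finite-dimensional quotient since $1-b\notin\tfrac12\Z_{\le0}$. By Step 1, no finite-dimensional quotient means simple. In this case also $b\notin 1+\tfrac12\Z_{\ge 0}$, consistent with the claim.
\end{itemize}

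\textbf{Main obstacle.} The only genuinely delicate point is the evaluation $d=0$. Here Remark~\ref{d-intersectiondim} settles it immediately because $\mathcal S^+=0$ when $a_+=0$. The rest is bookkeeping of the $\alpha\leftrightarrow 1-\alpha$ normalization in the rank-one case.
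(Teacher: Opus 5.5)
Your proof is correct, but it takes a genuinely different route from the paper's.

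\textbf{The paper's route.} The paper argues directly inside $\C[t]e^{g}$. It writes down an explicit element
$\mathcal L_2=e+f\bigl(g''(f)+g'(f)^2\bigr)-(2b-2)g'(f)-2g'(f)\bigl(h+fg'(f)-b+1\bigr)\in U(\mathfrak{sl}(2))$
satisfying $\mathcal L_2\cdot t^ke^{g}=k(2b-k-1)t^{k-1}e^{g}$. For the ``if'' direction, when $b\notin 1+\tfrac12\Z_{\geq0}$, repeated application of $\mathcal L_2$ pushes any nonzero element of a submodule down to $e^{g}$, and $e^{g}$ generates everything under $f=t$. For the ``only if'' direction, when $2b-1=m_0\in\Z_{\geq1}$, the paper checks by hand that $\Span\{t^pe^{g}: p\geq m_0\}$ is a proper submodule.

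\textbf{Your route.} You use $f=t$ only to show that every nonzero submodule has finite codimension. This is the easy analogue of Lemma~\ref{T1new}. You then read off the answer from:
\begin{itemize}
\item[$\cdot$] the normal form $M(b,(n-1,1,0),K_2)$ of Proposition~\ref{realizationT-1};
\item[$\cdot$] Theorem~\ref{cohfamilydecomp}(i), where $a_+=0$ forces $d=0$ and gives weighting $\mathcal F_-(b)^{\oplus(n-1)}\oplus\mathcal F_0^+(b)$;
\item[$\cdot$] Proposition~\ref{generalcaseprop}.
\end{itemize}
This is exactly the strategy the paper itself uses for $T_1(g,b)$ in Theorem~\ref{simplicityT-1}.

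\textbf{What each route buys.} Your argument is more conceptual. It explains the criterion as coming from the single $\mathcal F_0^+(b)$ summand of the weighting, and it identifies the quotient as $L(2b-2)$ via Proposition~\ref{SESgeneralcase}. The paper's argument is self-contained and uniform in $\deg g$. It needs neither Proposition~\ref{realizationT-1}, nor the weighting machinery (Proposition~\ref{quotient-weighting}), nor the list of degree-$1$ coherent families admitting finite-dimensional quotients, which the paper states without proof. It also exhibits the explicit submodule and shows that $e^{g}$ is contained in every nonzero submodule.

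\textbf{A simplification.} Your case split for $n=1$ is unnecessary. The triple $\boldsymbol a=(0,1,0)$ already falls under Theorem~\ref{cohfamilydecomp}(i) for every $\alpha$. So Proposition~\ref{generalcaseprop} applies directly to $M\bigl(b,(0,1,0),-\tfrac1{a_1}\bigr)$ with $\alpha=b$, and you need neither Proposition~\ref{simplicity-rank1} nor the $\alpha\leftrightarrow 1-\alpha$ symmetry.
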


\begin{proof}
We first prove the ``if'' direction. Define
$$
\mathcal L_2
:=
e+f\bigl(g''(f)+(g'(f))^2\bigr)
-(2b-2)g'(f)
-2g'(f)\bigl(h+fg'(f)-b+1\bigr).
$$

A direct computation gives
$$
\mathcal L_2\cdot t^k e^g
=
k(2b-k-1)t^{k-1}e^g,
\qquad k\geq 0.
$$

Let $N$ be a nonzero submodule of $T_2(g,b)$, and choose 
$ 0\neq v=\sum_{i=0}^m b_i t^i e^g\in N$ with $b_m\neq 0$. If $m=0$, then $e^g\in N$.
If $m\geq 1$, then $m(2b-m-1)\neq 0$, since
$b\notin 1+\tfrac12\mathbb Z_{\geq0}$. Hence $\mathcal L_2v$ has degree $m-1$. Repeating this reduction, we obtain $e^g\in N$. Since $\Phi_2(f) =t$, we have $t^k e^g=f^k e^g\in N$ for all $k\geq 0$. Thus $N=T_2(g,b)$, and $T_2(g,b)$ is simple.

\smallskip\noindent
Conversely, suppose $b\in 1+\tfrac12\mathbb Z_{\geq0}$, and set $m_0:=2b-1\in\mathbb Z_{\geq1}$. Let
$$
N_{m_0}:=\Span\{t^p e^g\mid p\geq m_0\}.
$$

This is a nonzero proper subspace of $T_2(g,b)$. We show that it is a submodule. Since $f$ acts by multiplication by $t$, the subspace $N_{m_0}$ is stable under $f$. It remains to check stability under $e$.  For this we use
$$
e\cdot t^p e^g
=
p(2b-p-1)t^{p-1}e^g
-
\Bigl(2p g' + t(g''+(g')^2)-(2b-2)g'\Bigr)t^p e^g .
$$

If $p\geq m_0+1$, then all terms belong to $N_{m_0}$. If $p=m_0$, the only possible term of degree $m_0-1$ has coefficient
$$
m_0(2b-m_0-1)=0.
$$

Thus $eN_{m_0}\subseteq N_{m_0}$. Therefore $N_{m_0}$ is a proper nonzero submodule of $T_2(g,b)$, and $T_2(g,b)$ is not simple.
\end{proof}

\begin{remark}
If $2b-1=m_0\in\mathbb Z_{\geq1}$ and $\deg g=n\geq1$, then the submodule $N_{m_0}$ from the proof belongs to $\mathcal M(n)$. Indeed, the same argument as in Proposition~\ref{U(h)-freeproof} gives
$$
N_{m_0}
=
\bigoplus_{p=m_0}^{m_0+n-1}
U(\mathfrak h)\cdot t^p e^g.
$$
\end{remark}


By Remark \ref{rank1T1} and Theorem \ref{simplicity-rank1}, the module $T_1(a_1t,b)$ is simple for all $a_1\in\C^*$ and $b\in\C$. Accordingly, in this subsection, we restrict our attention to $T_1(g,b)$ under the assumption that $\deg g\geq 2$.

\begin{lemma}\label{lemforT3}
Let $V$ be a subspace of $\mathbb C[t]$. Suppose that there exists
$N \geq 0 $ such that, for every $k\geq0$, the subspace $V$
contains a polynomial of degree $N+k$. Then $\mathbb C[t]/V$ is
finite dimensional.
\end{lemma}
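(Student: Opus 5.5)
We will argue by looking at the set of leading degrees. Let $D:=\{\deg p : 0\neq p\in V\}\subseteq\Z_{\geq 0}$. By hypothesis, $D\supseteq\{N,N+1,N+2,\dots\}$. For each $k\geq 0$ we fix a polynomial $p_{N+k}\in V$ with $\deg p_{N+k}=N+k$.

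The key step is to show that
$$
\C[t]=\Span\{1,t,\dots,t^{N-1}\}+V .
$$
This gives $\dim \C[t]/V\leq N$, which is finite. We prove by induction on $m$ that every polynomial $q$ of degree at most $m$ lies in $W:=\Span\{1,\dots,t^{N-1}\}+V$.

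For $m<N$ the claim is trivial. Now let $m\geq N$ and $\deg q=m$. Since $p_m$ has degree exactly $m$, its leading coefficient is nonzero. Subtract from $q$ the scalar multiple of $p_m\in V$ that cancels the leading term. What remains has degree less than $m$, so it lies in $W$ by induction. Since $p_m\in V\subseteq W$, it follows that $q\in W$. Every element of $\C[t]$ has finite degree, so $W=\C[t]$.

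Equivalently, the images of $t^0,\dots,t^{N-1}$ span $\C[t]/V$. This is the only substantive point, and it is just echelon reduction with respect to degree.

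There is no real obstacle; the one thing to be careful about is that the hypothesis gives polynomials of every degree $\geq N$. That is exactly what the leading-term cancellation uses. We do not need $V$ to be a submodule or to be closed under anything other than linear combinations.
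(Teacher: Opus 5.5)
Your proof is correct: the leading-term cancellation against the chosen $p_m\in V$ of each degree $m\geq N$ is exactly the ``standard linear algebra argument'' the paper invokes when it omits the proof. It also gives the explicit bound $\dim \mathbb C[t]/V\leq N$, including the edge case $N=0$, where $V=\mathbb C[t]$.
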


\begin{proof}
This follows from a standard linear algebra argument, so we omit the proof.
\end{proof}

\begin{lemma} \label{T1new}
Let $g\in\mathbb C[t]$ with $\deg g\geq2$. If $V$ is a nonzero proper
submodule of $T_1(g,b)$, then $T_1(g,b)/V$ is finite dimensional.
\end{lemma}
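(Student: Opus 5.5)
Identify $T_1(g,b)$ with $\C[t]$ via $p(t)e^{g}\leftrightarrow p(t)$, and let $V\subset\C[t]$ be the subspace corresponding to a nonzero proper submodule. By Lemma~\ref{lemforT3}, it suffices to find $N\ge 0$ such that $V$ contains a polynomial of every degree $N+k$, $k\geq 0$. The idea is to show that $V$ is closed under multiplication by $t$ up to lower-order corrections controlled by $V$ itself, using the operator $h$ (which raises degree by $n-1=\deg g-1\ge 1$) together with $f$ (which lowers degree by $n-1$ generically).

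First write the actions explicitly. With $\Phi_1$, for $p\in\C[t]$ we have
$$h\cdot pe^g=(tp'+bp+tg'p)e^g,\qquad f\cdot pe^g=-(p'+g'p)e^g,$$
$$e\cdot pe^g=(t^2p'+2btp+t^2g'p)e^g.$$
Hence $e\cdot(pe^g)=t\cdot\big(h\cdot pe^g\big)+bt\,pe^g$, and $h+tf$ acts on $pe^g$ by $p\mapsto bp$. In particular
$$(e-h\cdot t\text{-type combinations})$$
are replaced by the cleaner identity $t\,(f\cdot pe^g)=-(tp'+tg'p)e^g=(b p)e^g-h\cdot pe^g$. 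The key point is that $V$ is stable under $e$, $f$, and $h$. Take $0\neq v=p\,e^g\in V$ with $\deg p=m$. Then $h^j v\in V$ has degree $m+j(n-1)$, since the leading term of $tg'p$ dominates. Moreover, $e\cdot v$ has degree $m+n$, because the $t^2g'p$ term has degree $m+n+1$. More precisely, compare leading terms: $e\cdot v$ has degree $m+n+1$ with leading coefficient $na_n\cdot\mathrm{lc}(p)$, while $h\cdot v$ has degree $m+n-1$. Thus, from a single element of degree $m$, we obtain degrees $m+j(n-1)+k(n+1)$ for all $j,k\geq 0$ by applying $h^je^k$. Since the leading behaviour is multiplicative, the leading coefficients never vanish.

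Next, get all residues. The set $\{j(n-1)+k(n+1): j,k\ge 0\}$ misses infinitely many integers when $\gcd(n-1,n+1)=2$, i.e.\ when $n$ is odd. To cover the missing parity we also use $f$, which sends degree $m$ to degree $m+n-1$ (leading term $g'p$). This gives nothing new. We therefore need an operator changing degree by an odd amount, or by exactly $1$. I would use the commutator-free combination $e\cdot v - t$-multiples: the element $e\cdot v+ (\text{const})\,h^2\cdot v$ is a combination of elements of $V$ whose degree can be lowered by cancelling leading terms. Indeed $h^2v$ has leading term $(na_n)^2t^{2n}\cdot t^{m}\cdot\ldots$, of degree $m+2n-2$, which does not match $m+n+1$ in general. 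Instead, use $f$ on $h\cdot v$ versus $h$ on $f\cdot v$: the relation $[h,f]=-f$ is linear and gives nothing new. So the concrete plan is to compute $e\cdot v$ and $h\cdot(f\cdot\ldots)$-type elements of the same top degree $m+n+1$ and subtract, since $ef\cdot v$ has degree $m+2n$. Then $h^2v$ versus $f e v$ cancels, exposing a next term of degree $m+2n-1$ or lower. Expanding via the subleading coefficients of $g$ and $p$, I expect to find an element of $V$ of degree $m+d$ with $d$ odd, or with $d=1$, and a nonzero coefficient. Once both parities, or more generally residues generating $\Z$ together with $n-1$ and $n+1$, are realized, the numerical-semigroup argument yields all sufficiently large degrees $N+k$.

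The main obstacle is this cancellation step: guaranteeing that the exposed lower-degree coefficient is nonzero, with possible dependence on $b$ and on $\deg p$. If it fails, I would fall back on the $U(\mathfrak h)$-structure instead. By Proposition~\ref{U(h)-freeproof}, $T_1(g,b)\in\mathcal M(n)$, and $V$ is a $\C[h]$-submodule, hence free. If $V$ has rank $n$, then $T_1/V$ is a finitely generated torsion $\C[h]$-module, hence finite dimensional, and we are done. So the real task becomes showing that $V$ has full $U(\mathfrak h)$-rank. For this I would show that $\C(h)\otimes V$ is stable under the $\C(h)$-semilinear operators $e$ and $f$ on $\C(h)^n$, and that the rank-$n$ module $M(b,(1,n-1,0),K_1)$ of Proposition~\ref{realizationT-1} has no such proper invariant $\C(h)$-subspace. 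That last claim would be checked via the companion-like structure of $K_1$, using that $n\ge 2$.
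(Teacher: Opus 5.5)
Your proposal has a genuine gap, and it comes from a miscomputed degree. From your own formula $h\cdot pe^g=(tp'+bp+tg'p)e^g$, the dominant term is $tg'p$. It has degree $m+n$ and leading coefficient $na_n\,\mathrm{lc}(p)\neq 0$. So $h$ raises the degree by $n$, not by $n-1$, and $h\cdot v$ has degree $m+n$, not $m+n-1$.

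Because of this slip you concluded that the available shifts are $n-1$ (from $h$ and $f$) and $n+1$ (from $e$), whose $\gcd$ is $2$ for odd $n$. You then set up a cancellation step that is never carried out: you only say you \emph{expect} to find an element with an odd degree shift and nonzero coefficient. So the main line of argument does not prove the lemma as written.

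The fallback does not close the gap either. Reducing to the claim that $V$ has full $U(\mathfrak h)$-rank is correct. However, the key assertion that $\C(h)^n$ has no proper nonzero subspace stable under the semilinear operators $e,f$ is only stated (``would be checked via the companion-like structure of $K_1$''). That assertion is in fact equivalent to the lemma itself: the paper derives exactly this no-lower-rank statement, Corollary~\ref{nolowersub(1)}, \emph{from} the lemma. Reformulating the problem this way gains nothing without an actual argument.

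Once the degree of $h\cdot v$ is corrected, the degree-counting approach you started works immediately, and it is the paper's proof. For nonzero $q$, one has $\deg(f\cdot qe^g)=\deg q+n-1$ with leading term $-g'q$. Likewise $\deg(h\cdot qe^g)=\deg q+n$ with leading term $tg'q$; the paper uses $h-b$, which merely removes $bq$. Both leading coefficients are nonzero. Since $n\geq 2$ gives $n-1\geq 1$ and $\gcd(n-1,n)=1$, every sufficiently large integer has the form $r(n-1)+sn$ with $r,s\geq 0$. Hence $V$ contains elements $p_De^g$ with $\deg p_D=D$ for all large $D$, and Lemma~\ref{lemforT3} finishes the proof. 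Your pair $h,e$ would work equally well, since their shifts $n$ and $n+1$ are also coprime.
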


\begin{proof}
Choose $0\neq p(t)e^g\in V$, with $p(t)\in\mathbb C[t]$, and set
$d:=\deg g$. Since $T_1(g,b)$ is defined by $\Phi_1$, we have
$$
f\cdot pe^g=-(p'+pg')e^g,\qquad
(h-b)\cdot pe^g=(p'+pg')te^g .
$$

With the convention that $\deg(ue^g) := \deg(u)$, for every nonzero $q\in\mathbb C[t]$,
$$
\deg(f\cdot qe^g)=\deg q+d-1,\qquad
\deg((h-b)\cdot qe^g)=\deg q+d.
$$

Indeed, these degrees come from the leading term of $qg'$.

Since $V$ is a submodule, repeated applications of $f$ and $h-b$ to
$p(t)e^g$ show that $V$ contains elements of degrees
$$
\deg p+r(d-1)+sd,\qquad r,s\in\mathbb Z_{\geq0}.
$$

Because $\gcd(d-1,d)=1$, every sufficiently large integer is of the form
$r(d-1)+sd$ with $r,s\geq0$. Hence, for every sufficiently large integer $D$, there exists
$p_D(t)e^g\in V$ with $\deg p_D=D$. Now Lemma~\ref{lemforT3}
implies that $T_1(g,b)/V$ is finite dimensional.
\end{proof}
\begin{corollary}\label{nolowersub(1)}
Let $n=\deg g\geq 2$. The module $T_1(g,b)$ has no nonzero
$U(\mathfrak h)$-free submodules of rank strictly less than $n$.
\end{corollary}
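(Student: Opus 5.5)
The plan is to derive the corollary directly from Lemma~\ref{T1new}, together with Proposition~\ref{U(h)-freeproof}, which says $T_1(g,b)\in\mathcal M(n)$.

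Suppose $V\subset T_1(g,b)$ is a nonzero submodule that is $U(\mathfrak h)$-free of rank $r<n$. Then $V$ is proper, because $T_1(g,b)$ has rank $n$. By Lemma~\ref{T1new}, the quotient $T_1(g,b)/V$ is finite dimensional.

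Next I would count ranks over $\C[h]$. Tensor the exact sequence
$$0\to V\to T_1(g,b)\to T_1(g,b)/V\to 0$$
with the field $\C(h)$; this is exact because $\C(h)$ is flat over $\C[h]$. A finite-dimensional $\C[h]$-module is torsion, so it is killed by the tensor product. Hence $\C(h)\otimes V\simeq \C(h)\otimes T_1(g,b)$, which forces $r=n$, a contradiction.

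Alternatively, one can argue by dimension: if $\operatorname{rank} V<n$, then $T_1(g,b)/V$ contains a free $\C[h]$-summand and so is infinite dimensional.

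There is essentially no obstacle here. The only point to be careful about is that Lemma~\ref{T1new} applies to any nonzero proper submodule, with no freeness assumption needed, and the hypothesis $\deg g\ge 2$ matches the lemma's hypothesis. The rest is the standard fact that a finite-dimensional quotient cannot lower the rank.
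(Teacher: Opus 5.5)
Your proposal is correct and follows the paper's own argument: Lemma~\ref{T1new} forces $T_1(g,b)/V$ to be finite dimensional, while a rank count shows that this quotient has positive $U(\mathfrak h)$-rank and so is infinite dimensional, a contradiction. Tensoring with $\C(h)$ is simply a more explicit justification of the rank step that the paper states in one line.
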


\begin{proof}
Suppose that $V$ is a nonzero $U(\mathfrak h)$-free submodule of
$T_1(g,b)$ of rank $r<n$. Since $T_1(g,b)$ is $U(\mathfrak h)$-free
of rank $n$, the quotient $T_1(g,b)/V$ has positive $U(\mathfrak h)$-rank
and is therefore infinite dimensional. This contradicts Lemma~\ref{T1new}.
\end{proof}

\begin{theorem} \label{simplicityT-1}
Let $\deg g\geq 2$. The module $T_1(g,b)$ is simple if and only if $b\notin 1+\tfrac12\mathbb Z_{\geq0}$.
\end{theorem}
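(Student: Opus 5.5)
By Lemma~\ref{T1new}, every nonzero proper submodule $V$ of $T_1(g,b)$ has finite-dimensional quotient. So $T_1(g,b)$ is simple if and only if it has no nonzero finite-dimensional quotient. By Proposition~\ref{realizationT-1}, $T_1(g,b)\simeq M\bigl(b,(1,n-1,0),K_1\bigr)$ with $n=\deg g\geq 2$. Having a proper submodule of full rank $n$ is exactly having a nonzero finite-dimensional quotient. We can therefore apply Proposition~\ref{generalcaseprop} with $\boldsymbol a=(1,n-1,0)$ and $\alpha=b$.

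\emph{Computing $d$.} Since $a_+=0$, the matrix $\boldsymbol M_{\boldsymbol a}(\mathcal K)$ has the single column $\mathcal K\mathsf e_1$. As $\mathcal K$ is invertible, its rank is $1=a_-+a_+$, so $d=0$. Equivalently, by Remark~\ref{d-intersectiondim}, $\mathcal S^+=0$ forces $d=0$. This also matches case (i) of Theorem~\ref{cohfamilydecomp}, since $(a_-,a_+)=(1,0)$. Hence $\mathcal{W}(T_1(g,b))\simeq \mathcal F_-(b)\oplus \mathcal F_0^+(b)^{\oplus(n-1)}$, independently of $K_1$.

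\emph{Applying the criterion.} Proposition~\ref{generalcaseprop} says a proper rank-$n$ submodule exists if and only if either $a_0+d=n-1>0$ and $b\in 1+\tfrac12\Z_{\geq0}$, or $d>0$ and $b\in\tfrac12\Z_{\leq 0}$. The second alternative is impossible because $d=0$. The first holds exactly when $b\in 1+\tfrac12\Z_{\geq 0}$, because $n-1\geq 1$.

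\emph{Conclusion.} If $b\notin 1+\tfrac12\Z_{\geq0}$, there is no nonzero finite-dimensional quotient. Then, by Lemma~\ref{T1new}, there is no nonzero proper submodule, so $T_1(g,b)$ is simple. If $b\in 1+\tfrac12\Z_{\geq0}$, there is a proper rank-$n$ submodule, nonzero since it has rank $n$, so $T_1(g,b)$ is not simple.

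The main obstacle is the reliance on the normal-form identification of Proposition~\ref{realizationT-1} and on Theorem~\ref{cohfamilydecomp}(i), whose one-sided case is only sketched in the paper. A backup for the non-simple direction avoids both. Take $b=1+\tfrac k2$ and produce the quotient $L(k)$ directly. One can either mimic the subspace $N_{m_0}$ from Theorem~\ref{simplicityT-2} via the Fourier relation $\Phi_2=\theta\circ\Phi_1$, or use Proposition~\ref{SESgeneralcase}, whose first sequence with $a_-=1$, $a_0=n-1$ gives the quotient $L(k)^{\oplus(n-1)}$ directly.
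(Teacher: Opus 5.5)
Your proof is correct and is essentially the paper's argument: you realize $T_1(g,b)$ as $M\bigl(b,(1,n-1,0),K_1\bigr)$ via Proposition~\ref{realizationT-1}, apply Proposition~\ref{generalcaseprop}, and use Lemma~\ref{T1new} to exclude proper submodules of rank less than $n$ (the paper cites this through its consequence, Corollary~\ref{nolowersub(1)}). Your explicit check that $d=0$ because $a_+=0$, so that $b\in\tfrac12\Z_{\leq 0}$ never yields a finite-dimensional quotient, is a step the paper leaves implicit.
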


\begin{proof}
By Proposition~\ref{realizationT-1}, the module $T_1(g,b)$ is realized as
$M\bigl(b,(1,n-1,0),K_1\bigr)$, where $n=\deg g$. The result now follows
from Proposition~\ref{generalcaseprop}, together with
Corollary~\ref{nolowersub(1)}, which rules out proper submodules of smaller
$U(\mathfrak h)$-rank.
\end{proof}

\begin{proposition}
Let $k\in\mathbb Z_{\geq0}$, and let $g_1,g_2\in\mathbb C[t]$ with
$\deg g_1=n\geq2$ and $\deg g_2\geq1$. Then the following short exact
sequences hold:
$$
0\longrightarrow T_1\left(g_1,-\tfrac{k}{2}\right)
\longrightarrow T_1\left(g_1,1+\tfrac{k}{2}\right)
\longrightarrow L(k)^{\oplus(n-1)}
\longrightarrow 0,
$$
$$
0\longrightarrow T_2\left(g_2,-\tfrac{k}{2}\right)
\longrightarrow T_2\left(g_2,1+\tfrac{k}{2}\right)
\longrightarrow L(k)
\longrightarrow 0.
$$
\end{proposition}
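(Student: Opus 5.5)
The plan is to construct both sequences directly inside the exponential modules. Each submodule will be given by an explicit embedding. The quotient will then be identified by combining a dimension count with the central character.

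For $T_2$ we reuse the submodule from the proof of Theorem~\ref{simplicityT-2}. Put $b=1+\tfrac k2$, so $m_0=2b-1=k+1$, and
$$N_{k+1}=\Span\{t^pe^{g_2}: p\geq k+1\}\subset T_2\bigl(g_2,1+\tfrac k2\bigr)$$
is a submodule. Its quotient is spanned by the images of $e^{g_2},te^{g_2},\dots,t^ke^{g_2}$, so it has dimension $k+1$. The central character is $(2b-1)^2=(k+1)^2$. A finite-dimensional $\mathfrak{sl}(2)$-module of dimension $k+1$ with this central character must be $L(k)$, since every composition factor is then $L(k)$.

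Next we identify $N_{k+1}$ with $T_2(g_2,-\tfrac k2)$. Define $\psi:T_2(g_2,-\tfrac k2)\to N_{k+1}$ by $q(t)e^{g_2}\mapsto t^{k+1}q(t)e^{g_2}$. Under $\Phi_2$, $f$ is multiplication by $t$, so $\psi$ commutes with $f$. For $h$, conjugating $-t\partial$ by $t^{k+1}$ shifts it by $-(k+1)$. This matches the shift of the constant term, $(b-1)$ becoming $(b'-1)$ with $b'=b-(k+1)=-\tfrac k2$. For $e=-t\partial^2+(2b-2)\partial$ we have
$$t^{-(k+1)}\bigl(-t\partial^2+(2b-2)\partial\bigr)t^{k+1}=-t\partial^2+\bigl(2b-2-2(k+1)\bigr)\partial+\frac{(k+1)\bigl(2b-2-k\bigr)}{t}.$$
With $2b-2=k$ the last term vanishes, and the middle coefficient is $-k-2=2b'-2$. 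So $\psi$ is an isomorphism onto $N_{k+1}$. This settles the second sequence.

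For $T_1$ we use Proposition~\ref{realizationT-1}. It gives $T_1(g_1,b)\simeq M\bigl(b,(1,n-1,0),K_1\bigr)$, and $K_1$ does not depend on $b$. Proposition~\ref{SESgeneralcase}, applied with $a_-=1$, $a_0=n-1$ and $\alpha=1+\tfrac k2$, gives a sequence with quotient $L(k)^{\oplus(n-1)}$. Its submodule is $M\bigl(1+\tfrac k2,(1,-(n-1),0),K_1\bigr)$, because $K_1$ is constant and the shift $h\mapsto h+k+1$ does nothing.

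It remains to identify this submodule with $T_1(g_1,-\tfrac k2)\simeq M\bigl(-\tfrac k2,(1,n-1,0),K_1\bigr)$. By Theorem~\ref{main-isom-thm}(i), with $\beta=1-\alpha=-\tfrac k2$ and $\boldsymbol b=(1,n-1,0)$, these two modules agree: the $P$-matrices coincide and we may take $P=\I_n$ in condition (ii).

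The one point needing care is the $e$-conjugation identity for $T_2$. It is exactly what forces $b=1+\tfrac k2$. If one prefers a uniform argument, Remark~\ref{rank1T1} together with Proposition~\ref{simplicity-rank1}(ii) also covers the case $\deg g_2=1$.
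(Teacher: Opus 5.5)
Your proposal is correct.

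\textbf{The $T_1$ sequence.} This part is exactly the paper's proof. You realize $T_1(g_1,b)$ as $M\bigl(b,(1,n-1,0),K_1\bigr)$, where $K_1$ is constant and independent of $b$. You then apply Proposition~\ref{SESgeneralcase} with $\alpha=1+\tfrac k2$. Finally you identify the submodule with $M\bigl(-\tfrac k2,(1,n-1,0),K_1\bigr)$ using the symmetry $\alpha\leftrightarrow1-\alpha$, $a_0\leftrightarrow-a_0$ of Theorem~\ref{main-isom-thm}. The paper leaves this last identification implicit; you state it explicitly.

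\textbf{The $T_2$ sequence.} Here you take a different route.
\begin{itemize}
\item The paper repeats the $T_1$ argument. It uses the realization $T_2(g_2,b)\simeq M\bigl(b,(n-1,1,0),K_2\bigr)$ from Proposition~\ref{realizationT-1}, which requires $\deg g_2\geq2$ and a Smith normal form computation. It treats $\deg g_2=1$ separately via Remark~\ref{rank1T1}.
\item You instead write down the embedding $qe^{g_2}\mapsto t^{k+1}qe^{g_2}$. You check that it intertwines the two actions by conjugating $\Phi_2(e),\Phi_2(f),\Phi_2(h)$ by $t^{k+1}$ in $\mathcal D(1)_t$.
\end{itemize}
The conjugation computation is right. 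The correction term $(k+1)(2b-2-k)/t$ vanishes exactly when $2b-2=k$. The image is the submodule $N_{k+1}$ from the proof of Theorem~\ref{simplicityT-2}. The quotient has dimension $k+1$ and central character $(k+1)^2$, so it is $L(k)$.

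\textbf{What each route buys.} Your route gives an explicit, uniform proof for all $\deg g_2\geq1$ and avoids the normal form entirely. Your closing appeal to Remark~\ref{rank1T1} is therefore unnecessary. The paper's route, in exchange, shows that both sequences are instances of the single structural result Proposition~\ref{SESgeneralcase}.

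\textbf{An optional extension.} Your trick also handles $T_1$ directly. One checks that $\partial^{k+1}$ intertwines $\Phi_1$ at $b'=-\tfrac k2$ with $\Phi_1$ at $b=1+\tfrac k2$: it commutes with $-\partial$, and it satisfies
\[
(t\partial+b)\partial^{k+1}=\partial^{k+1}(t\partial+b'),\qquad (t^2\partial+2bt)\partial^{k+1}=\partial^{k+1}(t^2\partial+2b't).
\]
The map $q\mapsto q'+qg_1'$ is injective on $\C[t]$ and its image has codimension $n-1$. Hence the cokernel of $\partial^{k+1}$ on $\C[t]e^{g_1}$ has dimension $(k+1)(n-1)$, which gives $L(k)^{\oplus(n-1)}$ without Proposition~\ref{realizationT-1}.
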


\begin{proof}
The first sequence follows from Propositions~\ref{SESgeneralcase} and
\ref{realizationT-1}. The second follows in the same way, using
Proposition~\ref{realizationT-1} when $\deg g_2\geq2$, and
Remark~\ref{rank1T1} when $\deg g_2=1$.
\end{proof}

\begin{remark}
For every $k\in\mathbb Z_{\geq0}$, we have
$-\tfrac{k}{2}\notin 1+\tfrac12\mathbb Z_{\geq0}$. Hence
Theorems~\ref{simplicityT-1} and~\ref{simplicityT-2} imply that
$T_1(g_1,-\tfrac{k}{2})$ and $T_2(g_2,-\tfrac{k}{2})$ are simple.
\end{remark}

\begin{proposition} \label{T3-simplicity}
If $g$ is a nonconstant even polynomial, then
$T_3(g)^{\mathrm{ev}}$ and $T_3(g)^{\mathrm{odd}}$ are simple.
\end{proposition}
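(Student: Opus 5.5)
The plan is to combine the realization of Proposition~\ref{decompT-3} with the full-rank criterion of Proposition~\ref{generalcaseprop}. Small-rank submodules will be excluded by a degree argument modeled on Lemma~\ref{T1new}. Throughout, write $g=\sum_{j=1}^m a_{2j}t^{2j}$ with $a_{2m}\neq0$, so that
$$
T_3(g)^{\mathrm{ev}}\simeq M\bigl(\tfrac14,(0,-1,m-1),K_3\bigr),\qquad T_3(g)^{\mathrm{odd}}\simeq M\bigl(\tfrac14,(0,1,m-1),K_3\bigr).
$$
Both modules are free of rank $m$ over $U(\mathfrak h)$.

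\emph{Step 1: no proper submodule of rank $m$.} For $T_3(g)^{\mathrm{odd}}$ we have $a_0=1\geq0$, so Proposition~\ref{generalcaseprop} applies directly with $\alpha=\tfrac14$. Since $\tfrac14\notin 1+\tfrac12\Z_{\geq0}$ and $\tfrac14\notin\tfrac12\Z_{\leq0}$, there is no proper submodule of rank $m$, whatever the value of $d$.

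For $T_3(g)^{\mathrm{ev}}$ the triple has $a_0=-1$, so we first invoke Theorem~\ref{main-isom-thm}(i). It gives
$$
M\bigl(\tfrac14,(0,-1,m-1),K_3\bigr)\simeq M\bigl(\tfrac34,(0,1,m-1),K_3\bigr),
$$
because $P_{((0,-1,m-1),1/4)}=P_{((0,1,m-1),3/4)}$, so condition (ii) holds with $P=\I_m$. Since $\tfrac34$ is also not a half-integer, Proposition~\ref{generalcaseprop} again rules out proper submodules of full rank. Equivalently, by Proposition~\ref{quotient-weighting} and Theorem~\ref{cohfamilydecomp}(i), the weighting has no finite-dimensional quotient because $\alpha\notin\tfrac12\Z$.

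\emph{Step 2: every nonzero submodule has finite codimension.} Let $V$ be a nonzero submodule of $T_3(g)^{\mathrm{ev}}$ (respectively $T_3(g)^{\mathrm{odd}}$), and pick $0\neq p(t)e^g\in V$ of degree $D$. Since $\Phi_3(e)=\tfrac12t^2$, applying $e^k$ produces elements $\tfrac1{2^k}t^{2k}p(t)e^g\in V$ of degree $D+2k$ for all $k\geq0$. All these lie in the fixed parity class $\C[t]_{\mathrm{ev}}e^g$ (respectively $\C[t]_{\mathrm{odd}}e^g$). Applying the parity analogue of Lemma~\ref{lemforT3} (same linear-algebra argument: a subspace of $\C[t]_{\mathrm{ev}}$ or $\C[t]_{\mathrm{odd}}$ containing a polynomial of every degree $D+2k$ has finite codimension) shows that the quotient by $V$ is finite dimensional. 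This step is easier than Lemma~\ref{T1new}, because $e$ alone already raises the degree by exactly $2$.

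\emph{Step 3: conclusion.} Let $V$ be a nonzero proper submodule. By Step 2 the quotient is finite dimensional, hence $U(\mathfrak h)$-torsion, so $V$ has $U(\mathfrak h)$-rank $m$. Indeed, $V$ is torsion-free and finitely generated over the PID $\C[h]$, hence free, and a rank below $m$ would force an infinite-dimensional quotient. But Step 1 excludes proper submodules of full rank, so no such $V$ exists and both summands are simple.

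The main obstacle I anticipate is the bookkeeping in Step 1 for the even part. Proposition~\ref{generalcaseprop} is stated only for $\boldsymbol a\in\Z_{\geq0}^3$, so the passage $\alpha\mapsto1-\alpha$ via Theorem~\ref{main-isom-thm} has to be checked carefully. The fallback is to argue directly through Propositions~\ref{quotient-weighting} and~\ref{cohfamilydecomp}: for non-half-integral $\alpha$, none of the four degree-one families admits a finite-dimensional quotient.
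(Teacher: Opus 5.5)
Your proof is correct, but it takes a different route from the paper.

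The paper argues directly. It picks $0\neq p(t)e^g\in S$ of minimal degree. From $2h-\tfrac12=t\partial$ and $tg'\in\C[t^2]$ it gets $tp'(t)e^g\in S$. Then $f=-\tfrac12\partial^2$, together with $g'/t,\ g''+(g')^2\in\C[t^2]$, gives $p''(t)e^g\in S$. Minimality forces $p''=0$, so $e^g\in S$ (resp.\ $te^g\in S$), and powers of $e$ produce everything.

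You use only the $e$-action, to see that $\C[t^2]\,p(t)e^g\subseteq V$ has finite codimension in $T_3(g)^{\epsilon}$, and then rule out nonzero finite-dimensional quotients. This is the same template the paper itself uses for $T_1(g,b)$ (Lemma~\ref{T1new}) and for $T_3(g)$ with $g_{\mathrm{odd}}\neq 0$. So your argument unifies the treatment and uses evenness of $g$ only to know that $T_3(g)^{\epsilon}$ is a submodule. The paper's argument, in exchange, is self-contained and avoids the $U(\mathfrak h)$-free machinery. It also yields the explicit fact that every nonzero submodule contains the cyclic generator $e^g$ or $te^g$.

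Your Step~1 is much heavier than necessary. A nonzero finite-dimensional quotient of $T_3(g)^{\epsilon}$ would have central character $\Phi_3(c)=\tfrac14$. But $c$ acts on $L(k)$ by $(k+1)^2\neq\tfrac14$, so no such quotient exists. The paper uses exactly this observation in its proof of the simplicity criterion for $T_3(g)$. It replaces the realization from Proposition~\ref{decompT-3}, the passage $\alpha\mapsto 1-\alpha$ via Theorem~\ref{main-isom-thm}, and Proposition~\ref{generalcaseprop} with its weighting-functor input. In particular, the bookkeeping obstacle you anticipated for the even part disappears.

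Your handling of that step is nonetheless correct. We have $P_{((0,-1,m-1),1/4)}=P_{((0,1,m-1),3/4)}$ and $\lambda_{1/4}=\lambda_{3/4}$, so the two normal forms define literally the same module.
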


\begin{proof}
We prove the statement for both submodules at once. Let
$$
S\subset T_3(g)^{\epsilon},\qquad \epsilon\in\{\mathrm{ev},\mathrm{odd}\},
$$
be a nonzero submodule, and choose $0\neq p(t)e^g\in S$, with
$p(t)\in\mathbb C[t]_{\epsilon}$, such that $\deg p$ is minimal.

Since $\Phi_3(e)=\frac12t^2$, the submodule $S$ contains $q(t^2)p(t)e^g$ for every
$q\in\mathbb C[t]$. Also,
$$
\left(2h-\tfrac12\right)\cdot p(t)e^g
=
\bigl(tp'(t)+tg'(t)p(t)\bigr)e^g .
$$

Because $g$ is even, $tg'(t)\in\mathbb C[t^2]$. Hence
$tg'(t)p(t)e^g\in S$, and therefore $tp'(t)e^g\in S$.

Next,
$$
f\cdot p(t)e^g
=
-\tfrac12\bigl(p''(t)+2g'(t)p'(t)+((g'(t))^2+g''(t))p(t)\bigr)e^g .
$$

Since $(g'(t))^2+g''(t)\in\mathbb C[t^2]$, the last term belongs to $S$.
Moreover, $g'(t)/t\in\mathbb C[t^2]$, and we have already shown that
$tp'(t)e^g\in S$; hence $2g'(t)p'(t)e^g\in S$. It follows that
$p''(t)e^g\in S$.

If $p''\neq0$, then $p''$ has the same parity as $p$ and smaller degree,
contradicting the minimal choice of $p$. Thus $p''=0$. In the even case,
$p$ is constant, so $e^g\in S$. In the odd case, $p$ is a scalar multiple
of $t$, so $te^g\in S$. Applying powers of $e$, we obtain
$T_3(g)^{\mathrm{ev}}\subseteq S$ in the even case and
$T_3(g)^{\mathrm{odd}}\subseteq S$ in the odd case. Hence
$S=T_3(g)^\epsilon$, and both modules are simple.
\end{proof}

\begin{theorem}
The module $T_3(g)$ is simple if and only if $g$ is not an even polynomial,
equivalently, if $g_{\mathrm{odd}}\neq 0$.
\end{theorem}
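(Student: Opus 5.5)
The ``only if'' direction is immediate from Proposition~\ref{decompT-3}. If $g$ is even and nonconstant, then $T_3(g)=T_3(g)^{\mathrm{ev}}\oplus T_3(g)^{\mathrm{odd}}$ with both summands nonzero, so $T_3(g)$ is not simple. (If $g$ is constant, $T_3(g)\simeq\C[t]$ is again the sum of its even and odd parts, since $\Phi_3$ preserves parity.) For the ``if'' direction, assume $g_{\mathrm{odd}}\neq0$, so that $\deg g\geq1$. The case $\deg g=1$ is handled by Remark~\ref{rank1T1} together with Proposition~\ref{simplicity-rank1}(i): $T_3(a_1t)$ is a rank-one module of type $(1,0,0)$, hence simple. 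So assume $n=\deg g\geq2$.

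I would imitate the argument for $T_1$ and proceed in two steps. First, I would show that any nonzero submodule $V\subset T_3(g)$ has finite-dimensional quotient, using Lemma~\ref{lemforT3}. Take $0\neq p e^g\in V$. Since $\Phi_3(e)=\tfrac12t^2$, the submodule contains all $t^{2j}pe^g$. Next,
\[
\bigl(2h-\tfrac12\bigr)\cdot pe^g=(tp'+tg'p)e^g,
\]
which has degree $\deg p+n$. Hence $V$ contains elements of every degree $\deg p+2j+sn$ with $j,s\geq0$. If $n$ is odd, these cover all sufficiently large integers. If $n$ is even, I would instead use the odd part of $g$. Subtracting suitable combinations of $t^{2j}pe^g$ from $(2h-\tfrac12)pe^g$ kills its even-shift contributions and leaves an element whose degree has the opposite parity to $\deg p$. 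That degree is controlled by the top odd-degree term of $tg'$, which is nonzero precisely because $g_{\mathrm{odd}}\neq0$. So $V$ contains polynomials of all sufficiently large degrees, and Lemma~\ref{lemforT3} gives $\dim T_3(g)/V<\infty$. As in Corollary~\ref{nolowersub(1)}, this also rules out nonzero free submodules of smaller rank.

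Second, I would rule out proper submodules of full rank $n$. By Proposition~\ref{matrixformT3}, $T_3(g)\simeq M\bigl(\tfrac14,(1,0,n-1),K\bigr)$, so $\alpha=\tfrac14$. Proposition~\ref{generalcaseprop} says a proper rank-$n$ submodule exists only if $\alpha\in1+\tfrac12\Z_{\geq0}$ or $\alpha\in\tfrac12\Z_{\leq0}$, and $\tfrac14$ lies in neither set. Equivalently, the central character $\tfrac14$ is not that of any finite-dimensional module, so $T_3(g)$ has no finite-dimensional quotient at all. This second step is therefore immediate: a nonzero proper $V$ would give a nonzero finite-dimensional quotient $T_3(g)/V$, which is impossible.

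The main obstacle is the degree-parity argument in the first step when $n$ is even. There I must check carefully that the cancellation against $\C[t^2]$-multiples of $pe^g$ really produces an element of opposite-parity degree that grows unboundedly. A cleaner route may be to apply $2h-\tfrac12$ repeatedly: this gives degrees $\deg p+sn$, and then one odd-shift correction supplies an element of each parity with arbitrarily large degree, which suffices. Failing that, I would use $f$, whose leading term $-\tfrac12(g')^2p$ has degree $\deg p+2n-2$, combined with the same reduction.
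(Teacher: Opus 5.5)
Your proposal is correct and is essentially the paper's proof. Since $e$ acts by $\tfrac12t^2$, the submodule $V$ contains every $\C[t^2]$-multiple of $p\,e^g$, in particular $tg'_{\mathrm{ev}}p\,e^g$ (because $tg'_{\mathrm{ev}}\in\C[t^2]$). Subtracting it from $(2h-\tfrac12)\cdot pe^g$ leaves $(tp'+tg'_{\mathrm{odd}}p)e^g$, of degree $\deg p+\deg g_{\mathrm{odd}}$, so the cancellation you flagged as the main obstacle works exactly as you describe. Lemma~\ref{lemforT3} together with the central character $\tfrac14$ then finishes, as you say.

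The paper runs this argument uniformly for every $g$ with $g_{\mathrm{odd}}\neq0$. Your separate treatment of $\deg g=1$, the split into odd versus even $n$, and the appeal to Proposition~\ref{generalcaseprop} are therefore redundant but harmless.
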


\begin{proof}
If $g$ is even, the actions of $e,f,h$ preserve
$\mathbb C[t]_{\mathrm{ev}}e^g$ and $\mathbb C[t]_{\mathrm{odd}}e^g$. Hence these are nonzero proper submodules of $T_3(g)$.
Hence $T_3(g)$ is not simple.

Conversely, suppose that $g_{\mathrm{odd}}\neq0$. Let $V$ be a nonzero
proper submodule of $T_3(g)$, and choose $0\neq p(t)e^g\in V$. Since
$\Phi_3(e)=\frac12t^2$, we have $t^{2k}p(t)e^g\in V$ for $k\geq0$.
Also,
$$
2\left(h-\tfrac14\right)\cdot p(t)e^g
=
\bigl(tp'(t)+tp(t)g'_{\mathrm{ev}}+tp(t)g'_{\mathrm{odd}}\bigr)e^g .
$$

Since $tg'_{\mathrm{ev}}\in\mathbb C[t^2]$, the term
$tp(t)g'_{\mathrm{ev}}e^g$ belongs to $V$. Therefore
$$
\bigl(tp'(t)+tp(t)g'_{\mathrm{odd}}\bigr)e^g\in V.
$$

The same argument, with $p$ replaced by any $q$ such that $q(t)e^g\in V$, shows that the second operation also preserves $V$.
 Thus, starting from
$p(t)e^g$, we may repeatedly apply the two operations
$$
q(t)\mapsto t^2q(t),\qquad
q(t)\mapsto tq'(t)+tq(t)g'_{\mathrm{odd}}.
$$

The first operation raises the degree by $2$. If $r=\deg g_{\mathrm{odd}}$,
then the second operation raises the degree by $r$, since its leading term
comes from $tq(t)g'_{\mathrm{odd}}$. As $r$ is odd, the integers $2$ and
$r$ are relatively prime. Hence, for every sufficiently large integer $D$,
the submodule $V$ contains an element $p_D(t)e^g$ with $\deg p_D=D$.
Identifying $T_3(g)=\mathbb C[t]e^g$ with $\mathbb C[t]$, Lemma~\ref{lemforT3}
implies that $T_3(g)/V$ is finite dimensional.

This is impossible, since $T_3(g)/V$ is a nonzero finite-dimensional module
with central character $\frac14$, while no nonzero finite dimensional
$\mathfrak{sl}(2)$-module has this central character. Therefore
$T_3(g)$ is simple.
\end{proof}

\subsection{$\mathcal U(\mathfrak h)$-dual of exponential modules and other examples}
Note that taking $U(\mathfrak h)$-dual produces further $U(\mathfrak h)$-free modules. Since all objects in $\mathcal M(1)$ are classified in Theorem \ref{classfication-rank1}, we restrict our attention to the case $\deg g = n \geq 2$, when studying the $U(\mathfrak h)$-duals of $T_1(g,b)$, $T_2(g,b)$, and $T_3(g)$.

\begin{proposition}
For $n \geq 2$ and $m \geq 1$, let  $g(t)=\sum_{i=1}^n a_i t^i$ and $q(t) = \sum_{j=1}^m b_{2j}t^{2j}$, where $a_n\neq 0$, $b_{2m}\neq 0$. The following statements hold.
\begin{itemize}
\item[(i)] If $b \in \left(\C \setminus \tfrac{1}{2}\Z\right) \cup \{\tfrac{1}{2}\}$, then $T_j(g,b)^{\vee_{\mathfrak h}}$ is simple for each $j \in \{1,2\}$.
\item[(ii)] The modules $\left(T_3(q)^{\mathrm{ev}}\right)^{\vee_{\mathfrak h}}$ and $\left(T_3(q)^{\mathrm{odd}}\right)^{\vee_{\mathfrak h}}$ are simple.
\end{itemize}
\end{proposition}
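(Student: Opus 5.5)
The plan is to apply Proposition~\ref{prop-hdual-simple-defect}. It says that if $M=M(\alpha,\boldsymbol a,K(h))$ is simple, then $M^{\vee_{\mathfrak h}}$ is simple if and only if $\mathfrak f(M)=0$. So for each module in the statement there are two things to check: that the module is simple, and that its defect $\mathfrak f$ vanishes.

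For part (i), fix $b\in(\C\setminus\tfrac12\Z)\cup\{\tfrac12\}$. Such $b$ lies outside $1+\tfrac12\Z_{\geq0}$, so $T_1(g,b)$ is simple by Theorem~\ref{simplicityT-1} (recall $\deg g=n\geq2$), and $T_2(g,b)$ is simple by Theorem~\ref{simplicityT-2}. By Proposition~\ref{realizationT-1}, these modules are $M(b,(1,n-1,0),K_1)$ and $M(b,(n-1,1,0),K_2)$, so in both cases $\alpha=b$. The formula for $\mathfrak f$ in the definition preceding Proposition~\ref{prop-hdual-simple-defect} gives $\mathfrak f=0$ whenever $\alpha\notin\tfrac12\Z_{\leq0}\cup(1+\tfrac12\Z_{\geq0})$. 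Our $b$ lies in neither set: $\tfrac12$ belongs to neither, and every other admissible $b$ is outside $\tfrac12\Z$. We can also read this off Theorem~\ref{cohfamilydecomp}(i): since $a_+=0$ we are in the one-sided case, and the decomposition contains no $\mathcal F_0^-$ summands. The only $\mathcal F_0^+(b)$ summands are the $a_0$ ones, and these have no finite-dimensional submodule when $b$ is not a half-integer, or when $b=\tfrac12$. Hence $\mathfrak f=0$, and Proposition~\ref{prop-hdual-simple-defect} gives simplicity of both duals.

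For part (ii), Proposition~\ref{T3-simplicity} gives that $T_3(q)^{\mathrm{ev}}$ and $T_3(q)^{\mathrm{odd}}$ are simple. Proposition~\ref{decompT-3} realizes them with $\alpha=\tfrac14$ and $\boldsymbol a=(0,\mp1,m-1)$. Two points need care here.

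\begin{itemize}
\item The even part has $a_0=-1<0$, whereas Proposition~\ref{prop-hdual-simple-defect} and the formula for $\mathfrak f$ are stated for $\boldsymbol a\in\Z_{\geq0}^3$. To handle this, use Theorem~\ref{main-isom-thm} with $\beta=1-\alpha=\tfrac34$: it rewrites $M(\tfrac14,(0,-1,m-1),K_3)$ as $M(\tfrac34,(0,1,m-1),K_3')$ for a suitable $K_3'$, since $P_{((0,-1,m-1),1/4)}=P_{((0,1,m-1),3/4)}$.
\item The parameter is now $\alpha\in\{\tfrac14,\tfrac34\}$, and neither value lies in $\tfrac12\Z$. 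Therefore $\mathfrak f=0$ by the defining formula; equivalently, case (i) of Theorem~\ref{cohfamilydecomp} applies because $a_-=0$.
\end{itemize}

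Proposition~\ref{prop-hdual-simple-defect} then yields simplicity of both duals.

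The main obstacle I expect is confirming that the proof of Proposition~\ref{prop-hdual-simple-defect} does not depend on the sign convention for $a_0$. Its argument uses only Proposition~\ref{fin-sub-coherentfam} and the central character, which is an alternative way to see the vanishing. A nonzero finite-dimensional $S$ sitting in $0\to M\to X\to S\to 0$ would have central character $(2\alpha-1)^2$. For $\alpha\notin\tfrac12\Z$, or $\alpha=\tfrac12$, this value is not $(k+1)^2$ for any $k\geq0$, which is impossible for a finite-dimensional module. So $\mathfrak f(M)=0$ directly, and I would present this central-character argument as the cleanest route in both parts.
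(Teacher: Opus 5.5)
Your proposal is correct and follows the paper's proof. It obtains simplicity of $T_1(g,b)$, $T_2(g,b)$, $T_3(q)^{\mathrm{ev}}$ and $T_3(q)^{\mathrm{odd}}$ from Theorems~\ref{simplicityT-1}, \ref{simplicityT-2} and Proposition~\ref{T3-simplicity}, reads off $\mathfrak f=0$ from the realizations in Propositions~\ref{realizationT-1} and~\ref{decompT-3}, and concludes with Proposition~\ref{prop-hdual-simple-defect}. Your extra care only makes explicit what the paper leaves implicit: the rewriting $M(\tfrac14,(0,-1,m-1),K_3)=M(\tfrac34,(0,1,m-1),K_3)$ (the matrices $P$ and $\overline P$ coincide literally, so one can take $K_3'=K_3$), and the central-character reason why no finite-dimensional module can appear when $(2\alpha-1)^2\neq(k+1)^2$.
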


\begin{proof}
Note that $T_3(q)^{\mathrm{ev}}$, $T_3(q)^{\mathrm{odd}}$, and $T_j(g,b)$ for $j\in\{1,2\}$ are simple, since $b \in \left(\C \setminus \tfrac{1}{2}\Z\right) \cup \left\{\tfrac{1}{2}\right\}$. Moreover, by Propositions \ref{realizationT-1} and \ref{decompT-3},
$$\mathfrak f\left(T_1(g,b)\right)=\mathfrak f\left(T_2(g,b)\right)= \mathfrak f\left(T_3(q)^{\mathrm{ev}}\right)=\mathfrak f\left(T_3(q)^{\mathrm{odd}}\right)=0. $$

Hence, Proposition \ref{prop-hdual-simple-defect} implies the statements.
\end{proof}

\begin{corollary}
For $n\geq 2$, let $a_1,\dots,a_{n-1}\in\C$ and $a_n\in\C^*$. Then $M\bigl(b,(0,1,n-1),K_1\bigr)$ and $M\bigl(b,(0,n-1,1),K_2\bigr)$ are simple if and only if $b\notin 1+\tfrac12\mathbb Z_{\geq 0}$, where $K_1$ and $K_2$ are given in Proposition \ref{realizationT-1}.
\end{corollary}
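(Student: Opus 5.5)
The plan is to identify the two modules in the corollary as $U(\mathfrak h)$-duals of $T_2(g,b)$ and $T_1(g,b)$. Then simplicity is read off from the simplicity criteria already proved together with Proposition~\ref{prop-hdual-simple-defect}.

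First, apply Corollary~\ref{h-dual-normal-form} to the realizations in Proposition~\ref{realizationT-1}:
$$
T_1(g,b)^{\vee_{\mathfrak h}}\simeq M\bigl(b,(0,-(n-1),1),-\T_n(K_1^{-1})^{\mathsf T}\T_n\bigr),\qquad
T_2(g,b)^{\vee_{\mathfrak h}}\simeq M\bigl(b,(0,-1,n-1),-\T_n(K_2^{-1})^{\mathsf T}\T_n\bigr).
$$
These have $a_0<0$. To match the form $(0,1,n-1)$ or $(0,n-1,1)$ with $a_0>0$, use Theorem~\ref{main-isom-thm}(i) to replace $b$ by $1-b$, flipping the sign of $a_0$. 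The cleaner route, though, is to compare directly: by Lemma~\ref{1st-isom-thm} it suffices to show that the $f$-matrix $P_{((0,1,n-1),b)}(h)K_1$ is $\sigma^{-1}$-similar to the $f$-matrix of a known module. The explicit companion-like shapes of $K_1$ and $K_2$ suggest the comparison modules are $\tau$-twists and/or duals of $T_2$ and $T_1$. So I would compute $M(b,(0,1,n-1),K_1)^{\vee_{\mathfrak h}}$ using Corollary~\ref{h-dual-normal-form}, getting parameters $(n-1,-1,0)$. I would then apply the flip $b\mapsto 1-b$ and check, with Proposition~\ref{tau-normal-form} and Proposition~\ref{realizationT-1}, that the result is isomorphic to a module $T_j(\tilde g,\tilde b)$ for an explicit $\tilde g$ (a sign- or scaling-modified $g$). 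I expect this matrix bookkeeping, namely finding the explicit $P(h)$ in Theorem~\ref{main-isom-thm}(ii), to be the main obstacle. I would first try a permutation by $\T_n$ combined with a diagonal sign matrix, as in Proposition~\ref{tau-normal-form}.

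An alternative that avoids identification is to mimic Theorem~\ref{simplicityT-1} directly on $M=M(b,(0,1,n-1),K_1)$. First show that $M$ has no proper submodule of rank $n$ iff $b\notin 1+\tfrac12\Z_{\geq0}$. The ``only if'' direction comes from Proposition~\ref{SESgeneralcase} (second sequence, with $a_-=0$, $a_0=1$), which produces a quotient $L(2b-2)$ when $b\in 1+\tfrac12\Z_{\geq 0}$. For the ``if'' direction, note that Proposition~\ref{generalcaseprop} with $a_-=0$ gives $d=0$, so the only obstruction is $a_0>0$ together with $b\in 1+\tfrac12\Z_{\geq0}$. The same applies to $(0,n-1,1)$.

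It then remains to exclude submodules of smaller rank. For a submodule $N\subset M$ of rank $r$ with $0<r<n$, the quotient is torsion-free after saturation. Saturating $N$ gives a submodule $N'$ with $M/N'\in\mathcal M(n-r)$. Dualizing the sequence $0\to N'\to M\to M/N'\to 0$ (exact here because all terms are free) gives a submodule $(M/N')^{\vee_{\mathfrak h}}\subset M^{\vee_{\mathfrak h}}$ of rank $n-r$. By the identification above, $M^{\vee_{\mathfrak h}}$ is a $T_1$-type module, whose lower-rank submodules are excluded by Corollary~\ref{nolowersub(1)}, or a $T_2$-type module, where the analogous degree argument of Lemma~\ref{T1new} applies since $f=t$ raises degree by one and $h$ raises it by $n$. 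Either way this is a contradiction. Combining the two steps yields simplicity exactly when $b\notin 1+\tfrac12\Z_{\geq0}$.
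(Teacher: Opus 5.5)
Your second route is the paper's route. Proper rank-$n$ submodules are controlled by Proposition~\ref{generalcaseprop} (with $a_-=0$, so $d=0$) and Proposition~\ref{SESgeneralcase}. Lower-rank submodules are excluded by passing to an exponential module, which is what the paper's one-line citation of Theorem~\ref{simplicityT-2} and Corollary~\ref{nolowersub(1)} is doing. Your saturation step and $\C[h]$-split dualization make that transfer more explicit than the paper does.

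\textbf{The gap.} The step everything rests on, namely what $M^{\vee_{\mathfrak h}}$ actually is, is never carried out. You invoke ``the identification above'' after deferring it as the main obstacle, and the version in your first paragraph is false. By Corollary~\ref{h-dual-normal-form} and Theorem~\ref{main-isom-thm}(i), $T_2(g,b)^{\vee_{\mathfrak h}}\simeq M(1-b,(0,1,n-1),\ast)$. This is isomorphic to no module $M(b,(0,1,n-1),K)$ unless $b=\tfrac12$, and the same holds for $T_1$. Writing $K_i(g)$ for the dependence on $a_1,\dots,a_n$, the correct statements are
\[
M\bigl(b,(0,1,n-1),K_1(g)\bigr)^{\vee_{\mathfrak h}}\simeq T_2(-g,1-b),\qquad
M\bigl(b,(0,n-1,1),K_2(g)\bigr)^{\vee_{\mathfrak h}}\simeq T_1(-g,1-b).
\]
Your first plan, via Proposition~\ref{prop-hdual-simple-defect}, cannot work for $b\in\tfrac12\Z_{\le 0}$. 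There $1-b\in 1+\tfrac12\Z_{\ge 0}$, so the partner $T_j(-g,1-b)$ is not simple and the hypothesis of that proposition fails, yet the corollary asserts simplicity. So the second route is the one to keep.

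\textbf{How to close it.} Drop $\tau$ and signed permutations: for $n\ge 3$ the conjugating matrix genuinely involves the $a_j$. Use a cyclic vector instead. The dual's $f$-matrix is $\overline P_{(\boldsymbol a,b)}(h)(K^{-1})^{\mathsf T}$. For the first module this is $-\diag(h+b,1,\dots,1)(K_1^{-1})^{\mathsf T}$, that is,
\[
f\mathsf e_1=\tfrac{1}{na_n}\mathsf e_n,\qquad f\mathsf e_2=(h+b)\mathsf e_1-\tfrac{a_1}{na_n}\mathsf e_n,\qquad f\mathsf e_j=\mathsf e_{j-1}-\tfrac{(j-1)a_{j-1}}{na_n}\mathsf e_n\quad(3\le j\le n).
\]
\begin{itemize}
\item The vectors $f^k\mathsf e_1$, $0\le k\le n-1$, are constant and triangular, so they form a $\C[h]$-basis.
\item Cayley--Hamilton for the companion matrix of $g'(x)/(na_n)$ gives $\sum_{j=1}^n ja_jf^j\mathsf e_1=(h+b)\mathsf e_1$.
\item This is exactly the relation $(h+b)e^{-g}=\sum_j ja_jt^je^{-g}$ defining $T_2(-g,1-b)$ in the basis $t^ke^{-g}$.
\item Since $\lambda_b=\lambda_{1-b}$, Lemma~\ref{1st-isom-thm} gives the isomorphism.
\end{itemize}
For the second module, build a constant basis starting from $x_0=\mathsf e_{n-1}$ and solving $fx_i=-(h+b)x_{i-1}$. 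This matches $T_1(-g,1-b)$, where $fv_i=-(h+b)v_{i-1}$ and $fv_0=\sum_j ja_jv_{j-1}$.

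With these identifications the rest of your argument goes through. $T_2$ has no lower-rank submodules because $f=t$ already forces every nonzero submodule to have finite codimension. $T_1$ is covered by Corollary~\ref{nolowersub(1)}.
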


\begin{proof}
By Theorem \ref{simplicityT-2} and Corollary \ref{nolowersub(1)}, $M\bigl(b,(0,1,n-1),K_1\bigr)$ and $M\bigl(b,(0,n-1,1),K_2\bigr)$ have no submodules of lower rank. The statement follows immediately from Proposition \ref{SESgeneralcase}.
\end{proof}

\end{document}